\documentclass[reqno,english]{amsart}
\usepackage{amsfonts,amsmath,latexsym,verbatim,amscd,mathrsfs,color,array}
\usepackage[colorlinks=true]{hyperref}

\hypersetup{
	colorlinks=true,
	linkcolor=red,
    citecolor=blue
}

\usepackage{orcidlink}

\newcommand{\rme}{\mathrm{e}}
\newcommand{\rmd}{\mathrm{d}}

\allowdisplaybreaks

\usepackage{float}
\usepackage{amsmath,amssymb,amsthm,amsfonts,graphicx,color,mathtools}
\usepackage[hmargin=2.5cm, vmargin=2.5cm]{geometry}
\usepackage{bbm}
\usepackage{amssymb}
\usepackage{pdfsync}
\usepackage{epstopdf}
\usepackage{cite}
\usepackage{graphicx}
\usepackage{multirow}
\usepackage[font=bf,aboveskip=15pt]{caption}
\usepackage[toc,page]{appendix}
\usepackage[colorlinks=true]{hyperref}
\renewcommand{\arraystretch}{1.3}

\allowdisplaybreaks[4]

\newcommand{\1}{\mathbf{1}}

\newcommand{\pp}{ {\partial} }

\newcommand{\RR}{{{\mathbb R}}}

\newcommand{\R} {\mathbb R}

\newcommand{\cuad}{{\sqcap\kern-.68em\sqcup}}

\newcommand{\be}{\begin{equation}}
\newcommand{\ee}{\end{equation}}

\newtheorem{definition}{Definition}[section]
\newtheorem{lemma}{Lemma}[section]
\newtheorem{proposition}{Proposition}[section]
\newtheorem{theorem}{Theorem}[section]
\newtheorem{corollary}{Corollary}[section]
\newtheorem{remark}{Remark}[section]
\newcommand{\bremark}{\begin{remark} \em}
	\newcommand{\eremark}{\end{remark} }

\numberwithin{equation}{section}
\begin{document}
\title[One-bubble solution with maximum points escaping to infinity]{One-bubble solution with maximum points escaping to infinity for the energy-critical heat equation in $\mathbb{R}^5$}

\author[Junichi Harada]{Junichi Harada}

\address[Junichi Harada]{Faculty of Education and Human Studies, Akita University}
\email{harada-j@math.akita-u.ac.jp}

\author[Qidi Zhang]{Qidi Zhang\orcidlink{0000-0002-0486-9868} }


\address[Qidi Zhang]{ 
Institut de Math\'{e}matiques de Jussieu, Sorbonne Universit\'{e}, Universit\'{e} Paris Cit\'{e}, 4 place, Jussieu, 75005 Paris, France
}
\email{qzhang@imj-prg.fr}

\subjclass{35B33, 35B40}

\keywords{Energy-critical heat equation, Parabolic gluing, Global solution, Maximum points escaping to infinity}

\begin{abstract}
	
We investigate the existence of a one-bubble solution with maximum points escaping to infinity for the energy-critical heat equation in dimension five. The key point of the construction is a refined inner linear theory as in \cite[Section 8]{Wei-Zhang-Zhou2022LLG}.

\end{abstract}

\maketitle

\tableofcontents

\section{Introduction and main results}

Given an integer $n\ge 3$, consider the energy-critical heat equation
\begin{equation}\label{critical-heat-eq}
\partial_{t} u = \Delta u + |u|^{\frac{4}{n-2}} u \mbox{ \ in \ } \mathbb{R}^n \times (0, \infty),
\quad
u(\cdot, 0) = u_0 \mbox{ \ in \ } \mathbb{R}^n.
\end{equation}
It is well-known that for any initial value $u_0\in\dot H^1(\R^n)$, \eqref{critical-heat-eq} admits a unique local-in-time solution $u(x,t)$ satisfying
 \[
 u(t)\in C([0,T);\dot H^1(\R^n))\cap C((0,T);L^\infty(\R^n))
 \]
 for some $T>0$.
We are interested in the rich and intricate behavior of solutions to this equation. Typically, depending on the size of the initial value,
solutions either decay to zero at a linear rate or blow up in finite time in a self-similar manner (Type I blow-up).
Beyond this classical dichotomy, however, the presence of bubbles gives rise to a much wider variety of dynamics.
By Caffarelli-Gidas-Spruck \cite{Caffarelli-Gidas-Spruck1989},
up to scaling and translation,
the unique positive steady-state solution of \eqref{critical-heat-eq} is the well-known Aubin-Talenti bubble
\begin{equation}\label{26June6-U-def}
	U(x) :=\alpha_n (1+|x|^2)^{-\frac{n-2}{2}},
	\quad \alpha_n :=[ n(n-2) ]^{\frac{n-2}{4}}.
\end{equation}

A basic example of such bubbling dynamics appears in the work of Fila and King \cite{FilaKing12}, who predicted, at a formal level,
global-in-time solutions in which a rescaled copy of $U$ spreads at
various growth rates. More precisely, they predicted that, for a given positive, radially
symmetric function $f_0(x)$ satisfying
\begin{equation}\label{move-26Sep9-4}
\lim_{|x| \to \infty} |x|^\gamma f_0(x) = A > 0
~\mbox{ and }~ \gamma > (n-2)/2,
\end{equation}
there exists a constant $k = k(f_0) > 0$ such that the solution
$u(x,t)$ of \eqref{critical-heat-eq} with $u_0 = kf_0$ behaves asymptotically
as
\begin{equation}\label{INT_Aug27_1}
u(x,t)
 \approx
 \begin{cases}
 \lambda(t)^{-\frac{n-2}{2}}
 U\big( \tfrac{x}{\lambda(t)} \big)
 & \text{for } |x| \ll \sqrt{t}
 \\
 \Psi(x,t)
 & \text{for } |x|>\sqrt{t},
 \end{cases}
 \end{equation}
where $\Psi(x,t)$ denotes a solution of the linear heat equation $\Psi_t = \Delta_x \Psi$ in $\R^n \times(0,\infty)$, with an initial value suitably chosen, depending on $\gamma$ (typically of self-similar form),
and the specific leading term of $\lambda(t)$ given in Table \ref{table_1}.
 \begin{table}[h]
 \begin{center}
 \renewcommand{\arraystretch}{1.0}
 \begin{tabular}{p{3em}|p{6em}p{5em}p{3em}c}
 & \centering $\frac{n-2}{2}<\gamma<2$ & \centering $\gamma=2$ & \centering $\gamma>2$ &
 \\ \hline
 $n=3$ & \centering $t^{1-\gamma}$ & \centering $t^{-1}(\ln t)^{2}$
 & \centering $t^{-1}$ &
 \\
 $n=4$ & \centering $t^{\frac{2-\gamma}{2}} (\ln t)^{-1}$ & \centering $1$ &
 \centering $(\ln t)^{-1}$ &
 \\
 $n=5$ & \centering $t^{2-\gamma}$ & \centering $(\ln t)^{2}$
 & \centering $1$ &
 \\ \hline
 \end{tabular}
 \\[2mm]
 \hspace{-25mm}
 If $n\geq6$ and $\gamma>\frac{n-2}{2}$,
 then
 $\lambda(t)=1$.
 \caption{Fila-King Conjecture (Rate of $\lambda(t)$)}
 \label{table_1}
 \end{center}
 \end{table}

Subsequently,
thanks to the development of the gluing method in Cort\'{a}zar-del Pino-Musso \cite{Green16JEMS} and D\'{a}vila-del Pino-Wei \cite{17HMF},
the predictions of Fila and King have been rigorously established in a series of works,
which can be summarized as follows:
\begin{itemize}
\item 
$n=3$, $\gamma>1$ (infinite-time blow-up): del Pino-Musso-Wei \cite{173D}.
\item
$n=4$, $\gamma>2$ (infinite-time blow-up): Wei-Zhang-Zhou \cite{infi4d}.
A counterpart for the 1-equivariant harmonic map heat flow including decaying and asymptotically stable cases: Wei-Zhang-Zhou \cite{TriHMF2026}.
\item
$n=5$, $\gamma>\frac{3}{2}$: Li-Wei-Zhang-Zhou
\cite{decay5d}.
\item
$n=6$, $\gamma >2$: Wei-Zhou \cite{Wei-Zhou-2025}.
\end{itemize}
For $n = 6$, Harada \cite{harada2025oscillatory} constructed radial
solutions $u \in C([0,\infty); \dot{H}^1(\mathbb{R}^6))$ that are
(i) positive and decaying, (ii) sign-changing and growing,
or (iii) sign-changing and oscillatory,
with initial data $u_0$ satisfying
$|u_0(x)| \sim |x|^{-2} (\ln |x|)^{-\beta}$ for $|x| \gg 1$, where
$\beta \in (\frac12, 1)$.
This construction is inspired by Gustafson-Nakanishi-Tsai \cite{GNT10CMP}.
In contrast with Harada's classification result
\cite{Harada2026-6DnearGround}, this construction demonstrates that
the dynamical behavior in $\dot{H}^{1}(\mathbb{R}^6)$ is much
different from the behavior in $H^{1}(\mathbb{R}^6)$.
As a counterpart of the energy-critical heat equation in $n=6$,
Sire, Wei, Zheng, and Zhou \cite{Sire-Wei-Zheng-Zhou-2026global} gave more general dynamics for the Yang-Mills heat flow in dimension $4$
with a wider class of initial values in the radial class.
For $n = 6$ and $u_0 \notin \dot{H}^{1}(\mathbb{R}^6)$, Hisa,
Takahashi, and Zhanpeisov \cite{Hisa-Takahashi-Zhanpeisov-2026infinite}
constructed
(i) sign-changing infinite-time blow-up solutions and
(ii) nonnegative, slowly decaying solutions, using a forward self-similar
solution, instead of a solution of the linear heat equation, as the
leading term.
Away from one bubble,
del Pino, Musso, and Wei \cite{TowerNLH} constructed bubble-tower solutions for $n\geq6$.
For infinite-time blow-ups in bounded domains, we refer to Galaktionov-King \cite{King03JDE},
Cort\'azar-del Pino-Musso \cite{Green16JEMS}, del Pino-Musso-Wei-Zheng \cite{del2018sign}, and Ageno-del Pino \cite{ageno2023infinite},
where the Dirichlet boundary plays an essential role in the mechanism.

In a related direction, Collot, Merle, and Rapha\"el
\cite{Collot17CMP} established a classification of the dynamics for
$n \ge 7$ when $u_0 \in \dot{H}^{1}(\mathbb{R}^n)$ is close to $U$ in
$\dot{H}^{1}$; see Harada \cite{Harada2026-6DnearGround} for the case $n = 6$, where $u_0 \in H^1(\mathbb{R}^6)$.
In \cite{Kihyun-Merle2025, kim-Merle-2026rigidity},
Kim and Merle gave rigid classification results for global solutions with $n \ge 7$ in radial and non-radial cases, respectively.

We now return to the solution constructed by Fila and King \cite{FilaKing12}. More
generally, in the absence of radial symmetry, the solution is
expected to behave as
\begin{equation*}
u(x,t)
=
\lambda(t)^{-\frac{n-2}{2}}
U\big( \tfrac{x-\xi(t)}{\lambda(t)} \big)
+
\Psi(x,t)
+
h_{\rm{error}}(x,t),
\end{equation*}
where $\xi(t) \in \R^n$ denotes the (possibly time-dependent)
center of the bubble, and $\Psi(x,t)$ denotes a certain solution of the linear heat equation:
$\Psi_t = \Delta_x \Psi$ in $\R^n \times(0,\infty)$;
we do not specify $\Psi$ further at this level of generality. The shape of the solution $u$ is maintained through a delicate interaction between
$\lambda(t)^{-\frac{n-2}{2}} U( \tfrac{x-\xi(t)}{\lambda(t)} )$
and $\Psi(x,t)$.

In the radially symmetric case treated in \cite{FilaKing12},
the center of the bubble is fixed at the origin, i.e.,
$\xi(t) \equiv 0$ for $t>0$. Fila and King \cite{FilaKing12} use a radial solution $\Psi(x,t)$ of $\Psi_t = \Delta_x \Psi$ in $\R^n \times(0,\infty)$ of the form
\begin{equation*}
\Psi(x,t) \approx t^{-\gamma/2} g(|x|/\sqrt{t})
\end{equation*}
when $n=5$ and $\gamma \in (3/2,2)$, where $g=g(r)$ is expected to be positive and strictly decreasing, attaining its maximum at $r=0$. Their solution moreover satisfies
$\|\Psi(\cdot,t)\|_{L_x^\infty(\R^n)}\ll\lambda(t)^{-\frac{n-2}{2}}$
(see \cite{FilaKing12} for details), so that it holds that
\begin{equation*}
\| u(\cdot,t) \|_{L_x^\infty(\R^n)}
\sim
\lambda(t)^{-\frac{n-2}{2}}
\big\| U(\tfrac{\cdot}{\lambda(t)}) \big\|_{L_x^\infty(\R^n)}
=
\lambda(t)^{-\frac{n-2}{2}}
U(0).
\end{equation*}
Consequently, in the radially symmetric case of \cite{FilaKing12},
for large $t>0$ the maximum point of $u(\cdot,t)$ is expected to be
located in a neighborhood of the maximum point of
$\lambda(t)^{-\frac{n-2}{2}}U(\tfrac{\cdot}{\lambda(t)})$,
namely $x=0$ (recall $\xi(t)\equiv 0$ in this case).
On the other hand, $\Psi(\cdot,t)$ attains its maximum at
$x=0$ for $t>0$, as noted above.
Thus, in the radially symmetric case, the
maximum point of $u(\cdot,t)$ lies in a neighborhood of the maximum point of $\Psi(\cdot,t)$ for large $t>0$. This observation, however, relies crucially on the radial symmetry. It is far from clear whether the similar phenomenon
persists generally. The present work is motivated
by the following question.

\medskip
\noindent
\textbf{Question.}
\textit{
Is it always the case that the maximum point of a global-in-time solution $u(\cdot,t)$ of
\eqref{critical-heat-eq} is located in a neighborhood of the maximum point of the corresponding solution $\Psi(\cdot,t)$ of the linear
heat equation in $\mathbb{R}^n$ for all
sufficiently large $t>0$?
}
\medskip

The main result, Theorem \ref{5d-main-th}, answers this question in
the negative when $n = 5$: there exists a global solution $u$ of
\eqref{critical-heat-eq} whose maximum points of $u(\cdot, t)$ are far away from those of the
corresponding solution $\Psi(\cdot,t)$ of the linear heat equation
(defined in \eqref{move26Sep9-3})
for sufficiently large $t$ (see Corollary \ref{move26Sep8-1-cor} (i)).

For $\gamma_{i}<5$, $i=1,2$, $c_{\sharp} \ne 0$, we take
\begin{equation}\label{move26Sep9-3}
\partial_t \Psi = \Delta \Psi \mbox{ \ in \ } \mathbb{R}^5 \times (0,\infty),
\quad 
\Psi(x,0) = |x|^{-\gamma_1} + c_{\sharp} x_1 |x|^{-\gamma_2 - 1} \mbox{ \ in \ } \mathbb{R}^5.
\end{equation}
The location of the maximum points of $\Psi(\cdot, t)$ will be given in Lemma \ref{Psi-maxpoint-lem} for general dimensions. Due to the time-translation invariance of \eqref{critical-heat-eq}, for technical convenience, we initiate our analysis at a fixed time $t_0$. The solution of \eqref{critical-heat-eq} can then be recovered by applying a time translation. The main theorem is stated below.
\begin{theorem}\label{5d-main-th}  
	Consider
	\begin{equation}\label{u-eq-5d} 
		\partial_t u=\Delta u+ |u|^{\frac{4}{3}} u
		\mbox{ \ in \ } 
		\mathbb{R}^5 \times (t_0,\infty).
	\end{equation}  
	Given constants $c_{\sharp} \ne 0$, $\zeta \in (0, 1)$, 
\begin{equation}\label{move-26Aug23-1}
\frac{3}{2} < \gamma_1 < 2, 
\quad
\frac{1}{11} (3+9\gamma_1) < \gamma_2 < \frac{1}{3}(-3+5\gamma_1),
    \end{equation}
then for $t_0$ sufficiently large, 
	there exists a solution $u$ of the form 
	\begin{equation}\label{u-behavior}
		u = 
		15^{\frac{3}{4}}
		\lambda^{-\frac{3}{2}} 
		\Big( 1+
		\Big|  \frac{x-(\xi_1,0,0,0,0) }{\lambda} \Big|^2 \Big)^{-\frac{3}{2}}
		\eta\Big( \frac{x- (\xi_1,0,0,0,0)}{\sqrt{t}} \Big)
        +
        \Psi
		+
		h_{\rm{error}}
	\end{equation}
with the initial value
\begin{equation}\label{move-26Sep7-3}
\begin{aligned}
		u(x,t_0) = \ &
		15^{\frac{3}{4}}
		\lambda(t_0)^{-\frac{3}{2}} 
		\Big( 1+
		\Big|  \frac{x-(\xi_1(t_0),0,0,0,0) }{\lambda(t_0)} \Big|^2 \Big)^{-\frac{3}{2}}
		\eta\Big( \frac{x- (\xi_1(t_0),0,0,0,0)}{\sqrt{t_0}} \Big)
        +
        \Psi(x,t_0)
\\
&
		+
		\lambda(t_0)^{-\frac{3}{2}}
    g_{0}(t_0) \eta\Big( \frac{2}{\ln t_0} \frac{x- (\xi_1(t_0),0,0,0,0)}{\lambda(t_0)} \Big)
    Z_{0}\Big( \frac{x- (\xi_1(t_0),0,0,0,0)}{\lambda(t_0)} \Big).
\end{aligned}
	\end{equation}
Here $g_{0}(t_0)$ is a scalar satisfying $|g_{0}(t_0)| \lesssim (\ln t_0) t_0^{3-2\gamma_1}$, and $Z_0$ is a radially symmetric, smooth, and exponentially decaying function.
\begin{itemize}
    \item[(i)] $u$ is even with respect to the $i$-th component of $x$ for $i =2,3,4,5$.
    
    \item[(ii)]  $\lambda=\lambda(t) , \xi_1 = \xi_1(t) \in C^1([t_0,\infty))$ satisfy
\begin{equation}\label{move-26Aug23-6}
    \begin{aligned}
&
		\lambda =
		D_{5,\gamma_1,1} [ 1
		+
		O( (\ln t_0)^{-\frac{1}{2} \zeta } ) ]
		t^{2-\gamma_1},  
\\
&
\xi_1 =
        c_{\sharp} C_{\xi^{[0]}} 
\begin{cases}
	\frac{2}{7-3\gamma_1 - \gamma_2} [ 1 + O( (\ln t_0)^{-\frac{1}{4} \zeta } ) ] t^{\frac{7}{2} - \frac{3}{2} \gamma_1 - \frac{1}{2}\gamma_2 }
	&
	\mbox{ \ if \ } 3 \gamma_1 + \gamma_2 \ne 7
	\\
	[ 1 + O( (\ln t_0)^{-\frac{1}{4}\zeta } ) ] \ln t
	&
	\mbox{ \ if \ } 3 \gamma_1 + \gamma_2 = 7
\end{cases}
    \end{aligned}
	\end{equation}
with positive constants $D_{5,\gamma_1, 1}$, $C_{\xi^{[0]}}$ given in \eqref{qd25Jan4-3}, \eqref{qd25Jan4-4} respectively. 

\item[(iii)]
$h_{\rm{error}} \in C( \mathbb{R}^5 \times [t_0,\infty) )$ is a remainder term satisfying $|h_{\rm{error}}| \lesssim t^{-\frac{1}{2}\gamma_1 } \ln t (\ln t_0)^{6} \langle t^{-1} |x|^2 \rangle^{-1}$.

\end{itemize}
\end{theorem}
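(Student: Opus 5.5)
We construct $u$ by parabolic inner--outer gluing. The ansatz is $u = U_{\lambda,\xi}\,\eta(\cdot/\sqrt t) + \Psi + \varphi$, where
\[
U_{\lambda,\xi}(x)=\lambda^{-3/2}U\big(\tfrac{x-\xi}{\lambda}\big),\qquad \xi=(\xi_1,0,0,0,0),
\]
and the correction splits as $\varphi=\lambda^{-3/2}\eta_R\,\phi(y,t)+\psi(x,t)$ with $y=(x-\xi)/\lambda$ and $R\sim\ln t$ the inner cut-off radius. Evenness in $x_2,\dots,x_5$ holds for $\Psi$ and is preserved by the scheme. This kills the kernels $\partial_{y_i}U$ for $i\ge2$, so only the modes $Z_0$ (unstable), $Z_5=\Lambda U$ (dilation) and $Z_1=\partial_{y_1}U$ (translation in $x_1$) need to be handled.

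\textbf{Modulation (leading order).} First we determine $\lambda,\xi_1$ at leading order. Projecting the error onto $Z_5$ gives the balance $\lambda^{1/2}\dot\lambda\sim -c\,\Psi(\xi,t)$ from the interaction $pU^{p-1}\Psi$ (here $\int U^{p-1}Z_5\neq0$ in $n=5$). Since $\Psi(0,t)\sim t^{-\gamma_1/2}$, this yields $\lambda\approx D_{5,\gamma_1,1}t^{2-\gamma_1}$. Projecting onto $Z_1$ gives $\dot\xi_1\sim c\,\lambda\,\partial_{x_1}\Psi(\xi,t)$, with $\partial_{x_1}\Psi(0,t)\sim c_\sharp t^{-(\gamma_2+1)/2}$ coming from the odd part of the data. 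Hence
\[
\dot\xi_1\sim c_\sharp C_{\xi^{[0]}}\,t^{\frac52-\frac32\gamma_1-\frac12\gamma_2},
\]
which integrates to the stated power or to $\ln t$.

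The conditions \eqref{move-26Aug23-1} must then be checked to guarantee three things. First, $|\xi_1|\ll\sqrt t$, so $\Psi$ is evaluated near $x=0$ and its Taylor expansion is valid. Second, $\xi_1\gg\lambda$, or at least $\xi_1$ is far from the maximum of $\Psi$ located by Lemma~\ref{Psi-maxpoint-lem}. Third, the corrections to $\lambda$ coming from $\xi_1$-dependence (the $|\xi|^2\Delta\Psi$ terms) stay lower order. Next we write the inner problem
\[
\lambda^2\partial_t\phi=\Delta_y\phi+pU^{p-1}\phi+H(y,t),\qquad |y|<2R,
\]
and the outer problem
\[
\partial_t\psi=\Delta\psi+(\text{cut-off errors})+pU_{\lambda,\xi}^{p-1}(1-\eta_R)\psi+N,
\]
with $\psi(\cdot,t_0)=0$.

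\textbf{Linear theory and fixed point.} For the inner problem we use the refined linear theory of \cite[Section~8]{Wei-Zhang-Zhou2022LLG}. Orthogonality conditions with respect to $Z_1,Z_5$ are imposed through corrections $\lambda=\lambda_0+\lambda_1$, $\xi_1=\xi_{1,0}+\xi_{1,1}$, determined by nonlocal ODEs. The unstable direction $Z_0$ is handled by choosing the initial scalar $g_0(t_0)$; this is exactly the extra term in \eqref{move-26Sep7-3}. The resulting estimates are weighted spatial decay $\langle y\rangle^{-a}$ with time weights $t^{-\nu}$. For the outer problem we use heat-kernel bounds with weight $t^{-\gamma_1/2}\langle t^{-1}|x|^2\rangle^{-1}$, giving (iii). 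The $(\ln t_0)^6$ loss comes from $R=\ln t$ and logarithmic factors in the kernel estimates. The fixed point is taken for $(\phi,\psi,\lambda_1,\xi_{1,1},g_0)$ in a product of weighted spaces via Schauder's theorem, since the modulation ODEs require compactness. The smallness factors $(\ln t_0)^{-\zeta/2}$ and $(\ln t_0)^{-\zeta/4}$ come from choosing the norm exponents with a parameter $\zeta$ and taking $t_0$ large.

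\textbf{Main obstacle.} The hardest step is the inner--outer coupling in $n=5$, where $U\sim|y|^{-3}$ decays slowly. The term $pU^{p-1}\psi$ feeds back only marginally. Moreover, the translation mode makes the $\xi_1$-equation sensitive, because its driving term $\lambda\,\partial_1\Psi$ is much smaller than the terms driving $\lambda$.

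Closing the estimate for $\xi_{1,1}$ requires two ingredients. One is that the error projected onto $Z_1$ be smaller than $\dot\xi_{1,0}/\lambda$; this is where the lower bound $\gamma_2>(3+9\gamma_1)/11$ should enter. The other is that the inner solution gain decay in the $y_1$-odd sector, which is the role of the refined linear theory with improved decay for each spherical mode. My first attempt would be to decompose $H$ into Fourier modes $0$ and $1$, apply the mode-wise estimates of \cite{Wei-Zhang-Zhou2022LLG} separately, and verify that the exponent inequalities in \eqref{move-26Aug23-1} are exactly what make each contraction estimate close.
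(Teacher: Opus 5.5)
The overall architecture matches the paper: gluing, parity killing $Z_2,\dots,Z_5$, modulation through $Z_6$ and $Z_1$, the unstable direction absorbed by $g_0$, and Schauder. As you set it up, however, the scheme does not close, and the missing piece is the paper's key idea.

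\textbf{The inner radius must be a power of $t$.} The problem is the inner cut-off $R\sim\ln t$. The outer correction is driven by cut-off errors at $|y|\sim R$ (e.g.\ $\Lambda_1$ and $\dot\lambda\lambda^{-5/2}Z_6(y)(\eta(\tilde y)-\eta_R)$). Near the bubble it is therefore smaller than $\Psi$ only by a factor $R$: up to logarithms, $|\psi|\lesssim t^{-\gamma_1/2}R^{-1}$ and $|\nabla\psi|\lesssim t^{-2+\gamma_1/2}R^{-2}$. Its contribution $\lambda^{1/2}\int U^{4/3}Z_1\,[\psi(\lambda y+\xi,t)-\psi(0,t)]\,dy$ to the translation equation is then bounded only by $\lambda^{3/2}\sup|\nabla\psi|\sim t^{1-\gamma_1}R^{-2}$. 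This must be $\ll|\dot\xi_1^{[0]}|\sim t^{\frac52-\frac32\gamma_1-\frac12\gamma_2}$, i.e.\ $R^2\gg t^{(\gamma_1+\gamma_2-3)/2}$. Since \eqref{move-26Aug23-1} forces $\gamma_1+\gamma_2>3$, $R$ has to be a positive power of $t$; logarithmic gains cannot close this estimate.

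The paper takes $R=t^\beta$ with $\beta>\frac14(\gamma_1+\gamma_2-3)$. On the other side, the $|\Psi|^{7/3}$ part of $\mathcal N$ in the outer problem caps $\beta<-1-\frac{\gamma_1}{2}+\frac76\min\{\gamma_1,\gamma_2\}$. The full list of restrictions reduces to this window, and its nonemptiness is exactly \eqref{move-26Aug23-1}: $\gamma_2>\frac{3+9\gamma_1}{11}$ if $\gamma_2\le\gamma_1$, and $\gamma_2<\frac{5\gamma_1-3}{3}$ if $\gamma_2>\gamma_1$. So you are right that the lower bound on $\gamma_2$ enters through the $Z_1$-projection, but what it constrains is the size of $R$.

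\textbf{The missing idea: re-gluing.} Once $R=t^\beta$, the classical inner linear theory of Cort\'azar--del Pino--Musso loses positive powers of $R$, hence of $t$, and the window collapses. Preventing this is what the refined theory you cite is for, not extra decay in the $y_1$-odd sector. The \emph{re-gluing} solves each projected inner problem on $B_{2R}$ by a second inner--outer split $\phi=\phi^{\mathrm{o}}+\eta(2y/R_0)\zeta$, with the much smaller fixed radius $R_0=\ln t_0$:
the orthogonality conditions are imposed only on $B_{R_0}$, through the terms $\varrho(\tau)\eta Z_6$ and $\varrho(\tau)\eta Z_1$;
$\zeta$ is handled by the orthogonal theory at radius $R_0$;
and $\phi^{\mathrm{o}}$, which sees only $pU^{p-1}\lesssim R_0^{-4}$, is controlled by barriers on $B_{4R}$ with no loss in $R$.

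All losses then become powers of $R_0$. This, not $R=\ln t$, is the origin of the factor $(\ln t_0)^6$. The same mechanism produces the corrections $\varrho^*=O(vR_0^{-\zeta})$ behind the factors $(\ln t_0)^{-\zeta/2}$ and $(\ln t_0)^{-\zeta/4}$ in \eqref{move-26Aug23-6}. It also produces the cut-off $\eta(2y/\ln t_0)$ in \eqref{move-26Sep7-3} and the bound $|g_0(t_0)|\lesssim(\ln t_0)t_0^{3-2\gamma_1}$; note that $g_0$ is a linear output of the mode-$0$ map, not a separate fixed-point unknown.

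\textbf{Two smaller slips.} First, projecting $\mathcal H$ gives $\dot\lambda\simeq A_1\lambda^{1/2}\Psi(0,t)$ and $\dot\xi_1\simeq A_2\lambda^{3/2}\partial_{x_1}\Psi(0,t)$. Your versions $\lambda^{1/2}\dot\lambda\sim\Psi$ and $\dot\xi_1\sim\lambda\,\partial_{x_1}\Psi$ would give $\lambda\sim t^{(2-\gamma_1)/3}$ and a different exponent for $\xi_1$. Second, $\xi_1\gg\lambda$ is neither true nor needed: $|\xi_1|/\lambda\to0$ (again because $\gamma_1+\gamma_2>3$), and $|\xi|\lesssim\lambda$ is used throughout. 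Comparison with the maximum points of $\Psi$ belongs to Corollary~\ref{move26Sep8-1-cor}, not to the theorem.
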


The center of the bubble $(\xi_1,0,0,0,0)$ appearing in the solution constructed in Theorem \ref{5d-main-th} goes to $\infty$ as $t \to \infty$, provided that $3\gamma_1+\gamma_2 \le 7$. It is natural to expect that the maximum point of $u(\cdot,t)$ also goes to $\infty$ as $t \to \infty$. This is a subtle point, since the bubble's center and the maximum point of $u(\cdot,t)$ are a priori different quantities. Nevertheless, at least when $\gamma_1 > \gamma_2$, it can be deduced from Theorem \ref{5d-main-th} that the maximum point of $u(\cdot,t)$ also goes to $\infty$, as we show in the following corollary.
\begin{corollary}\label{move26Sep8-1-cor}

Let constants $c_{\sharp} \ne 0$, $\zeta \in (0, 1)$, $\gamma_1,\gamma_2$ satisfy \eqref{move-26Aug23-1}, and let $u$ be the solution constructed in Theorem \ref{5d-main-th}. Then
\begin{itemize}
\item[(i)]
The set of maximum points of $u(\cdot,t)$ (respectively, $\Psi(\cdot, t)$) is nonempty for $t>t_0$. There exists a constant $C>0$ and a sufficiently small constant $\epsilon_1 >0$ such that for any maximum point $z(t)$ of $u(\cdot,t)$, and any maximum point $w(t)$ of $\Psi(\cdot, t)$, it holds that $|z(t)| \le C t^{-\epsilon_1} |w(t)|$ for $t>t_0$.

\item[(ii)] 
Moreover, under the additional assumption $\gamma_1 > \gamma_2$,
any maximum point $z(t)$ of $u(\cdot,t)$ satisfies
\begin{equation}\label{move-26Aug23-12}
|z(t) - (\xi_1(t),0,0,0,0)| \le \frac{1}{2} |\xi_1(t)|.
\end{equation}
In particular, $|z(t)| \to \infty$ as $t\to \infty$ if $3\gamma_1 + \gamma_2 \le 7$.
\end{itemize}
\end{corollary}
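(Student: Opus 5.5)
Both parts come from comparing sizes. Write $u = B + \Psi + h_{\rm error}$, where $B$ is the cut-off bubble in \eqref{u-behavior}. By Theorem \ref{5d-main-th} (ii), $\lambda \sim t^{2-\gamma_1}$, so $\|B(\cdot,t)\|_{L^\infty} = 15^{3/4}\lambda^{-3/2} \sim t^{-\frac32(2-\gamma_1)}$. We compare this with the other two terms.

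\begin{itemize}
\item $\|\Psi(\cdot,t)\|_{L^\infty}\lesssim t^{-\gamma_1/2}+t^{-\gamma_2/2}$, using the self-similar scaling of \eqref{move26Sep9-3}.
\item $|h_{\rm error}|\lesssim t^{-\gamma_1/2}\ln t\,(\ln t_0)^6$.
\end{itemize}

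Since $\gamma_1>3/2$, we have $\frac32(2-\gamma_1)<\gamma_1/2$. The lower bound in \eqref{move-26Aug23-1} gives $\gamma_2>(3+9\gamma_1)/11$, and one checks $\frac32(2-\gamma_1)<\gamma_2/2$, i.e. $\gamma_2>6-3\gamma_1$, which holds since $(3+9\gamma_1)/11 > 6-3\gamma_1$ when $\gamma_1>3/2$. Hence $B$ dominates in sup norm by a factor $t^{\epsilon}$ for some $\epsilon>0$.

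\textbf{Existence of maximum points.} For $\Psi$, I would invoke Lemma \ref{Psi-maxpoint-lem}. For $u$, note that $u(\cdot,t)$ is continuous and tends to $0$ at infinity, because every term in the decomposition decays in $|x|$ (the bound on $h_{\rm error}$ carries the factor $\langle t^{-1}|x|^2\rangle^{-1}$). Moreover $\sup u>0$, since $u(\xi,t)\ge c\lambda^{-3/2}$. So the supremum is attained.

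\textbf{Localization of $z(t)$.} At a maximum point $z$,
\[
u(z)\ge u(\xi_1 e_1)\ge 15^{3/4}\lambda^{-3/2}-Ct^{-\gamma_1/2+}-Ct^{-\gamma_2/2}.
\]
On the other hand,
\[
u(z)\le B(z)+Ct^{-\gamma_1/2+}+Ct^{-\gamma_2/2}.
\]
Combining the two gives $B(z)\ge 15^{3/4}\lambda^{-3/2}(1-Ct^{-\epsilon})$. This forces $|z-\xi_1e_1|\lesssim \lambda t^{-\epsilon/2}$, and in any case $|z-\xi_1 e_1|\le C\lambda$.

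\textbf{Part (i).} We need $|z|\le Ct^{-\epsilon_1}|w|$. The bound above gives $|z|\le|\xi_1|+C\lambda$. By Lemma \ref{Psi-maxpoint-lem}, presumably $|w(t)|\sim \sqrt t$ times a constant, i.e. $\Psi(\cdot,t)$ has its maximum at scale $\sqrt t$, shifted by the odd perturbation; I would rely on the lemma for a lower bound $|w|\gtrsim\sqrt t$. It then suffices to show that $\lambda$ and $|\xi_1|$ are $\le t^{1/2-\epsilon_1}$.

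\begin{itemize}
\item For $\lambda$: $2-\gamma_1<1/2$ holds since $\gamma_1>3/2$.
\item For $\xi_1$: the exponent is $\frac72-\frac32\gamma_1-\frac12\gamma_2<\frac12$, which is equivalent to $3\gamma_1+\gamma_2>6$. This holds since $\gamma_2>6-3\gamma_1$ as computed above. In the logarithmic case the bound is trivial.
\end{itemize}

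\textbf{Part (ii).} Now $\gamma_1>\gamma_2$, and we need $C\lambda\le\frac12|\xi_1|$, i.e. $2-\gamma_1<\frac72-\frac32\gamma_1-\frac12\gamma_2$. This is equivalent to $\gamma_2<3-\gamma_1$, which does not hold automatically. So instead I would use the sharper localization $|z-\xi_1e_1|\lesssim\lambda t^{-\epsilon/2}$. The role of $\gamma_1>\gamma_2$ is presumably to make the error bound $t^{-\gamma_2/2}$ coming from $\Psi$ improvable near the bubble: the gradient of $\Psi$ at scale $\lambda$ is small. To exploit this, I would Taylor expand $B$ near its peak, using $B(\xi_1e_1)-B(z)\approx c\lambda^{-3/2}|z-\xi_1e_1|^2/\lambda^2$, and compare with the oscillation of $\Psi+h$ over that ball.

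The step I expect to be the main obstacle is this last comparison: it needs a pointwise oscillation (gradient) bound for $h_{\rm error}$ near the bubble, which the stated sup bound does not give directly. I would try parabolic interior estimates on the equation satisfied by $h_{\rm error}$ at scale $\lambda$ to obtain such a bound, and then conclude $|z-\xi_1e_1|\ll|\xi_1|$. Divergence of $|z|$ then follows from $|\xi_1|\to\infty$ when $3\gamma_1+\gamma_2\le7$.
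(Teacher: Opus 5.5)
\textbf{Part (ii).} This part is not completed, and the obstacle you single out is not the real one. You need no oscillation or gradient bound on $\Psi$ or $h_{\rm error}$. The sup-norm localization you already derived is enough once you make $\epsilon$ explicit.

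If $\gamma_2<\gamma_1$, the dominant error in your two-sided bound for $u(z)$ is $|c_{\sharp}|t^{-\gamma_2/2}$; the term $t^{-\gamma_1/2}\ln t\,(\ln t_0)^6$ is smaller for $t\ge t_0\gg1$. Hence $t^{-\epsilon}\sim t^{-\gamma_2/2}\lambda^{3/2}\sim t^{3-\frac32\gamma_1-\frac12\gamma_2}$, and your bound $|z-\xi_1e_1|\lesssim\lambda t^{-\epsilon/2}$ becomes $|z-\xi_1e_1|\lesssim t^{\frac72-\frac74\gamma_1-\frac14\gamma_2}$. By \eqref{move-26Aug23-6}, $|\xi_1|\sim t^{\frac72-\frac32\gamma_1-\frac12\gamma_2}$, so the ratio of the two is $t^{-(\gamma_1-\gamma_2)/4}$. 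Thus the hypothesis $\gamma_1>\gamma_2$ is exactly what makes the localization radius at most $\frac12|\xi_1|$ for $t_0$ large.

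In the case $3\gamma_1+\gamma_2=7$ one has $|\xi_1|\sim\ln t$, while the radius is $t^{(7-4\gamma_1)/4}\to0$, because $\gamma_2=7-3\gamma_1<\gamma_1$ forces $\gamma_1>\frac74$.

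This is exactly the paper's proof. Taylor-expand $(1+|y|^2)^{-3/2}$ at $y=0$ and set $f(t):=t^{-\frac12\min\{\gamma_1,\gamma_2\}}+t^{-\frac{\gamma_1}{2}}\ln t\,(\ln t_0)^6$. Every $x$ with $\lambda^{-7/2}|x-\xi|^2\gg f(t)$ then satisfies $u(x,t)<u(\xi,t)$, using only the sup bound $|\Psi|+|h_{\rm error}|\lesssim f(t)$.

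\textbf{Part (i).} Your argument contains an error when $\gamma_2>\gamma_1$, a case allowed by \eqref{move-26Aug23-1} since $\frac13(5\gamma_1-3)>\gamma_1$. In that regime Lemma \ref{Psi-maxpoint-lem} does not give $|w|\gtrsim\sqrt t$. It only gives $|w|\gtrsim|c_{\sharp}|t^{\frac{\gamma_1-\gamma_2+1}{2}}$, which is $\ll\sqrt t$. One should not expect $|w|\sim\sqrt t$ there: the odd part of $\Psi(\cdot,t)$ is a perturbation of relative size $t^{(\gamma_1-\gamma_2)/2}\to0$ of a radial profile maximized at the origin.

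You must therefore compare with this smaller scale:
\begin{itemize}
\item $\lambda\ll t^{\frac{\gamma_1-\gamma_2+1}{2}}$ is equivalent to $\gamma_2<3\gamma_1-3$, which follows from the upper bound $\gamma_2<\frac13(5\gamma_1-3)$ together with $\gamma_1>\frac32$;
\item $|\xi_1|\ll t^{\frac{\gamma_1-\gamma_2+1}{2}}$ is equivalent to $\gamma_1>\frac32$, and is immediate in the logarithmic case.
\end{itemize}
With this correction your radius $|\xi_1|+C\lambda$ suffices for (i). The paper uses the smaller radius $|\xi_1|+t^{\frac72-\frac74\gamma_1-\frac14\min\{\gamma_1,\gamma_2\}}$ (up to logarithms), which also works.
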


The following remark gives some features of the solution constructed in Theorem \ref{5d-main-th}.
\begin{remark}\label{move-26Aug24-2-rmk}

\begin{enumerate}

\item\label{move26Sep7-4-rmk} When $\gamma_1 = \gamma_2$ and $0<|c_{\sharp}|<1$, we have $u>0$ by $\Psi(x,0) \ge (1-|c_{\sharp}|) |x|^{-\gamma_1}$ and $\Psi > 0$. See the proof in Subsection \ref{move-26Sep8-2-subsec}. However, when $\gamma_1 \ne \gamma_2$ and $c_{\sharp} \ne 0$, since $\Psi(x,0)$ is sign-changing, then $u$ may also be sign-changing.

\item Both $u$ and $\Psi$ are even with respect to the $i$-th component of $x$ for $i =2,3,4,5$. This suggests that the constructed solution could inherit certain symmetry properties from the leading-order terms that drive its dynamics.

\item 
By solving the linearized elliptic problem as Harada \cite{harada2025oscillatory} for instance, we may get a next-order expansion of the solution $u$, and then the error term will be smaller.
\end{enumerate}
\end{remark}

The construction is based on the parabolic gluing method originated from Cort\'{a}zar-del Pino-Musso \cite{Green16JEMS} and D\'{a}vila-del Pino-Wei \cite{17HMF}. The key point of this paper is to use the re-gluing method, that is, using the gluing method again in the inner problem, to establish a refined inner linear theory for mode $0$, mode $1$, and higher modes to eliminate the loss of some positive power of $R$ due to the energy method and comparison theorem, where $R$ is the radius of the region of the inner problem up to a multiplicity of a constant. Since the linear theory established in Cort\'{a}zar-del Pino-Musso \cite[Proposition 7.1]{Green16JEMS} loses some positive power of $R$, the direct application of \cite[Proposition 7.1]{Green16JEMS} to this paper will lead to the failure of the parameter choice to close the gluing process. Instead, we modify  \cite[Proposition 7.1]{infi4d} as \cite[Section 8]{Wei-Zhang-Zhou2022LLG} and \cite[Proposition 7.2]{TriHMF2026} to deduce a refined linear theory in Section \ref{re-gluing-Sec}. The re-gluing process was first used in the analysis of linearization of the harmonic map heat flow at mode $0$ in D\'{a}vila-del Pino-Wei \cite[Subsection 7.5]{17HMF}. Additionally, we supplement the fixed-point argument with the continuity argument in Section \ref{conti-Sec}.

For clarity, we present the proof scheme here. In Section \ref{gluing-sys-sec}, we give the approximate solution and formulate the gluing system by the standard method. As a relatively independent section in the paper, Section \ref{re-gluing-Sec} gives the key linear theory for the inner problem by re-gluing. In Section \ref{Proj-spherical-HM-Sec}, we present spherical harmonic functions and project the inner problem into these spaces. In Section \ref{lambda-xi-leading-in-topo-Sec}, we use the dominant parts of reduced orthogonality equations \eqref{qd25Jan5} to give the leading terms of the scaling parameter $\lambda$ and the translation parameter $\xi$. Then we design the topology for the inner problem.
In Section \ref{inner-map-sec}, we use the linear theory in Section \ref{re-gluing-Sec} to present the mapping $\mathcal{T}^{\rm{in}}[\mathcal{J}]$ for solving the inner problem \eqref{inner-tau-eq} with $n=5$ in a new time variable $\tau$ defined in \eqref{tau-def},
where we extend the right-hand side of the inner problem in \eqref{calJ-def} for technical convenience to achieve continuity of mappings in Section \ref{conti-Sec} later.
In Section \ref{modi-orth-eq-sec}, we formulate the reduced orthogonality equations and the solution mapping $\mathcal{S}_6$, $\vec{\mathcal{S}}$ in \eqref{mu1-xi-sys} for $\dot{\lambda}_1$, $\dot{\xi}^{[1]}$, which are the derivative of the minor terms of $\lambda$ and $\xi$.
In Section \ref{out-solve-sec}, we give the fixed-point scheme, and $\mathcal{T}_{\rm{o}} [ \mathcal{G} \1_{|x| \le T^{9}, t \le T} ]$ as a solution mapping for the outer problem in some bounded domain and corresponding estimates by convolution estimates.
In Section \ref{inner-orth-subsec}, we give the estimate of $\mathcal{T}^{\rm{in}}[\mathcal{J}](y,\tau(t)) \times \mathcal{S}_6 \times \vec{\mathcal{S}}$ as the solution mappings for the inner problem and reduced orthogonality equations. In Section \ref{conti-Sec}, we prove the continuity of $\mathcal{T}_{\rm{o}} [ \mathcal{G} \1_{|x| \le T^{9}, t \le T} ]$, $\mathcal{T}^{\rm{in}}[\mathcal{J}](y,\tau(t))$, $\mathcal{S}_6$, $\vec{\mathcal{S}}$ in a norm space $\mathcal{X}$ defined in \eqref{Xspace-def}. 
In Section \ref{Final-u-Sec},  combining Sections \ref{out-solve-sec}, \ref{inner-orth-subsec}, \ref{conti-Sec}, and applying the Schauder fixed-point theorem in $\mathcal{X}$, we find a solution for the gluing system and the reduced orthogonality equations in some bounded domain. So we get a desired solution of $\partial_t u=\Delta u+ |u|^{\frac{4}{3}} u$ in some bounded domain. Finally, the compactness argument leads to a solution $u$ in $\mathbb{R}^5 \times (t_0, \infty)$, and we complete the proof.

\section*{Notations}

\begin{itemize} 

\item  $\eta(x)$ is a radially symmetric, smooth cut-off function satisfying  $\eta(x)=1$ for $|x|\le 1$, $\eta(x)=0$ for $|x|\ge 2$, and $0\le \eta(x) \le 1$ for all $x\in \mathbb{R}^n$, where $n$ denotes the spatial dimension corresponding to the variable $x$.

\item For integers $i \le j$, denote a set as $\overline{i,j} := \{i,i+1,\dots, j\}$. Denote the natural number set as $\mathbb{N} = \{ 0,1,2,\dots \}$.
	
	\item 
    Unless otherwise specified, all constants in this paper are independent of 
$t_0$ and $T$. We write $a\lesssim b$ (respectively $a \gtrsim b$) if there exists a  constant $C > 0$ such that $a \le  Cb$ (respectively $a \ge  Cb$). Set $a \sim b$ if $b \lesssim a \lesssim b$. Denote $f_1=O(f_2)$ if $|f_1| \lesssim f_2$. 
	
	\item For non-negative quantities $C_1, C_2$, the symbol $C_1 \gg (\ll) C_2$ means that there exists a sufficiently large (small) positive constant $c$ such that $C_1 \ge (\le) c \, C_2$.
	
	\item 
	For $r>0$, denote $B_r = \{ x \in \mathbb{R}^n \mid |x|<r \}$.

	\item For $x\in \mathbb{R}^n$, denote $|x|=\big( \sum\limits_{i=1}^n x_i^2 \big)^{1/2}$, and the Japanese bracket $\langle x \rangle = \sqrt{|x|^2+1}$.

\item Given a vector $\mathbf{k} = (k_1, k_2, \dots, k_n) \in \mathbb{R}^n$, denote its $\ell_1$ norm as $\| \mathbf{k} \|_{\ell_1} = \sum_{i=1}^n |k_i|$.

\item 
For $c \in \mathbb{R}$, $c-$ denotes a number that is less than $c$ and can be chosen arbitrarily close to $c$.

\item Denote $\1_{\Omega}(x)$ as the indicator function with $\1_{\Omega}(x)=1$ if $x\in \Omega$ and $\1_{\Omega}(x)=0$ if $x\not\in \Omega$. We will use $\1_{\Omega}$ to denote $\1_{\Omega}(x)$ if no ambiguity.
\end{itemize}

\medskip

{\textbf{Statement on the Use of AI Tools.}} We used AI tools solely for language polishing. All mathematical arguments and proofs were written by the authors.

\section{Approximate solution and gluing system}\label{gluing-sys-sec}
 
Given an integer $n\ge 3$, consider the energy-critical heat equation 
\begin{equation}\label{u-eq} 
		\pp_t u=\Delta u+\left|u\right|^{\frac{4}{n-2}}u
		\mbox{ \ in \ } 
		\mathbb{R}^n \times (t_0,\infty). 
\end{equation} 
Throughout this paper, we always assume $t_0 \gg 1$. For $U$ given in \eqref{26June6-U-def}, the linearized operator $\Delta+\frac{n+2}{n-2}U^{\frac{4}{n-2}}$ has bounded kernels 
\begin{equation}\label{Zi-def}
\begin{aligned}
&
Z_i(x) := \partial_{x_i} U(x) = \alpha_n (2-n) (1+|x|^2)^{-\frac n 2} x_i,
 \quad
   i \in \overline{1, n}, 
\\
& Z_{n+1}(x) := \frac{n-2}{2}U(x)+
   x\cdot \nabla U(x) = \frac{n-2}{2} \alpha_n \frac{1-|x|^2 }{(1+|x|^2 )^{\frac n 2} }.
\end{aligned}
\end{equation}
We take the leading term of the solution to \eqref{u-eq} to be of the following form 
\begin{equation*}
	u_1(x,t) := \lambda^{-\frac{n-2}{2}} U (y)
	\eta(\tilde{y})
	+\Psi(x,t),
	\mbox{ \ where \ } 
y:=\frac{x-\xi}{\lambda},
\quad \tilde{y} := \frac{x-\xi}{\sqrt{t}},
\end{equation*}
$\lambda=\lambda(t) \in C^1((t_0,\infty), \mathbb{R}_+)$, $\xi= (\xi_1(t), \xi_2(t),\dots, \xi_n(t) ) \in C^1 ( (t_0,\infty), \mathbb{R}^n )$ will be determined later, and
\begin{equation*}
\partial_t \Psi = \Delta \Psi \mbox{ \ in \ } \mathbb{R}^n \times (0,\infty),
\quad 
\Psi(x,0) = |x|^{-\gamma_1} + c_{\sharp} x_1 |x|^{-\gamma_2 -1} \mbox{ \ in \ } \mathbb{R}^n
\end{equation*}
with constants $\gamma_{i}<n$, $i=1,2$ and $c_{\sharp} \ne 0$. Note that when $\gamma_1 = \gamma_2$ and $0<|c_{\sharp}|<1$, $\Psi(x,0) > 0$. When $\gamma_1 \ne \gamma_2$ and $c_{\sharp} \ne 0$, $\Psi(x,0)$ is sign-changing. We formulate
\begin{equation}\label{move-26Aug30-1}
	\Psi(x,t) :=  ( 4\pi t )^{-\frac n2} \int_{\RR^n} 
	\rme^{-\frac{|x-z|^2}{4t} } (|z|^{-\gamma_1} + c_{\sharp} z_1 |z|^{-\gamma_2-1}) \rmd z.
\end{equation} 
It is readily seen that $\Psi(x,t)$ is even with respect to the $i$-th component of $x$ for $i \in \overline{2,n}$. That is,
\begin{equation}
\Psi((x_1,x_2,\dots, -x_i, \dots, x_n),t) = \Psi((x_1,x_2,\dots, x_i, \dots, x_n),t)
\mbox{ \ for \ } i \in \overline{2,n}.
\end{equation}
We will ensure that the constructed solution $u$ inherits this symmetry.

The following basic lemma is useful in this paper.
\begin{lemma}\label{qd25Oct22-3-lem}

	Given an integer $n>0$, $C_0>0$, $b>-n$, then for all $t>0$,
	\begin{equation*}
		\int_{\mathbb{R}^n}	\rme^{-C_0 \frac{|x-z|^2}{t} } |z|^{b} \rmd z
		\sim t^{\frac{b+n}{2}} \1_{|x| \le t^{\frac{1}{2}}} + t^{\frac{n}{2}} |x|^{b} \1_{|x| > t^{\frac{1}{2}}},
	\end{equation*}
	where the ``$\sim$'' only depends on $n, C_0, b$.
	
\end{lemma}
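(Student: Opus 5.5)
By the scaling $z=\sqrt{t}\,w$ and $x=\sqrt{t}\,X$, the integral equals
\[
t^{\frac{b+n}{2}} I(X), \qquad I(X):=\int_{\mathbb{R}^n}\rme^{-C_0|X-w|^2}|w|^{b}\,\rmd w .
\]
It therefore suffices to prove $I(X)\sim 1$ for $|X|\le 1$ and $I(X)\sim |X|^{b}$ for $|X|>1$. Translating back gives $t^{\frac{b+n}{2}}$ in the first region and $t^{\frac{b+n}{2}}(|x|/\sqrt t)^{b}=t^{\frac n2}|x|^b$ in the second, which is exactly the claim. All constants produced below depend only on $n,C_0,b$.

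\textbf{Case $|X|\le 1$.} For the upper bound, split the integral at $|w|=2$. On $|w|\le 2$ we bound the Gaussian by $1$, and $\int_{|w|\le2}|w|^b\,\rmd w<\infty$ because $b>-n$. On $|w|>2$ we have $|X-w|\ge |w|/2$, so the integrand is at most $\rme^{-C_0|w|^2/4}|w|^b$, which is integrable. For the lower bound, restrict to the annulus $1\le|w|\le 2$. There $|w|^b\gtrsim 1$ whatever the sign of $b$, and $|X-w|\le 3$ makes the Gaussian at least $\rme^{-9C_0}$.

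\textbf{Case $|X|>1$.} Split $\mathbb{R}^n$ into $A_1=\{|w-X|\le |X|/2\}$, $A_2=\{|w|\le |X|/2\}$ and the remainder $A_3$.
\begin{itemize}
\item On $A_1$ we have $|w|\sim|X|$, so the contribution is $\sim |X|^b\int_{A_1}\rme^{-C_0|X-w|^2}\,\rmd w$. This last integral is $\sim 1$, since $A_1$ contains the unit ball around $X$ once $|X|/2\ge 1/2$. This gives both the lower bound $I(X)\gtrsim |X|^b$ and an upper bound $\lesssim|X|^b$ for this piece.
\item On $A_2$ we have $|X-w|\ge |X|/2$, so the contribution is at most
\[
\rme^{-C_0|X|^2/4}\int_{|w|\le|X|/2}|w|^b\,\rmd w\lesssim \rme^{-C_0|X|^2/4}|X|^{b+n}\lesssim |X|^b ,
\]
using $b>-n$ and $|X|>1$.
\item On $A_3$ we have $|w|>|X|/2$ and $|w-X|>|X|/2$. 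Since $|w|\le |w-X|+|X|<3|w-X|$, we get $|w|^b\lesssim |X|^b$ if $b\le 0$ and $|w|^b\lesssim |w-X|^b+|X|^b$ if $b>0$. In either case the contribution is bounded by $|X|^b$ times a Gaussian moment, plus $\rme^{-C_0|X|^2/8}$ times a polynomial, which is again $\lesssim|X|^b$.
\end{itemize}

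There is no real obstacle here. The only points needing care are the local integrability of $|w|^b$ at the origin, which uses $b>-n$ (and which is why the annulus is used for the lower bound when $|X|\le1$), and handling both signs of $b$ in region $A_3$.
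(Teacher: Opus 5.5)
Your proof is correct and follows essentially the same route as the paper. Both rescale to $t=1$ and then split the integral into three pieces: near the origin, where $|w|$ is comparable to $|X|$, and the far region. The differences are that you treat $|X|\le 1$ separately and use crude pointwise bounds, whereas the paper evaluates radial integrals and invokes a Gaussian tail estimate.

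One small slip: for $1<|X|<2$, the set $A_1$ contains the ball of radius $1/2$ about $X$, not the unit ball. This still gives $\int_{A_1}\rme^{-C_0|X-w|^2}\,\rmd w\gtrsim 1$, so the argument is unaffected.
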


\begin{proof}
We postpone the proof of Lemma \ref{qd25Oct22-3-lem} to Section \ref{lem-proof-26June10-sec}.
\end{proof}

Given $\gamma_{i}<n$, $i=1,2$, for any $\mathbf{m} \in \mathbb{N}^n$,
\begin{equation}\label{qd25Dec25-4}
\begin{aligned}
&
	\big| \partial_{x}^{\mathbf{m}}
	\Psi(x,t) \big|
	= 
	\Big|
	(4\pi t)^{-\frac n2} \int_{\mathbb{R}^n} 
	\partial_{x}^{\mathbf{m}} \Big(	\rme^{-\frac{|x-z|^2}{4t} } \Big) (|z|^{-\gamma_1} + c_{\sharp} z_1 |z|^{-\gamma_2 -1}) \rmd z \Big|
	\\
	\lesssim \ & t^{-\frac n2 - \frac{\| \mathbf{m} \|_{\ell_1}}{2} } \int_{\mathbb{R}^n}	\rme^{-\frac{|x-z|^2}{5t} } ( |z|^{-\gamma_1} + |c_{\sharp}| |z|^{-\gamma_2} ) \rmd z
    \\
\stackrel{{\text{Lemma }} \ref{qd25Oct22-3-lem}}{\sim} \ &
t^{- \frac{\| \mathbf{m} \|_{\ell_1}}{2} }
\big[
(t^{-\frac{\gamma_1}{2}} +
|c_{\sharp}| t^{-\frac{\gamma_2}{2}}
) 
\1_{|x| \le t^{\frac{1}{2}}} + (|x|^{-\gamma_1} + |c_{\sharp}| |x|^{-\gamma_2}) \1_{|x| > t^{\frac{1}{2}}} \big].
\end{aligned}
\end{equation}
Thus, $\Psi(x,t)$ is smooth in $\mathbb{R}^n \times (0,\infty)$. For $|x| \ge t^{\frac{1}{2}}$, by Lemma \ref{move-26Spe2-1-lem}, a better estimate holds
\begin{equation}\label{move-26Sep2-2}
\big| \partial_{x}^{\mathbf{m}}
	\Psi(x,t) \big| \lesssim
|x|^{-\gamma_1 - \| \mathbf{m} \|_{\ell_1}} + |c_{\sharp}| |x|^{-\gamma_2 - \| \mathbf{m} \|_{\ell_1} }.
\end{equation}
In particular, if $\gamma_i>\frac{n}{2} - 1$, $i=1,2$, $\Psi(\cdot, t) \in \dot{H}^1(\mathbb{R}^n)$ for $t > 0$. Although the construction is carried out in some weighted $L^{\infty}$ spaces, we will eventually see that both $\Psi(\cdot, t) $ and $u(\cdot, t_0)$ belong to $\dot{H}^1(\mathbb{R}^n)$ with $n=5$.

The change of variables gives
\begin{equation*}
\Psi(x,t)
= (4\pi)^{-\frac n2} t^{-\frac{\gamma_1}{2}} \int_{\mathbb{R}^n} 
\rme^{-4^{-1} |t^{-\frac{1}{2}} x-a|^2 }  (|a|^{-\gamma_1} + c_{\sharp} t^{\frac{\gamma_1 - \gamma_2}{2}} a_1 |a|^{-\gamma_2-1}) \rmd a.
\end{equation*}
Using the parity in the integrand, we have
\begin{equation}\label{Psi0t}
\Psi(0,t)
= C_{n,\gamma_1,1} t^{-\frac{\gamma_1}{2}},
\mbox{ \ where \ } 
C_{n,\gamma_1,1} := (4\pi)^{-\frac n2}  \int_{\mathbb{R}^n} 
\rme^{-4^{-1} |a|^2 } |a|^{-\gamma_1} \rmd a > 0.
\end{equation}
Besides,
\begin{equation}\label{nabla-Psi}
\nabla \Psi(x,t)
= (4\pi)^{-\frac n2} t^{-\frac{\gamma_1}{2}} \int_{\mathbb{R}^n} 
\rme^{-4^{-1} |t^{-\frac{1}{2}} x-a|^2 }  
(-2^{-1}) (t^{-\frac{1}{2}} x - a) t^{-\frac{1}{2}}
(|a|^{-\gamma_1} + c_{\sharp} t^{\frac{\gamma_1 - \gamma_2}{2}} a_1 |a|^{-\gamma_2-1}) \rmd a,
\end{equation}
\begin{equation}\label{partial-Psi0t}
\partial_{x_i} \Psi(0,t)
=
c_{\sharp} t^{- \frac{\gamma_2 + 1}{2}} 2^{-1} (4\pi)^{-\frac n2} \int_{\mathbb{R}^n} 
\rme^{-4^{-1} |a|^2 }  
a_i a_1 |a|^{-\gamma_2-1} \rmd a
=
\begin{cases}
c_{\sharp} C_{n, \gamma_2, 2} t^{- \frac{\gamma_2 + 1}{2}} 
& 
\mbox{ \ if \ } i=1
\\
0
& \mbox{ \ if \ } i \in \overline{2,n},
\end{cases}
\end{equation}
where we denote
\begin{equation}
C_{n, \gamma_2, 2} := 2^{-1} (4\pi)^{-\frac n2} \int_{\mathbb{R}^n} 
\rme^{-4^{-1} |a|^2 } a_1^2 |a|^{-\gamma_2-1} \rmd a > 0.
\end{equation}

For $\gamma_i \in [0,n)$, $i=1,2$, we have an alternative estimate
\begin{equation}\label{move-26Aug19-1}
|\Psi(x,t)| \lesssim t^{-\frac{\gamma_1}{2}} + | c_{\sharp} | t^{-\frac{\gamma_2}{2} - \frac{1}{2}} |x|,
\end{equation}
which gives better time decay in the orthogonality equations later. Indeed, for $\gamma_1 \in [0, n)$,
\begin{equation*}
\Big| ( 4\pi t )^{-\frac n2} \int_{\RR^n} 
	\rme^{-\frac{|x-z|^2}{4t} } |z|^{-\gamma_1} \rmd z \Big|
\lesssim t^{-\frac{\gamma_1}{2}},
\end{equation*}
and for $\gamma_2 \in [0, n)$, using $\int_{\RR^n} \rme^{-\frac{|-z|^2}{4t} } z_1 |z|^{-\gamma_2-1} \rmd z = 0$, we have
\begin{align*}
& \Big| ( 4\pi t )^{-\frac n2} \int_{\RR^n} 
\big( 
\rme^{-\frac{|x-z|^2}{4t} } 
-
\rme^{-\frac{|-z|^2}{4t} }
\big) c_{\sharp} z_1 |z|^{-\gamma_2-1} \rmd z \Big|
\\
= \ & \Big| ( 4\pi t )^{-\frac n2} \int_{\RR^n} 
\rme^{-\frac{|\theta x-z|^2}{4t} }
(-1) \frac{1}{2t} (\theta x - z) \cdot x c_{\sharp} z_1 |z|^{-\gamma_2-1} \rmd z \Big|
\\
\lesssim \ & | c_{\sharp} | t^{-\frac{n}{2} - \frac{1}{2}} |x|  \int_{\RR^n} 
\rme^{-\frac{|\theta x-z|^2}{5t} } |z|^{-\gamma_2} \rmd z
\stackrel{\gamma_2 \ge 0}{\lesssim}
| c_{\sharp} | t^{-\frac{\gamma_2}{2} - \frac{1}{2}} |x|
\end{align*}
for some $\theta \in [0,1]$. We analyze the location of the maximum points of $\Psi(\cdot,t)$ in the following lemma.
\begin{lemma}\label{Psi-maxpoint-lem}

Given an integer $n\ge 1$, $\gamma_i \in (0, n)$, $i=1,2$, $c_{\sharp} \ne 0$, $t>0$, for $\Psi$ given in \eqref{move-26Aug30-1}, denote $ S_{\Psi(\cdot, t), {\rm{max}}} := \{ x \mid \Psi(x, t) = \sup_{w \in \mathbb{R}^n} \Psi(w, t) \} $.
Then,
$S_{\Psi(\cdot, t), {\rm{max}}}$ is a nonempty compact set, and there exists a large constant $C \ge 1$ only depending on $n, \gamma_1, \gamma_2$ such that
\begin{equation}
C^{-1} \min\{ t^{\frac{1}{2}},  |c_{\sharp}| t^{\frac{\gamma_1 - \gamma_2}{2} + \frac{1}{2}}  \} \le |x| \le C \max\{ t^{\frac{1}{2}},  |c_{\sharp}|^{\frac{1}{\gamma_2}} t^{\frac{\gamma_1}{2 \gamma_2}}\}
\mbox{ \ for \ } x \in S_{\Psi(\cdot, t), {\rm{max}}}.
\end{equation}

\end{lemma}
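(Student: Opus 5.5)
The plan is to reduce to a one-parameter family of time-independent profiles by self-similar scaling. Substituting $z=\sqrt t\,a$ in \eqref{move-26Aug30-1} gives
\begin{equation*}
\Psi(x,t)=t^{-\frac{\gamma_1}{2}}F_c\big(t^{-\frac12}x\big),\qquad F_c:=G_1+c\,G_2,\qquad c:=c_{\sharp}t^{\frac{\gamma_1-\gamma_2}{2}},
\end{equation*}
where
\begin{equation*}
G_1(w)=(4\pi)^{-\frac n2}\int_{\mathbb R^n}\rme^{-\frac{|w-a|^2}{4}}|a|^{-\gamma_1}\rmd a,\qquad G_2(w)=(4\pi)^{-\frac n2}\int_{\mathbb R^n}\rme^{-\frac{|w-a|^2}{4}}a_1|a|^{-\gamma_2-1}\rmd a.
\end{equation*}
Then $S_{\Psi(\cdot,t),\max}=\sqrt t\,S_{F_c,\max}$. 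It therefore suffices to show that $S_{F_c,\max}$ is nonempty and compact, and that every maximizer $w$ satisfies
\begin{equation*}
C^{-1}\min\{1,|c|\}\le|w|\le C\max\{1,|c|^{1/\gamma_2}\},
\end{equation*}
with $C$ independent of $c\neq0$. Multiplying by $\sqrt t$ gives exactly the two bounds claimed, since $\sqrt t\,|c|=|c_\sharp|t^{\frac{\gamma_1-\gamma_2}{2}+\frac12}$ and $\sqrt t\,|c|^{1/\gamma_2}=|c_\sharp|^{1/\gamma_2}t^{\frac{\gamma_1}{2\gamma_2}}$.

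\textbf{Basic properties of $G_1,G_2$.} Since $\gamma_i\in(0,n)$, both integrals converge. Differentiating under the integral as in \eqref{qd25Dec25-4} shows that $G_1,G_2$ are smooth with bounded derivatives, and Lemma \ref{qd25Oct22-3-lem} gives
\begin{equation*}
|G_1(w)|\lesssim\langle w\rangle^{-\gamma_1},\qquad |G_2(w)|\lesssim\langle w\rangle^{-\gamma_2}.
\end{equation*}
Moreover:
\begin{itemize}
\item $G_1$ is radial and positive, with $G_1(0)=C_{n,\gamma_1,1}>0$ by \eqref{Psi0t}.
\item $G_2$ is odd in $w_1$. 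Writing $G_2(w)=(4\pi)^{-n/2}\int_{a_1>0}\big(\rme^{-|w-a|^2/4}-\rme^{-|w+a|^2/4}\big)a_1|a|^{-\gamma_2-1}\rmd a$ shows that $G_2(w)>0$ when $w_1>0$.
\item $\partial_1G_2(0)=2C_{n,\gamma_2,2}>0$ by \eqref{partial-Psi0t}.
\end{itemize}

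\textbf{Existence and upper bound.} Since $F_c$ is continuous, tends to $0$ at infinity, and satisfies $F_c(0)=G_1(0)>0$, the supremum is positive and attained. Hence $S_{F_c,\max}$ is closed, bounded, and nonempty. For the upper bound, compare with two test points:
\begin{equation*}
\sup F_c\ge F_c(0)=G_1(0),\qquad \sup F_c\ge F_c\big(\mathrm{sgn}(c)e_1\big)\ge |c|\,G_2(e_1),
\end{equation*}
where the second uses $G_1\ge0$ and the oddness of $G_2$. Thus $\sup F_c\ge\kappa\max\{1,|c|\}$ for some $\kappa>0$. On the other hand, if $|w|\ge K\max\{1,|c|^{1/\gamma_2}\}$, then
\begin{equation*}
F_c(w)\le C_0\big(K^{-\gamma_1}+K^{-\gamma_2}\big)<\kappa
\end{equation*}
for $K$ large. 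So no maximizer lies in that region.

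\textbf{Lower bound.} I expect this to be the only slightly delicate step, because it needs the first-order condition rather than just the values of $F_c$. Any maximizer satisfies $\nabla F_c(w)=0$. Since $G_1$ is radial and smooth, $\nabla G_1(0)=0$, so $|\partial_1G_1(w)|\le C_1|w|$ for $|w|\le1$. By continuity of $\partial_1G_2$, there is $\delta\in(0,1]$ with $\partial_1G_2(w)\ge C_{n,\gamma_2,2}$ for $|w|\le\delta$. Suppose $|w|<\min\{\delta,\;C_{n,\gamma_2,2}|c|/(2C_1)\}$. Then
\begin{equation*}
\mathrm{sgn}(c)\,\partial_1F_c(w)\ge|c|\,C_{n,\gamma_2,2}-C_1|w|>0,
\end{equation*}
which contradicts $\nabla F_c(w)=0$. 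Hence every maximizer satisfies $|w|\gtrsim\min\{1,|c|\}$. Scaling back to $x=\sqrt t\,w$ completes the proof, and all constants depend only on $n,\gamma_1,\gamma_2$.
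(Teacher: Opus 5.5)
Your proof is correct and follows essentially the same route as the paper. It uses the self-similar rescaling $a=t^{-1/2}z$, an upper bound from comparing the decay in Lemma \ref{qd25Oct22-3-lem} with the value at the origin, and a lower bound from the nonvanishing of the $x_1$-derivative near the origin; the paper gets the latter via a mean-value estimate on $\vec f(x,t)-\vec f(0,t)$, you via $\nabla G_1(0)=0$ and continuity of $\partial_1 G_2$.

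One harmless slip: by \eqref{partial-Psi0t}, $\partial_1G_2(0)=C_{n,\gamma_2,2}$, not $2C_{n,\gamma_2,2}$, so you should use $\partial_1G_2\ge\frac12 C_{n,\gamma_2,2}$ on $B_\delta$. Also, the second test point $\mathrm{sgn}(c)e_1$ is not needed, since $F_c(w)<G_1(0)$ already holds on the far region.
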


\begin{proof}

For $\gamma_{i} \in (0,n)$, $i=1,2$, and $t>0$, by \eqref{qd25Dec25-4}, we have $\lim\limits_{|x| \to \infty} \Psi(x,t) = 0$. For $t>0$, since $\Psi(0,t)>0$ by \eqref{Psi0t}, $S_{\Psi(\cdot, t), {\rm{max}}} \ne \emptyset$ is a compact set. For $x$ as a critical point of $\Psi(\cdot, t)$, it holds that $\nabla \Psi(x,t) = 0 $.
Denote the key part of $\nabla \Psi$ in \eqref{nabla-Psi} as
\begin{equation*}
\vec{f}(x,t)
= \int_{\mathbb{R}^n} 
\rme^{-4^{-1} |t^{-\frac{1}{2}} x-a|^2 } (t^{-\frac{1}{2}} x - a)
(|a|^{-\gamma_1} + c_{\sharp} t^{\frac{\gamma_1 - \gamma_2}{2}} a_1 |a|^{-\gamma_2-1}) \rmd a
\end{equation*}
with $\vec{f}=0$ is and only if $\nabla \Psi =0$. It is ready to get
\begin{equation*}
\vec{f}(0,t)
= 
\big( 
- c_{\sharp} t^{\frac{\gamma_1 - \gamma_2}{2}}  \int_{\mathbb{R}^n} 
\rme^{-4^{-1} |a|^2} a_1^2 |a|^{-\gamma_2-1} \rmd a ,\underbrace{0,0, \dots, 0}_{n-1 \text{ zeros}} \big),
\end{equation*}
where the $1$st component is nonzero.
\begin{equation*}
\begin{aligned}
&
|\vec{f}(x,t) - \vec{f}(0,t)|
\\
= \ & 
\Big|
\int_{\mathbb{R}^n} 
\Big[ 
\rme^{-4^{-1} |t^{-\frac{1}{2}} x-a|^2 } (t^{-\frac{1}{2}} x - a)
-
\rme^{-4^{-1} |-a|^2 } (- a)
\Big]
(|a|^{-\gamma_1} + c_{\sharp} t^{\frac{\gamma_1 - \gamma_2}{2}} a_1 |a|^{-\gamma_2-1}) \rmd a
\Big|
\\
= \ & \Big| \int_{\mathbb{R}^n} 
\Big[ \rme^{-4^{-1} |\theta t^{-\frac{1}{2}} x - a|^2 } (-2^{-1}) 
\big[ 
(\theta t^{-\frac{1}{2}} x - a) \cdot t^{-\frac{1}{2}} x \big]  (\theta t^{-\frac{1}{2}} x - a) 
+
\rme^{-4^{-1} |\theta t^{-\frac{1}{2}} x - a|^2 } t^{-\frac{1}{2}} x
\Big]
\\
& \times
(|a|^{-\gamma_1} + c_{\sharp} t^{\frac{\gamma_1 - \gamma_2}{2}} a_1 |a|^{-\gamma_2-1}) \rmd a \Big|
\\
\lesssim \ & t^{-\frac{1}{2}} |x| \int_{\mathbb{R}^n} 
\rme^{-5^{-1} |\theta t^{-\frac{1}{2}} x - a|^2 } 
(|a|^{-\gamma_1} + |c_{\sharp}| t^{\frac{\gamma_1 - \gamma_2}{2}} |a|^{-\gamma_2}) \rmd a
\lesssim 
t^{-\frac{1}{2}} |x| 
(1 + |c_{\sharp}| t^{\frac{\gamma_1 - \gamma_2}{2}})
\end{aligned}
\end{equation*}
for some $\theta \in [0,1]$, where for the last step, we use $\gamma_i \in [0,n)$, $i=1,2$, and Lemma \ref{qd25Oct22-3-lem}. If
$ |x| \ll \min\{ t^{\frac{1}{2}},  |c_{\sharp}| t^{\frac{\gamma_1 - \gamma_2}{2} + \frac{1}{2}}  \} $, we have $
t^{-\frac{1}{2}} |x| 
(1 + |c_{\sharp}| t^{\frac{\gamma_1 - \gamma_2}{2}}) \ll |c_{\sharp}| t^{\frac{\gamma_1 - \gamma_2}{2}}
$. Then $\vec{f}(x,t) = \vec{f}(0,t) + \vec{f}(x,t) - \vec{f}(0,t) \ne 0$, which implies $\nabla \Psi(x,t) \ne 0$.

Let us estimate the upper bound for the maximum point. By $\gamma_i<n$, $i=1,2$, Lemma \ref{qd25Oct22-3-lem}, we have
\begin{equation*}
\begin{aligned}
&
|\Psi(x,t)| \lesssim  t^{-\frac n2} \int_{\mathbb{R}^n} 
	\rme^{-\frac{|x-z|^2}{4t} } (|z|^{-\gamma_1} + |c_{\sharp}| |z|^{-\gamma_2}) \rmd z
\\
\sim \ & (t^{-\frac{\gamma_1}{2}} + |c_{\sharp}| t^{- \frac{\gamma_2}{2}}) \1_{|x| \le t^{\frac{1}{2}}} + (|x|^{-\gamma_1} + |c_{\sharp}| |x|^{-\gamma_2}) \1_{|x| > t^{\frac{1}{2}}}.
\end{aligned}
\end{equation*}
Let us compare with 
$
\Psi(0,t)
\sim t^{-\frac{\gamma_1}{2}}
$. In the region $|x| > t^{\frac{1}{2}}$, 
for $\gamma_i >0$, $i=1,2$, $|x|
\gg \max\{ t^{\frac{1}{2}},  |c_{\sharp}|^{\frac{1}{\gamma_2}} t^{\frac{\gamma_1}{2 \gamma_2}}\} $ implies $|x|^{-\gamma_1} + |c_{\sharp}| |x|^{-\gamma_2} \ll t^{-\frac{\gamma_1}{2}}$, which means that these $\Psi(x,t)$ are not the maximum value.

In sum, we conclude the necessary condition for $x$ to be the maximum point of $\Psi(\cdot, t)$.
\end{proof}

Next, we will give the inner-outer gluing system. Define the error of $u$ as
\begin{equation*}
	\mathcal{E}[u]:=- \pp_t u+\Delta u+|u|^{\frac{4}{n-2}} u.
\end{equation*}
We look for an exact solution $u$ of \eqref{u-eq} in the form
\begin{equation}\label{u-def}
	u = \lambda^{-\frac{n-2}{2}} U (y)
	\eta(\tilde{y})
	+ \Psi(x,t) + \psi(x,t)+\lambda^{-\frac{n-2}{2}}\phi\Big(\frac{x-\xi}{\lambda},t \Big) \eta_{R}(y),
\quad 
\eta_{R}(y) := \eta(\frac{y}{R}) = \eta\Big(\frac{x-\xi}{\lambda R }\Big)
\end{equation}
with 
\begin{equation}
R= R(t) := t^{\beta}
\end{equation}
and a constant $\beta>0$ to be determined later.
Straightforward calculation gives
\begin{align*}
	\mathcal{E}[u]
= \ &
\big(
\dot{\lambda} \lambda^{-\frac{n}{2}}  Z_{n+1}(y) 
+
\lambda^{-\frac{n}{2}} \dot{\xi} \cdot 
(\nabla U)( y )
\big) \eta(\tilde{y} )
+
\mathcal{E}_{U}^{\rm{cut}}
+
\lambda^{-\frac{n+2}{2}} U(y)^{\frac{n+2}{n-2}}
\big( \eta(\tilde{y})^{\frac{n+2}{n-2}} 
-
\eta( \tilde{y} )
\big)
\\
& - \pp_t \psi + \Delta \psi
- \lambda^{-\frac{n-2}{2}} \pp_t \phi (y,t) \eta_R(y) 
+
\lambda^{-\frac{n+2}{2}} \Delta_y \phi(y, t) \eta_R(y) 
\\
& + \Lambda_1 + \Lambda_2
+ \mathcal{N}
+ \frac{n+2}{n-2} \lambda^{-2}
U(y)^{\frac{4}{n-2}}
\eta(\tilde{y})^{\frac{4}{n-2}} 
\big( \Psi + \psi +\lambda^{-\frac{n-2}{2}}\phi(y,t) \eta_{R}(y) \big),
\end{align*}
where
\begin{equation}\label{E-eta-def} 
	\mathcal{E}_{U}^{\rm{cut}}:=   \lambda^{-\frac{n-2}{2}} U(y) 
	( 2^{-1} t^{-1} \tilde{y}  + t^{-\frac{1}{2}} \dot{\xi} )\cdot ( \nabla \eta ) ( \tilde{y} )  
	+
	2\lambda^{-\frac{n}{2}} t^{-\frac{1}{2}} 
	( \nabla U ) (y) \cdot 
	( \nabla \eta ) ( \tilde{y} )
	+
	\lambda^{-\frac{n-2}{2}} t^{-1} U ( y ) ( \Delta \eta ) ( \tilde{y} ), 
\end{equation}
\begin{equation}\label{Lambda1-phi}
	\begin{aligned}  
		\Lambda_1 = \Lambda_1[\phi,\lambda,\xi ] := \ & \lambda^{-\frac{n+2}{2}} R^{-2} \phi(y, t)  ( \Delta 
		\eta )(\frac{y}{R})
		+ 
		2 \lambda^{-\frac{n+2}{2}}  R^{-1}  \nabla_y \phi(y, t) \cdot (\nabla \eta )(\frac{y}{R}) 
		\\
		&
		+
		\lambda^{-\frac{n-2}{2}} \phi( y, t) 
		( \nabla \eta )(\frac{y}{R}) \cdot \Big[ \frac{\dot{\xi}}{\lambda R}
		+ \frac{y}{R} \frac{1}{\lambda R } \frac{\rmd (\lambda R)}{\rmd t}  \Big],
	\end{aligned}
\end{equation}
\begin{equation}\label{Lambda2-phi}
	\Lambda_2 = \Lambda_2[\phi,\lambda,\xi] := 
	\dot{\lambda} \lambda^{-\frac n2}   \Big( \frac{n-2}{2} \phi(y,t)
	+ y \cdot \nabla_y \phi (y,t) \Big) \eta_R(y)
	+ \lambda^{-\frac{n}{2}} \dot{\xi} \cdot \nabla_y \phi(y, t) \eta_R(y),
\end{equation}
\begin{equation}\label{N-def} 
\mathcal{N} =	\mathcal{N} [\psi,\phi,\lambda,\xi ] :=  
	|u|^{\frac{4}{n-2}} u 
	-
	\lambda^{-\frac{n+2}{2}} ( U(y) \eta(\tilde{y}) )^{\frac{n+2}{n-2}}
	-
	\frac{n+2}{n-2} \lambda^{-2} 
	(U(y) \eta(\tilde{y}) )^{\frac{4}{n-2}} 
	\big(\Psi + \psi + \lambda^{-\frac{n-2}{2}} \phi(y,t) \eta_R(y) \big).
\end{equation}
Here we use the symbol `` $\dot{}$ '' to denote $\frac{\rmd}{\rmd t}$ for brevity.

To make $\mathcal{E}[u] =0$, it suffices to solve the following inner-outer gluing system.
\\
\textbf{The outer problem:}
\begin{equation}\label{out-eq-original}
		\pp_t \psi 
		= \Delta \psi + \mathcal{G} \mbox{ \ in \ } \mathbb{R}^n \times (t_0,\infty),
		\quad 
		\psi(\cdot,t_0)=0 \mbox{ \ in \ } \mathbb{R}^n,
\end{equation}
where
\begin{equation}\label{g}
\begin{aligned}
&
\mathcal{G} = \mathcal{G}[\psi,\phi,\lambda,\xi](x,t) := \big(
\dot{\lambda} \lambda^{-\frac{n}{2}}  Z_{n+1}(y) 
+
\lambda^{-\frac{n}{2}} \dot{\xi} \cdot 
(\nabla U)( y )
\big) ( \eta(\tilde{y} ) - \eta_R(y) )
+
\mathcal{E}_{U}^{\rm{cut}}
\\
&
+
\lambda^{-\frac{n+2}{2}} U(y)^{\frac{n+2}{n-2}}
\big( \eta(\tilde{y})^{\frac{n+2}{n-2}} 
-
\eta( \tilde{y} )
\big)
+ \Lambda_1 + \Lambda_2
+ \mathcal{N}
\\
&
+ 
\frac{n+2}{n-2} \lambda^{-2}
U(y)^{\frac{4}{n-2}}
\big(
\eta(\tilde{y})^{\frac{4}{n-2}} - \eta_{R}(y) \big) (\Psi + \psi)
+ 
\frac{n+2}{n-2} \lambda^{-2}
U(y)^{\frac{4}{n-2}}
\big(
\eta(\tilde{y})^{\frac{4}{n-2}} - 1 \big) \lambda^{-\frac{n-2}{2}}\phi(y,t) \eta_{R}(y).
\end{aligned}
\end{equation}
\textbf{The inner problem:}
\begin{equation}
		\lambda^{2} \partial_t \phi (y,t) 
		= \Delta_y \phi(y, t) + 
		\frac{n+2}{n-2}
		U(y)^{\frac{4}{n-2}} \phi(y,t) + \mathcal{H}
\mbox{ \ for \ } t \in (t_0, \infty), y \in B_{2R},
\end{equation}
where
\begin{equation}\label{H-def}  
	\mathcal{H} = \mathcal{H}[\psi,\lambda,\xi ](y,t) :=  
	\dot{\lambda} \lambda Z_{n+1}(y)
	+\lambda \dot{\xi} \cdot (\nabla U )(y) 
	+\frac{n+2}{n-2}\lambda^{\frac{n-2}{2}} U(y)^{\frac{4}{n-2}}
	\big(
	\Psi(\lambda y+\xi,t)
	+
	\psi(\lambda y+\xi, t)
	\big). 
\end{equation}

For convenience of the continuity argument, given any $T \in (t_0,\infty)$, we consider
the following equations with a suitably chosen solution $(\psi, \phi)$ to
\begin{equation}\label{outer-problem}
		\pp_t \psi 
		= \Delta \psi + \mathcal{G} \mbox{ \ in \ } B_{T^{9}}  \times (t_0,T],
		\quad 
		\psi(\cdot,t_0)=0 \mbox{ \ in \ } B_{T^{9}},
\end{equation}
\begin{equation}\label{inner-problem}
	\lambda^{2} \partial_t \phi (y,t) 
	= \Delta_y \phi(y, t) + 
	\frac{n+2}{n-2}
	U(y)^{\frac{4}{n-2}} \phi(y,t) + \mathcal{H} \eta\Big(\frac{y}{2R(t)}\Big)
	\mbox{ \ for \ } t \in (t_0, T], y \in B_{2R},
\end{equation}
and $\lambda>0$ will be solved in $(t_0, T]$. Once solving \eqref{outer-problem} and \eqref{inner-problem}, we find a solution $u$ in $B_{T^{9}}  \times (t_0,T]$. Finally, we will take $T\to \infty$ to find a solution $u$ in $\mathbb{R}^n \times (t_0,\infty)$.
Set a new time variable
\begin{equation}\label{tau-def}
	\tau=\tau[\lambda](t) :=\int_{t_0}^t \lambda(s)^{-2} \rmd s + \tau_0 \mbox{ \ for \ } t \in [t_0, T], \quad
	 \tau(t_0) = \tau_0 :=  C_{\tau} t_0 (t_0^{2-\gamma_1})^{-2},
\end{equation}
where $C_\tau$ is a sufficiently large constant independent of $t_0$ to be determined later, and the choice of $\tau_0$ will be used for deducing \eqref{tau-est} later. Then,
\eqref{inner-problem} is rewritten as  
\begin{equation}\label{inner-tau-eq}
	\partial_{\tau}\phi (y, t(\tau)) = \Delta_y \phi(y, t(\tau))
	+\frac{n+2}{n-2}U(y)^{\frac{4}{n-2}}\phi(y,t(\tau))
	+ \mathcal{H}(y,t(\tau)) \eta\Big(\frac{y}{2R(t(\tau))}\Big)
	\mbox{ \ for \ }
	\tau \in (\tau_0, \tau(T)], y \in B_{2 R(t(\tau))}.
\end{equation}

\section{Projection on spherical harmonics functions}\label{Proj-spherical-HM-Sec}

We establish mappings to solve the inner problem in different modes by projecting onto spherical harmonic functions. Set $\mathbf{SH}$ as an orthonormal basis in $L^2(S^{n-1})$ made up of spherical harmonic functions
\begin{equation}\label{SH-def}
\mathbf{SH} :=
\bigg\{
\Upsilon_{i,j} \ \Big| \ i \in \mathbb{N}, 
\begin{cases}
	j=1 & \mbox{ \ if \ } i=0
	\\
	j =1,2,\dots, n
	& \mbox{ \ if \ } i=1
	\\
	j=1,2,\dots, \binom{n+i-1}{i} - \binom{n+i-3}{i-2}
	& \mbox{ \ if \ } i \ge 2
\end{cases}
\bigg\},
\end{equation} 
where $\Upsilon_{i,j} \in \mathbf{SH}$ satisfy
\begin{equation}\label{move-26Sep5-2}
\begin{aligned}
&
	\Delta_{S^{n-1} } \Upsilon_{i,j} + i (n-2+i) \Upsilon_{i,j} = 0 
	\mbox{ \ in \ }  S^{n-1},
\quad
\int_{ S^{n-1}} \Upsilon_{i_1, j_1}(w) \Upsilon_{i_2, j_2}(w) \rmd w =\delta_{(i_1, j_1), (i_2, j_2)},
\\
&
\Upsilon_{0,1}(w)= C_{{\rm{har}},0}, \ \Upsilon_{1,j}(w)= C_{{\rm{har}},1} w_j, \ 
j = 1,2,\dots, n
\mbox{ \ for \ } w \in S^{n-1} 
\end{aligned}
\end{equation}
with some nonzero constants $C_{{\rm{har}},0}$, $C_{{\rm{har}},1}$.

Denote a label set of the first $n+1$ spherical harmonic functions as
\begin{equation}
\mathbf{M_{0,1}} := \{ (i,j) \mid i=0,j=1 \mbox{ or } i=1, j=1,2,\dots,n \}.
\end{equation}
For any admissible function $f(y)$ defined in $B_{C}$, $C \in (0, \infty]$ with the notation $B_{\infty} = \mathbb{R}^n$, we define 
\begin{equation}\label{qd26Jan3-1}
\begin{aligned}
&
f_{i,j}^{\sharp}(|y|) := \int_{ S^{n-1} } f(|y| \theta) \Upsilon_{i,j}(\theta) \rmd \theta,
	\quad
	\tilde{f}_{i,j}(y) := f_{i,j}^{\sharp}(|y|) \Upsilon_{i,j} \big( \frac{y}{|y|} \big)
\mbox{ \ for \ } (i,j) \in \mathbf{M_{0,1}},
\\
&
	\tilde{f}_{\perp}(y) := f - \sum\limits_{(i,j) \in \mathbf{M_{0,1}} } \tilde{f}_{i,j}.
\end{aligned}
\end{equation}
In this way, for the spatial variable, we define $\mathcal{H}_{i,j}^{\sharp}(|y|,t)$ for $(i,j) \in \mathbf{M_{0,1}}$; $\tilde{\mathcal{H}}_{i,j}(y,t)$ for $(i,j) \in \mathbf{M_{0,1}} \cup \{ \perp \}$, where we mean $\tilde{\mathcal{H}}_{i,j} = \tilde{\mathcal{H}}_{\perp}$ when $(i,j) \in \{ \perp \}$. To solve \eqref{inner-tau-eq}, it suffices to solve
\begin{equation}\label{qd26Jan3-8}
\begin{aligned}
\partial_{\tau} \tilde{\phi}_{i,j} = \ & \Delta_y \tilde{\phi}_{i,j}
	+
	\frac{n+2}{n-2} U(y)^{\frac{4}{n-2}} \tilde{\phi}_{i,j}
    \\
    & 
	+ \tilde{\mathcal{H}}_{i,j}(y,t(\tau)) \eta\Big(\frac{y}{2R(t(\tau))}\Big)
	\mbox{ \ for \ }
	\tau \in (\tau_0, \tau(T)], y \in B_{2 R(t(\tau))},
	\quad (i,j) \in \mathbf{M_{0,1}} \cup \{ \perp \}.
\end{aligned}
\end{equation}

The following useful identities are prepared for reduced orthogonality equations later.

Claim: Given an integer $n\ge 3$, $C \in (0,\infty]$, suppose $f(y) \in L^{\infty}(B_C)$ if $C\in (0,\infty)$ and $|f(y)| \lesssim \langle y \rangle^{(-2)-}$ if $C = \infty$, then
\begin{equation}\label{qd25Jan4}
	\begin{aligned}
		&
		\int_{B_C}
		\tilde{f}_{1,i}(y) Z_i(y) \rmd y = \int_{B_C} f(y) Z_i(y) \rmd y
		\mbox{ \ for \ } i \in \overline{1,n};
		\quad
		\int_{B_C}
		\tilde{f}_{0,1}(y) Z_{n+1}(y) \rmd y
		= \int_{B_C} f(y) Z_{n+1}(y) \rmd y;
		\\
        &
        \int_{B_C}
		\tilde{f}_{1,i}(y) Z_{j}(y) \rmd y = 0
        \mbox{ \ for \ } i \in \overline{1,n}, j\in \overline{1,n+1} \setminus \{ i \};
        \quad
        \int_{B_C}
		\tilde{f}_{0,1}(y) Z_{j}(y) \rmd y
		= 0 \mbox{ \ for \ } j \in \overline{1,n};
        \\
        &
		\int_{B_C}
		\tilde{f}_{\perp}(y) Z_i(y) \rmd y = 0
		\mbox{ \ for \ } i \in \overline{1,n+1}.
	\end{aligned}
\end{equation}

\begin{proof}[Proof of \eqref{qd25Jan4}]

By assumption, the integrals above are well-defined. For $i, j \in \overline{1,n}$, since $\frac{Z_j(y)}{\Upsilon_{1,j} ( \frac{y}{|y|} )}$ is radially symmetric with a natural continuous extension at $y=0$, then
\begin{align*}
			&
			\int_{B_C}
			\tilde{f}_{1,i}(y) Z_{j}(y) \rmd y
				=
				\int_{B_C}
				f_{1,i}^{\sharp}(|y|) \Upsilon_{1,i} \big( \frac{y}{|y|} \big) \frac{Z_j(y)}{\Upsilon_{1,j} \big( \frac{y}{|y|} \big)} \Upsilon_{1,j} \big( \frac{y}{|y|} \big) \rmd y
			\\ 
			= \ & \int_{0}^{C} \int_{\partial B_r}
			f_{1,i}^{\sharp}(|y|) \Upsilon_{1,i} \big( \frac{y}{|y|} \big) \Big[ \frac{Z_j(y)}{\Upsilon_{1,j} \big( \frac{y}{|y|} \big)} \Big](|y|) \Upsilon_{1,j} \big( \frac{y}{|y|} \big) \rmd S \rmd |y|
			\\
			= \ & \int_{0}^{C} \int_{S^{n-1}}
			f_{1,i}^{\sharp}(r) \Upsilon_{1,i} (\theta) \Big[ \frac{Z_j(y)}{\Upsilon_{1,j} \big( \frac{y}{|y|} \big)} \Big](r) \Upsilon_{1,j}(\theta) r^{n-1} \rmd \theta \rmd r
			\\
			= \ & \delta_{i,j} \int_{0}^{C} f_{1,i}^{\sharp}(r) \Big[ \frac{Z_j(y)}{\Upsilon_{1,j} \big( \frac{y}{|y|} \big)} \Big](r) r^{n-1} \rmd r
			= \delta_{i,j} \int_{0}^{C} \int_{ S^{n-1} } f(r\theta) \Upsilon_{1,i}(\theta)  \Big[ \frac{Z_j(y)}{\Upsilon_{1,j} \big( \frac{y}{|y|} \big)} \Big](r) r^{n-1} \rmd \theta \rmd r
			\\
			= \ & \delta_{i,j} \int_{0}^{C} \int_{ \partial B_r } f(y) \Upsilon_{1,i}\big( \frac{y}{|y|} \big) \frac{Z_j(y)}{\Upsilon_{1,j} \big( \frac{y}{|y|} \big)} \rmd S \rmd |y|
			= \delta_{i,j} \int_{B_C} f(y) Z_i(y) \rmd y.
\end{align*}
The other identities follow similarly, and we omit details.
\end{proof}

\section{Leading terms of $\lambda$, $\xi$, and topology of the inner problem}\label{lambda-xi-leading-in-topo-Sec}

{\textbf{We take $n=5$ hereafter except for Section \ref{re-gluing-Sec}.}}
We will first give the leading terms of $\lambda$ and $\xi$, which are determined by the dominant parts of the reduced orthogonality equations \eqref{qd25Jan5} later. Set
\begin{equation}
R_0 = \ln t_0.
\end{equation}
Denote $\lambda_0$ as the leading term of $\lambda$. $\lambda_0$ is determined by
\begin{equation}\label{mu0-eq-original}
\begin{aligned}
&
	\int_{B_{R_0}} \Big( \dot{\lambda}_0 \lambda_0 Z_{6}(y)+\frac{7}{3}\lambda_0^{\frac{3}{2}} U(y)^{\frac{4}{3}} \Psi(0, t) \Big) Z_{6}(y) \rmd y = 0
\\
\Leftrightarrow \ & \dot{\lambda}_0
= - \frac{7}{3} \Big( \int_{B_{R_0}} Z_{6}(y)^2 \rmd y \Big)^{-1}
\int_{B_{R_0}} U(y)^{\frac{4}{3}}  Z_{6}(y) \rmd y \, \lambda_0^{\frac{1}{2}} \Psi(0, t).
\end{aligned}
\end{equation}
Denote $\xi^{[0]} = (\xi_1^{[0]}, \xi_2^{[0]},\dots, \xi_5^{[0]})$ as the leading term of $\xi$. For $i\in \overline{1,5}$, $\xi_i^{[0]}$ is determined by
\begin{equation*}
\begin{aligned}
& \int_{B_{R_0}} \Big[ \lambda_0 \dot{\xi}_i^{[0]} Z_i(y) 
+\frac{7}{3} \lambda_0^{\frac{3}{2}} U(y)^{\frac{4}{3}}
(\nabla \Psi)(0,t) \cdot (\lambda_0 y+\xi^{[0]})  \Big] Z_i(y) \rmd y = 0
\\
\Leftrightarrow \ &
\int_{B_{R_0}} \lambda_0 \dot{\xi}_i^{[0]} Z_i(y)^2 \rmd y
+
\int_{B_{R_0}} \frac{7}{3} \lambda_0^{\frac{3}{2}} U(y)^{\frac{4}{3}}
(\partial_{x_i} \Psi)(0,t) \lambda_0 y_i Z_i(y) \rmd y = 0
\\
\Leftrightarrow \ &
\dot{\xi}_i^{[0]}
= - \frac{7}{3}
\Big( \int_{B_{R_0}} Z_1(y)^2 \rmd y \Big)^{-1}
\int_{B_{R_0}} U(y)^{\frac{4}{3}}
y_1 Z_1(y) \rmd y
\, 
\lambda_0^{\frac{3}{2}} (\partial_{x_i} \Psi)(0,t),
\end{aligned}
\end{equation*}
where we use the formulae of $U, Z_{i}$. It is equivalent to
\begin{equation}\label{mu0-eq}
\dot{\lambda}_0 = A_1(R_0) \lambda_0^{\frac{1}{2}} \Psi(0, t),
\quad
\dot{\xi}_i^{[0]}
= A_2(R_0) \lambda_0^{\frac{3}{2}} (\partial_{x_i} \Psi)(0,t) \mbox{ \ for \ } i \in \overline{1,5},
\end{equation}
where
\begin{align}
& 
A_1(R_0):= - \frac{7}{3} \Big( \int_{B_{R_0}} Z_{6}(y)^2 \rmd y \Big)^{-1}
\int_{B_{R_0}} U(y)^{\frac{4}{3}}  Z_{6}(y) \rmd y
\notag
\\
= \ &
- \frac{7}{3} \Big( \int_{\mathbb{R}^5} Z_{6}(y)^2 \rmd y + O(R_0^{-1}) \Big)^{-1}
\Big[
-\frac{9}{14} \int_{\mathbb{R}^5} U(y)^{\frac{7}{3}} \rmd y
+
O(R_0^{-2})
\Big]
\notag
\\
= \ &
\frac{3}{2} \Big( \int_{\mathbb{R}^5} Z_{6}(y)^2 \rmd y \Big)^{-1}
\int_{\mathbb{R}^5} U(y)^{\frac{7}{3}} \rmd y
\big(
1+
O( R_0^{-1} )
\big)
\sim 1,
\label{AR-def}
\\ 
& 
A_2(R_0):= - \frac{7}{3}
\Big( \int_{B_{R_0}} Z_1(y)^2 \rmd y \Big)^{-1}
\int_{B_{R_0}}  U(y)^{\frac{4}{3}}
y_1 Z_1(y) \rmd y
\notag
\\
= \ & - \frac{7}{3}
\Big( \int_{\mathbb{R}^{5}} Z_1(y)^2 \rmd y + O(R_0^{-3}) \Big)^{-1}
\Big(
\int_{\mathbb{R}^{5}}  U(y)^{\frac{4}{3}}
y_1 Z_1(y) \rmd y
+
O(R_0^{-2})
\Big)
\notag
\\
= \ & - \frac{7}{3}
\Big( \int_{\mathbb{R}^{5}} Z_1(y)^2 \rmd y \Big)^{-1}
\Big(
\int_{\mathbb{R}^{5}}  U(y)^{\frac{4}{3}}
y_1 Z_1(y) \rmd y
\Big)
\big( 1 + O(R_0^{-2}) \big) \sim 1
\notag
\end{align}
for $t_0 \gg 1$. Here for $A_1(R_0)$, we have used 
$
\int_{\mathbb{R}^5} U(y)^{\frac{4}{3}} Z_{6}(y) \rmd y = -\frac{9}{14} \int_{\mathbb{R}^5} U(y)^{\frac{7}{3}} \rmd y $.
For $\lambda_0$, we take
\begin{equation}\label{mu-0-explicit}
\lambda_0 =
\Big( \frac{1}{2} \int_{1}^{t} A_1(R_0)\Psi(0, s) \rmd s
\Big)^{2}.
\end{equation}  
By \eqref{Psi0t},
\begin{equation*}
\lambda_0
= 
\Big[ C_{5,\gamma_1,1} \frac{3}{4}
\frac{\int_{\mathbb{R}^5} U(y)^{\frac{7}{3}} \rmd y}{\int_{\mathbb{R}^5} Z_{6}(y)^2 \rmd y} \Big]^{2}
\Big[ \int_{1}^{t}  
\big(
1+
O( R_0^{-1} )
\big)  s^{-\frac{\gamma_1}{2}} \rmd s
\Big]^{2}.
\end{equation*}
When $\gamma_1<2$,
\begin{equation*}
\int_{1}^{t}  
\big(
1+
O( R_0^{-1} )
\big)  s^{-\frac{\gamma_1}{2}} \rmd s
=
\frac{2}{2-\gamma_1} ( t^{1-\frac{\gamma_1}{2}} - 1 )
+
O\big( t^{1-\frac{\gamma_1}{2}}
R_0^{-1}
\big)
= \frac{2}{2-\gamma_1} t^{1-\frac{\gamma_1}{2}}
[ 1
+
O(R_0^{-1}) ].
\end{equation*}
Hence,
\begin{equation}
		\lambda_0
= D_{5,\gamma_1,1} [ 1
+
O( R_0^{-1} ) ]
t^{2-\gamma_1},
\end{equation}
where
\begin{equation}\label{qd25Jan4-3}
D_{5,\gamma_1,1} :=
\Big[ C_{5,\gamma_1,1} \frac{3}{2(2-\gamma_1)}
\frac{\int_{\mathbb{R}^5} U(y)^{\frac{7}{3}} \rmd y}{\int_{\mathbb{R}^5} Z_{6}(y)^2 \rmd y} \Big]^{2} >0.
\end{equation}
Then
\begin{equation}
	\begin{aligned}
		&
		\dot{\lambda}_0 \stackrel{\eqref{mu0-eq}}{=} A_1(R_0) \lambda_0^{\frac{1}{2}} \Psi(0, t)
		\stackrel{\eqref{AR-def} \eqref{Psi0t}}{=} 
		 \frac{3}{2}
		\frac{\int_{\mathbb{R}^5} U(y)^{\frac{7}{3}} \rmd y}{\int_{\mathbb{R}^5} Z_{6}(y)^2 \rmd y} 
		[
		1+
		O( R_0^{-1} )
		] D_{5,\gamma_1,1}^{\frac{1}{2}}
		t^{\frac{2-\gamma_1}{2}}
		[ 1 + O( R_0^{-1} ) ]^{\frac{1}{2}} C_{5,\gamma_1,1} t^{-\frac{\gamma_1}{2}}
		\\
		= \ & C_{5,\gamma_1,1} D_{5,\gamma_1,1}^{\frac{1}{2}} \frac{3}{2}
		\frac{\int_{\mathbb{R}^5} U(y)^{\frac{7}{3}} \rmd y}{\int_{\mathbb{R}^5} Z_{6}(y)^2 \rmd y} 
		[
		1+
		O( R_0^{-1} ) ] 
		t^{1-\gamma_1}
\stackrel{\eqref{qd25Jan4-3}}{=}  D_{5,\gamma_1,1} (2-\gamma_1) [
1 + O( R_0^{-1} ) ]  
t^{1-\gamma_1}.
	\end{aligned}
\end{equation}

By \eqref{partial-Psi0t},
\begin{equation}\label{xi0-value}
\begin{aligned}
&
\dot{\xi}_i^{[0]} = 0 \mbox{ \ for \ } i \in \overline{2,5},
\\
&
\dot{\xi}_1^{[0]}
= A_2(R_0) \lambda_0^{\frac{3}{2}} (\partial_{x_1} \Psi)(0,t)
= A_2(R_0) \lambda_0^{\frac{3}{2}} c_{\sharp} C_{5,\gamma_2,2} t^{-\frac{\gamma_2 +1}{2}}
\stackrel{\eqref{AR-def}}{=}  c_{\sharp} C_{\xi^{[0]}} [ 1 + O( R_0^{-1} ) ]
t^{\frac{5}{2}-\frac{3\gamma_1}{2} - \frac{\gamma_2}{2}},
\end{aligned}
\end{equation}
where we denote
\begin{equation}\label{qd25Jan4-4}
C_{\xi^{[0]}} := - \frac{7}{3} C_{5,\gamma_2,2} D_{5,\gamma_1,1}^{\frac{3}{2}}
\Big( \int_{\mathbb{R}^{5}} Z_1(y)^2 \rmd y \Big)^{-1}
\int_{\mathbb{R}^{5}}  U(y)^{\frac{4}{3}}
y_1 Z_1(y) \rmd y
\stackrel{\eqref{Zi-def}}{>} 0.
\end{equation}

We take
\begin{equation}
\xi_i^{[0]} = 0 \mbox{ \ for \ } i \in \overline{2,5},
\quad
\xi_1^{[0]} = c_{\sharp} C_{\xi^{[0]}} 
\begin{cases}
	\frac{2}{7-3\gamma_1 - \gamma_2} [ 1 + O(R_0^{-1}) ] t^{\frac{7}{2} - \frac{3\gamma_1}{2} - \frac{\gamma_2}{2}}
	&
	\mbox{ \ if \ } 3 \gamma_1 + \gamma_2 \ne 7
	\\
	[ 1 + O(R_0^{-1}) ] \ln t
	&
	\mbox{ \ if \ } 3 \gamma_1 + \gamma_2 = 7.
\end{cases}
\end{equation}
We note that the extra addition of a constant vector in $\xi^{[0]}$ will only contribute a translation to the final solution. In sum, $\lambda_0$ and $\xi^{[0]}$ are smooth functions in $(t_0,\infty)$. Under the parameter restrictions
\begin{equation}\label{mov-26Aug11-1}
\gamma_1 \in (\frac{3}{2}, 2), 
\quad 
\gamma_1 + \gamma_2 \ge 3,
\quad 
t_0 \gg 1,
\end{equation}
then
\begin{equation}
\begin{aligned}
&
\lambda_0 \sim t^{2-\gamma_1},
\quad
\dot{\lambda}_0 \sim t^{1-\gamma_1},
\quad
\xi^{[0]} = (\xi_1^{[0]}, 0, 0, 0, 0),
\\
&
|\xi^{[0]}| 
\sim 
\begin{cases}
t^{\frac{7}{2} - \frac{3\gamma_1}{2} - \frac{\gamma_2}{2}}
	&
	\mbox{ \ if \ } 3 \gamma_1 + \gamma_2 \ne 7
	\\
 \ln t
	&
	\mbox{ \ if \ } 3 \gamma_1 + \gamma_2 = 7
\end{cases},
\quad
|\dot{\xi}^{[0]}|
\sim t^{\frac{5}{2}-\frac{3\gamma_1}{2} - \frac{\gamma_2}{2}};
\quad
|\xi^{[0]}| \lesssim \lambda_0 \ll t^{\frac{1}{2}},
\quad
|\dot{\xi}^{[0]}| \lesssim \dot{\lambda}_0.
\end{aligned}
\end{equation}

For the technical convenience of the continuity argument, we will first construct a solution for $t\in [t_0, T]$ with an arbitrary constant $T>2 t_0$. 
And we make the following ansatz about $\lambda$ and $\xi$: 
\begin{align}
		&
		\lambda=\lambda_0+\lambda_1,
		\quad
		\xi = \xi^{[0]} + \xi^{[1]},
		\mbox{ \ where \ } 
		\lambda_1=\lambda_1(t) \in C^1 ([t_0,T], \mathbb{R}),
        \quad
        \xi^{[1]} =
        (\xi_1^{[1]}(t), 0, 0, 0 , 0)
		\in C^1 ([t_0,T], \mathbb{R}^5),
        \notag
		\\
		&
		|\lambda_1 |\le \lambda_0/9 , 
		\quad
		|\dot{\lambda}_1 |\le \dot{\lambda}_0 / 9,
		\quad
		|\xi^{[1]}| \le |\xi^{[0]}|/9,
		\quad
		|\dot{\xi}^{[1]}| \le |\dot{\xi}^{[0]}| / 9,
        \label{mu1-ansatz}
\end{align}
which implies 
\begin{equation}
	\begin{aligned}
		&
		\frac{8}{9} \lambda_0 \le \lambda \le \frac{10}{9} \lambda_0,
		\quad
		\frac{8}{9} \dot{\lambda}_0 \le \dot{\lambda} \le \frac{10}{9} \dot{\lambda}_0,
		\quad
		\frac{8}{9} |\xi^{[0]}|
		\le |\xi| \le \frac{10}{9} |\xi^{[0]}|,
		\quad
		\frac{8}{9} |\dot{\xi}^{[0]}|
		\le
		|\dot{\xi}| \le
		\frac{10}{9} |\dot{\xi}^{[0]}|;
		\\
		&
		|\xi| \lesssim \lambda \ll t^{\frac{1}{2}},
		\quad
		|\dot{\xi}| \lesssim \dot{\lambda}.
	\end{aligned}
\end{equation}

Recall \eqref{tau-def}. For the convenience of the continuity argument for the inner problem, we extend 
\begin{equation}\label{qd26May1-7}
	\tau=\tau[\lambda](t) :=\int_{t_0}^t \Big\{ \lambda(s) \1_{s\le T} + \big[ (T+1-s)\lambda(T) + (s-T) s^{2-\gamma_1} \big] \1_{T < s\le T+1} + s^{2-\gamma_1} \1_{s > T+1} \Big\}^{-2} \rmd s + \tau_0
\end{equation}
for $t \in [t_0, \infty)$. $\tau(t) \in C^1([t_0, \infty))$ is strictly monotonically increasing.
Denote $t = t[\lambda](\tau)$ as the inverse function well-defined for $\tau \in [\tau_0,\infty)$. For $\gamma_1 > \frac{3}{2}$ and $C_\tau \gg 1$, there exists a constant $C_{tm} \ge 1$ such that
\begin{equation}\label{tau-est}
\begin{aligned}
&
C_{tm}^{-1} t^{2\gamma_1 -3} \le \tau(t)  
\le C_{tm} t^{2\gamma_1 -3}
\mbox{ \ for \ } t \in [t_0, \infty); \quad
\tau_0 = \tau(t_0);
\\
&
(C_{tm}^{-1} \tau)^{\frac{1}{2\gamma_1 -3}} \le  t(\tau) \le (C_{tm} \tau)^{\frac{1}{2\gamma_1 -3}}
\mbox{ \ for \ } \tau \in [\tau_0, \infty); \quad t_0 = t(\tau_0).
\end{aligned}
\end{equation}

For $n=5$, under the ansatz \eqref{mu1-ansatz}, the upper bound for the leading term of $\mathcal{H}$ is given by
\begin{equation}\label{qd25Jan4-2}
	| \dot{\lambda} \lambda Z_{6}(y)| + | \lambda^{\frac{3}{2}} U(y)^{\frac{4}{3}}
			\Psi(0,t) |
	\lesssim t^{3-2\gamma_1} \langle y \rangle^{-3} \sim (\tau(t))^{-1} \langle y \rangle^{-3}.
\end{equation}
Inspired by Propositions \ref{mode0-regluing-prop}, \ref{regluing-mode1-prop}, \ref{regluing-higher-mode-prop} about the inner linear theory later, we define the norm
\begin{equation}\label{inner-norm}
	\|g\|_{\rm{in}}:= \sup_{t \in [t_0,T], y\in \overline{B_{4 R}} }
	[ R_0^{6} t^{3-2\gamma_1} \ln t \langle y \rangle^{-1} ]^{-1} ( \langle y\rangle |\nabla g(y,t)| + |g(y,t)| )
\end{equation}
for the inner problem \eqref{inner-problem} with $n=5$. We will solve \eqref{inner-problem} in the space
\begin{equation}\label{Bin-def}
\begin{aligned}
	B_{\rm{in}} := 
	\big\{ g(y,t)  \mid & \ g(y,t)\in C\big( [t_0,T]; C^1 ( \overline{B_{4 R}} ) \big), 
	\quad
	\|g\|_{\rm{in}} \le 1, 
    \\
    &
    \mbox{ $g(y,t)$ is even with respect to the $i$-th component of $y$ for $i \in \overline{2,5}$}
    \big\}.
\end{aligned}
\end{equation}
The larger domain $B_{4 R}$ provides technical convenience in the continuity argument in \eqref{qd26May1-2} later.

\section{Mappings for solving the inner problem in different modes}\label{inner-map-sec}

To solve the inner problem \eqref{inner-tau-eq}, it suffices to solve the components projected onto spherical harmonic functions, $\tilde{\phi}_{i,j}$  in \eqref{qd26Jan3-8} with right-hand side $\tilde{\mathcal{H}}_{i,j}(y,t(\tau)) \eta( \frac{y}{2R(t(\tau))} )$, where $(i,j) \in \mathbf{M_{0,1}} \cup \{ \perp \}$ label the spherical harmonic functions. We will give $\mathcal{T}_{i,j}^{\rm{in}}$ as the inverse mapping of $\tilde{\phi}_{i,j}$ in this section.

For the continuity argument for the mappings to solve the inner problem in Section \ref{conti-Sec} later, we extend $\mathcal{H}(y,t(\tau)) \eta(\frac{y}{2R(t(\tau))})$ to $\tau > \tau(T)$ in the following form
\begin{equation}\label{calJ-def}
\begin{aligned}
&
	\mathcal{J} = \mathcal{J}[\psi,\lambda,\xi ](y,\tau) 
    \\
    := \ & \mathcal{H}(y,t(\tau)) \eta\Big(\frac{y}{2R(t(\tau))}\Big) \1_{\tau_0 < \tau \le \tau(T)}
	+
	\mathcal{H}(y,T) \eta\Big(\frac{y}{2R(T)}\Big) \1_{ \tau > \tau(T)}
	\mbox{ \ for \ } (y,\tau) \in \mathbb{R}^5 \times (\tau_0, \infty).
\end{aligned}
\end{equation}
Since $\eta$ is radially symmetric, by \eqref{qd26Jan3-1}, we have that for $(j,k) \in \mathbf{M_{0,1}}$,
\begin{equation}\label{move-26Aug21-5}
\begin{aligned}
&
\mathcal{J}_{j,k}^{\sharp}(|y|, \tau) = \int_{ S^{4} } \mathcal{J}(|y| \theta, \tau) \Upsilon_{j,k}(\theta) \rmd \theta
= 
\mathcal{H}_{j,k}^{\sharp}(|y| ,t(\tau)) \eta\Big(\frac{|y|}{2R(t(\tau))}\Big) \1_{\tau_0 < \tau \le \tau(T)}
	+ \mathcal{H}_{j,k}^{\sharp}(|y|,T) \eta\Big(\frac{|y|}{2R(T)}\Big) \1_{ \tau > \tau(T)},
\\
&
\tilde{\mathcal{J}}_{j,k}(y, \tau) = \mathcal{J}_{j,k}^{\sharp}(|y|, \tau) \Upsilon_{j,k}(y/|y|)
= 
\tilde{\mathcal{H}}_{j,k}(y ,t(\tau)) \eta\Big(\frac{|y|}{2R(t(\tau))}\Big) \1_{\tau_0 < \tau \le \tau(T)}
	+ \tilde{\mathcal{H}}_{j,k}(y,T) \eta\Big(\frac{|y|}{2R(T)}\Big) \1_{ \tau > \tau(T)},
\\
&
\tilde{\mathcal{J}}_{\perp}(y, \tau) = \mathcal{J} - \sum\limits_{(j,k) \in \mathbf{M_{0,1}} } \tilde{\mathcal{J}}_{j,k}
= 
\tilde{\mathcal{H}}_{\perp}(y ,t(\tau)) \eta\Big(\frac{|y|}{2R(t(\tau))}\Big) \1_{\tau_0 < \tau \le \tau(T)}
	+ \tilde{\mathcal{H}}_{\perp}(y,T) \eta\Big(\frac{|y|}{2R(T)}\Big) \1_{ \tau > \tau(T)}.
\end{aligned}
\end{equation}

Denote
\begin{equation}
	R_{*}(s) := 8 (C_{tm} s)^{\frac{\beta}{2\gamma_1 -3}} \mbox{ \ for \ } s>0.
\end{equation}
For any $\lambda_1$ satisfying \eqref{mu1-ansatz}, by \eqref{tau-est},
\begin{equation}
(C_{tm}^{-1} \tau)^{\frac{\beta}{2\gamma_1 -3}} \le	R(t(\tau)) = (t(\tau))^{\beta} \le (C_{tm} \tau)^{\frac{\beta}{2\gamma_1 -3}},
\quad
R_{*}(\tau) \ge 8 R(t(\tau)) 
\mbox{ \ for \ } \tau \in (\tau_0, \infty);
\quad
\tau(T) \le C_{tm} T^{2\gamma_1 -3}. 
\end{equation}

We will use the re-gluing inner linear theory to give inverse mappings for $\tilde{\mathcal{J}}_{i,j}$, $(i,j) \in \mathbf{M_{0,1}} \cup \{ \perp \}$.
Formally, by Proposition \ref{mode0-regluing-prop} (with $R_0=\ln t_0$), there exist a mapping
\begin{equation}\label{qd26Apr12-3}
	(\mathcal{T}_{0,1}^{\rm{in}}, g_0, \varrho_{0,1}(\tau) ) = (\mathcal{T}_{0,1}^{\rm{in}}(y,\tau), g_0, \varrho_{0,1}(\tau) )[ \tilde{\mathcal{J}}_{0,1}]
\end{equation}
linearly depending on $\tilde{\mathcal{J}}_{0,1}$ such that
\begin{equation}\label{qd26Apr12-4}
	\begin{cases}
		\begin{aligned}
			\partial_{\tau} \mathcal{T}_{0,1}^{\rm{in}} = \ & \big( \Delta + \frac{7}{3} U^{\frac{4}{3}} \big) \mathcal{T}_{0,1}^{\rm{in}} + \tilde{\mathcal{J}}_{0,1}
			+ 
			\varrho_{0,1}(\tau) \eta(y) Z_{6}(y) 
			\mbox{ \ for \ } \tau \in (\tau_0, C_{tm} T^{2\gamma_1 -3}], y\in B_{R_{*}(\tau)},
		\end{aligned}
		\\
		\mathcal{T}_{0,1}^{\rm{in}}(y,\tau_0)
		=
		g_0
		\eta( \frac{2 y}{R_0} ) Z_0(y) 
		\mbox{ \ in \ } B_{R_{*}(\tau_0)},
	\end{cases}
\end{equation}
where
\begin{equation}\label{varrho01-def}
		\varrho_{0,1}(\tau)
		=
		-\Big(\int_{B_{2} } \eta(y) Z_{6}^2(y) \rmd y\Big)^{-1} \Big(\int_{B_{R_0}} \tilde{\mathcal{J}}_{0,1}(y,\tau)  Z_{6}(y) \rmd y
		+ \varrho_{0,1}^{*}(\tau)  \Big)
\end{equation}
with a small quantity $\varrho_{0,1}^{*}(\tau) = \varrho_{0,1}^{*}[\tilde{\mathcal{J}}_{0,1}](\tau)$ linearly depending on $\tilde{\mathcal{J}}_{0,1}$. For $\tau \in (\tau_0, \tau(T)]$, $R_0 < 2R$, 
\begin{equation}
	\begin{aligned}
		\varrho_{0,1}(\tau)
		= \ &
		-\Big(\int_{B_{2} } \eta(y) Z_{6}^2(y) \rmd y\Big)^{-1} \Big(\int_{B_{R_0}} \tilde{\mathcal{H}}_{0,1}(y,t(\tau))
		\eta\Big(\frac{y}{2R(t(\tau))}\Big)  Z_{6}(y) \rmd y
		+ \varrho_{0,1}^{*}(\tau)  \Big)
		\\
		= \ &
		-\Big(\int_{B_{2} } \eta(y) Z_{6}^2(y) \rmd y\Big)^{-1} \Big(\int_{B_{R_0}} \tilde{\mathcal{H}}_{0,1}(y,t(\tau))  Z_{6}(y) \rmd y
		+ \varrho_{0,1}^{*}(\tau)  \Big).
	\end{aligned}
\end{equation}
Compared with \eqref{qd26Jan3-8}, the purpose of extending the spatial-time region in \eqref{qd26Apr12-4} is to make the mappings $\mathcal{T}_{0,1}^{\rm{in}}[\cdot]$,
$g_0[\cdot]$, $\varrho_{0,1}[\cdot]$ themselves independent the choice of $\lambda_1$. We use the same extension for other modes in this section.

Formally, by Proposition \ref{regluing-mode1-prop} (with $R_0=\ln t_0$), for $i\in \overline{1,5}$, there exist a mapping
\begin{equation}\label{qd26Apr12-5}
	(\mathcal{T}_{1,i}^{\rm{in}},  \varrho_{1,i}(\tau) ) = (\mathcal{T}_{1,i}^{\rm{in}}(y,\tau), \varrho_{1,i}(\tau) ) [ \tilde{\mathcal{J}}_{1,i}]
\end{equation}
linearly depending on $ \tilde{\mathcal{J}}_{1,i}$ such that
\begin{equation}\label{qd26Apr12-6}
\begin{cases}
			\partial_{\tau} \mathcal{T}_{1,i}^{\rm{in}} = \big( \Delta + \frac{7}{3} U^{\frac{4}{3}} \big) \mathcal{T}_{1,i}^{\rm{in}} +  \tilde{\mathcal{J}}_{1,i}
			+ 
			\varrho_{1,i}(\tau) \eta(y) Z_{i}(y) 
			\mbox{ \ for \ } \tau \in (\tau_0, C_{tm} T^{2\gamma_1 -3}], y\in B_{R_{*}(\tau)},
\\
		\mathcal{T}_{1,i}^{\rm{in}}(y,\tau_0)
		=
		0 
		\mbox{ \ in \ } B_{R_{*}(\tau_0)},
\end{cases}
\end{equation}
where 
\begin{equation}\label{varrho1i-def}
		\varrho_{1,i}(\tau)
		=
		-\Big(\int_{B_{2} } \eta(y) Z_{i}^2(y) \rmd y\Big)^{-1} \Big(\int_{B_{R_0}} \tilde{\mathcal{J}}_{1,i}(y,\tau)  Z_{i}(y) \rmd y
		+ \varrho_{1,i}^{*}(\tau)  \Big)
\end{equation}
with a small quantity $\varrho_{1,i}^{*}(\tau) = \varrho_{1,i}^{*}[\tilde{\mathcal{J}}_{1,i}](\tau)$ linearly depending on $\tilde{\mathcal{J}}_{1,i}$. For $\tau \in (\tau_0, \tau(T)]$, $R_0 < 2R$, 
\begin{equation}\label{move-26Aug20-4}
		\varrho_{1,i}(\tau)
		= 
		-\Big(\int_{B_{2} } \eta(y) Z_{i}^2(y) \rmd y\Big)^{-1} \Big(\int_{B_{R_0}} \tilde{\mathcal{H}}_{1,i}(y,t(\tau))  Z_{i}(y) \rmd y
		+ \varrho_{1,i}^{*}(\tau)  \Big).
\end{equation}

Formally, by Proposition \ref{regluing-higher-mode-prop} (with $R_0$ as a large constant independent of $t_0, T$), there exist a mapping
\begin{equation}
	\mathcal{T}_{\perp}^{\rm{in}} = \mathcal{T}_{\perp}^{\rm{in}} [ \tilde{\mathcal{J}}_{\perp} ](y,\tau) 
\end{equation}
linearly depending on $ \tilde{\mathcal{J}}_{\perp} $ such that
\begin{equation}\label{move-26Sep6-1}
	\partial_{\tau} \mathcal{T}_{\perp}^{\rm{in}} = \big( \Delta + \frac{7}{3} U^{\frac{4}{3}} \big) \mathcal{T}_{\perp}^{\rm{in}} + \tilde{\mathcal{J}}_{\perp} 
	\mbox{ \ for \ } \tau \in (\tau_0, C_{tm} T^{2\gamma_1 -3} ], y\in B_{R_{*}(\tau)},
	\quad
	\mathcal{T}_{\perp}^{\rm{in}}(y,\tau_0)
	=
	0 
	\mbox{ \ in \ } B_{R_{*}(\tau_0)}.
\end{equation}

We will impose suitable assumptions to make the above mappings well-defined. Denote 
\begin{equation}\label{Tin-def}
	\mathcal{T}^{\rm{in}} = \mathcal{T}^{\rm{in}}[\mathcal{J}](y,\tau) :=
	\mathcal{T}_{0,1}^{\rm{in}} + \mathcal{T}_{\perp}^{\rm{in}} +
	\sum_{i=1}^{5} \mathcal{T}_{1,i}^{\rm{in}}
\end{equation}
as the inverse mapping for $\phi$, where $\mathcal{T}^{\rm{in}}$ linearly depends on $\mathcal{J}$. We emphasize that since the domain $ \{ (y,\tau) \mid \tau \in (\tau_0, C_{tm} T^{2\gamma_1 -3}], y \in B_{R_{*}(\tau)} \}$ and $\ln t_0$ are independent of the choice of $\lambda_1, \xi^{[1]}, \psi, \phi$, then
\begin{equation}\label{qd26May2-6}
\begin{aligned}
&
\mbox{The mappings $\mathcal{T}_{0,1}^{\rm{in}}[\cdot]$,
$g_0[\cdot]$, $\varrho_{0,1}[\cdot]$, 
$\varrho_{0,1}^{*}[\cdot]$,
$\mathcal{T}_{1,i}^{\rm{in}}[\cdot]$, 
$\varrho_{1,i}[\cdot]$, $\varrho_{1,i}^{*}[\cdot]$, $i\in\overline{1,5}$, $\mathcal{T}_{\perp}^{\rm{in}}[\cdot]$, $\mathcal{T}^{\rm{in}}[\cdot]$}
\\
&
\mbox{are independent of the choice of $\lambda_1, \xi^{[1]}, \psi, \phi$.}
\end{aligned}
\end{equation}

\section{Reduced orthogonality equations}\label{modi-orth-eq-sec}

To find a solution for the inner problem, we need to solve the reduced orthogonality equations
\begin{equation}\label{26June10-2}
\varrho_{0,1}(\tau) = 0,
\quad
\varrho_{1,i}(\tau) = 0, \ i \in \overline{1,5}
\mbox{ \ for \ }
\tau \in (\tau_0, \tau(T)]
\end{equation}
with $\varrho_{0,1}$ and $\varrho_{1,i}$ defined in \eqref{varrho01-def} and \eqref{varrho1i-def} respectively.
\eqref{26June10-2} is equivalent to that for $\tau \in (\tau_0, \tau(T)]$, 
\begin{equation}\label{orth-eq}
	\int_{B_{R_0}} \tilde{\mathcal{H}}_{0,1}(y,t(\tau))  Z_{6}(y) \rmd y
		+ \varrho_{0,1}^{*}(\tau)  = 0,
		\quad \int_{B_{R_0}} \tilde{\mathcal{H}}_{1,i}(y,t(\tau))  Z_{i}(y) \rmd y
		+ \varrho_{1,i}^{*}(\tau) = 0, \ i \in \overline{1,5}.
\end{equation}
In time variable $t$, \eqref{orth-eq} is rewritten as that for $t\in (t_0, T]$,
\begin{equation}\label{qd26Jan4-1}
	\int_{B_{R_0 }} \tilde{\mathcal{H}}_{0,1}(y,t)  Z_{6}(y) \rmd y
	+ \varrho_{0,1}^{*}(\tau(t))  = 0,
	\quad 
	\int_{B_{R_0}} \tilde{\mathcal{H}}_{1,i}(y,t)  Z_{i}(y) \rmd y
	+ \varrho_{1,i}^{*}(\tau(t)) = 0, \  i \in \overline{1,5}.
\end{equation}
By \eqref{qd25Jan4}, we know that \eqref{qd26Jan4-1} is equivalent to that for $t\in (t_0, T]$,
\begin{equation}\label{qd25Jan5}
	\int_{B_{R_0 }} \mathcal{H}(y,t)  Z_{6}(y) \rmd y
	+ \varrho_{0,1}^{*}(\tau(t))  = 0,
	\quad 
	\int_{B_{R_0}} \mathcal{H}(y,t)  Z_{i}(y) \rmd y
	+ \varrho_{1,i}^{*}(\tau(t)) = 0, \  i \in \overline{1,5}.
\end{equation}

We will reformulate \eqref{qd25Jan5} for the minor terms $\lambda_1, \xi^{[1]}$. Using the formulae of $U, Z_{j}, j \in \overline{1,6}$, we have 
\begin{equation*}
	\begin{aligned}
		&
		\dot{\lambda} \int_{B_{R_0}} Z_{6}(y)^2 \rmd y
		+
		\frac{7}{3} \lambda^{\frac{1}{2}} \int_{B_{R_0}} U(y)^{\frac{4}{3}} Z_{6}(y)
		\big(
		\Psi(\lambda y+\xi,t)
		+
		\psi(\lambda y+\xi, t)
		\big) \rmd y
		+ \lambda^{-1} \varrho_{0,1}^{*}(\tau(t))
		= 0
		\\
		\Leftrightarrow \ & \dot{\lambda}
		= - 
		\frac{7}{3} \lambda^{\frac{1}{2}}
		\Big(
		\int_{B_{R_0}} Z_{6}(y)^2 \rmd y
		\Big)^{-1}
		\int_{B_{R_0}} U(y)^{\frac{4}{3}} Z_{6}(y)
		\big(
		\Psi(\lambda y+\xi,t)
		+
		\psi(\lambda y+\xi, t)
		\big) \rmd y
		\\
		& -	\Big( \int_{B_{R_0}} Z_{6}(y)^2 \rmd y \Big)^{-1}
		\lambda^{-1} \varrho_{0,1}^{*}(\tau(t)).
	\end{aligned}
\end{equation*}
Subtracting the above equation with the one of $\lambda_0$ in  \eqref{mu0-eq}, that is, $
		\dot{\lambda}_0 =  - \frac{7}{3} \lambda_0^{\frac{1}{2}} \Psi(0, t) ( \int_{B_{R_0}} Z_{6}(y)^2 \rmd y )^{-1} $ $
		\int_{B_{R_0}} U(y)^{\frac{4}{3}}  Z_{6}(y) \rmd y $, we have
\begin{equation}\label{qd25Dec25-2}
	\begin{aligned}
		& \dot{\lambda}_1
		=  
		- \frac{7}{3} \Big(
		\int_{B_{R_0}} Z_{6}(y)^2 \rmd y
		\Big)^{-1}
		\Big[
		\lambda^{\frac{1}{2}}
		\int_{B_{R_0}} U(y)^{\frac{4}{3}} Z_{6}(y)
		\big(
		\Psi(\lambda y+\xi,t)
		+
		\psi(\lambda y+\xi, t)
		\big) \rmd y
		\\
		& - \lambda_0^{\frac{1}{2}} \Psi(0, t)
		\int_{B_{R_0}} U(y)^{\frac{4}{3}}  Z_{6}(y) \rmd y \Big]
		- 
		\Big( \int_{B_{R_0}} Z_{6}(y)^2 \rmd y \Big)^{-1}
		\lambda^{-1} \varrho_{0,1}^{*}(\tau(t))
		= \mathcal{F}(t) - \omega(t) \lambda_1,
	\end{aligned}
\end{equation}
where
\begin{equation}\label{qd25Dec26-1}
	\begin{aligned}
		&
		\omega(t) :=  \frac{7}{6} \Big(
		\int_{B_{R_0}} Z_{6}(y)^2 \rmd y
		\Big)^{-1} 
		\int_{B_{R_0}} U(y)^{\frac{4}{3}}  Z_{6}(y) \rmd y \lambda_0^{-\frac{1}{2}} \Psi(0, t)
		\\
		\stackrel{\eqref{AR-def} \eqref{mu-0-explicit}}{=} \ &  - \Psi(0, t) \Big( \int_{1}^{t} \Psi(0, s) \rmd s
		\Big)^{-1}
		\stackrel{\eqref{Psi0t}}{=} - \frac{t^{-\frac{\gamma_1}{2}} }{\int_{1}^t s^{-\frac{\gamma_1}{2}} \rmd s}
		\stackrel{\gamma_1 \ne 2}{=}
		- \frac{2-\gamma_1}{2} t^{-1} (1-t^{\frac{\gamma_1}{2}-1})^{-1},
	\end{aligned}
\end{equation}
\begin{equation}\label{qd26Mar3-1}
	\begin{aligned}
		&
		\mathcal{F}(t) = \mathcal{F}[\psi, \lambda_1, \xi^{[1]}](t) := - \frac{7}{3} \Big(
		\int_{B_{R_0}} Z_{6}(y)^2 \rmd y
		\Big)^{-1}
		\Big[
		\lambda^{\frac{1}{2}}
		\int_{B_{R_0}} U(y)^{\frac{4}{3}} Z_{6}(y)
		\psi(\lambda y+\xi, t) \rmd y
		\\
		& +
		\lambda^{\frac{1}{2}}
		\int_{B_{R_0}} U(y)^{\frac{4}{3}} Z_{6}(y)
		\big( \Psi(\lambda y+\xi,t) - \Psi(0,t) \big) \rmd y
		\\
		& + (\lambda^{\frac{1}{2}} - \lambda_0^{\frac{1}{2}} - 2^{-1} \lambda_0^{-\frac{1}{2}} \lambda_1 ) \Psi(0, t)
		\int_{B_{R_0}} U(y)^{\frac{4}{3}}  Z_{6}(y) \rmd y \Big]
		 - 
		\Big( \int_{B_{R_0}} Z_{6}(y)^2 \rmd y \Big)^{-1}
		\lambda^{-1} \varrho_{0,1}^{*}(\tau(t)).
	\end{aligned}
\end{equation}

For $i \in \overline{1,5}$, we have
\begin{align*}
	&
		\int_{B_{R_0}}
		\Big[ \dot{\lambda} \lambda Z_{6}(y)
		+\lambda \dot{\xi} \cdot (\nabla U )(y) 
		+\frac{7}{3}\lambda^{\frac{3}{2}} U(y)^{\frac{4}{3}}
		\big(
		\Psi(\lambda y+\xi,t)
		+
		\psi(\lambda y+\xi, t)
		\big)
		\Big] Z_{i}(y) \rmd y + \varrho_{1,i}^{*}(\tau(t)) = 0
		\\
		\Leftrightarrow \ 
		& \int_{B_{R_0}}
		\Big[ \lambda \dot{\xi}_i Z_i(y) 
		+\frac{7}{3}\lambda^{\frac{3}{2}} U(y)^{\frac{4}{3}}
		\big(
		\Psi(\lambda y+\xi,t) - \Psi(0,t)
		+
		\psi(\lambda y+\xi, t) - \psi(0, t)
		\big) \Big] Z_{i}(y) \rmd y
		+ \varrho_{1,i}^{*}(\tau(t)) 
		= 0
		\\
		\Leftrightarrow \ &
		\dot{\xi}_i \int_{B_{R_0}} Z_1(y)^2 \rmd y
		+ \frac{7}{3} \lambda^{\frac{1}{2}}
		\int_{B_{R_0}} U(y)^{\frac{4}{3}} Z_{i}(y)
		\big(
		\Psi(\lambda y+\xi,t) - \Psi(0,t)
		+
		\psi(\lambda y+\xi, t) - \psi(0, t)
		\big) \rmd y
		\\
		& + \lambda^{-1} \varrho_{1,i}^{*}(\tau(t)) 
		= 0
		\\
		\Leftrightarrow \ & \dot{\xi}_i 
		= 
		- \frac{7}{3} \lambda^{\frac{1}{2}}
		\Big( \int_{B_{R_0}} Z_1(y)^2 \rmd y \Big)^{-1}
		\int_{B_{R_0}} U(y)^{\frac{4}{3}} Z_{i}(y)
		\\
		& \times
		\big[
		\Psi(\lambda y+\xi,t) - \Psi(0,t)
		-
		(\nabla \Psi)(0,t) \cdot (\lambda y+\xi)
		+
		(\nabla \Psi)(0,t) \cdot (\lambda y+\xi)
		+
		\psi(\lambda y+\xi, t) - \psi(0, t)
		\big] \rmd y
		\\
		& - \Big( \int_{B_{R_0}} Z_1(y)^2 \rmd y \Big)^{-1} \lambda^{-1}
		\varrho_{1,i}^{*}(\tau(t))
		\\
		= \ & - \frac{7}{3} \lambda^{\frac{1}{2}}
		\Big( \int_{B_{R_0}} Z_1(y)^2 \rmd y \Big)^{-1}
		\int_{B_{R_0}} U(y)^{\frac{4}{3}} Z_{i}(y)
		\\
		& \times
		\big[
		\Psi(\lambda y+\xi,t) - \Psi(0,t)
		-
		(\nabla \Psi)(0,t) \cdot (\lambda y+\xi)
		+
		\lambda y_i (\partial_{x_i} \Psi)(0,t)
		+
		\xi \cdot (\nabla \Psi)(0,t) 
		+
		\psi(\lambda y+\xi, t) - \psi(0, t)
		\big] \rmd y
		\\
		& - \Big( \int_{B_{R_0}} Z_1(y)^2 \rmd y \Big)^{-1} \lambda^{-1} 
		\varrho_{1,i}^{*}(\tau(t))
		\\
		= \ & - \frac{7}{3} \lambda^{\frac{1}{2}}
		\Big( \int_{B_{R_0}} Z_1(y)^2 \rmd y \Big)^{-1}
		\Big\{ \lambda (\partial_{x_i} \Psi)(0,t)
		\int_{B_{R_0}} U(y)^{\frac{4}{3}} y_1 Z_{1}(y) \rmd y
		\\
		& +
		\int_{B_{R_0}} U(y)^{\frac{4}{3}} Z_{i}(y)
		\big[
		\Psi(\lambda y+\xi,t) - \Psi(0,t)
		-
		(\nabla \Psi)(0,t) \cdot (\lambda y+\xi)
		+
		\psi(\lambda y+\xi, t) - \psi(0, t)
		\big] \rmd y
		\Big\}
		\\
		& - \Big( \int_{B_{R_0}} Z_1(y)^2 \rmd y \Big)^{-1} \lambda^{-1} 
		\varrho_{1,i}^{*}(\tau(t)).
\end{align*}
Subtracting the above equation with the one of $\xi_i^{[0]}$ in \eqref{mu0-eq}, that is,
$ \dot{\xi}_i^{[0]}
	= - \frac{7}{3}
	( \int_{B_{R_0}} Z_1(y)^2 \rmd y )^{-1} \lambda_0^{\frac{3}{2}} (\partial_{x_i} \Psi)(0,t)$ $
	\int_{B_{R_0}} U(y)^{\frac{4}{3}}
	y_1 Z_1(y) \rmd y $,
we have
\begin{align}
		&
		\dot{\xi}_i^{[1]} = \mathcal{S}_i = \mathcal{S}_i[\psi, \lambda_1,\xi^{[1]}](t) := - \frac{7}{3}
		\Big( \int_{B_{R_0}} Z_1(y)^2 \rmd y \Big)^{-1}
		\Big\{ ( \lambda^{\frac{3}{2}} - \lambda_0^{\frac{3}{2}} ) (\partial_{x_i} \Psi)(0,t)
		\int_{B_{R_0}} U(y)^{\frac{4}{3}} y_1 Z_{1}(y) \rmd y
        \notag
		\\
		& + \lambda^{\frac{1}{2}}
		\int_{B_{R_0}} U(y)^{\frac{4}{3}} Z_{i}(y)
		\big[
		\Psi(\lambda y+\xi,t) - \Psi(0,t)
		-
		(\nabla \Psi)(0,t) \cdot (\lambda y+\xi)
		+
		\psi(\lambda y+\xi, t) - \psi(0, t)
		\big] \rmd y
		\Big\}
        \label{qd25Dec25-3}
		\\
		& - \Big( \int_{B_{R_0}} Z_1(y)^2 \rmd y \Big)^{-1} \lambda^{-1} \varrho_{1,i}^{*}(\tau(t)).
        \notag
\end{align}

To find a solution of the system of equations \eqref{qd25Dec25-2} and \eqref{qd25Dec25-3}, it suffices to solve
\begin{equation}\label{mu1-xi-sys}
	\begin{aligned}
		& \dot{\lambda}_1 = \mathcal{S}_6 = \mathcal{S}_6[\psi, \lambda_1,\xi^{[1]}](t)
		:=
		\frac{\rmd}{\rmd t} \Big(
		\int_{t_0}^t
		\rme^{\int_t^s \omega(a) \rmd a}  \mathcal{F}(s) \rmd s \Big)
		= - \omega(t) \int_{t_0}^t
		\rme^{\int_t^s \omega(a) \rmd a}  \mathcal{F}(s) \rmd s
		+
		\mathcal{F}(t),
		\\
		&
		\dot{\xi}^{[1]} = \vec{\mathcal{S}} = \vec{\mathcal{S}}[\psi,\lambda_1,\xi^{[1]}](t)
		:= (\mathcal{S}_1, \mathcal{S}_2,\dots, \mathcal{S}_5)(t),
		\\
		&
		\lambda_1 = \lambda_1[\dot{\lambda}_1](t) :=
		\int_{t_0}^t \dot{\lambda}_1(s) \rmd s, \quad
		\xi^{[1]} = \xi^{[1]}[\dot{\xi}^{[1]}](t) :=
\begin{cases} 
\int_{t_0}^t \dot{\xi}^{[1]}(s) \rmd s
			& \mbox{ \ if \ } 3 \gamma_1 + \gamma_2 \le 7
			\\
\int_{\infty}^t \dot{\xi}^{[1]}(s) \rmd s
			& \mbox{ \ if \ } 3 \gamma_1 + \gamma_2 > 7
		\end{cases}
	\end{aligned}
\end{equation}
about $(\dot{\lambda}_1, \dot{\xi}^{[1]})$ if these integrals are well-defined. The choice of $\xi^{[1]}$ is due to the topology of $\dot{\xi}^{[1]}$ in \eqref{mu-xi-norm} later. 
Indeed, given $ (\psi, \dot{\lambda}_1, \dot{\xi}^{[1]} )$, and $\lambda_1[\dot{\lambda}_1], \xi^{[1]}[\dot{\xi}^{[1]}]$ in \eqref{mu1-xi-sys},
$\mathcal{S}_6\big[ \psi, \lambda_1[\dot{\lambda}_1],\xi^{[1]}[\dot{\xi}^{[1]}] \big]$ will be regraded as a mapping from a product normed space of $(\psi, \dot{\lambda}_1, \dot{\xi}^{[1]})$ into a normed space of $\dot{\lambda}_1$. A similar perspective is applied for $\vec{\mathcal{S}}$.

\section{The fixed-point scheme and a mapping for solving the outer problem in $\overline{B_{T^{9}}}  \times [t_0, T]$}\label{out-solve-sec}

We introduce a symbol $\mathbf{P}_1[\cdot]$.
\begin{definition}[\cite{5dinfinite-time-2025}]\label{AP-def}
	
	Define $g(t) = \langle \ln t\rangle$, and let $g^{(k)} = \underbrace{g \circ g \circ \dots \circ g}_{\mbox{\rm k-fold composition}}$ be the $k$-fold composition of $g$. Given a function $f(t)$ defined in an unbounded set $\Omega \subset \mathbb{R}_{>0}$, we say that $f(t)$ is of {\it algebraic power type in $\Omega$} if
	\begin{equation*}
		C^{-1} f_*(t) \le f(t) \le C f_*(t)\ \mbox{ for }  \ t \in \Omega,\quad   f_*(t) := c_0 t^{c_1} \prod_{i=1}^{\infty} \big( g^{(i)}(t) \big)^{c_{i+1}},
	\end{equation*}
	where  $C\ge 1$ is a constant, $c_0 > 0$ ($c_0$ possibly depends on $t_0$ in this paper), $c_i\in \mathbb{R}$ for $i\ge 1$, and $c_{i} = 0$ for $i\ge N_0$ for some integer $N_0\ge 1$. 
	Then, we define
	\begin{equation}\label{Pi-def}
		\mathbf{P}_i[f] := c_i, \quad i \ge 1.
	\end{equation}
	Let $\mathbf{AP}(\Omega)$ denote the set of algebraic power-type functions defined on $\Omega \subset \mathbb{R}_{>0}$.
	
\end{definition}
Notice that for $i\ge 1$, $\mathbf{P}_i[\cdot]$ is uniquely determined for any $f \in \mathbf{AP}(\Omega)$ with  $\Omega \subset \mathbb{R}_{>0}$ unbounded. Denote
\begin{equation}
	\mathcal{T}_{\rm{o}} [f] = \mathcal{T}_{\rm{o}} [f](x,t)
	:=
	\int_{t_0}^t \int_{\R^5} 
	[ 4\pi(t-s) ]^{-\frac{5}{2}}
	\rme^{ -\frac{|x-z|^2}{4(t-s)}  } f(z,s) \rmd z \rmd s.
\end{equation}
For solving the outer problem \eqref{outer-problem} with $n=5$,
\begin{equation}
		\pp_t \psi 
		= \Delta \psi + \mathcal{G} \mbox{ \ in \ } B_{T^{9}}  \times (t_0,T],
		\quad 
		\psi(\cdot,t_0)=0 \mbox{ \ in \ } B_{T^{9}},
\end{equation}
where $\mathcal{G}$ is given in \eqref{g} with $n=5$,
\begin{equation}\label{g-n=5}
\begin{aligned}
&
\mathcal{G} = \mathcal{G}[\psi,\phi,\lambda,\xi](x,t) = \frac{7}{3} \lambda^{-2}
U(y)^{\frac{4}{3}}
\big(
\eta(\tilde{y})^{\frac{4}{3}} - 1 \big) \lambda^{-\frac{3}{2}}\phi(y,t) \eta_{R}(y)
+ \Lambda_1 + \Lambda_2
\\
& + 
\underbrace{
\big(
\dot{\lambda} \lambda^{-\frac{5}{2}}  Z_{6}(y) 
+
\lambda^{-\frac{5}{2}} \dot{\xi} \cdot 
(\nabla U)( y )
\big) ( \eta(\tilde{y} ) - \eta_R(y) )
+
\mathcal{E}_{U}^{\rm{cut}}
+
\lambda^{-\frac{7}{2}} U(y)^{\frac{7}{3}}
\big( \eta(\tilde{y})^{\frac{7}{3}} 
-
\eta( \tilde{y} )
\big)
}_{=: \mathcal{G}_1}
\\
&
+ 
\underbrace{
\frac{7}{3} \lambda^{-2}
U(y)^{\frac{4}{3}}
\big(
\eta(\tilde{y})^{\frac{4}{3}} - \eta_{R}(y) \big) (\Psi + \psi)
}_{=: \mathcal{G}_2} \ 
+ \mathcal{N},
\end{aligned}
\end{equation}
we give a mapping
\begin{equation}
\mathcal{T}_{\rm{o}} [ \mathcal{G} \1_{|x| \le T^{9}, t \le T} ](x,t)
	\mbox{ \ for \ } (x,t) \in \mathbb{R}^5 \times [t_0, \infty).
\end{equation}

Before detailed calculations,
we first present the global picture for the fixed-point scheme.

Given $(\psi, \phi, \dot{\lambda}_1, \dot{\xi}^{[1]})$ in some suitable topology. 
We regard 
$\lambda_1 = \lambda_1[\dot{\lambda}_1](t)$ and $\xi^{[1]} = \xi^{[1]}[\dot{\xi}^{[1]}](t)$ as functionals about $\dot{\lambda}_1$ and $\dot{\xi}^{[1]}$ in the form of \eqref{mu1-xi-sys}.
Let $\tilde{\psi} = \mathcal{T}_{\rm{o}} [ \mathcal{G}[\psi,\phi,\lambda_0 + \lambda_1,\xi^{[0]} + \xi^{[1]}] \1_{|x| \le T^{9}, t \le T} ](x,t)$, which satisfies
\begin{equation}
		\partial_{t} \tilde{\psi} 
		= \Delta \tilde{\psi} +  \mathcal{G}[\psi,\phi,\lambda_0 + \lambda_1,\xi^{[0]} + \xi^{[1]}] \1_{|x| \le T^{9}, t \le T}  \mbox{ \ in \ } B_{T^{9}}  \times (t_0,T],
		\quad 
		\tilde{\psi}(\cdot,t_0)=0 \mbox{ \ in \ } B_{T^{9}}.
\end{equation}
Recall \eqref{calJ-def} and \eqref{Tin-def}. Let $\tilde{\phi}(y,\tau) = \mathcal{T}^{\rm{in}}\big[ \mathcal{J}[\psi, \lambda_{0} + \lambda_{1}, \xi^{[0]} + \xi^{[1]} ] \big](y,\tau)$. It holds that
\begin{equation}
	\begin{cases}
		\begin{aligned}
        \begin{aligned}
			\partial_{\tau} \tilde{\phi} = \ & \big( \Delta + \frac{7}{3} U^{\frac{4}{3}} \big) \tilde{\phi} + \mathcal{J}[\psi, \lambda_{0} + \lambda_{1}, \xi^{[0]} + \xi^{[1]} ]
            \\
            &
			+ 
			\varrho_{0,1}(\tau) \eta(y) Z_{6}(y) 
            +
            \sum_{i=1}^{5} \varrho_{1,i}(\tau) \eta(y) Z_{i}(y)
			\mbox{ \ for \ } \tau \in (\tau_0, C_{tm} T^{2\gamma_1 -3}], y\in B_{R_{*}(\tau)},
            \end{aligned}
		\end{aligned}
		\\
		\tilde{\phi}(y,\tau_0)
		=
		g_0
		\eta( \frac{2 y}{R_0} ) Z_0(y) 
		\mbox{ \ in \ } B_{R_{*}(\tau_0)}.
	\end{cases}
\end{equation}
Recall \eqref{mu1-xi-sys}. Let 
\begin{equation}
\tilde{\dot{\lambda}}_1 = \mathcal{S}_6[\psi, \lambda_1,\xi^{[1]}](t),
\quad
\tilde{\dot{\xi}}^{[1]} = \vec{\mathcal{S}}[\psi,\lambda_1,\xi^{[1]}](t).
\end{equation}
From this perspective,
our problem is reduced to finding a fixed point $(\psi, \phi, \dot{\lambda}_1, \dot{\xi}^{[1]})$ in some space $\mathcal{X}$ for 
\begin{equation}\label{fixed-point-problem}
\begin{aligned}
\big( & \mathcal{T}_{\rm{o}} [ \mathcal{G}[\psi,\phi,\lambda_0 + \lambda_1,\xi^{[0]} + \xi^{[1]}] \1_{|x| \le T^{9}, t \le T} ](x,t),  
\mathcal{T}^{\rm{in}}\big[ \mathcal{J}[\psi, \lambda_{0} + \lambda_{1}, \xi^{[0]} + \xi^{[1]} ] \big](y,\tau[\lambda_0 + \lambda_1](t)),
\\
&
\mathcal{S}_6[\psi, \lambda_1,\xi^{[1]}](t),
\vec{\mathcal{S}}[\psi,\lambda_1,\xi^{[1]}](t)
\big)
\end{aligned}
\end{equation}
with $\lambda_1 = \lambda_1[\dot{\lambda}_1](t)$ and $\xi^{[1]} = \xi^{[1]}[\dot{\xi}^{[1]}](t)$ given in \eqref{mu1-xi-sys}. Indeed, $\mathcal{X}$ will be defined in \eqref{Xspace-def}, which we explain in the following sections. By the argument in Section \ref{modi-orth-eq-sec}, once the fixed-point problem \eqref{fixed-point-problem} is solved, then for $t\in (t_0, T]$, we have $\varrho_{0,1}(\tau(t)) = 0$, $\varrho_{1,i}(\tau(t)) = 0$, $i\in \overline{1,5}$.

\medskip

Hereafter in this section, we devote ourselves to the detailed calculations for the outer problem.
We set
\begin{equation}
	w_{\rm o}(x,t)
	:=
	t^{-\frac{\gamma_1}{2} } (\ln t)^2 R_0^6 R^{-1} 
	( \1_{|x| \le t^{\frac{1}{2}}} + t |x|^{-2} \1_{|x| > t^{\frac{1}{2}} } ),
\end{equation}
and define the norm 
\begin{equation}
	\|f\|_{\rm o} := \sup_{t \in [t_0, T], x \in \overline{ B_{T^{9}} } } (w_{\rm o}(x,t) )^{-1} |f(x,t)|.
\end{equation}
We also define the following semi-norms and norm
\begin{equation}
[f]_{{\rm{o}},\nabla} := \sup_{t \in [t_0, T], x \in \overline{ B_{T^{9}} } } 
        \big[ 
		t^{-2+\frac{\gamma_1}{2}} (\ln t)^{\frac{3}{2} + \frac{1}{100}} R_0^6 R(t)^{-2} \big]^{-1} |\nabla f(x,t)|;
\end{equation}
given a constant $\alpha \in (0,1)$,
\begin{equation}
\begin{aligned}
[f]_{{\rm{o}},\alpha} := & \sup_{ t_*\in (t_0, T], x_* \in \overline{ B_{T^{9}} },  t_1, t_2 \in [ \max\{t_0, t_* -1\} ,t_*], t_1 \ne t_2 }
\\
&
\Big\{ \ln t_0
\big[
t_*^{-\frac{\gamma_1}{2} } (\ln t_*)^2 R_0^6 R(t_*)^{-1} 
+
t_*^{\frac{3}{2} \gamma_1 - 4} (\ln t_*) R_0^{6} R(t_*)^{-3} \big] \Big\}^{-1}
\frac{|f(x_*, t_1) - f(x_*, t_2) |}{| t_1 - t_2|^{\alpha}};
\end{aligned}
\end{equation}
\begin{equation}
\| f \|_{B_{\rm o}} := \max\{ \|f\|_{\rm o}, [f]_{{\rm{o}},\nabla}, [f]_{{\rm{o}},\alpha} \}.
\end{equation}
The outer problem \eqref{outer-problem} with $n=5$ will be solved in the space
\begin{equation}\label{Bo-def}
\begin{aligned}
B_{\rm o} := \big\{ f(x,t)  \mid & \ 
f(x,t) \in C\big( [t_0,T]; C^{1}( \overline{ B_{T^{9}} } ) \big),
\quad
\| f \|_{B_{\rm o}} \le 1,  
\\
&
\mbox{ $f(x,t)$ is even with respect to the $i$-th component of $x$ for $i \in \overline{2,5}$}
\big\},
\end{aligned}
\end{equation}
where the gradient estimate in $B_{\rm o}$ will be used for \eqref{qd26Apr12-7}, \eqref{move-26Sep5-4}, while the H\"older estimate will only be used for the compactness argument for $\mathcal{S}_{6} \times
\vec{\mathcal{S} }$ in Subsection \ref{inner-orth-subsec} later. Thus, the H\"older estimate of the outer problem is pretty rough. The following lemma gives the estimate for the mapping of the outer problem.

\begin{lemma}\label{outer-exist} 
	
For any $\psi$ defined in $\overline{B_{T^{9}}}  \times [t_0, T] $ satisfying $\|\psi\|_{\rm o} \le 1$ and $\psi(x,t)$ is even with respect to the $i$-th component of $x$ for $i \in \overline{2,5}$, $\phi \in B_{\rm{in}}$, $\lambda_1, \dot{\lambda}_1,  \xi^{[1]}, \dot{\xi}^{[1]}$ satisfying \eqref{mu1-ansatz}, then $\mathcal{T}_{\rm{o}} [ \mathcal{G} \1_{|x| \le T^{9}, t \le T} ](x,t)$ is even with respect to the $i$-th component of $x$, $i \in \overline{2,5}$.

Moreover, suppose the parameter assumption \eqref{mov-26Aug11-1} and
\begin{align}
&
3\gamma_1 + \gamma_2 > 6,
\quad
\frac{5}{2} -\beta -\frac{\gamma_1}{2} > 0,
\quad
\gamma_1 \in (\frac{3}{2}, 2),
\quad
6 + 2 \beta < 3 \gamma_1 + \min\{ \gamma_1, \gamma_2 \}, \quad  \gamma_1 <\min\{ \gamma_1, \gamma_2 \} + 2 \beta,
\notag
\\
&
\frac{3}{2} + \beta - \min\{ \gamma_1, \gamma_2 \} < 0,
\quad
0 < -\frac{15}{2} - 3 \beta + 5\gamma_1,
\quad
0 < -1 -\beta + \frac{7}{6} \min\{ \gamma_1, \gamma_2 \} - \frac{\gamma_1}{2},
\label{qd25Dec24-2}
\\
&
\min\{ \gamma_1, \gamma_2 \} \ge \frac{6}{7},
\quad
0 < - \frac{17}{2} - \frac{11}{3} \beta + \frac{17}{3} \gamma_1,
\quad
0 < -4 - \beta + \frac{8}{3} \gamma_1,
\notag
\end{align}
then for $t_0$ sufficiently large,
\begin{equation}\label{psi-est}
|\mathcal{T}_{\rm{o}} [ \mathcal{G} \1_{|x| \le T^{9}, t \le T} ](x,t) | \le  w_{\rm o}(x,t);
\end{equation}
there exists a constant $D_1 >0$ independent of $t_0, T$ such that for any $(x,t) \in \mathbb{R}^5 \times [t_0, \infty)$,
\begin{equation}\label{qd25Oct30-3}
|\nabla \mathcal{T}_{\rm{o}} [ \mathcal{G} \1_{|x| \le T^{9}, t \le T} ](x,t) |  
\le D_1
t^{-2+\frac{\gamma_1}{2}} (\ln t)^{\frac{3}{2}} R_0^6 R^{-2};
\end{equation}  
given any $\alpha_1 \in (0,1)$, there exists a constant $D_2 >0$ independent of $t_0, T$ such that for any $t_* \in (t_0,T]$,
\begin{equation}\label{out-holder-est}
\begin{aligned}
&
\sup\limits_{x_* \in \mathbb{R}^5, t_1, t_2 \in [ \max\{t_0, t_* -1\} ,t_*], t_1 \ne t_2}
\frac{|\mathcal{T}_{\rm{o}}[ \mathcal{G} \1_{|x| \le T^{9}, t \le T} ](x_*, t_1) - \mathcal{T}_{\rm{o}}[ \mathcal{G} \1_{|x| \le T^{9}, t \le T} ](x_*, t_2) |}{| t_1 - t_2|^{\alpha_1}}
\\
\le \ & D_2 \big[ 
t_*^{-\frac{\gamma_1}{2} } (\ln t_*)^2 R_0^6 R(t_*)^{-1} 
+
t_*^{\frac{3}{2} \gamma_1 - 4} (\ln t_*) R_0^{6} R(t_*)^{-3} \big].
\end{aligned}
\end{equation}

Furthermore, for any compact set $K \subset \mathbb{R}^5 \times [t_0,\infty)$, there exists $C_{K}$ sufficiently large such that for all $T > C_{K}$, then 
$K\subset B_{T^9} \times [t_0, T]$ and $\mathcal{T}_{\rm{o}} [ \mathcal{G} \1_{|x| \le T^{9}, t \le T} ]$, $\nabla \mathcal{T}_{\rm{o}} [ \mathcal{G} \1_{|x| \le T^{9}, t \le T} ]$ have uniform H\"older continuity in $K$ independent of the choice of $\psi$, $\phi$, $\lambda_1, \dot{\lambda}_1, \xi^{[1]}, \dot{\xi}^{[1]}$.
\end{lemma}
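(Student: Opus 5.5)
The plan is to bound the source $\mathcal{G}$ pointwise term by term and then feed the bounds into the heat kernel. Evenness comes first and is immediate. $\Psi$, $\psi$ and $\phi$ are even in $x_i$ (resp.\ $y_i$) for $i\in\overline{2,5}$. Since $\xi=(\xi_1,0,0,0,0)$, the maps $x\mapsto y,\tilde y$ commute with the reflections $x_i\mapsto -x_i$, and $U$, $Z_6$, $\eta$ are radial. We have $\dot\xi\cdot\nabla U(y)=\dot\xi_1\partial_{y_1}U(y)$, which is even in $y_i$ for $i\ge 2$, and likewise $\dot\xi\cdot\nabla\eta(\tilde y)$ and $\dot\xi\cdot\nabla_y\phi$. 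Hence every term of $\mathcal{G}$ in \eqref{g-n=5}, as well as the cutoff $\1_{|x|\le T^9}$, is even. The heat kernel commutes with these reflections, so $\mathcal{T}_{\rm o}[\mathcal{G}\1]$ is even.

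For \eqref{psi-est}, I will prove a weighted bound of the form
\[
|\mathcal{G}|\lesssim t^{-\frac{\gamma_1}{2}-1}(\ln t)^{a}R_0^6R^{-1-\epsilon}\Big(\lambda^{-2}\langle y\rangle^{-2-\sigma}\1_{|x-\xi|\le 2\sqrt t}+t^{-1}\1_{|x|\sim\sqrt t}\Big),
\]
splitting $\mathcal{G}$ as follows.

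\emph{$\mathcal{G}_1$.} It is supported where $|y|\ge R$ or $|\tilde y|\ge1$. Using $\dot\lambda\sim t^{1-\gamma_1}$, $\lambda\sim t^{2-\gamma_1}$ and $|\dot\xi|\lesssim\dot\lambda$, one gets $|\mathcal{G}_1|\lesssim t^{1-\gamma_1}\lambda^{-3/2}\langle y\rangle^{-3}\1_{|y|\ge R}$ plus the cutoff terms $\mathcal{E}_U^{\rm cut}$, which are of size $\lambda^{3/2}t^{-5/2}$ near $|x|\sim\sqrt t$.

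\emph{$\mathcal{G}_2$.} It is bounded by $\lambda^{-2}\langle y\rangle^{-4}(|\Psi|+|\psi|)\1_{|y|\ge R}$, with $|\Psi|\lesssim t^{-\gamma_1/2}+t^{-\gamma_2/2}$ from \eqref{qd25Dec25-4}.

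\emph{The $\phi$-terms.} $\Lambda_1$, $\Lambda_2$ and the first term are estimated with $\|\phi\|_{\rm in}\le1$, that is, $|\phi|+\langle y\rangle|\nabla\phi|\lesssim R_0^6t^{3-2\gamma_1}\ln t\,\langle y\rangle^{-1}$. This gives $|\Lambda_1|\lesssim\lambda^{-7/2}R^{-2}R_0^6t^{3-2\gamma_1}\ln t\,R^{-1}\1_{R\le|y|\le2R}$ and $|\Lambda_2|\lesssim \dot\lambda\lambda^{-5/2}R_0^6t^{3-2\gamma_1}\ln t\,\langle y\rangle^{-1}\1_{|y|\le2R}$.

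\emph{$\mathcal{N}$.} I will use the elementary inequality $\big||a+b|^{4/3}(a+b)-|a|^{4/3}a-\tfrac73|a|^{4/3}b\big|\lesssim |a|^{1/3}|b|^2\1_{|b|\le|a|}+|b|^{7/3}$, taking $b=\Psi+\psi+\lambda^{-3/2}\phi\eta_R$.

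Once these bounds are in hand, I convolve each piece with the heat kernel using Lemma \ref{qd25Oct22-3-lem} and the standard estimate for $\int_{t_0}^t\int (t-s)^{-5/2}e^{-|x-z|^2/4(t-s)}\lambda(s)^{-2}\langle (z-\xi)/\lambda\rangle^{-2-\sigma}\,\rmd z\,\rmd s$. This estimate yields a profile $\lesssim \1_{|x|\le\sqrt t}+t|x|^{-2}\1_{|x|>\sqrt t}$ times the time integral of the amplitude. The Duhamel integrals over $s\in[t_0,t]$ are of algebraic power type in the sense of Definition \ref{AP-def}, so the $\mathbf{P}_1$-exponents decide whether the $s$-integral concentrates at $s\sim t$ or at $s=t_0$. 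Each inequality in \eqref{qd25Dec24-2} is exactly what makes one such exponent comparison strictly favorable, so that a full factor of $(\ln t)^{-a}$ or a power $t^{-\epsilon}$ is gained and beats the implicit constants for $t_0\gg1$. The extra $(\ln t)^2$ and $R_0^6$ in $w_{\rm o}$ absorb logarithmic losses and the $\phi$-contributions.

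The gradient bound \eqref{qd25Oct30-3} follows in the same way, using $|\nabla_x$ of the heat kernel$|\lesssim (t-s)^{-1/2}\times$ kernel. I will split $s\in[t-1,t]$ from $s\in [t_0,t-1]$ and use $\lambda^{-1}\langle y\rangle^{-1}$ decay to keep the $s$-integral near $t$ convergent. This is why the bound carries $R^{-2}$ and a $(\ln t)^{3/2}$ power instead of $w_{\rm o}$. The Hölder bound \eqref{out-holder-est} in time on $[t_*-1,t_*]$ comes from standard parabolic estimates: I write $\mathcal{T}_{\rm o}(t_1)-\mathcal{T}_{\rm o}(t_2)$ as the heat semigroup acting on the difference, plus $\int_{t_2}^{t_1}$. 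The sup bound of $\mathcal{G}$ on unit time intervals, which produces the second term $t_*^{\frac32\gamma_1-4}\ln t_* R_0^6R^{-3}$ coming from $\Lambda_1$ and $\Lambda_2$, then gives any $\alpha_1<1$. Finally, the local uniform Hölder continuity on compact $K$ follows from $L^\infty$ bounds of $\mathcal{G}$ on a neighborhood of $K$, independent of $T$, together with interior parabolic regularity.

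The main obstacle will be the nonlinear term $\mathcal{N}$ and the $\mathcal{G}_2$ term with $\Psi$ near $|y|\sim R$, where $\Psi$ involves $t^{-\gamma_2/2}$ rather than $t^{-\gamma_1/2}$. Getting the decay rate right here requires $\min\{\gamma_1,\gamma_2\}$ in the conditions. I would handle it by splitting into $|b|\le \lambda^{-3/2}U$ and the complement, and by carefully tracking the $|x-\xi|\le\lambda R$ versus $\lambda R\le|x-\xi|\le\sqrt t$ regions. For each of the resulting power-type quantities, I would check that the matching inequality in \eqref{qd25Dec24-2} gives strict decay.
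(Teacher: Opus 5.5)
Your evenness argument, your handling of the H\"older bound \eqref{out-holder-est}, and the overall mechanism (termwise bounds, Duhamel integrals of algebraic power type compared through $\mathbf{P}_1$) agree with the paper. The central quantitative step, however, does not work.

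\textbf{The single majorant for $|\mathcal{G}|$ is false.} It fails for the leading terms:
\begin{itemize}
\item For admissible $\phi$ saturating $\|\phi\|_{\rm in}\le 1$, on $\{R\le|y|\le 2R\}$ the term $\Lambda_1$ has size $M:=t^{\frac32\gamma_1-4}\ln t\,R_0^6R^{-3}$. Your majorant there is $t^{\frac32\gamma_1-5}(\ln t)^aR_0^6R^{-3-\epsilon-\sigma}$, smaller by a factor $tR^{\epsilon+\sigma}$.
\item At $y=0$, $|\Lambda_2|\sim t^{-1-\frac{\gamma_1}{2}}\ln t\,R_0^6$ exceeds the majorant by roughly $\lambda^2R^{1+\epsilon}$.
\item On $|x|\sim\sqrt t$, $|\mathcal{E}_U^{\rm cut}|\sim t^{\frac12-\frac32\gamma_1}$ exceeds $t^{-\frac{\gamma_1}{2}-2}R^{-1-\epsilon}$, since $\gamma_1<2$.
\item The leading part of $\mathcal{G}_1$ is $\dot\lambda\lambda^{-5/2}\langle y\rangle^{-3}$, not $\dot\lambda\lambda^{-3/2}\langle y\rangle^{-3}$; the extra factor $\lambda$ could not be absorbed.
\end{itemize}

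\textbf{This cannot be fixed by adjusting exponents.} The critical term is $\Lambda_1$, not $\mathcal{N}$ or $\mathcal{G}_2$; those close with a $t^{-\epsilon}$ margin. The near-field heat potential of $\Lambda_1$ is of order $M(\lambda_0R)^2=t^{-\frac{\gamma_1}{2}}\ln t\,R_0^6R^{-1}=(\ln t)^{-1}w_{\rm o}$. So there is no room for a gain $R^{-\epsilon}$; the only margin is one logarithm.

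Your rule ``profile times $\int_{t_0}^t$ of the amplitude'' also fails. It amounts to integrating $\|\mathcal{G}(s)\|_{L^\infty}$ in time. For $\Lambda_1$ this gives at least $tM(t)$, which exceeds $w_{\rm o}$ by $t^{2\gamma_1-3-2\beta}/\ln t\to\infty$. This divergence follows from the lemma's own hypothesis $\frac32+\beta-\min\{\gamma_1,\gamma_2\}<0$.

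\textbf{What is needed.} Keep each term with its own support and profile:
\begin{itemize}
\item $\Lambda_1\lesssim M\1_{|x|\le 3\lambda_0R}$;
\item $\Lambda_2\lesssim t^{1-\frac32\gamma_1}\ln t\,R_0^6(\lambda_0+|x|)^{-1}\1_{|x|\le3\lambda_0R}$;
\item $\mathcal{G}_1\lesssim t^{2-\frac32\gamma_1}|x|^{-3}\1_{\lambda_0R/2\le|x|\le3\sqrt t}$;
\item $\mathcal{G}_2\lesssim t^{4-2\gamma_1-\frac12\min\{\gamma_1,\gamma_2\}}|x|^{-4}\1_{\lambda_0R/2\le|x|\le3\sqrt t}$;
\item and the five pieces of $\mathcal{N}$.
\end{itemize}
Then convolve each with a near/far estimate, for example
\[
\mathcal{T}_{\rm o}\big[M\1_{|x|\le 3\lambda_0R}\big]\lesssim M(\lambda_0R)^2\big(\1_{|x|\le\sqrt t}+t|x|^{-2}\1_{|x|>\sqrt t}\big)+t^{-\frac52}\rme^{-\frac{|x|^2}{16t}}\int_{t_0}^{\max\{t_0,t/2\}}M(s)\,(\lambda_0R)^5(s)\,\rmd s.
\]
Your $\mathbf{P}_1$-comparisons belong on these far-field integrals, and that is where the conditions in \eqref{qd25Dec24-2} arise. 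For $\Lambda_1$ they are $\frac52-\beta-\frac{\gamma_1}{2}>0$ and $\frac32+\beta-\gamma_1<0$.

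\textbf{The gradient bound.} \eqref{qd25Oct30-3} does not follow from splitting the Duhamel integral at $s=t-1$. If you bound $(t-s)^{-1/2}\le 1$ on $[t_0,t-1]$, you only recover size $w_{\rm o}$, which is too large by the factor $\rho:=t^{2-\gamma_1}(\ln t)^{1/2}R$. The split has to be at $s=t-\rho^2$; this is what the interpolation lemma \cite[Lemma 4.1]{decay5d} used in the paper does. It gives $\rho^{-1}w_{\rm o}+\rho\sup|\mathcal{G}|$, and both terms equal $t^{\frac{\gamma_1}{2}-2}(\ln t)^{3/2}R_0^6R^{-2}$. This is the actual source of the $R^{-2}$ and $(\ln t)^{3/2}$.

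That argument in turn needs the global bound $|\mathcal{G}|\lesssim t^{\frac32\gamma_1-4}\ln t\,R_0^6R^{-3}$. You get it by checking that every other term is dominated by the $\Lambda_1$-size term. That check uses $\gamma_1<\min\{\gamma_1,\gamma_2\}+2\beta$ and $-1-\beta+\frac76\min\{\gamma_1,\gamma_2\}-\frac{\gamma_1}{2}>0$.
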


\begin{proof}

By $\xi = (\xi_1, 0,0,0,0)$ from \eqref{mu1-ansatz}, $\eta$ is radially symmetric, and parity of $\psi, \phi$ in the assumption, $\mathcal{G}$ given in \eqref{g-n=5} is even with respect to the $i$-th component of $x$, $i \in \overline{2,5}$. So is $ \mathcal{G} \1_{|x| \le T^{9}, t \le T} $, which implies that $\mathcal{T}_{\rm{o}} [ \mathcal{G} \1_{|x| \le T^{9}, t \le T} ](x,t)$ is even with respect to the $i$-th component of $x$, $i \in \overline{2,5}$. We check two typical terms in $\mathcal{G}$, which are included in $\Lambda_1$. The other terms in $\mathcal{G}$ can be handled similarly. Consider
\begin{equation}\label{move-26Sep4-9}
		2 \lambda^{-\frac{7}{2}}  R^{-1}  \nabla_y \phi(y, t) \cdot (\nabla \eta )(\frac{y}{R}) 
		+
		\lambda^{-\frac{3}{2}} \phi( y, t) 
		( \nabla \eta )(\frac{y}{R}) \cdot \Big[ \frac{\dot{\xi}}{\lambda R}
		+ \frac{y}{R} \frac{1}{\lambda R } \frac{\rmd (\lambda R)}{\rmd t}  \Big]
\mbox{ \ with \ } y =\frac{(x_1-\xi_1, x_2, x_3, x_4, x_5)}{\lambda}.
\end{equation}
Since
\begin{equation*}
\begin{aligned}
&
2 \lambda^{-\frac{7}{2}}  R^{-1}  (\nabla_y \phi)\Big( \frac{(x_1-\xi_1, x_2, x_3, x_4, x_5)}{\lambda}, t \Big) \cdot (\nabla \eta )\Big( \frac{(x_1-\xi_1, x_2, x_3, x_4, x_5)}{\lambda R} \Big),
\\
&
( \nabla \eta )\Big( \frac{(x_1-\xi_1, x_2, x_3, x_4, x_5)}{\lambda R} \Big) \cdot \frac{\dot{\xi}}{\lambda R}
=
( \partial_{x_1} \eta )\Big( \frac{(x_1-\xi_1, x_2, x_3, x_4, x_5)}{\lambda R} \Big) \frac{\dot{\xi}_1}{\lambda R},
\\
&
( \nabla \eta )\Big( \frac{(x_1-\xi_1, x_2, x_3, x_4, x_5)}{\lambda R} \Big) \cdot \frac{(x_1-\xi_1, x_2, x_3, x_4, x_5)}{\lambda R}
\end{aligned}
\end{equation*}
are even with respect to the $i$-th component of $x$, $i \in \overline{2,5}$, then so is \eqref{move-26Sep4-9}. Under the assumption 
\begin{equation}\label{qd25Dec23-2}
\frac{3}{2}  +\beta -\gamma_1 <0,
\end{equation}
then $\lambda R \ll t^{\frac{1}{2}}$. Under the assumption \eqref{mu1-ansatz},
for $t>t_0 \gg 1$, we have the following useful relationships
\begin{equation}\label{qd25Oct19-1}
\begin{aligned}
&
		\{ x \mid |x| \le \lambda_{0} R/2 \}
		\subset
		\{ x \mid |x-\xi| \le \lambda R \}
		\subset
		\{ x \mid |x-\xi| \le 2 \lambda R \}
		\subset
		\{ x \mid |x| \le 3 \lambda_0 R \} 
\\
&
\subset
\{ x \mid |x| \le 2^{-1} t^{1/2} \}
\subset
\{ x \mid |x-\xi| \le t^{1/2} \}
\subset
\{ x \mid |x-\xi| \le 2 t^{1/2} \}
\stackrel{3\gamma_1 + \gamma_2 > 6}{\subset}
\{ x \mid |x| \le 3 t^{1/2} \}
\\
&
		\mbox{and } 
		\langle y \rangle \sim \langle \lambda_{0}^{-1} |x| \rangle,
\end{aligned}
\end{equation}
since $|\xi| \lesssim \lambda_0$, and then $
\langle y \rangle 
\sim 
1+ \lambda_{0}^{-1} | x- \xi |
\lesssim 
1+ \lambda_{0}^{-1} |x| $, 
$ \langle y \rangle
\sim C_1 + \lambda_{0}^{-1} | x - \xi |
\ge 
C_1 + \lambda_{0}^{-1} \big( |x| - |\xi| \big)
\sim 
1 + \lambda_{0}^{-1}|x|$
with a large constant $C_1$.

When estimating $\mathcal{G}$ in \eqref{g-n=5} term by term in order, we always assume $|x| \le T^{9}, t \le T$.

{\textbf{Estimate for $\lambda^{-2}
U(y)^{\frac{4}{3}}
(
\eta(\tilde{y})^{\frac{4}{3}} - 1 ) \lambda^{-\frac{3}{2}}\phi(y,t) \eta_{R}(y)$}.}
Since 
$ | \eta(\tilde{y})^{\frac{4}{3}} - 1 | \le \1_{|\tilde{y}| > 1} = \1_{|x-\xi| > t^{\frac{1}{2}}} \le \1_{|x| > \frac{3}{4} t^{\frac{1}{2}} } $, and $\eta_{R}(y) \le \1_{|x| \le 2^{-1} t^{\frac{1}{2}}} $ by \eqref{qd25Oct19-1}, we have
\begin{equation}\label{move-26Sep6-5}
\lambda^{-2}
U(y)^{\frac{4}{3}}
(
\eta(\tilde{y})^{\frac{4}{3}} - 1 ) \lambda^{-\frac{3}{2}}\phi(y,t) \eta_{R}(y) = 0.
\end{equation}

{\textbf{Estimate for $\Lambda_1$.}}
Hereafter in this proof, convolution estimates \cite[Lemma A.1, Lemma A.2]{infi4d} will be used repetitively to estimate $\mathcal{T}_{\rm{o}} [\cdot]$.  
Recall $\Lambda_1$ given in \eqref{Lambda1-phi}. Under the assumption \eqref{mu1-ansatz}, we have 
\begin{equation*}
	\Big| \frac{\dot{\xi}}{\lambda R} \Big|
	+
	\Big| \frac{1}{\lambda R} \frac{\rmd (\lambda R)}{\rmd t} \Big|
	= 
	\Big| \frac{\dot{\xi}}{\lambda R} \Big|
	+
	\Big| \frac{\dot{\lambda}}{\lambda} + \frac{\dot{R}}{R} \Big|
	\lesssim t^{-1} \lesssim \lambda^{-2} R^{-2}.
\end{equation*}
For $\phi \in B_{\rm{in}}$, by \eqref{Bin-def},  
\begin{equation}\label{qd25Oct22-6}
\langle y\rangle |\nabla_y \phi(y,t)| + |\phi(y,t)| 
\le R_0^{6} t^{3-2 \gamma_1} \ln t \langle y \rangle^{-1}
\mbox{ \ for \ }
t \in (t_0,T], y\in B_{4 R}.
\end{equation} 
Thus, 
\begin{align}
		&
		|\Lambda_1| 
= \Big| \lambda^{-\frac{7}{2}} R^{-2} \phi(y, t)  ( \Delta 
\eta )(\frac{y}{R})
+ 
2 \lambda^{-\frac{7}{2}}  R^{-1}  \nabla_y \phi(y, t) \cdot (\nabla \eta )(\frac{y}{R}) 
\notag
\\
&
+
\lambda^{-\frac{3}{2}} \phi( y, t) 
( \nabla \eta )(\frac{y}{R}) \cdot \Big[ \frac{\dot{\xi}}{\lambda R}
+ \frac{y}{R} \frac{1}{\lambda R} \frac{\rmd (\lambda R)}{\rmd t} \Big] \Big|
\notag
\\
\lesssim \ & \lambda^{-\frac{7}{2}} R^{-2} R_0^{6} t^{3-2\gamma_1} \ln t R^{-1}
\1_{R\le |y| \le 2R} 
+ 
\lambda^{-\frac{7}{2}} R^{-1}
R_0^{6} t^{3-2\gamma_1} \ln t R^{-2} \1_{R\le |y| \le 2R}
\label{Lambda1-upp}
\\
&
+
\lambda^{-\frac{3}{2}}
R_0^{6} t^{3-2\gamma_1} \ln t R^{-1} 
\1_{R\le |y| \le 2R} \Big[ \Big| \frac{\dot{\xi}}{\lambda R} \Big|
+ \Big| \frac{1}{\lambda R} \frac{\rmd (\lambda R)}{\rmd t} \Big| \Big]
\notag
\\
\sim \ & \lambda^{-\frac{7}{2}}  t^{3-2\gamma_1} \ln t R_0^{6} R^{-3}
\1_{R\le |y| \le 2R}
\sim t^{\frac{3}{2} \gamma_1 - 4} \ln t R_0^{6} R^{-3} \1_{R\le |y| \le 2R}
\le t^{\frac{3}{2} \gamma_1 - 4} \ln t R_0^{6} R^{-3} \1_{|x| \le 3 \lambda_0 R}.
\notag
\end{align}
Then,
\begin{align}
		&  |\mathcal{T}_{\rm{o}} [\Lambda_1]| \lesssim 
		\mathcal{T}_{\rm{o}} [ t^{\frac{3}{2} \gamma_1 - 4} \ln t R_0^6 R^{-3} \1_{|x| \le 3 \lambda_0 R} ]
		\lesssim 
		t^{-\frac 52}
		\rme^
		{-\frac{|x|^2}{16 t } } 
		\int_{t_0}^{ \max\{t_0, t/2 \} }
		s^{\frac{3}{2} \gamma_1 - 4} \ln s (R_0^6 R^{-3})(s)
		(\lambda_0 R)^5(s)
		\rmd s 
        \notag
		\\
		&
		+ t^{\frac{3}{2} \gamma_1 - 4} \ln t R_0^6 R^{-3}
		\big[ (\lambda_0 R)^2 \1_{|x| \le \lambda_0 R} + (\lambda_0 R)^5 |x|^{-3} 
		\rme^{-\frac{|x|^2}{16 t}} \1_{|x| > \lambda_0 R} \big]
		\\
		\lesssim \ &
		t^{-\frac 52}
		\rme^
		{-\frac{|x|^2}{16 t } } 
		\int_{t_0}^{ \max\{t_0, t/2 \} }
		s^{\frac{3}{2} \gamma_1 - 4} \ln s (R_0^6 R^{-3})(s)
		(\lambda_0 R)^5(s)
		\rmd s 
        \notag
		\\
		&
		+ t^{\frac{3}{2} \gamma_1 - 4} \ln t R_0^6 R^{-3} 
		\big[ (\lambda_0 R)^2 \1_{|x| \le t^{\frac{1}{2}}} + (\lambda_0 R)^2 t |x|^{-2} \1_{|x| > t^{\frac{1}{2}} } \big].
        \notag
\end{align}
To make
\begin{equation*}
	\begin{aligned}
		&
		t^{-\frac 52}
		\rme^
		{-\frac{|x|^2}{16 t } } 
		\int_{t_0}^{ \max\{t_0, t/2 \} }
		s^{\frac{3}{2} \gamma_1 - 4} \ln s (R_0^6 R^{-3})(s)
		(\lambda_0 R)^5(s)
		\rmd s 
		\\
		\lesssim \ & t^{\frac{3}{2} \gamma_1 - 4} \ln t R_0^6 R^{-3}
		\big[ (\lambda_0 R)^2 \1_{|x| \le t^{\frac{1}{2}}} + (\lambda_0 R)^2 t |x|^{-2} \1_{|x| > t^{\frac{1}{2}} } \big],
		\\
		&
		\mbox{it suffices to ensure  } 
		\int_{t_0}^{ \max\{t_0, t/2 \} }
		s^{\frac{3}{2} \gamma_1 - 4} \ln s (R_0^6 R^{-3})(s)
		(\lambda_0 R)^5(s)
		\rmd s
		\lesssim t^{\frac{3}{2} \gamma_1 - \frac{3}{2}} \ln t R_0^6 R^{-3} (\lambda_0 R)^2
		\\
\Leftrightarrow \ & \int_{t_0}^{ \max\{t_0, t/2 \} }
s^{ - \frac{7}{2} \gamma_1 +6} \ln s (R_0^6 R^{2})(s)
\rmd s
\lesssim t^{-\frac{\gamma_1}{2} + \frac{5}{2}} \ln t R_0^6 R^{-1}.
	\end{aligned}
\end{equation*}
We take the almost necessary condition up to a multiplicity of power of $\ln t$:
\begin{equation}\label{qd25Dec22-1}
	\mathbf{P}_1[t^{-\frac{\gamma_1}{2} + \frac{5}{2}} \ln t R_0^6 R^{-1}] > 0
	\Leftrightarrow
	\frac{5}{2} -\beta -\frac{\gamma_1}{2} >0.
\end{equation}

When $\mathbf{P}_1[ s^{ - \frac{7}{2} \gamma_1 +6} \ln s (R_0^6 R^{2})(s) ] >-1 $, it suffices to make
\begin{equation}\label{qd25Dec22-2}
		t^{-\frac{7}{2} \gamma_1 + 7} \ln t R_0^6 R^{2} 
		\lesssim t^{-\frac{\gamma_1}{2}  + \frac{5}{2}} \ln t R_0^6 R^{-1}
		\Leftrightarrow t^{-3 \gamma_1 + \frac{9}{2}} R^{3} 
		\lesssim 1
		\Leftrightarrow \frac{3}{2} + \beta - \gamma_1 <0.
\end{equation}

When $\mathbf{P}_1[s^{-\frac{7}{2} \gamma_1 + 6} \ln s (R_0^6 R^{2})(s)] =-1$, it suffices to make
$
(\ln t)^m
\lesssim t^{-\frac{\gamma_1}{2} + \frac{5}{2}} \ln t R_0^6 R^{-1} 
$
with a large constant $m>0$, which is true by \eqref{qd25Dec22-1}. Moreover,
$
1 + \mathbf{P}_1[
s^{ - \frac{7}{2} \gamma_1 +6} \ln s ( R_0^6 R^{2} )(s) ]
< \mathbf{P}_1[ t^{-\frac{\gamma_1}{2} + \frac{5}{2}} \ln t R_0^6 R^{-1} ]
$
implies \eqref{qd25Dec22-2}.

When $\mathbf{P}_1[s^{-\frac{7}{2} \gamma_1 + 6} \ln s ( R_0^6 R^{2})(s)] <-1$, it suffices to make
\begin{equation*}
	\begin{aligned}
		&
		t_0^{-\frac{7}{2} \gamma_1 + 7} \ln t_0 ( R_0^6 R^{2})(t_0)
		\lesssim  t^{-\frac{\gamma_1}{2}  + \frac{5}{2}} \ln t R_0^6 R^{-1}.
		\\
		&
		\mbox{By \eqref{qd25Dec22-1}, it suffices to make } 
		t_0^{-\frac{7}{2} \gamma_1 + 7} \ln t_0 (R_0^6 R^{2})(t_0)
		\lesssim  t_0^{-\frac{\gamma_1}{2} + \frac{5}{2}} \ln t_0 (R_0^6 R^{-1})(t_0),
	\end{aligned}
\end{equation*}
which is same as \eqref{qd25Dec22-2}. In sum, under the parameter restrictions
\begin{equation}
\frac{5}{2} -\beta -\frac{\gamma_1}{2} > 0,
\quad  
\frac{3}{2} + \beta - \gamma_1 < 0,
\end{equation}
we have
\begin{align*}
		& |\mathcal{T}_{\rm{o}} [\Lambda_1]| \lesssim
		\mathcal{T}_{\rm o} [ t^{\frac{3}{2} \gamma_1 - 4} \ln t R_0^6 R^{-3} \1_{|x| \le 3 \lambda_0 R} ] 
        \lesssim t^{\frac{3}{2} \gamma_1 - 4} \ln t R_0^6 R^{-3}
		[ (\lambda_0 R)^2 \1_{|x| \le t^{\frac{1}{2}}} + (\lambda_0 R)^2 t |x|^{-2} \1_{|x| > t^{\frac{1}{2}} } ]
        \\
		\sim \ &
		t^{-\frac{\gamma_1}{2} } \ln t R_0^6 R^{-1}
		( \1_{|x| \le t^{\frac{1}{2}}} + t |x|^{-2} \1_{|x| > t^{\frac{1}{2}} } ) = (\ln t)^{-1} w_{\rm{o}}.
\end{align*}

{\textbf{Estimate for $\Lambda_2$.}} Hereafter in this proof, we always
assume $\epsilon>0$ is a sufficiently small constant varying from line to line. For $\Lambda_2$ given in \eqref{Lambda2-phi}, we have
\begin{align}
|\Lambda_2|
= \ & \Big|
\dot{\lambda} \lambda^{-\frac 52}   \Big( \frac{3}{2} \phi(y,t)
+ y \cdot \nabla_y \phi (y,t) \Big) \eta_R(y)
+ \lambda^{-\frac{5}{2}} \dot{\xi} \cdot \nabla_y \phi(y, t) \eta_R(y) \Big|
\lesssim
| \dot{\lambda}_0 \lambda_0^{-\frac 52} | R_0^6 t^{3-2\gamma_1} \ln t
\langle y \rangle^{-1} \eta_R(y)
\notag
\\
\lesssim \ &
t^{-\frac{\gamma_1}{2}-1} \ln t R_0^6
\langle \lambda_{0}^{-1} |x| \rangle^{-1} \1_{|x| \le 3 \lambda_0 R}
\sim
t^{1-\frac{3}{2} \gamma_1} \ln t R_0^6
(\lambda_{0} + |x|)^{-1} \1_{|x| \le 3 \lambda_0 R}.
\end{align}
The convolution estimate gives
\begin{align}
|\mathcal{T}_{\rm o}[\Lambda_2]| \lesssim \ &
\mathcal{T}_{\rm o} \big[ t^{1-\frac{3}{2}\gamma_1} \ln t R_0^6
(\lambda_{0} + |x|)^{-1} \1_{|x| \le 3 \lambda_0 R} \big]
\lesssim 
t^{-\frac{5}{2}} \rme^{-\frac{|x|^2}{16 t}} \int_{t_0}^{\max\{t_0, t/2\}} s^{1-\frac{3}{2}\gamma_1} \ln s R_0^6 (\lambda_0 R)^{4}(s) \rmd s
\notag
\\
& \qquad + t^{1-\frac{3}{2}\gamma_1} \ln t R_0^6
\big[ \lambda_0 R \1_{|x| \le \lambda_0 R}
+
(\lambda_0 R)^{4}
|x|^{-3} \rme^{-\frac{|x|^2}{16 t}} \1_{ |x| > \lambda_0 R}
\big].
\end{align}
We will impose suitable parameter restrictions to make this upper bound smaller than $t^{-\epsilon} w_{\rm o} =
t^{-\epsilon-\frac{\gamma_1}{2} } $ $(\ln t)^2 R_0^6 R^{-1} 
( \1_{|x| \le t^{\frac{1}{2}}} + t |x|^{-2} \1_{|x| > t^{\frac{1}{2}} } )$. Consider the 1st part. To make
\begin{equation*}
\begin{aligned}
&
t^{-\frac{5}{2}} \rme^{-\frac{|x|^2}{16 t}} \int_{t_0}^{\max\{t_0, t/2\}} s^{1-\frac{3}{2}\gamma_1} \ln s R_0^6 (\lambda_0 R)^{4}(s) \rmd s
\lesssim t^{-\epsilon-\frac{\gamma_1}{2} } (\ln t)^2 R_0^6 R^{-1} 
( \1_{|x| \le t^{\frac{1}{2}}} + t |x|^{-2} \1_{|x| > t^{\frac{1}{2}} } )
\\
\Leftrightarrow \ &  \int_{t_0}^{\max\{t_0, t/2\}} s^{1-\frac{3}{2}\gamma_1} \ln s R_0^6 (\lambda_0 R)^{4}(s) \rmd s
\lesssim t^{-\epsilon-\frac{\gamma_1}{2} +\frac{5}{2} } (\ln t)^2 R_0^6 R^{-1} 
( \1_{|x| \le t^{\frac{1}{2}}} + t |x|^{-2} \rme^{\frac{|x|^2}{16 t}} \1_{|x| > t^{\frac{1}{2}} } ),
\\
&
\mbox{it suffices to make }
\int_{t_0}^{\max\{t_0, t/2\}} s^{9-\frac{11}{2}\gamma_1} \ln s (R_0^6 R^{4})(s) \rmd s
\lesssim t^{-\epsilon-\frac{\gamma_1}{2} +\frac{5}{2} } (\ln t)^2 R_0^6 R^{-1}.
\end{aligned}
\end{equation*}
We take the almost necessary condition for the above inequality:
\begin{equation}\label{qd25Dec23-4}
\mathbf{P}_1[t^{-\epsilon-\frac{\gamma_1}{2} +\frac{5}{2} } (\ln t)^2 R_0^6 R^{-1}] > 0
\Leftrightarrow
\frac{5}{2} - \beta -\frac{\gamma_1}{2} > \epsilon.
\end{equation}

When $\mathbf{P}_1[s^{9-\frac{11}{2}\gamma_1} \ln s (R_0^6 R^{4})(s)] > -1$, it suffices to make
\begin{equation}\label{qd25Dec23-5}
\begin{aligned}
& t^{10-\frac{11}{2}\gamma_1} \ln t R_0^6 R^{4}
\lesssim t^{-\epsilon-\frac{\gamma_1}{2} +\frac{5}{2} } (\ln t)^2 R_0^6 R^{-1}
\Leftrightarrow
1
\lesssim 
t^{-\epsilon + 5 \gamma_1 - \frac{15}{2} } \ln t R^{-5},
\\
&
\mbox{which can be guaranteed by } 
\frac{3}{2} + \beta - \gamma_1 < -\frac{\epsilon}{5}.
\end{aligned}
\end{equation}

When $\mathbf{P}_1[s^{9-\frac{11}{2}\gamma_1} \ln s (R_0^6 R^{4})(s)] = -1$, it suffices to make
$
(\ln t)^m
\lesssim 
t^{-\epsilon-\frac{\gamma_1}{2} +\frac{5}{2} } (\ln t)^2 R_0^6 R^{-1}
$
with a large constant $m>0$, which is true by \eqref{qd25Dec23-4}. Notice that
$
1 + 
\mathbf{P}_1[s^{9-\frac{11}{2}\gamma_1} \ln s (R_0^6 R^{4})(s)]
< 
\mathbf{P}_1 [ t^{-\epsilon-\frac{\gamma_1}{2} +\frac{5}{2} } (\ln t)^2 R_0^6 R^{-1} ]
$
implies the parameter assumption in \eqref{qd25Dec23-5}.

When $\mathbf{P}_1[s^{9-\frac{11}{2}\gamma_1} \ln s (R_0^6 R^{4})(s)] < -1$, it suffices to make
\begin{equation*}
\begin{aligned}
& 
t_0^{10-\frac{11}{2}\gamma_1} \ln t_0 (R_0^6 R^{4})(t_0)
\lesssim 
t^{-\epsilon-\frac{\gamma_1}{2} +\frac{5}{2} } (\ln t)^2 R_0^6 R^{-1}.
\\
&
\mbox{By \eqref{qd25Dec23-4}, it suffices to ensure } t_0^{10-\frac{11}{2}\gamma_1} \ln t_0 (R_0^6 R^{4})(t_0)
\lesssim 
t_0^{-\epsilon-\frac{\gamma_1}{2} +\frac{5}{2} } (\ln t_0)^2 (R_0^6 R^{-1})(t_0),
\end{aligned}
\end{equation*}
which can be dealt with similarly to \eqref{qd25Dec23-5}.

Consider the 2nd part. When $|x| \le t^{\frac{1}{2}}$, to make
\begin{equation*}
	\begin{aligned}
		& t^{1-\frac{3}{2}\gamma_1} \ln t R_0^6
		\big[ \lambda_0 R \1_{|x| \le \lambda_0 R}
		+
		(\lambda_0 R)^{4}
		|x|^{-3} \rme^{-\frac{|x|^2}{16 t}} \1_{\lambda_0 R < |x| \le t^{\frac{1}{2}} }
		\big]
		\lesssim t^{-\epsilon-\frac{\gamma_1}{2} } (\ln t)^2 R_0^6 R^{-1} \1_{|x| \le t^{\frac{1}{2}}},
\\
&
\mbox{it suffices to ensure }
t^{1-\frac{3}{2}\gamma_1} \lambda_0 R
\lesssim t^{-\epsilon-\frac{\gamma_1}{2} } \ln t R^{-1},
	\end{aligned}
\end{equation*}
which can be achieved by taking
\begin{equation}
\frac{3}{2} + \beta - \gamma_1 < - \frac{\epsilon}{2}.
\end{equation}

When $|x| > t^{\frac{1}{2}}$,
notice that $ 
(\lambda_0 R)^{4}
|x|^{-3} \rme^{-\frac{|x|^2}{16 t}} \1_{ |x| > t^{\frac{1}{2}}}
=
\lambda_0 R (\lambda_0 R)^{3}
|x|^{-3} \rme^{-\frac{|x|^2}{16 t}} \1_{ |x| > t^{\frac{1}{2}}} 
$.
To make
\begin{align*}
&
(\lambda_0 R)^{3}
|x|^{-3} \rme^{-\frac{|x|^2}{16 t}} \1_{ |x| > t^{\frac{1}{2}}}
\lesssim t |x|^{-2} \1_{|x| > t^{\frac{1}{2}} }
\Leftrightarrow
(\lambda_0 R)^{3} t^{-1}
|x|^{-1} \rme^{-\frac{|x|^2}{16 t}} \1_{ |x| > t^{\frac{1}{2}}}
\lesssim \1_{|x| > t^{\frac{1}{2}} },
\\
&
\mbox{it suffices to make }
(\lambda_0 R)^{3} t^{-\frac{3}{2}} \lesssim 1 \Leftrightarrow 
\lambda_0
R
\lesssim t^{\frac{1}{2}},
\end{align*}
which is guaranteed by \eqref{qd25Dec23-2}. Then, the case $|x| > t^{\frac{1}{2}}$ goes back to the case $|x| \le t^{\frac{1}{2}}$ above.

{\textbf{Estimate for $\mathcal{G}_1$.}}
\begin{equation}\label{qd25Dec27-4}
	\begin{aligned}
&  |\mathcal{G}_1|
\lesssim  t^{\frac{3}{2}\gamma_1 -4}  \langle \lambda_{0}^{-1} |x| \rangle^{-3} \1_{\lambda_0 R/2 \le |x| \le 3 t^{\frac{1}{2}}}
+
t^{-\frac{3}{2} \gamma_1 + \frac{1}{2} }
\1_{ 2^{-1} t^{\frac{1}{2}} \le |x| \le 3 t^{\frac{1}{2}} }
+
\lambda_0^{-\frac{7}{2}}
\langle \lambda_{0}^{-1} |x| \rangle^{-7} \1_{ 1 \le |\tilde{y}| \le 2}
\\
\lesssim \ & t^{-\frac{3}{2} \gamma_1 + 2} |x|^{-3} \1_{\lambda_0 R/2 \le |x| \le 3 t^{\frac{1}{2}}}
+
t^{-\frac{3}{2} \gamma_1 + \frac{1}{2} }
\1_{ 2^{-1} t^{\frac{1}{2}} \le |x| \le 3 t^{\frac{1}{2}} }
+
\lambda_0^{-\frac{7}{2}}
\langle \lambda_{0}^{-1} |x| \rangle^{-7} \1_{ 2^{-1} t^{\frac{1}{2}} \le |x| \le 3 t^{\frac{1}{2}} }
\\
\sim \ & t^{-\frac{3}{2} \gamma_1 + 2} |x|^{-3} \1_{\lambda_0 R/2 \le |x| \le 3 t^{\frac{1}{2}}}
+
t^{-\frac{3}{2} \gamma_1 + \frac{1}{2} }
\1_{ 2^{-1} t^{\frac{1}{2}} \le |x| \le 3 t^{\frac{1}{2}} }
+
t^{-\frac{7}{2} \gamma_1 + \frac{7}{2}} \1_{ 2^{-1} t^{\frac{1}{2}} \le |x| \le 3 t^{\frac{1}{2}} }
\\
\stackrel{\gamma_1 \ge \frac{3}{2}}{\sim} \ & t^{-\frac{3}{2} \gamma_1 + 2} |x|^{-3} \1_{\lambda_0 R/2 \le |x| \le 3 t^{\frac{1}{2}}}
+
t^{-\frac{3}{2} \gamma_1 + \frac{1}{2} }
\1_{ 2^{-1} t^{\frac{1}{2}} \le |x| \le 3 t^{\frac{1}{2}} }
\sim  t^{-\frac{3}{2} \gamma_1 + 2} |x|^{-3} \1_{\lambda_0 R/2 \le |x| \le 3 t^{\frac{1}{2}}},
	\end{aligned}
\end{equation}
where we use
\begin{equation*}
\begin{aligned}
&
|\mathcal{E}_{U}^{\rm{cut}}| 
= \big| \lambda^{-\frac{3}{2}} U(y) 
( 2^{-1} t^{-1} \tilde{y}  + t^{-\frac{1}{2}} \dot{\xi} )\cdot ( \nabla \eta ) ( \tilde{y} )  
+
2\lambda^{-\frac{5}{2}} t^{-\frac{1}{2}} 
( \nabla U ) (y) \cdot 
( \nabla \eta ) ( \tilde{y} )
+
\lambda^{-\frac{3}{2}} t^{-1} U ( y ) ( \Delta \eta ) ( \tilde{y} ) \big|
\\
&
\mbox{since $ t^{-\frac{1}{2}} |\dot{\xi}| \lesssim t^{-1} $ is guaranteed by $ 3\gamma_1 + \gamma_2 \ge 6$,}
\\
\lesssim \ & \big( \lambda^{-\frac{3}{2}} \langle y \rangle^{-3} t^{-1}
+
\lambda^{-\frac{5}{2}} t^{-\frac{1}{2}}
\langle y \rangle^{-4}
+
\lambda^{-\frac{3}{2}} t^{-1} \langle y \rangle^{-3} \big)
\1_{ t^{\frac{1}{2}} \le |x-\xi| \le 2 t^{\frac{1}{2}} }
\\
\lesssim \ & \big(
\lambda^{-\frac{5}{2}} t^{-\frac{1}{2}}
\langle \lambda_{0}^{-1} |x| \rangle^{-4}
+
\lambda^{-\frac{3}{2}} t^{-1} \langle \lambda_{0}^{-1} |x| \rangle^{-3} \big)
\1_{ 2^{-1} t^{\frac{1}{2}} \le |x| \le 3 t^{\frac{1}{2}} }
\sim
t^{-\frac{3}{2} \gamma_1 + \frac{1}{2} }
\1_{ 2^{-1} t^{\frac{1}{2}} \le |x| \le 3 t^{\frac{1}{2}} }.
\end{aligned} 
\end{equation*}
The convolution estimate gives 
\begin{equation}\label{qd25Dec28-1}
	\begin{aligned}
		&
        |\mathcal{T}_{\rm o}[\mathcal{G}_1]|
        \lesssim
		\mathcal{T}_{\rm o} \big[ t^{-\frac{3}{2} \gamma_1 + 2} |x|^{-3} \1_{\lambda_0 R/2 \le |x| \le 3 t^{\frac{1}{2}}} \big]
		\lesssim 
		t^{-\frac 52}
		\rme^
		{-\frac{|x|^2}{16 t } } 
		\int_{t_0}^{\max\{ t_0, t/2 \} } s^{-\frac{3}{2} \gamma_1 + 3}
		\rmd s 
		\\
		&
		+ t^{-\frac{3}{2} \gamma_1 + 2}
		\big[ (\lambda_0 R)^{-1} \1_{|x| \le \lambda_0 R} + |x|^{-1} \1_{\lambda_0 R < |x| \le t^{\frac{1}{2}}}
		+
		t |x|^{-3} 
		\rme^{-\frac{|x|^2}{16 t}} \1_{|x| > t^{\frac{1}{2}}} \big]
\\
\stackrel{\gamma_1<\frac{8}{3}}{\lesssim} \ &  t^{-\frac{3}{2} \gamma_1 + \frac{3}{2}}
\rme^{-\frac{|x|^2}{16 t } }
+ t^{-\frac{3}{2} \gamma_1 + 2} (\lambda_0 R)^{-1}
\big[  \1_{|x| \le t^{\frac{1}{2}} } 
+
\lambda_0 R t |x|^{-3} 
\rme^{-\frac{|x|^2}{16 t}} \1_{|x| > t^{\frac{1}{2}}} \big]
\\
\stackrel{\eqref{qd25Dec23-2}}{\lesssim} \ &  t^{-\frac{\gamma_1}{2} } R^{-1}
( \1_{|x| \le t^{\frac{1}{2}} } 
+ t |x|^{-2} \1_{|x| > t^{\frac{1}{2}}} )
=
[ (\ln t)^2 R_0^6 ]^{-1} w_{\rm o}.
	\end{aligned}
\end{equation}

{\textbf{Estimate for $\mathcal{G}_2$.}}
For $\psi$ defined in $\overline{B_{T^{9}}}  \times [t_0, T] $ satisfying $\|\psi\|_{\rm o} \le 1$, by \eqref{qd25Dec25-4} and $t_0 \ge 1$,
\begin{align}
		|\Psi| + |\psi|
		\lesssim \ &
t^{-\frac{1}{2} \min\{ \gamma_1, \gamma_2 \}} 
\1_{|x| \le t^{\frac{1}{2}}} + |x|^{-\min\{ \gamma_1, \gamma_2 \}} \1_{|x| > t^{\frac{1}{2}}}
		+ t^{-\frac{\gamma_1}{2} } (\ln t)^2 R_0^6 R^{-1} 
		( \1_{|x| \le t^{\frac{1}{2}}} + t |x|^{-2} \1_{|x| > t^{\frac{1}{2}} } )
        \notag
		\\
		\stackrel{\gamma_1 \le 2}{\sim} \ &  t^{-\frac{1}{2} \min\{ \gamma_1, \gamma_2 \} } 
\1_{|x| \le t^{\frac{1}{2}}} + |x|^{-\min\{ \gamma_1, \gamma_2 \}} \1_{|x| > t^{\frac{1}{2}}}.
\label{qd25Oct25-2}
\end{align}
It follows that
\begin{equation}
\begin{aligned}
|\mathcal{G}_2|
\lesssim \ & \lambda_0^{-2}
\langle \lambda_{0}^{-1} |x| \rangle^{-4}
\1_{\lambda_0 R/2 \le |x| \le 3 t^{\frac{1}{2}}} \big( t^{-\frac{1}{2} \min\{ \gamma_1, \gamma_2 \} } 
\1_{|x| \le t^{\frac{1}{2}}} + |x|^{-\min\{ \gamma_1, \gamma_2 \}} \1_{|x| > t^{\frac{1}{2}}} \big)
\\
\sim \ &
t^{- 2 \gamma_1 -\frac{1}{2} \min\{ \gamma_1, \gamma_2 \}  +4} |x|^{-4}
\1_{\lambda_0 R/2 \le |x| \le 3 t^{\frac{1}{2}}}.
\end{aligned}
\end{equation}
By the convolution estimate,
\begin{align}
		& 
        |\mathcal{T}_{\rm o}[\mathcal{G}_2]|
        \lesssim
        \mathcal{T}_{\rm o} \big[ t^{-2 \gamma_1 -\frac{1}{2} \min\{ \gamma_1, \gamma_2 \} +4} |x|^{-4}
		\1_{\lambda_0 R/2 \le |x| \le 3 t^{\frac{1}{2}} } \big]
		\lesssim 
		t^{-\frac 52}
		\rme^{-\frac{|x|^2}{16 t } } 
		\int_{t_0}^{\max\{ t_0, t/2 \} }  s^{-2 \gamma_1 -\frac{1}{2} \min\{ \gamma_1, \gamma_2 \} + \frac{9}{2}}
		\rmd s 
        \notag
		\\
		&
		+ t^{-2 \gamma_1 -\frac{1}{2} \min\{ \gamma_1, \gamma_2 \} +4}
		\big[ (\lambda_0 R)^{-2} \1_{|x| \le \lambda_0 R} + |x|^{-2} \1_{\lambda_0 R < |x| \le t^{\frac{1}{2}}}
		+
		t^{\frac{1}{2}}
		|x|^{-3} 
		\rme^{-\frac{|x|^2}{16 t}} \1_{|x| > t^{\frac{1}{2}}} \big] 
        \notag
		\\
& \mbox{by $4\gamma_1 + \min\{ \gamma_1, \gamma_2 \} < 11$, then } 
        \\
		\lesssim \ &
		t^{3- 2 \gamma_1 -\frac{1}{2} \min\{ \gamma_1, \gamma_2 \}}
		\rme^{-\frac{|x|^2}{16 t } }
		+ t^{- 2 \gamma_1 -\frac{1}{2} \min\{ \gamma_1, \gamma_2 \} +4} (\lambda_0 R)^{-2}
		\big[ \1_{|x| \le t^{\frac{1}{2}}}
		+
		(\lambda_0 R)^{2}
		t^{\frac{1}{2}}
		|x|^{-3} 
		\rme^{-\frac{|x|^2}{16 t}} \1_{|x| > t^{\frac{1}{2}}} \big] 
        \notag
		\\
		\stackrel{\eqref{qd25Dec23-2}}{\lesssim} \ &
		t^{3-2 \gamma_1 -\frac{1}{2} \min\{ \gamma_1, \gamma_2 \}}
		\rme^{-\frac{|x|^2}{16 t } }
		+ t^{-\frac{1}{2} \min\{ \gamma_1, \gamma_2 \}} R^{-2}
		( \1_{|x| \le t^{\frac{1}{2}}}
		+ 
		\rme^{-\frac{|x|^2}{16 t}} \1_{|x| > t^{\frac{1}{2}}} ) 
		\lesssim t^{-\epsilon} w_{\rm o},
        \notag
\end{align}
where for the last step, we require the assumption
\begin{equation}
2 \epsilon + 6 + 2 \beta < 3 \gamma_1 + \min\{ \gamma_1, \gamma_2 \}, \quad 
2 \epsilon + \gamma_1 <\min\{ \gamma_1, \gamma_2 \} + 2 \beta.
\end{equation}

{\textbf{Estimate for $\mathcal{N}$.}} By \eqref{qd25Oct25-2} and \eqref{qd25Oct22-6} with $\langle y \rangle^{-1} \sim \langle \lambda_{0}^{-1} |x| \rangle^{-1} \sim \lambda_0
( \lambda_{0} + |x| )^{-1}$,
\begin{equation*}
|\Psi| + |\psi| + | \lambda^{-\frac{3}{2}} \phi(y,t) \eta_R(y) |
\lesssim
t^{-\frac{1}{2} \min\{ \gamma_1, \gamma_2 \} } 
\1_{|x| \le t^{\frac{1}{2}}} + |x|^{-\min\{ \gamma_1, \gamma_2 \}} \1_{|x| > t^{\frac{1}{2}}}
+
t^{-\frac{\gamma_1}{2}} \ln t R_0^6
\lambda_0
( \lambda_{0} + |x| )^{-1} \1_{|x| \le 3 \lambda_0 R}.
\end{equation*}
For $\mathcal{N}$ defined in \eqref{N-def} with $n=5$, using $\big| |A+B|^{\frac{4}{3}} (A+B) -|A|^{\frac{4}{3}}A -\frac 73 |A|^{\frac{4}{3}} B\big| \lesssim ( |A|^{\frac{1}{3}} + |B|^{\frac{1}{3}} ) B^2$ with $A= \lambda^{-\frac{3}{2}} U(y) \eta(\tilde{y})$ and $B=u-A$, we have
\begin{equation}\label{qd25Dec28-3}
	\begin{aligned}
		&
		| \mathcal{N}| 
		=  \big|
		|u|^{\frac{4}{3}} u 
		-
		\lambda^{-\frac{7}{2}} ( U(y) \eta(\tilde{y}) )^{\frac{7}{3}}
		-
		\frac{7}{3} \lambda^{-2} 
		(U(y) \eta(\tilde{y}) )^{\frac{4}{3}} 
		\big(\Psi + \psi + \lambda^{-\frac{3}{2}} \phi(y,t) \eta_R(y) \big) \big|
		\\
		\lesssim \ &
		\big(
		\big| \lambda^{-\frac{3}{2}} U(y) \eta(\tilde{y}) \big|^{\frac{1}{3}}
		+
		\big|
		\Psi + \psi + \lambda^{-\frac{3}{2}} \phi(y,t) \eta_R(y)
		\big|^{\frac{1}{3}}
		\big)
		\big|
		\Psi + \psi + \lambda^{-\frac{3}{2}} \phi(y,t) \eta_R(y)
		\big|^2
		\\
		\lesssim \ &
		\lambda_0^{\frac{1}{2}} (\lambda_0 + |x|)^{-1} \1_{ |x| \le 3 t^{1/2} }
		\big(
		|\Psi| + |\psi| + |\lambda^{-\frac{3}{2}} \phi(y,t) \eta_R(y)|
		\big)^2 
		+
		\big(
		|\Psi| + |\psi| + |\lambda^{-\frac{3}{2}} \phi(y,t) \eta_R(y)|
		\big)^{\frac{7}{3}}
		\\
		\lesssim \ &
		t^{1-\frac{\gamma_1}{2}} (\lambda_0 + |x|)^{-1} \1_{|x| \le 3 t^{1/2}}
		\big[ t^{-\min\{ \gamma_1, \gamma_2 \}} \1_{|x|\le t^{\frac{1}{2}}} + |x|^{-2 \min\{ \gamma_1, \gamma_2 \}} \1_{|x| > t^{\frac{1}{2}}}
        \\
        &
		+
		t^{-\gamma_1} (\ln t R_0^6)^{2} 
		\lambda_0^{2}
		( \lambda_{0} + |x| )^{-2} \1_{|x| \le 3 \lambda_0 R} \big]
		\\
		& +
		t^{-\frac{7}{6} \min\{ \gamma_1, \gamma_2 \}} \1_{|x|\le t^{\frac{1}{2}}} + |x|^{-\frac{7}{3} \min\{ \gamma_1, \gamma_2 \}} \1_{|x| > t^{\frac{1}{2}}}
		+
		t^{-\frac{7}{6} \gamma_1} (\ln t R_0^6)^{\frac{7}{3}}
		\lambda_0^{\frac{7}{3}}
		( \lambda_{0} + |x| )^{-\frac{7}{3}} \1_{|x| \le 3 \lambda_0 R}
		\\
		\sim \ & 
		t^{1-\frac{\gamma_1}{2} -\min\{ \gamma_1, \gamma_2 \} } (\lambda_0 + |x|)^{-1} \1_{|x| \le 3 t^{1/2}}
		+
		t^{1-\frac{3}{2} \gamma_1} (\ln t R_0^6)^{2}
		\lambda_0^{2}
		( \lambda_{0} + |x| )^{-3} \1_{|x| \le 3 \lambda_0 R}
		\\
		& +
		t^{-\frac{7}{6} \min\{ \gamma_1, \gamma_2 \}} \1_{|x|\le t^{\frac{1}{2}}} + |x|^{-\frac{7}{3} \min\{ \gamma_1, \gamma_2 \}} \1_{|x| > t^{\frac{1}{2}}}
		+
		t^{-\frac{7}{6} \gamma_1} (\ln t R_0^6)^{\frac{7}{3}}
		\lambda_0^{\frac{7}{3}}
		( \lambda_{0} + |x| )^{-\frac{7}{3}} \1_{|x| \le 3 \lambda_0 R}.
	\end{aligned}
\end{equation}
We will present the convolution estimate for the above term by term. For the 1st part,
\begin{equation}\label{qd25Oct23-2}
	\begin{aligned}
		&
		\mathcal{T}_{\rm o} \big[ t^{1-\frac{\gamma_1}{2} 
        -
        \min\{ \gamma_1, \gamma_2 \}
        } (\lambda_0 + |x|)^{-1} \1_{|x| \le 3 t^{1/2}} \big]
		\\
		\lesssim \ &
		t^{-\frac 52}
		\rme^
		{-\frac{|x|^2}{16 t } } 
		\int_{t_0}^{\max\{t_0, t/2\} }  s^{3-\frac{\gamma_1}{2}
        -
        \min\{ \gamma_1, \gamma_2 \}}
		\rmd s 
		+
		t^{1-\frac{\gamma_1}{2}
        -
        \min\{ \gamma_1, \gamma_2 \}
        }
		\big(
		t^{\frac{1}{2}} \1_{|x| \le t^{\frac{1}{2}}}
		+
		t^2
		|x|^{-3} 
		\rme^{-\frac{|x|^2}{16 t}} \1_{|x| > t^{\frac{1}{2}}}
		\big)
        \\
        &
\stackrel{\gamma_1 + 2\min\{ \gamma_1, \gamma_2 \} < 8}{\lesssim} 
t^{\frac{3}{2}-\frac{\gamma_1}{2} - \min\{ \gamma_1, \gamma_2 \}}
		\rme^{-\frac{|x|^2}{16 t } }
		\lesssim t^{-\epsilon} w_{\rm o},
	\end{aligned}
\end{equation}
where for the last step, we require 
\begin{equation}
\frac{3}{2} + \beta - \min\{ \gamma_1, \gamma_2 \} < -\epsilon.
\end{equation}

For the 2nd part,
\begin{equation*}
	\begin{aligned}
		&
		\mathcal{T}_{\rm o}\big[t^{1-\frac{3}{2} \gamma_1} (\ln t R_0^6)^{2}
		\lambda_0^{2}
		( \lambda_{0} + |x| )^{-3} \1_{|x| \le 3 \lambda_0 R} \big]
		\\
		\lesssim \ &
		t^{-\frac 52}
		\rme^
		{-\frac{|x|^2}{16 t } } 
		\int_{t_0}^{\max\{ t_0, t/2\} }   s^{1-\frac{3}{2} \gamma_1} (\ln s R_0^6)^2 \lambda_0^2(s) (\lambda_0 R)^2(s) \rmd s 
		\\
		&
		+
		t^{1-\frac{3}{2} \gamma_1} (\ln t R_0^6)^{2}
		\lambda_0^{2}
		\big[ \lambda_0^{-1} \1_{|x| \le \lambda_0} + |x|^{-1} \1_{\lambda_0 < |x| \le \lambda_0 R}
		+ (\lambda_0 R)^2
		|x|^{-3} 
		\rme^{-\frac{|x|^2}{16 t}} \1_{|x| > \lambda_0 R} \big]
\\
\sim \ &
t^{-\frac 52}
\rme^{-\frac{|x|^2}{16 t } } 
\int_{t_0}^{\max\{ t_0, t/2\} }   s^{9-\frac{11}{2} \gamma_1} (\ln s R_0^6)^2 R^2(s) \rmd s 
\\
&
+
t^{3-\frac{5}{2} \gamma_1} (\ln t R_0^6)^{2}
\big[ \1_{|x| \le \lambda_0} + \lambda_0 |x|^{-1} \1_{\lambda_0 < |x| \le \lambda_0 R}
+ \lambda_0 (\lambda_0 R)^2
|x|^{-3} 
\rme^{-\frac{|x|^2}{16 t}} \1_{|x| > \lambda_0 R} \big].
	\end{aligned}
\end{equation*}
We will impose parameter restrictions to make the above smaller than $t^{-\epsilon} w_{\rm o}$. On the one hand, to make
\begin{align}
		&
		t^{-\frac 52}
		\rme^{-\frac{|x|^2}{16 t } } 
		\int_{t_0}^{\max\{ t_0, t/2\} }   s^{9-\frac{11}{2} \gamma_1} (\ln s R_0^6)^2 R^2(s) \rmd s 
		\lesssim t^{-\epsilon} w_{\rm o},
        \notag
		\\
		&
		\mbox{it suffices to ensure }
		\int_{t_0}^{\max\{ t_0, t/2\} }   s^{9-\frac{11}{2} \gamma_1} (\ln s R_0^6)^2 R^2(s) \rmd s
		\lesssim 
		t^{-\epsilon-\frac{\gamma_1}{2} +\frac{5}{2}} (\ln t)^2 R_0^6 R^{-1}.
\end{align}
We take an almost necessary condition
\begin{equation}\label{qd25Oct26-1}
	\mathbf{P}_1[ t^{-\epsilon-\frac{\gamma_1}{2} +\frac{5}{2}} (\ln t)^2 R_0^6 R^{-1} ] >0 \Leftrightarrow 
	\epsilon < \frac{5}{2} -\beta -\frac{\gamma_1}{2}.
\end{equation}

If $\mathbf{P}_1[s^{9-\frac{11}{2} \gamma_1} (\ln s R_0^6)^2 R^2(s)] >-1$, it suffices to make
\begin{equation}\label{qd25Dec24-1}
	\begin{aligned}
		& 
		t^{10-\frac{11}{2} \gamma_1} (\ln t R_0^6)^2 R^2
		\lesssim 
		t^{-\epsilon-\frac{\gamma_1}{2} +\frac{5}{2}} (\ln t)^2 R_0^6 R^{-1}
		\Leftrightarrow 
		t^{\frac{15}{2}- 5 \gamma_1 + \epsilon} R_0^6 R^3
		\lesssim 1,
		\\
		&
		\mbox{which can be satisfied by }
		\epsilon < -\frac{15}{2} - 3 \beta + 5\gamma_1.
	\end{aligned}
\end{equation}

If $\mathbf{P}_1[s^{9-\frac{11}{2} \gamma_1} (\ln s R_0^6)^2 R^2(s)] =-1$, it suffices to make
$ (\ln t)^m \lesssim t^{-\epsilon-\frac{\gamma_1}{2} +\frac{5}{2}} (\ln t)^2 R_0^6 R^{-1}
$
with a large constant $m>0$, which is true by \eqref{qd25Oct26-1}. Notice that $1+ \mathbf{P}_1[s^{9-\frac{11}{2} \gamma_1} (\ln s R_0^6)^2 R^2(s)] < \mathbf{P}_1[t^{-\epsilon-\frac{\gamma_1}{2} +\frac{5}{2}} (\ln t)^2 R_0^6 R^{-1}]$ implies the requirement in $\eqref{qd25Dec24-1}$.

If $\mathbf{P}_1[s^{9-\frac{11}{2} \gamma_1} (\ln s R_0^6)^2 R^2(s)] <-1$, it suffices to make
\begin{equation*}
	\begin{aligned}
		&  t_0^{10-\frac{11}{2} \gamma_1} (\ln t_0 R_0^6)^2 R^2(t_0)
		\lesssim 
		t^{-\epsilon-\frac{\gamma_1}{2} +\frac{5}{2}} (\ln t)^2 R_0^6 R^{-1}
		\\
		&
		\mbox{By \eqref{qd25Oct26-1}, it suffices to ensure } t_0^{10-\frac{11}{2} \gamma_1} (\ln t_0 R_0^6)^2 R^2(t_0)
		\lesssim 
		t_0^{-\epsilon-\frac{\gamma_1}{2} +\frac{5}{2}} (\ln t_0)^2 (R_0^6 R^{-1})(t_0),
	\end{aligned}
\end{equation*}
which can be handled similarly to \eqref{qd25Dec24-1}.

On the other hand, to make
\begin{equation}
\begin{aligned}
&
t^{3-\frac{5}{2} \gamma_1} (\ln t R_0^6)^{2}
\big[ \1_{|x| \le \lambda_0} + \lambda_0 |x|^{-1} \1_{\lambda_0 < |x| \le \lambda_0 R}
+ \lambda_0 (\lambda_0 R)^2
|x|^{-3} 
\rme^{-\frac{|x|^2}{16 t}} \1_{|x| > \lambda_0 R} \big]
\lesssim t^{-\epsilon} w_{\rm o},
\\
&
\mbox{it suffices to make }
t^{\epsilon + 3-2\gamma_1} R_0^6 R
\lesssim 1,
\mbox{ \ which can be guaranteed by }
\epsilon < -3 - \beta + 2\gamma_1.
\end{aligned}
\end{equation}

For the 3rd part,
\begin{equation}\label{qd25Oct24-1}
	\begin{aligned}
		&
		\mathcal{T}_{\rm o} [ t^{-\frac{7}{6} \min\{ \gamma_1, \gamma_2 \}} \1_{|x| \le t^{\frac{1}{2}} } ]
		\lesssim
		t^{-\frac 52}
		\rme^
		{-\frac{|x|^2}{16 t } } 
		\int_{t_0}^{\max\{t_0, t/2 \}} 
		s^{-\frac{7}{6} \min\{ \gamma_1, \gamma_2 \} + \frac{5}{2}}
		\rmd s
        \\
        &
		+ t^{-\frac{7}{6} \min\{ \gamma_1, \gamma_2 \}}
		\big( t \1_{|x| \le t^{\frac{1}{2}}} + t^{\frac{5}{2}}
		|x|^{-3} 
		\rme^{-\frac{|x|^2}{16 t}} \1_{|x| > t^{\frac{1}{2}}} \big)
		\stackrel{\min\{ \gamma_1, \gamma_2 \}<3}{\lesssim} 
		t^{1-\frac{7}{6} \min\{ \gamma_1, \gamma_2 \}}
		\rme^{-\frac{|x|^2}{16 t } }
		\lesssim 
		t^{-\epsilon} w_{\rm o},
	\end{aligned}
\end{equation}
where for the last step, we require
\begin{equation}
\epsilon < -1 -\beta + \frac{7}{6} \min\{ \gamma_1, \gamma_2 \} - \frac{\gamma_1}{2}.
\end{equation}

For the 4th part,
\begin{equation}\label{qd25Dec28-6}
\mathcal{T}_{\rm o} [ |x|^{-\frac{7}{3} \min\{ \gamma_1, \gamma_2 \}} \1_{|x| > t^{\frac{1}{2}}} ]
		\stackrel{\min\{ \gamma_1, \gamma_2 \}< \frac{15}{7}}{\lesssim}
		t^{1-\frac{7}{6} \min\{ \gamma_1, \gamma_2 \}} \1_{|x|\le t^{\frac 12}} + t |x|^{-\frac{7}{3} \min\{ \gamma_1, \gamma_2 \}} \1_{|x| > t^{\frac 12}}
		\lesssim t^{-\epsilon} w_{\rm o},
\end{equation}
where for the last step, we require
\begin{equation}
\epsilon < -1 -\beta  + \frac{7}{6} \min\{ \gamma_1, \gamma_2 \} - \frac{\gamma_1}{2}, 
\quad
\min\{ \gamma_1, \gamma_2 \} \ge \frac{6}{7}.
\end{equation}

For the 5th part,
\begin{align*}
		&
		\mathcal{T}_{\rm o} \big[ t^{-\frac{7}{6} \gamma_1} (\ln t R_0^6)^{\frac{7}{3}}
		\lambda_0^{\frac{7}{3}}
		( \lambda_{0} + |x| )^{-\frac{7}{3}} \1_{|x| \le 3 \lambda_0 R} \big]
		\\
		\lesssim \ &
		t^{-\frac 52}
		\rme^{-\frac{|x|^2}{16 t } } 
		\int_{t_0}^{\max\{t_0, t/2\} }
		s^{-\frac{7}{6} \gamma_1}  (\ln s R_0^6 )^{\frac{7}{3}}
		\lambda_0^{\frac{7}{3}}(s)
		(\lambda_0 R)(s)^{\frac{8}{3}}
		\rmd s 
		\\
		&
		+
		t^{-\frac{7}{6} \gamma_1} (\ln t R_0^6)^{\frac{7}{3}}
		\lambda_0^{\frac{7}{3}}
		\big[
		\lambda_0^{-\frac{1}{3}} \1_{|x| \le \lambda_0}
		+
		|x|^{-\frac{1}{3}} \1_{\lambda_0 < |x| \le \lambda_0 R}
		+
		|x|^{-3} 
		\rme^{-\frac{|x|^2}{16 t}}
		(\lambda_0 R)^{\frac{8}{3}} \1_{|x| > \lambda_0 R}
	\big]
\\
\sim \ &
t^{-\frac 52}
\rme^{-\frac{|x|^2}{16 t } } 
\int_{t_0}^{\max\{t_0, t/2\} }
s^{10-\frac{37}{6} \gamma_1}  (\ln s R_0^6 )^{\frac{7}{3}}
R^{\frac{8}{3}}(s)
\rmd s 
\\
&
+
t^{4-\frac{19}{6} \gamma_1} (\ln t R_0^6)^{\frac{7}{3}}
\big[
\1_{|x| \le \lambda_0}
+
\lambda_0^{\frac{1}{3}}
|x|^{-\frac{1}{3}} \1_{\lambda_0 < |x| \le \lambda_0 R}
+
\lambda_0^{\frac{1}{3}}
|x|^{-3} 
\rme^{-\frac{|x|^2}{16 t}}
(\lambda_0 R)^{\frac{8}{3}} \1_{|x| > \lambda_0 R}
\big].
\end{align*}
We want to make the above upper bound smaller than $t^{-\epsilon} w_{\rm o}$. 
For the integral part, to make
\begin{equation}\label{qd25Dec28-7}
	\begin{aligned}
		&
		t^{-\frac 52}
		\rme^{-\frac{|x|^2}{16 t } } 
		\int_{t_0}^{\max\{t_0, t/2\} }
		s^{10-\frac{37}{6} \gamma_1}  (\ln s R_0^6 )^{\frac{7}{3}}
		R^{\frac{8}{3}}(s)
		\rmd s
		\lesssim t^{-\epsilon} w_{\rm o},
		\\
		&
		\mbox{it suffices to ensure }
		\int_{t_0}^{\max\{t_0, t/2\} }
		s^{10-\frac{37}{6} \gamma_1}  (\ln s R_0^6 )^{\frac{7}{3}}
		R^{\frac{8}{3}}(s)
		\rmd s
		\lesssim t^{-\epsilon-\frac{\gamma_1}{2} + \frac 52 } (\ln t)^2 R_0^6 R^{-1}.
	\end{aligned}
\end{equation}
We take an almost necessary condition for the above inequality:
\begin{equation}\label{qd25Oct28-2}
	\mathbf{P}_1[ t^{-\epsilon-\frac{\gamma_1}{2} + \frac 52 } (\ln t)^2 R_0^6 R^{-1}]>0 
	\Leftrightarrow 
	\epsilon < \frac 52 -\beta -\frac{\gamma_1}{2}.
\end{equation}

When $\mathbf{P}_1[ s^{10-\frac{37}{6} \gamma_1}  (\ln s R_0^6 )^{\frac{7}{3}}
R^{\frac{8}{3}}(s) ] >-1$, it suffices to make
\begin{equation}\label{qd25Oct28-3}
	\begin{aligned}
		&
		t^{11-\frac{37}{6} \gamma_1}  (\ln t R_0^6 )^{\frac{7}{3}}
		R^{\frac{8}{3}}
		\lesssim t^{-\epsilon-\frac{\gamma_1}{2} + \frac 52 } (\ln t)^2 R_0^6 R^{-1}
\Leftrightarrow 
t^{\epsilon + \frac{17}{2} - \frac{17}{3} \gamma_1 }
(\ln t)^{\frac{1}{3}}
(R_0^6)^{\frac{4}{3}}
R^{\frac{11}{3}}
\lesssim 1,
\\
		&
		\mbox{which can be satisfied by }
		\epsilon
		< - \frac{17}{2} - \frac{11}{3} \beta + \frac{17}{3} \gamma_1.
	\end{aligned}
\end{equation}

When $\mathbf{P}_1[s^{10-\frac{37}{6} \gamma_1}  (\ln s R_0^6 )^{\frac{7}{3}}
R^{\frac{8}{3}}(s)] =-1$, it suffices to make
$ (\ln t)^{m}
	\lesssim t^{-\epsilon-\frac{\gamma_1}{2} + \frac 52 } (\ln t)^2 R_0^6 R^{-1} $
with a large constant $m>0$, which true by \eqref{qd25Oct28-2}. It follows that
$ 1+
	\mathbf{P}_1[s^{10-\frac{37}{6} \gamma_1}  (\ln s R_0^6 )^{\frac{7}{3}}
	R^{\frac{8}{3}}(s)] < 
	\mathbf{P}_1[ t^{-\epsilon-\frac{\gamma_1}{2} + \frac 52 } (\ln t)^2 R_0^6 R^{-1} ]
$,
which implies the requirement in \eqref{qd25Oct28-3}.

When $\mathbf{P}_1[s^{10-\frac{37}{6} \gamma_1}  (\ln s R_0^6 )^{\frac{7}{3}}
R^{\frac{8}{3}}(s)] <-1$, it suffices to make
\begin{equation*}
	\begin{aligned}
		&
		t_0^{11-\frac{37}{6} \gamma_1}  (\ln t_0 R_0^6 )^{\frac{7}{3}}
		R^{\frac{8}{3}}(t_0)
		\lesssim t^{-\epsilon-\frac{\gamma_1}{2} + \frac 52 } (\ln t)^2 R_0^6 R^{-1}
		\\
		&
		\mbox{By \eqref{qd25Oct28-2}, it suffices to ensure }
		t_0^{11-\frac{37}{6} \gamma_1}  (\ln t_0 R_0^6 )^{\frac{7}{3}}
		R^{\frac{8}{3}}(t_0)
		\lesssim t_0^{-\epsilon-\frac{\gamma_1}{2} + \frac 52 } (\ln t_0)^2 (R_0^6 R^{-1})(t_0),
	\end{aligned}
\end{equation*}
which can be handled similarly to \eqref{qd25Oct28-3}.

For the other part,
\begin{equation}
	\begin{aligned}
		&
		t^{4-\frac{19}{6} \gamma_1} (\ln t R_0^6)^{\frac{7}{3}}
		\big[
		\1_{|x| \le \lambda_0}
		+
		\lambda_0^{\frac{1}{3}}
		|x|^{-\frac{1}{3}} \1_{\lambda_0 < |x| \le \lambda_0 R}
		+
		\lambda_0^{\frac{1}{3}}
		|x|^{-3} 
		\rme^{-\frac{|x|^2}{16 t}}
		(\lambda_0 R)^{\frac{8}{3}} \1_{|x| > \lambda_0 R}
		\big]
		\lesssim t^{-\epsilon} w_{\rm o},
\\
&
\mbox{it suffices to make }
t^{\epsilon + 4 -\frac{8}{3} \gamma_1} 
(\ln t)^{\frac{1}{3}}
( R_0^6)^{\frac{4}{3}}
R
\lesssim 1,
\mbox{ \ which can be satisfied by }
\epsilon< -4 - \beta + \frac{8}{3} \gamma_1.
	\end{aligned}
\end{equation}

Finally, we obtain $|\mathcal{T}_{\rm o}[\mathcal{N}]| \lesssim t^{-\epsilon} w_{\rm o}$.

In sum, under the parameter assumption \eqref{qd25Dec24-2}, by taking $\epsilon>0$ sufficiently small and then $t_0 \gg 1$, all parameter restrictions imposed thus far in this proof are satisfied. Then, we get \eqref{psi-est}.

Summarizing all above upper bounds of terms in $\mathcal{G}$, we have
\begin{align}
		&
		| \mathcal{G} |
		\lesssim 
		t^{\frac{3}{2} \gamma_1 - 4} \ln t R_0^{6} R^{-3} \1_{|x| \le 3 \lambda_0 R}
		+
		t^{1-\frac{3}{2} \gamma_1} \ln t R_0^6
		(\lambda_{0} + |x|)^{-1} \1_{|x| \le 3 \lambda_0 R}
        \notag
        \\
&
		+
		t^{-\frac{3}{2} \gamma_1 + 2} |x|^{-3} \1_{\lambda_0 R/2 \le |x| \le 3 t^{\frac{1}{2}}}
		+ t^{- 2 \gamma_1 -\frac{1}{2} \min\{ \gamma_1, \gamma_2 \}  +4} |x|^{-4}
\1_{\lambda_0 R/2 \le |x| \le 3 t^{\frac{1}{2}}}
\notag
\\
& + 
		t^{1-\frac{\gamma_1}{2} -\min\{ \gamma_1, \gamma_2 \} } (\lambda_0 + |x|)^{-1} \1_{|x| \le 3 t^{1/2}}
		+
		t^{1-\frac{3}{2} \gamma_1} (\ln t R_0^6)^{2}
		\lambda_0^{2}
		( \lambda_{0} + |x| )^{-3} \1_{|x| \le 3 \lambda_0 R}
        \notag
		\\
		& +
		t^{-\frac{7}{6} \min\{ \gamma_1, \gamma_2 \}} \1_{|x|\le t^{\frac{1}{2}}} + |x|^{-\frac{7}{3} \min\{ \gamma_1, \gamma_2 \}} \1_{|x| > t^{\frac{1}{2}}}
		+
		t^{-\frac{7}{6} \gamma_1} (\ln t R_0^6)^{\frac{7}{3}}
		\lambda_0^{\frac{7}{3}}
		( \lambda_{0} + |x| )^{-\frac{7}{3}} \1_{|x| \le 3 \lambda_0 R}
        \label{move-26Aug16-6}
\\
\lesssim \ &
		t^{\frac{3}{2} \gamma_1 - 4} \ln t R_0^{6} R^{-3}
		+
		t^{-\frac{1}{2} \gamma_1 -1} \ln t R_0^6
        +
		t^{\frac{3}{2} \gamma_1 -4} R^{-3}
		+ t^{2 \gamma_1 -\frac{1}{2} \min\{ \gamma_1, \gamma_2 \}  - 4} R^{-4}
        \notag
\\
& + 
		t^{\frac{\gamma_1}{2} -\min\{ \gamma_1, \gamma_2 \} -1}
		+
		t^{-\frac{1}{2} \gamma_1 - 1} (\ln t R_0^6)^{2}
		+
		t^{-\frac{7}{6} \min\{ \gamma_1, \gamma_2 \}}  +
		t^{-\frac{7}{6} \gamma_1} (\ln t R_0^6)^{\frac{7}{3}}
        \notag
\\
\sim \ &
		t^{\frac{3}{2} \gamma_1 - 4} \ln t R_0^{6} R^{-3}
		+ t^{2 \gamma_1 -\frac{1}{2} \min\{ \gamma_1, \gamma_2 \}  - 4} R^{-4}
        \notag
\\
& + 
		t^{\frac{\gamma_1}{2} -\min\{ \gamma_1, \gamma_2 \} -1}
		+
		t^{-\frac{1}{2} \gamma_1 - 1} (\ln t R_0^6)^{2}
		+
		t^{-\frac{7}{6} \min\{ \gamma_1, \gamma_2 \}}  +
		t^{-\frac{7}{6} \gamma_1} (\ln t R_0^6)^{\frac{7}{3}}
\sim 
		t^{\frac{3}{2} \gamma_1 - 4} \ln t R_0^{6} R^{-3},
        \notag
\end{align}
where for the last step, we use the following restrictions in order: $\gamma_1 <\min\{ \gamma_1, \gamma_2 \} + 2 \beta$ in \eqref{qd25Dec24-2};
and
$1+\beta < \frac{1}{3} (\gamma_1 + \min\{ \gamma_1, \gamma_2 \})$, $-1-\beta + \frac{2}{3} \gamma_1 >0$, $\frac{3}{2} \gamma_1 + \frac{7}{6} \min\{ \gamma_1, \gamma_2 \} - 4 - 3 \beta > 0$, where all these three inequalities follow from $0 < -1 -\beta + \frac{7}{6} \min\{ \gamma_1, \gamma_2 \} - \frac{\gamma_1}{2}$ and $\gamma_1 > \frac{3}{2}$ in \eqref{qd25Dec24-2}.

Using \eqref{psi-est} and \eqref{move-26Aug16-6}, by \cite[Lemma 4.1]{decay5d} (with $\rho = t^{2-\gamma_1} (\ln t)^{\frac{1}{2}} R$, which is $\ll t^{\frac{1}{2}}$ by $\frac{3}{2} + \beta - \min\{ \gamma_1, \gamma_2 \} < 0$ in \eqref{qd25Dec24-2}),
we get \eqref{qd25Oct30-3}. Similar to the H\"older estimate in \cite[Lemma 2.15]{WYZZ2024} (with $\rho =1$), we have \eqref{out-holder-est}. The parabolic regularity theory gives the qualitative H\"older continuity of $\mathcal{T}_{\rm{o}} [ \mathcal{G} \1_{|x| \le T^{9}, t \le T} ]$, $\nabla \mathcal{T}_{\rm{o}} [ \mathcal{G} \1_{|x| \le T^{9}, t \le T} ]$.
\end{proof}

\section{Mappings for solving reduced orthogonality equations and the inner problem in $[t_0, T]$}\label{inner-orth-subsec}

In this section, we will prove that the solution mapping system for the inner problem and $\dot{\lambda}_1, \dot{\xi}^{[1]}$ is from $\mathcal{X}$ into a (projected) subspace of $\mathcal{X}$ and is compact, with $\mathcal{X}$ given in \eqref{Xspace-def} later.
Set the spaces
\begin{equation}
	B_{\dot{\lambda}_1} := \{ 
	f\in C([t_0, T]) \mid
	\|f\|_{\dot{\lambda}_1} \le 1
	\},
	\quad 
	B_{\dot{\xi}^{[1]}} := \{ \vec{f}= ( f_1, 0,0,0,0) \in C([t_0, T]) \mid \| \vec{f} \|_{\dot{\xi}^{[1]}} \le 1 \}
\end{equation}
equipped with the norm 
\begin{equation}\label{mu-xi-norm}
	\|f\|_{\dot{\lambda}_1}:= \sup_{t \in [t_0, T] } ( R_0^{-\frac{\zeta}{2}} t^{1-\gamma_1} )^{-1}  | f(t) |,
	\quad
	\| \vec{f} \|_{\dot{\xi}^{[1]}}
	:=
	\sup\limits_{t \in [t_0, T]} (  R_0^{-\frac{\zeta}{4}} t^{\frac{5}{2}-\frac{3\gamma_1}{2} - \frac{\gamma_2}{2} } )^{-1} | \vec{f}(t) |
\mbox{ \ with a constant \ } \zeta \in (0,1),
\end{equation} 
where the topology is inspired by the estimate \eqref{qd26Jan3-6} and
\eqref{vec-S-est} later. Recall $\mathcal{T}^{\rm{in}}[\mathcal{J}]$ given in \eqref{Tin-def} and $\mathcal{S}_i$, $i\in \overline{1,6}$, $\vec{\mathcal{S}}$ given in \eqref{mu1-xi-sys}.

\begin{lemma}\label{mu1-xi-lem}

$(1).$ Suppose $\gamma_1 <2$,
for any $(\dot{\lambda}_1, \dot{\xi}^{[1]} ) \in B_{\dot{\lambda}_1} \times B_{\dot{\xi}^{[1]}}$ and $\lambda_1 = \lambda_1[\dot{\lambda}_1](t)$, $\xi^{[1]} = \xi^{[1]}[\dot{\xi}^{[1]}](t)$ given in \eqref{mu1-xi-sys}, then for $t_0$ sufficiently large, $\lambda_1, \dot{\lambda}_1,  \xi^{[1]}, \dot{\xi}^{[1]}$ satisfy the assumption
\eqref{mu1-ansatz}.

$(2).$ 
Suppose
\begin{equation}\label{qd26Jan3-7}
\begin{aligned}
&
\zeta \in (0,1),
\quad
\gamma_1 \in (\frac{3}{2}, 2),
\quad
\gamma_2 \in [0,5),
\quad
2 \beta < 2\gamma_1 -3,
\quad
6 < 3\gamma_1 + \min\{ \gamma_1, \gamma_2\},
\\
&
3 < \gamma_1 + \gamma_2,
\quad
0 < \frac{3}{2} + 2\beta - \frac{1}{2} (\gamma_1 + \gamma_2),
\quad
\frac{3}{2} + \beta -\gamma_1 + \frac{1}{2} \gamma_2 - \frac{1}{2} \min\{ \gamma_1, \gamma_2\} < 0,
\end{aligned}
\end{equation}
then for $t_0$ sufficiently large, with $\lambda_1 = \lambda_1[\dot{\lambda}_1](t)$ and $\xi^{[1]} = \xi^{[1]}[\dot{\xi}^{[1]}](t)$ given in \eqref{mu1-xi-sys}, 
\begin{equation*}
\big( 
\mathcal{T}^{\rm{in}}\big[ \mathcal{J}[\psi, \lambda_{0} + \lambda_{1}, \xi^{[0]} + \xi^{[1]} ] \big](y,\tau[\lambda_0 + \lambda_1](t)), \, 
\mathcal{S}_6[\psi, \lambda_1,\xi^{[1]}](t), \, 
\vec{\mathcal{S}}[\psi,\lambda_1,\xi^{[1]}](t)
\big)
\end{equation*}
is a compact mapping from $(\psi, \dot{\lambda}_1, \dot{\xi}^{[1]}) \in B_{\rm o} \times B_{\dot{\lambda}_1} \times B_{\dot{\xi}^{[1]}}$ into $ B_{\rm{in}} \times B_{\dot{\lambda}_1} \times B_{\dot{\xi}^{[1]}}$.

Moreover, for any compact set $K \subset [t_0,\infty)$, there exists $C_{K}$ sufficiently large such that for all $T > C_{K}$, then $K\subset [t_0, T]$, $\mathcal{S}_6[\psi, \lambda_1,\xi^{[1]}](t)$, 
$\vec{\mathcal{S}}[\psi,\lambda_1,\xi^{[1]}](t)$ have uniform H\"older continuity in $K$,
and $\mathcal{T}^{\rm{in}}\big[ \mathcal{J}[\psi, \lambda_{0} + \lambda_{1}, \xi^{[0]} + \xi^{[1]} ] \big](y,\tau[\lambda_0 + \lambda_1](t))$, $\nabla_{y} \mathcal{T}^{\rm{in}}\big[ \mathcal{J}[\psi, \lambda_{0} + \lambda_{1}, \xi^{[0]} + \xi^{[1]} ] \big](y,\tau[\lambda_0 + \lambda_1](t))$ have uniform H\"older continuity in $\{ (y,t) \mid t\in K, y \in \overline{B_{4 R}} \}$, independent of the choice of $(\psi, \dot{\lambda}_1, \dot{\xi}^{[1]}) \in B_{\rm o} \times B_{\dot{\lambda}_1} \times B_{\dot{\xi}^{[1]}}$.
\end{lemma}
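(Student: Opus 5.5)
Part (1) is a direct integration. For $\dot\lambda_1\in B_{\dot\lambda_1}$ we have $|\dot\lambda_1|\le R_0^{-\zeta/2}t^{1-\gamma_1}$, which is $\le \dot\lambda_0/9$ for $t_0\gg1$ because $\dot\lambda_0\sim t^{1-\gamma_1}$. Integrating from $t_0$ and using $\gamma_1<2$ gives $|\lambda_1|\lesssim R_0^{-\zeta/2}t^{2-\gamma_1}\le\lambda_0/9$.

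For $\xi^{[1]}$ I split by the sign of $7-3\gamma_1-\gamma_2$:
\begin{itemize}
\item If $3\gamma_1+\gamma_2<7$, integrating from $t_0$ gives $|\xi^{[1]}|\lesssim R_0^{-\zeta/4}t^{\frac72-\frac32\gamma_1-\frac12\gamma_2}$.
\item If $3\gamma_1+\gamma_2>7$, we integrate from $\infty$ (this is why \eqref{mu1-xi-sys} makes that choice), and obtain the same bound.
\item If $3\gamma_1+\gamma_2=7$, integrating from $t_0$ gives $R_0^{-\zeta/4}\ln t$.
\end{itemize}
In each case this is $\le|\xi^{[0]}|/9$. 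Thus \eqref{mu1-ansatz} holds.

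For part (2), I first check that the right-hand side $\mathcal J$ satisfies the hypotheses of the re-gluing Propositions \ref{mode0-regluing-prop}, \ref{regluing-mode1-prop}, \ref{regluing-higher-mode-prop} with decay $\tau^{-1}\langle y\rangle^{-3}$-type weights.
\begin{itemize}
\item By \eqref{qd25Jan4-2}, the leading part of $\mathcal H$ is bounded by $t^{3-2\gamma_1}\langle y\rangle^{-3}$. Its mode-$0$ and mode-$1$ projections against $Z_6$ and $Z_i$ are exactly cancelled up to $\varrho^*$, through the orthogonality equations \eqref{qd25Jan5}.
\item For the remaining parts I Taylor expand $\Psi(\lambda y+\xi,t)-\Psi(0,t)-\nabla\Psi(0,t)\cdot(\lambda y+\xi)$ using \eqref{qd25Dec25-4}. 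The second derivatives give $\lambda^{3/2}U^{4/3}(\lambda|y|+|\xi|)^2 t^{-1-\min\{\gamma_1,\gamma_2\}/2}$.
\item The gradient term $\xi\cdot\nabla\Psi(0,t)$ is controlled by \eqref{move-26Aug19-1}.
\item The $\psi$ contribution is bounded via $\|\psi\|_{\rm o}\le1$, using $[\psi]_{{\rm o},\nabla}$ for the difference $\psi(\lambda y+\xi)-\psi(\xi)$ in mode $1$.
\end{itemize}
The linear propositions then give $|\mathcal T^{\rm in}|+\langle y\rangle|\nabla\mathcal T^{\rm in}|\lesssim t^{3-2\gamma_1}\ln t\,R_0^{c}\langle y\rangle^{-1}$, where the constant $c<6$ leaves room for the $R_0^6$ in $\|\cdot\|_{\rm in}$. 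The conditions in \eqref{qd26Jan3-7}, namely $2\beta<2\gamma_1-3$ and $\frac32+\beta-\gamma_1+\frac12\gamma_2-\frac12\min<0$, are exactly what make these error pieces smaller than $\tau^{-1}$ times the weight on $B_{2R}$. The parity is preserved because $\xi=(\xi_1,0,\dots,0)$.

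Next I estimate $\mathcal S_6$ and $\vec{\mathcal S}$ term by term from \eqref{qd26Mar3-1} and \eqref{qd25Dec25-3}.
\begin{itemize}
\item In $\mathcal F$, the $\psi$ term gives $\lambda^{1/2}w_{\rm o}\sim t^{1-\gamma_1}(\ln t)^2R_0^6R^{-1}$, which is $\ll R_0^{-\zeta/2}t^{1-\gamma_1}$ since $R=t^\beta$.
\item The difference $\Psi(\lambda y+\xi)-\Psi(0)$ in $\mathcal F$ is bounded with \eqref{move-26Aug19-1} and second-derivative bounds, using $\gamma_1+\gamma_2>3$.
\item The quadratic term $(\lambda^{1/2}-\lambda_0^{1/2}-\frac12\lambda_0^{-1/2}\lambda_1)\Psi(0,t)$ is $O(R_0^{-\zeta})t^{1-\gamma_1}$.
\item The $\varrho^*$ terms are small by the propositions.
\end{itemize}
Since $\omega\sim-\frac{2-\gamma_1}{2}t^{-1}$, the kernel $e^{\int_t^s\omega}=(s/t)^{-(2-\gamma_1)/2}(1+o(1))$, and the Duhamel integral in $\mathcal S_6$ preserves the power $t^{1-\gamma_1}$ because $1-\gamma_1+\frac{2-\gamma_1}{2}>-1$. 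For $\vec{\mathcal S}$, the difference $\lambda^{3/2}-\lambda_0^{3/2}$ yields $R_0^{-\zeta/2}\lesssim R_0^{-\zeta/4}$ times the $\dot\xi^{[0]}$ size. The Taylor remainder gives $\lambda^{1/2}\lambda^2t^{-1-\min/2}$ or $\lambda^{1/2}|\xi|\lambda\,t^{-1-\min/2}$, and these are $\ll t^{\frac52-\frac32\gamma_1-\frac12\gamma_2}$ under $\frac32+2\beta-\frac12(\gamma_1+\gamma_2)>0$ and the last condition. The $\psi$ term uses $[\psi]_{{\rm o},\nabla}$.

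Compactness and uniform Hölder continuity follow by Arzelà–Ascoli. For $\mathcal S_6$ and $\vec{\mathcal S}$, the Hölder continuity in $t$ comes from $[\psi]_{{\rm o},\alpha}$ and the $C^1$ regularity of $\lambda,\xi$. For $\mathcal T^{\rm in}$ it comes from parabolic interior estimates in $(y,\tau)$, transported through the bi-Lipschitz map $\tau[\lambda](t)$. I expect the main obstacle to be verifying that the mode-$1$ right-hand side, after removing the $Z_i$ projections, decays fast enough for Proposition \ref{regluing-mode1-prop} without losing a power of $R$. This is where the gradient seminorm of $\psi$ and the sharp Taylor expansion of $\Psi$ with \eqref{move-26Aug19-1} must be used carefully; I would first try to bound those terms in the weight $\tau^{-1}\langle y\rangle^{-3}$ with an extra $t^{-\epsilon}$ margin.
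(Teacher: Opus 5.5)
Part (1) and the mode-$0$ and $\mathcal S_6$ estimates essentially match the paper. There is, however, a genuine gap in mode $1$.

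You plan to bound every projected right-hand side, including $\tilde{\mathcal J}_{1,1}$, in the weight $\tau^{-1}\langle y\rangle^{-3}$ (at best with an extra $t^{-\epsilon}$), and then declare the $\varrho^*$ terms small. That weight is enough for $\mathcal T^{\rm in}_{1,1}\in B_{\rm in}$, but not for $\mathcal S_1$. Proposition \ref{regluing-mode1-prop} gives $|\varrho^*_{1,1}|\lesssim vR_0^{-\zeta}$, where $v$ is the weight used for $\tilde{\mathcal J}_{1,1}$. With $v=\tau^{-1}t^{-\epsilon}$, the term $\lambda^{-1}\varrho^*_{1,1}$ in \eqref{qd25Dec25-3} is therefore only $O(R_0^{-\zeta}t^{1-\gamma_1-\epsilon})$. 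But $B_{\dot\xi^{[1]}}$ demands $R_0^{-\zeta/4}t^{\frac52-\frac32\gamma_1-\frac12\gamma_2}$, which is smaller by the fixed power $t^{-\frac12(\gamma_1+\gamma_2-3)}$, since $\gamma_1+\gamma_2>3$ is assumed. So $\mathcal S_1$ would not land in $B_{\dot\xi^{[1]}}$.

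The missing idea is that the mode-$1$ forcing must be measured at the translation scale $\lambda|\dot\xi^{[0]}|\sim t^{\frac92-\frac52\gamma_1-\frac12\gamma_2}\sim\tau^{-\frac{5\gamma_1+\gamma_2-9}{4\gamma_1-6}}$, not at $\tau^{-1}$. This works as follows.
\begin{itemize}
\item In the $\Upsilon_{1,1}$ projection, all radial or $y$-independent pieces ($\dot\lambda\lambda Z_6$, $U^{4/3}\Psi(0,t)$, $U^{4/3}\xi\cdot\nabla\Psi(0,t)$, $U^{4/3}\psi(0,t)$) drop out.
\item The surviving terms $\lambda\dot\xi_1\partial_{y_1}U$ and $\frac73\lambda^{5/2}U^{4/3}y_1\partial_{x_1}\Psi(0,t)$ are of that size by \eqref{partial-Psi0t}.
\item The projected second-order Taylor remainder is $O(t^{6-\frac72\gamma_1-\frac12\min\{\gamma_1,\gamma_2\}}\langle y\rangle^{-2})$. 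Absorbing it into $t^{\frac92-\frac52\gamma_1-\frac12\gamma_2}\langle y\rangle^{-3}$ on $|y|\lesssim t^{\beta}$ is exactly the last condition in \eqref{qd26Jan3-7}. That condition is a comparison with the mode-$1$ scale, not with $\tau^{-1}$ as you state.
\item The $\psi$-difference term needs $\frac32+2\beta-\frac12(\gamma_1+\gamma_2)>0$.
\end{itemize}
Only with this $v$ does Proposition \ref{regluing-mode1-prop} yield $\lambda^{-1}|\varrho^*_{1,1}|\lesssim R_0^{-\zeta}t^{\frac52-\frac32\gamma_1-\frac12\gamma_2}$, which closes the estimate for $\mathcal S_1$.

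Three further points are missing.
\begin{itemize}
\item $B_{\dot\xi^{[1]}}$ consists of vectors $(f_1,0,0,0,0)$, so you must show $\mathcal S_i\equiv0$ for $i=2,\dots,5$. This follows because evenness of $\Psi,\psi$ in $x_i$, $\xi=(\xi_1,0,0,0,0)$ and $\partial_{x_i}\Psi(0,t)=0$ make every term in \eqref{qd25Dec25-3} vanish. They also give $\tilde{\mathcal J}_{1,i}\equiv0$, hence $\varrho^*_{1,i}=0$ by linearity.
\item Equicontinuity of $\mathcal S_6$ and $\vec{\mathcal S}$ does not follow from $[\psi]_{{\rm o},\alpha}$ and the $C^1$ regularity of $\lambda,\xi$ alone. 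The terms $\varrho^*_{0,1}(\tau(t))$ and $\varrho^*_{1,1}(\tau(t))$ depend on $\dot\lambda_1,\dot\xi^{[1]}$, which have no uniform modulus of continuity over the balls. You need the quantitative H\"older bounds for $\varrho_1$ in Propositions \ref{mode0-regluing-prop} and \ref{regluing-mode1-prop}, which require only a sup bound on the forcing.
\item The condition $2\beta<2\gamma_1-3$ is not an error-size condition; it is the hypothesis $R_*^{2+\epsilon}=O(\tau)$ of the linear propositions. Likewise, the orthogonality equations \eqref{qd25Jan5} play no role here: the lemma concerns arbitrary points of the balls, and the propositions absorb the $Z_6,Z_i$ components into $\varrho$, so only sizes matter.
\end{itemize}
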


\begin{proof}
In this proof, we always assume $(\psi, \dot{\lambda}_1, \dot{\xi}^{[1]} ) \in B_{\rm o} \times B_{\dot{\lambda}_1} \times B_{\dot{\xi}^{[1]}}$. We agree that $t\in [t_0, T]$ and $\tau \in [\tau_0, \tau(T)]$ except when handling $\mathcal{J}$ defined in \eqref{calJ-def} as the extension of $\mathcal{H}(y,t(\tau))$, the relevant $\tilde{\mathcal{J}}_{i,j}$ defined in \eqref{move-26Aug21-5}, and the mappings for the inner problem in different modes.

{\textbf{Step $1$.}}
For $\lambda_1 = \lambda_1[\dot{\lambda}_1](t)$, $\xi^{[1]} = \xi^{[1]}[\dot{\xi}^{[1]}](t)$ given in \eqref{mu1-xi-sys}, direct calculations give
\begin{equation}\label{qd23Dec28-1}
	|\lambda_1|  
	\stackrel{\gamma_1<2}{\lesssim} R_0^{-\frac{\zeta}{2}} t^{2-\gamma_1},
	\quad
\xi^{[1]} = (\xi_1^{[1]},0,0,0,0),
\quad    
	|\xi^{[1]}| \lesssim R_0^{-\frac{\zeta}{4}}
	\begin{cases} 
t^{\frac{7}{2}-\frac{3\gamma_1}{2} - \frac{\gamma_2}{2}}  
			& \mbox{ \ if \ } 3 \gamma_1 + \gamma_2 \ne 7
			\\
            \ln t 
			& \mbox{ \ if \ } 3 \gamma_1 + \gamma_2 = 7
		\end{cases}
	\sim
	R_0^{-\frac{\zeta}{4}}
	|\xi^{[0]}|.
\end{equation}
Thus, for $t_0\gg 1$, $\lambda_1, \dot{\lambda}_1,  \xi^{[1]}, \dot{\xi}^{[1]}$ satisfy the ansatz
\eqref{mu1-ansatz}. That is, Lemma \ref{mu1-xi-lem} $(1)$ holds. The remaining part is left to the proof of Lemma \ref{mu1-xi-lem} $(2)$.

{\textbf{Step $2$ - for $\mathcal{S}_6$ and mode $0$.}}
Recall $\mathcal{H}$ in \eqref{H-def} and the cut-off function in $\mathcal{J}$. For $|y| \le 4R$, $t_0 \gg 1$, we have $|\lambda y+\xi| \le |\lambda| 4R +|\xi| \lesssim t^{2-\gamma_1 +\beta} \ll T^9$ for $\beta- \gamma_1 <7$. Thus, $\mathcal{J}$ is well-defined in $\mathbb{R}^5 \times (\tau_0, \infty)$ for $(\psi, \dot{\lambda}_1, \dot{\xi}^{[1]} ) \in B_{\rm o} \times B_{\dot{\lambda}_1} \times B_{\dot{\xi}^{[1]}}$.

Recall $\mathcal{F}(t)$ in \eqref{qd26Mar3-1}. We first establish the following estimates.
\begin{equation}\label{qd26Apr28-2}  
	\begin{aligned} 
		&
		\Big| \lambda^{\frac{1}{2}}
		\int_{B_{R_0}} U(y)^{\frac{4}{3}} Z_{6}(y)
		\psi(\lambda y+\xi, t) \rmd y \Big| \stackrel{\|\psi\|_{\rm o} \le 1}{\lesssim} 
		t^{1-\gamma_1} (\ln t)^2 R_0^6 R^{-1},
		\\
		& 
		\Big| (\lambda^{\frac{1}{2}} - \lambda_0^{\frac{1}{2}} - 2^{-1} \lambda_0^{-\frac{1}{2}} \lambda_1 ) \Psi(0, t)
		\int_{B_{R_0}} U(y)^{\frac{4}{3}}  Z_{6}(y) \rmd y \Big| 
		\lesssim 
		\lambda_0^{-\frac{3}{2}} \lambda_1^2 | \Psi(0,t) |
		\stackrel{\eqref{qd23Dec28-1} \eqref{Psi0t}}{\lesssim}
        R_0^{-\zeta} t^{1-\gamma_1},
\\
&
		\Big| \lambda^{\frac{1}{2}}
		\int_{B_{R_0}} U(y)^{\frac{4}{3}} Z_{6}(y)
		\big( \Psi(\lambda y+\xi,t) - \Psi(0,t) \big) \rmd y \Big|
        \\
= \	& \Big| \lambda^{\frac{1}{2}}
		\int_{B_{R_0}} U(y)^{\frac{4}{3}} Z_{6}(y)
		\big[ \Psi(\lambda y+\xi,t) - \Psi(0,t) - (\nabla \Psi)(0,t) \cdot (\lambda y+\xi) + (\nabla \Psi)(0,t) \cdot (\lambda y+\xi) \big] \rmd y \Big|
        \\
\stackrel{\eqref{partial-Psi0t}}{=} \ & \Big| \lambda^{\frac{1}{2}}
		\int_{B_{R_0}} U(y)^{\frac{4}{3}} Z_{6}(y)
		\big[ \Psi(\lambda y+\xi,t) - \Psi(0,t) - (\nabla \Psi)(0,t) \cdot (\lambda y+\xi) + (\partial_{x_1} \Psi)(0,t) \xi_{1} \big] \rmd y \Big|
        \\
\stackrel{\eqref{qd25Dec30-1}}{\lesssim} \ & \lambda^{\frac{1}{2}}
		\int_{B_{R_0}} \langle y \rangle^{-7}
		\Big[ (\lambda^2 |y|^2+ |\xi|^2) t^{-1-\frac{1}{2}\min\{ \gamma_1, \gamma_2\} } 
        +  
        \begin{cases}
t^{3-\frac{3}{2}\gamma_1 - \gamma_2}
	&
	\mbox{ \ if \ } 3 \gamma_1 + \gamma_2 \ne 7
	\\
t^{-\frac{\gamma_2 + 1}{2}} \ln t
	&
	\mbox{ \ if \ } 3 \gamma_1 + \gamma_2 = 7
\end{cases} \Big] \rmd y
		\\
\lesssim \ & \lambda^{\frac{1}{2}}
		\Big[ (\ln R_0) \lambda^2 t^{-1-\frac{1}{2}\min\{ \gamma_1, \gamma_2\} } 
        +  
        \begin{cases}
t^{3-\frac{3}{2}\gamma_1 - \gamma_2}
	&
	\mbox{ \ if \ } 3 \gamma_1 + \gamma_2 \ne 7
	\\
t^{-\frac{\gamma_2 + 1}{2}} \ln t
	&
	\mbox{ \ if \ } 3 \gamma_1 + \gamma_2 = 7
\end{cases} \Big]
		\\
\sim \ & 
(\ln R_0) t^{4 -\frac{5}{2} \gamma_1 - \frac{1}{2}\min\{ \gamma_1, \gamma_2\} } 
        +  
        \begin{cases}
t^{4- 2 \gamma_1 - \gamma_2}
	&
	\mbox{ \ if \ } 3 \gamma_1 + \gamma_2 \ne 7
	\\
t^{\frac{1}{2} - \frac{\gamma_1 + \gamma_2}{2}} \ln t
	&
	\mbox{ \ if \ } 3 \gamma_1 + \gamma_2 = 7,
\end{cases}
	\end{aligned} 
\end{equation}
where we use
\begin{equation}\label{qd25Dec30-1}
		\big|
		\Psi(\lambda y+\xi,t)
		-
		\Psi(0,t) -
		(\nabla \Psi)(0,t) \cdot (\lambda y+\xi) \big|
		\stackrel{\eqref{qd25Dec25-4}, \min\{ \gamma_1, \gamma_2\} \ge 0}{\lesssim} (\lambda^2 |y|^2+ |\xi|^2) t^{-1-\frac{1}{2}\min\{ \gamma_1, \gamma_2\} }.
\end{equation}

We require the inner linear theory to handle $\varrho_{0,1}^{*}(\tau(t))$. Recall \eqref{qd26Jan3-1} and $\Upsilon_{0,1}$ in \eqref{move-26Sep5-2}.
\begin{equation*}
	\mathcal{H}_{0,1}^{\sharp}(r,t)
		= \int_{ S^{4} } \mathcal{H}(r\theta, t) \Upsilon_{0,1}(\theta) \rmd \theta
	= \int_{ S^{4} } 
	\Big[
	\dot{\lambda} \lambda Z_{6}(y) 
	+\frac{7}{3}\lambda^{\frac{3}{2}} U(y)^{\frac{4}{3}}
	\big(
	\Psi(\lambda y+\xi,t)
	+
	\psi(\lambda y+\xi, t)
	\big)
	\Big] \Big|_{y = r\theta} \Upsilon_{0,1}(\theta) \rmd \theta,
\end{equation*}
\begin{equation}\label{qd25Jan3-3}
\tilde{\mathcal{H}}_{0,1}(y,t) = \mathcal{H}_{0,1}^{\sharp}(|y|,t) \Upsilon_{0,1}(y/|y|),
\quad
	|\tilde{\mathcal{H}}_{0,1}(y,t)| \lesssim 
	|\mathcal{H}_{0,1}^{\sharp}(|y|,t)|
	\lesssim 
	t^{3-2\gamma_1} \langle y \rangle^{-3} 
    \stackrel{\eqref{tau-est}}{\sim} (\tau(t))^{-1} \langle y \rangle^{-3},
\end{equation}
where we use
\begin{align*}
&
\Big|
\int_{ S^{4} } 
	\Big[
	\dot{\lambda} \lambda Z_{6}(y) 
	\Big] \Big|_{y = r\theta} \Upsilon_{0,1}(\theta) \rmd \theta
\Big|
\lesssim t^{3-2\gamma_1} \langle r \rangle^{-3},
\\
&
\Big| 
\int_{ S^{4} } 
	\Big[ \lambda^{\frac{3}{2}} U(y)^{\frac{4}{3}}
	\psi(\lambda y+\xi, t)
	\Big] \Big|_{y = r\theta} \Upsilon_{0,1}(\theta) \rmd \theta \Big|
\stackrel{\|\psi\|_{\rm o} \le 1}{\lesssim}
t^{3-2\gamma_1} (\ln t)^2 R_0^6 R^{-1} \langle r \rangle^{-4},
\\
&
\Big|
\int_{ S^{4} } 
	\Big[ \lambda^{\frac{3}{2}} U(y)^{\frac{4}{3}}
\Psi(\lambda y+\xi,t)
	\Big] \Big|_{y = r\theta} \Upsilon_{0,1}(\theta) \rmd \theta
\Big|
\stackrel{\gamma_1, \gamma_2 \in [0,5), \, \eqref{move-26Aug19-1}}{\lesssim} 
\lambda^{\frac{3}{2}}
\langle r\rangle^{-4}
\big[ 
t^{-\frac{\gamma_1}{2}} + | c_{\sharp} | t^{-\frac{\gamma_2}{2} - \frac{1}{2}} (\lambda r+ |\xi|)
\big]
\\ 
& \quad
\lesssim \lambda^{\frac{3}{2}}
\langle r\rangle^{-4}
\big[ 
t^{-\frac{\gamma_1}{2}} + t^{-\gamma_1 - \frac{\gamma_2}{2} +\frac{3}{2}} \langle r\rangle
\big]
\stackrel{\gamma_1 + \gamma_2 \ge 3}{\lesssim}
t^{-\frac{\gamma_1}{2}} \lambda^{\frac{3}{2}}
\langle r\rangle^{-3} \sim t^{3-2\gamma_1} \langle r \rangle^{-3}.
\end{align*}

For $\tau \in (\tau_0, C_{tm} T^{2\gamma_1 -3}]$, $y\in \mathbb{R}^5$, since $	C_{tm}^{-1} T^{2\gamma_1 -3} \le \tau(T)
\le C_{tm} T^{2\gamma_1 -3}$ due to \eqref{tau-est}, then by \eqref{move-26Aug21-5},
\begin{equation*}
	|\tilde{\mathcal{J}}_{0,1}(y,\tau)| \lesssim \tau^{-1} \langle y \rangle^{-3} \1_{\tau_0 < \tau \le \tau(T)}
	+
	(\tau(T))^{-1} \langle y \rangle^{-3} \1_{ \tau(T) < \tau \le C_{tm} T^{2\gamma_1 -3}}
\sim   
\tau^{-1} \langle y \rangle^{-3} \1_{ \tau_0 < \tau \le C_{tm} T^{2\gamma_1 -3}}.
\end{equation*}

By Proposition \ref{mode0-regluing-prop} (with $v=\tau^{-1}$, $a=1$, $R_0=\ln t_0$, $\epsilon_0 = \zeta$) and Remark \ref{qd26Apr28-1-rek}, under the assumption
\begin{equation}\label{move-26Aug21-6}
\zeta \in (0,1),
\quad
\frac{2 \beta}{2\gamma_1 -3} <1,
\quad
t_0 \gg 1,
\end{equation}
\eqref{qd26Apr12-3} and \eqref{qd26Apr12-4} are well-defined, and it holds that
\begin{align}
&
\mbox{$\mathcal{T}_{0,1}^{\rm{in}}(y,\tau) \in C^{1+\sigma,\frac{1+\sigma}{2}} \big( \{ (y,\tau) \mid \tau \in (\tau_0, C_{tm} T^{2\gamma_1 -3}], y\in B_{R_{*}(\tau)} \} \big)$ for some $\sigma \in (0,1)$};
\notag
\\
&
\mbox{$\mathcal{T}_{0,1}^{\rm{in}}(y,\tau)$
is radially symmetric in the spatial variable $y$};
\label{qd26Apr3-5}
\\
&
	\langle y \rangle |\nabla \mathcal{T}_{0,1}^{\rm{in}} | + | \mathcal{T}_{0,1}^{\rm{in}} | \lesssim R_0^{6} \tau^{-1} \langle y \rangle^{-1};
	\quad
    \mathcal{T}_{0,1}^{\rm{in}}(y, \tau_0) = g_0 \eta(2 y/R_0) Z_0(y)
    \mbox{ \ with \ }
	|g_0| \lesssim R_0 \tau_0^{-1};
    \notag
\end{align}
\begin{equation}\label{qd26Apr28-3}
|\lambda^{-1} \varrho_{0,1}^{*}(\tau(t))| \lesssim \lambda^{-1} (\tau(t))^{-1} R_0^{-\zeta} \sim R_0^{-\zeta} t^{1-\gamma_1};
\quad
\mbox{$\varrho_{0,1}^{*}(\tau(t))$ is H\"older continuous for $t \in (t_0, T]$.}
\end{equation}

By \eqref{qd26Apr28-2} and \eqref{qd26Apr28-3}, under the assumption
\begin{equation}
6 < 3\gamma_1 + \min\{ \gamma_1, \gamma_2\},
\quad
3 < \gamma_1 + \gamma_2,
\end{equation}
we have
\begin{equation}\label{F-upp}
|\mathcal{F}(t) | \lesssim R_0^{-\zeta} t^{1-\gamma_1}. 
\end{equation}

For $\gamma_1<2$, we can take $0<\delta < 1$ to make $  - \frac{2-\gamma_1}{2} (1+ \delta)  + (1-\gamma_1) >-1$. Then
\begin{equation*}
	\begin{aligned}
		&
		\Big| \omega(t) \int_{t_0}^t
		\rme^{\int_t^s \omega(a) \rmd a}  \mathcal{F}(s) \rmd s \Big|
 \stackrel{\eqref{qd25Dec26-1}}{\lesssim}
		 t^{-1} \int_{t_0}^t
		\rme^{ - \int_s^t \omega(a) \rmd a}  |\mathcal{F}(s)| \rmd s
    \stackrel{\eqref{qd25Dec26-1}, t_0\gg 1}{\lesssim}  t^{-1} \int_{t_0}^t
		\rme^{ \int_s^t \frac{2-\gamma_1}{2} (1+ \delta) a^{-1} \rmd a} R_0^{-\zeta} s^{1-\gamma_1} \rmd s
		\\
		= \ & R_0^{-\zeta} t^{-1} \int_{t_0}^t
		\rme^{ \frac{2-\gamma_1}{2} (1+ \delta) \ln(\frac{t}{s})}  s^{1-\gamma_1} \rmd s
		= R_0^{-\zeta} t^{-1} \int_{t_0}^t
		\big( \frac{t}{s} \big)^{ \frac{2-\gamma_1}{2} (1+ \delta) }  s^{1-\gamma_1} \rmd s
		\\
		= \ & R_0^{-\zeta} t^{-1+ \frac{2-\gamma_1}{2} (1+ \delta)} \int_{t_0}^t
		s^{ - \frac{2-\gamma_1}{2} (1+ \delta) }  s^{1-\gamma_1} \rmd s
		\lesssim R_0^{-\zeta} t^{-1+ \frac{2-\gamma_1}{2} (1+ \delta)}
		t^{1 - \frac{2-\gamma_1}{2} (1+ \delta) } 
		 t^{1-\gamma_1}
		= R_0^{-\zeta} t^{1-\gamma_1}.
	\end{aligned}
\end{equation*}
In sum, we have
\begin{equation}\label{qd26Jan3-6}
| \mathcal{S}_6
| \lesssim R_0^{-\zeta} t^{1-\gamma_1}.
\end{equation}

{\textbf{Step $3$ - for $\mathcal{S}_i$, $i\in \overline{2,5}$ and mode $1$.}} In {\textbf{Step $3$}}, we always assume $i\in \overline{2,5}$. We will cope with $\mathcal{S}_i$  given in \eqref{qd25Dec25-3} term by term.

$(\partial_{x_i} \Psi)(0,t) 
\stackrel{\eqref{partial-Psi0t}}{=} 0$.
\begin{equation*}
\int_{B_{R_0}} U(y)^{\frac{4}{3}} Z_{i}(y) (\Psi + \psi)(\lambda y+\xi,t) \rmd y
=
\int_{B_{R_0}} U(y)^{\frac{4}{3}} Z_{i}(y) (\Psi + \psi)\big( (\lambda y_1 +\xi_1, \lambda y_2 , \lambda y_3,\lambda y_4,\lambda y_5),t \big) \rmd y
= 0
\end{equation*}
since $U$ is radially symmetric; $\Psi(x,t)$, $\psi(x,t)$ are even with respect to the $i$-th component of $x$; $Z_i$ is odd about $y_i$.
\begin{equation*}
\int_{B_{R_0}} U(y)^{\frac{4}{3}} Z_{i}(y) ( - \Psi(0,t) - \psi(0, t) ) \rmd y = 0.
\end{equation*}
\begin{equation*}
\begin{aligned}
&
\int_{B_{R_0}} U(y)^{\frac{4}{3}} Z_{i}(y) \big[ -
		(\nabla \Psi)(0,t) \cdot (\lambda y+\xi)
		\big] \rmd y
=
\int_{B_{R_0}} U(y)^{\frac{4}{3}} Z_{i}(y) \big[ -
		(\nabla \Psi)(0,t) \cdot (\lambda y)
		\big] \rmd y
\\
= \ & \int_{B_{R_0}} U(y)^{\frac{4}{3}} Z_{i}(y) \big[ -
		(\partial_{x_i} \Psi)(0,t) \cdot (\lambda y_i)
		\big] \rmd y \stackrel{\eqref{partial-Psi0t}}{=} 0.
\end{aligned}
\end{equation*}
Note that
\begin{equation}\label{move-26Aug22-2}
\begin{aligned}
&
	\mathcal{H}
= \dot{\lambda} \lambda Z_{6}(y)
	+\lambda \dot{\xi} \cdot (\nabla U )(y) 
	+\frac{7}{3}\lambda^{\frac{3}{2}} U(y)^{\frac{4}{3}}
	(\Psi + \psi)(\lambda y+\xi, t)
\\
= \ & \dot{\lambda} \lambda Z_{6}(y)
	+\lambda \dot{\xi}_1 (\partial_{x_1} U )(y) 
	+
    \frac{7}{3}\lambda^{\frac{3}{2}} U(y)^{\frac{4}{3}}
	(\Psi
	+
	\psi)\big( (\lambda y_1+\xi_1, \lambda y_2, \lambda y_3, \lambda y_4, \lambda y_5), t \big)
\end{aligned}
\end{equation}
is even for the $i$-th component of $y$, $i \in \overline{2,5}$. Recalling $\Upsilon_{1,j}$ in \eqref{move-26Sep5-2} with $n=5$, $\mathcal{H}_{1,i}^{\sharp}(|y|,t)$ and $\tilde{\mathcal{H}}_{1,i}(y,t)$ defined in \eqref{qd26Jan3-1}, and $\tilde{\mathcal{J}}_{1,i}$ defined in \eqref{move-26Aug21-5}, we have
\begin{equation}\label{move-26Aug22-1}
\mathcal{H}_{1,i}^{\sharp}(|y|,t) = \int_{ S^{4} } \mathcal{H}(|y| \theta, t) \Upsilon_{1,i}(\theta) \rmd \theta = 0,
\quad 
\tilde{\mathcal{H}}_{1,i}(y,t) = 0,
\quad
\tilde{\mathcal{J}}_{1,i}(y,\tau) = 0
\mbox{ \ for \ } i\in \overline{2,5}.
\end{equation}
Under the assumption \eqref{move-26Aug21-6}, by Proposition \ref{regluing-mode1-prop}, $(\mathcal{T}_{1,i}^{\rm{in}},  \varrho_{1,i}(\tau) ) = (\mathcal{T}_{1,i}^{\rm{in}}(y,\tau), \varrho_{1,i}(\tau) ) [ \tilde{\mathcal{J}}_{1,i}]$ is well-defined in \eqref{qd26Apr12-5}
linearly depends on $ \tilde{\mathcal{J}}_{1,i}$, and $\varrho_{1,i}^{*}(\tau) = \varrho_{1,i}^{*}[\tilde{\mathcal{J}}_{1,i}](\tau)$ linearly depends on $\tilde{\mathcal{J}}_{1,i}$ in the line below \eqref{move-26Aug20-4}. It follows that
\begin{equation}\label{move-26Aug22-5}
\mathcal{T}_{1,i}^{\rm{in}}(y,\tau) = 0,
\quad
\varrho_{1,i}(\tau) = 0,
\quad
\varrho_{1,i}^{*}(\tau) = 0
\mbox{ \ for \ } i\in \overline{2,5}.
\end{equation}

In sum,
\begin{equation}\label{move-26Aug21-7}
\mathcal{S}_i= 0 \mbox{ \ for \ } i \in \overline{2,5}.
\end{equation}

{\textbf{Step $4$ - for $\mathcal{S}_1$ and mode $1$.}} We will handle $\mathcal{S}_1$ given in \eqref{qd25Dec25-3} term by term.
\begin{equation}\label{qd26Apr29-1}
	\Big| ( \lambda^{\frac{3}{2}} - \lambda_0^{\frac{3}{2}} ) (\partial_{x_1} \Psi)(0,t)
	\int_{B_{R_0}} U(y)^{\frac{4}{3}} y_1 Z_{1}(y) \rmd y \Big|
	\stackrel{\eqref{partial-Psi0t}}{\lesssim}
	\lambda_0^{\frac{1}{2}} \lambda_1 t^{-\frac{\gamma_2 +1}{2}} 
	\stackrel{\eqref{qd23Dec28-1}}{\lesssim} R_0^{-\frac{\zeta}{2}} t^{\frac{5}{2}- \frac{3 \gamma_1}{2} - \frac{\gamma_2}{2}}.
\end{equation}
\begin{equation}\label{qd26Apr29-2}
	\begin{aligned}
		&
		\Big|
		\lambda^{\frac{1}{2}}
		\int_{B_{R_0}} U(y)^{\frac{4}{3}} Z_{1}(y)
		\big[
		\Psi(\lambda y+\xi,t) - \Psi(0,t)
		-
		(\nabla \Psi)(0,t) \cdot (\lambda y+\xi)
		\big] \rmd y
		\Big|
		\\
        \stackrel{\eqref{qd25Dec30-1}}{\lesssim} \ & \lambda^{\frac{1}{2}}
		\int_{B_{R_0}} \langle y \rangle^{-8}
		(\lambda^2 |y|^2+ |\xi|^2) t^{-1-\frac{1}{2}\min\{ \gamma_1, \gamma_2\} } \rmd y
        \\
		\lesssim \ &
		\lambda_0^{\frac{5}{2}} t^{-1-\frac{1}{2}\min\{ \gamma_1, \gamma_2\} }
		\sim t^{4-\frac{5}{2} \gamma_1 - \frac{1}{2}\min\{ \gamma_1, \gamma_2\}}
    \lesssim
		t^{-\epsilon} t^{\frac{5}{2}- \frac{3 \gamma_1}{2} - \frac{\gamma_2}{2}}
	\end{aligned}
\end{equation}
with a constant $0<\epsilon\ll 1$, where for the last step, we require
\begin{equation}
\frac{3}{2} - \gamma_1 + \frac{\gamma_2}{2} - \frac{1}{2} \min\{ \gamma_1, \gamma_2\} < -\epsilon.
\end{equation}

Using the gradient estimate of $\psi$ in \eqref{Bo-def}, we have
\begin{align}
		&
		\Big|
		\lambda^{\frac{1}{2}}
		\int_{B_{R_0}} U(y)^{\frac{4}{3}} Z_{1}(y)
		\big[
		\psi(\lambda y+\xi, t) - \psi(0, t)
		\big] \rmd y
		\Big|
		\lesssim 
		\lambda^{\frac{1}{2}}
		\int_{B_{R_0}} \langle y \rangle^{-8}
		t^{-2+\frac{\gamma_1}{2}} (\ln t)^{\frac{3}{2} + \frac{1}{100}} R_0^6 R^{-2} (\lambda |y| + |\xi|) \rmd y
        \notag
		\\
		\lesssim \ &
		t^{1-\gamma_1} (\ln t)^{\frac{3}{2}+ \frac{1}{100}} R_0^6 R^{-2}
		\lesssim 
		t^{-\epsilon} t^{\frac{5}{2}- \frac{3 \gamma_1}{2} - \frac{\gamma_2}{2}},
        \label{qd26Apr12-7}
\end{align}
where for the last step, we require
\begin{equation}
\epsilon < \frac{3}{2} + 2\beta - \frac{1}{2} (\gamma_1 + \gamma_2).
\end{equation}

Recall $\mathcal{H}_{1,1}^{\sharp}$ defined in \eqref{qd26Jan3-1}. Using the formulae of $U, Z_{6}, \Upsilon_{1,1}$, we have
\begin{align*}
		&
		\mathcal{H}_{1,1}^{\sharp}(r,t) 
        = \int_{ S^{4} } \mathcal{H}(r \theta, t) \Upsilon_{1,1}(\theta) \rmd \theta 
        \\
        = \ & \int_{ S^{4} } \Big[
		\dot{\lambda} \lambda Z_{6}(y)
		+\lambda \dot{\xi} \cdot (\nabla U )(y) 
		+\frac{7}{3}\lambda^{\frac{3}{2}} U(y)^{\frac{4}{3}}
		\big(
		\Psi(\lambda y+\xi,t)
		+
		\psi(\lambda y+\xi, t)
		\big)
		\Big] \Big|_{y = r\theta} \Upsilon_{1,1}(\theta) \rmd \theta
		\\
		= \ & \int_{ S^{4} } \Big\{
		\lambda \dot{\xi}_1 \partial_{y_1} U(y)
		+ \frac{7}{3}\lambda^{\frac{3}{2}} U(y)^{\frac{4}{3}}
		\big[
		\Psi(\lambda y+\xi,t)
		-
		\Psi(0,t)
		-
		(\lambda y+\xi) \cdot \nabla \Psi(0,t)
		+
		(\lambda y+\xi) \cdot \nabla \Psi(0,t)
		\\
		&
		+
		\psi(\lambda y+\xi, t)
		-
		\psi(0, t)
		\big]
		\Big\} \Big|_{y = r\theta} \Upsilon_{1,1}(\theta) \rmd \theta
		\\
		= \ & \int_{ S^{4} } \Big\{
		\lambda \dot{\xi}_1 \partial_{y_1} U(y)
		+ \frac{7}{3}\lambda^{\frac{3}{2}} U(y)^{\frac{4}{3}}
		\big[
		\Psi(\lambda y+\xi,t)
		-
		\Psi(0,t)
		-
		(\lambda y+\xi) \cdot \nabla \Psi(0,t)
		+
		\lambda y_1 \partial_{x_1} \Psi(0,t)
		\\
		&
		+
		\psi(\lambda y+\xi, t)
		-
		\psi(0, t)
		\big]
		\Big\} \Big|_{y = r\theta} \Upsilon_{1,1}(\theta) \rmd \theta.
\end{align*}
Therein,
\begin{align}
    &
    \Big|
	\int_{ S^{4} } \Big(
	\lambda \dot{\xi}_1 \partial_{y_1} U(y)
	\Big) \Big|_{y = r\theta} \Upsilon_{1,1}(\theta) \rmd \theta \Big|
	\lesssim
t^{\frac{9}{2} - \frac{5}{2} \gamma_1 - \frac{1}{2} \gamma_2 } \langle r \rangle^{-4},
\notag
\\
		& 
		\Big|
		\int_{ S^{4} } \Big\{ \lambda^{\frac{3}{2}} U(y)^{\frac{4}{3}}
		\big[
		\Psi(\lambda y+\xi,t)
		-
		\Psi(0,t)
		-
		(\lambda y+\xi) \cdot \nabla \Psi(0,t)
		\big]
		\Big\} \Big|_{y = r\theta} \Upsilon_{1,1}(\theta) \rmd \theta
		\Big|
        \notag
		\\
		\stackrel{\eqref{qd25Dec30-1}}{\lesssim} \ &  \lambda^{\frac{3}{2}} \langle r \rangle^{-4}
        (\lambda^2 r^2+ |\xi|^2) t^{-1-\frac{1}{2}\min\{ \gamma_1, \gamma_2\} }
		\lesssim
		\lambda^{\frac{7}{2}} \langle r \rangle^{-2} t^{-1-\frac{1}{2}\min\{ \gamma_1, \gamma_2\} }
		\sim
		t^{6-\frac{7}{2}\gamma_1 -\frac{1}{2}\min\{ \gamma_1, \gamma_2\} } \langle r \rangle^{-2},
\label{move-26Sep5-4}
\\
& \Big| \int_{ S^{4} } \Big( \lambda^{\frac{3}{2}} U(y)^{\frac{4}{3}}
		\lambda y_1 \partial_{x_1} \Psi(0,t)
		\Big) \Big|_{y = r\theta} \Upsilon_{1,1}(\theta) \rmd \theta \Big|
		\stackrel{\eqref{partial-Psi0t}}{\lesssim} t^{\frac{9}{2} - \frac{5}{2} \gamma_1 - \frac{1}{2} \gamma_2} \langle r \rangle^{-3},
\notag
\\
& 
\Big|
\int_{ S^{4} } \Big\{ \lambda^{\frac{3}{2}} U(y)^{\frac{4}{3}}
\big[
\psi(\lambda y+\xi, t)
-
\psi(0, t)
\big]
\Big\} \Big|_{y = r\theta} \Upsilon_{1,1}(\theta) \rmd \theta \Big|
\notag
\\
\stackrel{\eqref{Bo-def}}{\lesssim} \ & \lambda^{\frac{3}{2}} \langle r \rangle^{-4}
t^{-2+\frac{\gamma_1}{2}} (\ln t)^{\frac{3}{2} + \frac{1}{100}} R_0^6 R^{-2} (\lambda r + |\xi|)
\lesssim
t^{3-2\gamma_1} (\ln t)^{\frac{3}{2} + \frac{1}{100}} R_0^6 R^{-2} \langle r \rangle^{-3}.
\notag
\end{align}
Notice that for $r < R_{*}(\tau(t)) \sim t^{\beta}$, to make
\begin{equation}
t^{6-\frac{7}{2}\gamma_1 -\frac{1}{2}\min\{ \gamma_1, \gamma_2\} } \langle r \rangle^{-2} \lesssim t^{\frac{9}{2} - \frac{5}{2} \gamma_1 - \frac{1}{2} \gamma_2} \langle r \rangle^{-3},
	\mbox{ it suffices to make }
	\frac{3}{2} + \beta -\gamma_1 + \frac{1}{2} \gamma_2 - \frac{1}{2} \min\{ \gamma_1, \gamma_2\} \le 0.
\end{equation}
To make
\begin{equation}
t^{3-2\gamma_1} (\ln t)^{\frac{3}{2} + \frac{1}{100}} R_0^6 R^{-2} \langle r \rangle^{-3} \lesssim 
t^{\frac{9}{2} - \frac{5}{2} \gamma_1 - \frac{1}{2} \gamma_2} \langle r \rangle^{-3},
\mbox{ it suffices to make }
0 < \frac{3}{2} + 2\beta -\frac{1}{2}(\gamma_1 + \gamma_2).
\end{equation}
In sum,
\begin{equation}\label{qd25Jan3-2}
\begin{aligned}
&
\tilde{\mathcal{H}}_{1,1}(y,t) 
\stackrel{\eqref{qd26Jan3-1}}{=}
\mathcal{H}_{1,1}^{\sharp}(|y|,t) \Upsilon_{1,1}(y/|y|),
\\
& 
	|\tilde{\mathcal{H}}_{1,1}(y,t)|
    \lesssim
     |\mathcal{H}_{1,1}^{\sharp}(|y|,t)|
	\lesssim t^{\frac{9}{2} - \frac{5}{2} \gamma_1 - \frac{1}{2} \gamma_2} \langle y \rangle^{-3}
	\stackrel{\eqref{tau-est}}{\sim}
    (\tau(t))^{-\frac{5\gamma_1 + \gamma_2 - 9}{4\gamma_1 - 6}}
    \langle y \rangle^{-3}.
\end{aligned}
\end{equation}
For $\tau \in (\tau_0, C_{tm} T^{2\gamma_1 -3}]$, $y\in \mathbb{R}^5$, by \eqref{move-26Aug21-5},
\begin{equation*}
\begin{aligned}
&
\tilde{\mathcal{J}}_{1,1}(y,\tau) = 
\mathcal{J}_{1,1}^{\sharp}(|y|,\tau) \Upsilon_{1,1}(y/|y|),
\\
&
	|\tilde{\mathcal{J}}_{1,1}(y,\tau)| \lesssim \tau^{-\frac{5\gamma_1 + \gamma_2 - 9}{4\gamma_1 - 6}} \langle y \rangle^{-3} \1_{\tau_0 < \tau \le \tau(T)}
	+
	(\tau(T))^{-\frac{5\gamma_1 + \gamma_2 - 9}{4\gamma_1 - 6}} \langle y \rangle^{-3} \1_{ \tau(T) < \tau \le C_{tm} T^{2\gamma_1 -3} }
    \\
\stackrel{\eqref{tau-est}}{\sim} \ &
\tau^{-\frac{5\gamma_1 + \gamma_2 - 9}{4\gamma_1 - 6}} \langle y \rangle^{-3} \1_{\tau_0 < \tau \le C_{tm} T^{2\gamma_1 -3}}.
\end{aligned}
\end{equation*}
By Proposition \ref{regluing-mode1-prop} (with $v= \tau^{-\frac{5\gamma_1 + \gamma_2 - 9}{4\gamma_1 - 6}}$, $a=1$, $R_0=\ln t_0$, $\epsilon_0 = \zeta$) and Remark \ref{qd26Apr30-1-rmk}, under the assumption
\begin{equation}
\zeta \in (0,1),
\quad
\frac{2 \beta}{2\gamma_1 -3} <1,
\quad
t_0 \gg 1,
\end{equation}
\eqref{qd26Apr12-5} and \eqref{qd26Apr12-6} with $i=1$ are well-defined, and it holds that
\begin{equation}\label{qd26Apr3-6}
\begin{aligned}
&
\mbox{$\mathcal{T}_{1,1}^{\rm{in}}(y,\tau) \in C^{1+\sigma,\frac{1+\sigma}{2}} \big( \{ (y,\tau) \mid \tau \in (\tau_0, C_{tm} T^{2\gamma_1 -3}], y\in B_{R_{*}(\tau)} \} \big)$ for some $\sigma \in (0,1)$};
\\
&
\mbox{$\mathcal{T}_{1,1}^{\rm{in}}(y,\tau)$ is even with respect to the $j$-th component of $y$, $j \in \overline{2,5}$};
\\
&
	\langle y \rangle |\nabla \mathcal{T}_{1, 1}^{\rm{in}} | + | \mathcal{T}_{1, 1}^{\rm{in}} | \lesssim R_0^{6} \tau^{-\frac{5\gamma_1 + \gamma_2 - 9}{4\gamma_1 - 6}} \langle y \rangle^{-1};
	\quad
	\mathcal{T}_{1, 1}^{\rm{in}}(y, \tau_0) =0;
\end{aligned}
\end{equation}
\begin{equation}\label{qd26Apr29-3}
| \lambda^{-1} \varrho_{1,1}^{*}(\tau(t)) |
\lesssim
\lambda^{-1} (\tau(t))^{-\frac{5\gamma_1 + \gamma_2 - 9}{4\gamma_1 - 6}} R_0^{-\zeta}
\sim R_0^{-\zeta} t^{\frac{5}{2} -\frac{3 \gamma_1}{2}  -\frac{\gamma_2}{2}};
\quad
\mbox{$\varrho_{1,1}^{*}(\tau(t))$ is H\"older continuous for $t \in (t_0, T]$.}
\end{equation}

Combining \eqref{qd26Apr29-1}, \eqref{qd26Apr29-2}, \eqref{qd26Apr12-7}, and \eqref{qd26Apr29-3}, we get 
\begin{equation}\label{vec-S-est}
	|\mathcal{S}_1| \lesssim R_0^{-\frac{\zeta}{2}} t^{\frac{5}{2}- \frac{3 \gamma_1}{2} - \frac{\gamma_2}{2}}.
\end{equation}

{\textbf{Step $5$.}}
By \eqref{qd26Jan3-6}, \eqref{move-26Aug21-7}, \eqref{vec-S-est}; the H\"older continuity of $\varrho_{0,1}^{*}(\tau(t))$ in \eqref{qd26Apr28-3}, $\varrho_{1,1}^{*}(\tau(t))$ in \eqref{qd26Apr29-3}, and the gradient and H\"older estimates of $\psi\in B_{\rm o}$; the definition of $( \mathcal{S}_{6} ,
\vec{\mathcal{S} } )$ in \eqref{mu1-xi-sys}, it follows that
$ \mathcal{S}_{6} \times
\vec{\mathcal{S} } $ is a compact mapping from $B_{\rm o} \times B_{\dot{\lambda}_1} \times B_{\dot{\xi}^{[1]}}$ into $B_{\dot{\lambda}_1} \times B_{\dot{\xi}^{[1]}}$, and we get the qualitative H\"older continuity of $( \mathcal{S}_{6} ,
\vec{\mathcal{S} } )$ in Lemma \ref{mu1-xi-lem}.

{\textbf{Step $6$ - for higher modes.}} By \eqref{move-26Aug22-2}, \eqref{qd25Jan3-3}, \eqref{move-26Aug22-1}, and \eqref{qd25Jan3-2},
\begin{equation}
\tilde{\mathcal{H}}_{\perp}(y,t) 
\stackrel{\eqref{qd26Jan3-1}}{=} \mathcal{H} - \tilde{\mathcal{H}}_{0,1} - \sum_{i=1}^{5} \tilde{\mathcal{H}}_{1,i}
= \mathcal{H} - \tilde{\mathcal{H}}_{0,1} -\tilde{\mathcal{H}}_{1,1}
\end{equation}
is even for the $j$-th component of $y$, $j \in \overline{2,5}$. So is $\tilde{\mathcal{J}}_{\perp}(y,\tau)$ given in \eqref{move-26Aug21-5}.

By $\gamma_1, \gamma_2 \in [0,5)$, \eqref{move-26Aug19-1}, and $\|\psi\|_{\rm o} \le 1$, we have
\begin{align}
&
|\mathcal{H}|
= \big| \dot{\lambda} \lambda Z_{6}(y)
	+\lambda \dot{\xi} \cdot (\nabla U )(y) 
	+\frac{7}{3}\lambda^{\frac{3}{2}} U(y)^{\frac{4}{3}}
	(\Psi + \psi)(\lambda y+\xi, t) \big|
    \notag
\\
\lesssim \ & t^{3-2\gamma_1} \langle y \rangle^{-3} + t^{3-\frac{3}{2} \gamma_1} \langle y \rangle^{-4} \big[ t^{-\frac{\gamma_1}{2}} + t^{-\frac{\gamma_2}{2} - \frac{1}{2}} |\lambda y+\xi| + t^{-\frac{\gamma_1}{2} } (\ln t)^2 R_0^6 R^{-1} \big]
\label{move-26Aug21-3}
\\
\lesssim \ & t^{3-2\gamma_1} \langle y \rangle^{-3} + t^{\frac{9}{2}-\frac{5}{2} \gamma_1 - \frac{1}{2}\gamma_2 } \langle y \rangle^{-3}
\stackrel{\gamma_1 + \gamma_2 \ge 3}{\sim}
t^{3-2\gamma_1} \langle y \rangle^{-3}.
\notag
\end{align}
Combining \eqref{move-26Aug21-3}, \eqref{qd25Jan3-3}, and \eqref{qd25Jan3-2}, we have
\begin{equation}
|\tilde{\mathcal{H}}_{\perp}|
\stackrel{\gamma_1 + \gamma_2 \ge 3}{\lesssim} t^{3-2\gamma_1} \langle y \rangle^{-3}
\stackrel{\eqref{tau-est}}{\sim} (\tau(t))^{-1} \langle y \rangle^{-3}.
\end{equation}
For $\tau \in (\tau_0, C_{tm} T^{2\gamma_1 -3}]$, $y\in \mathbb{R}^5$,
\begin{equation*}	|\tilde{\mathcal{J}}_{\perp}(y,\tau)| \lesssim \tau^{-1} \langle y \rangle^{-3} \1_{\tau_0 < \tau \le \tau(T)}
	+
	(\tau(T))^{-1} \langle y \rangle^{-3} \1_{\tau(T) < \tau \le C_{tm} T^{2\gamma_1 -3}}
\sim 
\tau^{-1} \langle y \rangle^{-3} \1_{\tau_0 < \tau \le C_{tm} T^{2\gamma_1 -3}}.
\end{equation*}
By Proposition \ref{regluing-higher-mode-prop} (with $v=\tau^{-1}$, $a=1$, a constant $R_0 = C_{0} \gg 1$), under the assumption 
\begin{equation}
\frac{2 \beta}{2\gamma_1 -3} <1,
\quad
t_0\gg 1,
\end{equation}
\eqref{move-26Sep6-1} is well-defined and we have
\begin{equation}\label{qd26Apr3-7}
\begin{aligned}
&
\mbox{$\mathcal{T}_{\perp}^{\rm{in}}(y,\tau) \in C^{1+\sigma,\frac{1+\sigma}{2}} \big( \{ (y,\tau) \mid \tau \in (\tau_0, C_{tm} T^{2\gamma_1 -3}], y\in B_{R_{*}(\tau)} \} \big)$ for some $\sigma \in (0,1)$};
\\
&
\mbox{$\mathcal{T}_{\perp}^{\rm{in}}(y,\tau)$ is even with respect to the $j$-th component of $y$, $j \in \overline{2,5}$};
\\
&
	\langle y \rangle |\nabla  \mathcal{T}_{\perp}^{\rm{in}} | + | \mathcal{T}_{\perp}^{\rm{in}} | \lesssim 
	\tau^{-1} \langle y \rangle^{-1};
	\quad \mathcal{T}_{\perp}^{\rm{in}}(y,\tau_0) = 0.
\end{aligned}
\end{equation}

{\textbf{Step $7$.}}
For $\mathcal{T}^{\rm{in}}$ defined in \eqref{Tin-def},
combining \eqref{qd26Apr3-5}, \eqref{move-26Aug22-5}, \eqref{qd26Apr3-6}, and \eqref{qd26Apr3-7}, we have
\begin{equation}\label{T*-est}
\begin{aligned}
&
\mbox{$\mathcal{T}^{\rm{in}}(y,\tau) \in C^{1+\sigma,\frac{1+\sigma}{2}} \big( \{ (y,\tau) \mid \tau \in (\tau_0, C_{tm} T^{2\gamma_1 -3}], y\in B_{R_{*}(\tau)} \} \big)$
for some $\sigma \in (0,1)$};
\\
&
\mbox{$\mathcal{T}^{\rm{in}}(y,\tau)$ is even with respect to the $j$-th component of $y$, $j \in \overline{2,5}$;}
\\
&
	\langle y \rangle |\nabla \mathcal{T}^{\rm{in}}| + |\mathcal{T}^{\rm{in}}| 
    \stackrel{\gamma_1 > \frac{3}{2}, \gamma_1 + \gamma_2 \ge 3}{\lesssim} R_0^{6} \tau^{-1} \langle y \rangle^{-1}
    \stackrel{\eqref{tau-est}}{\sim} (t(\tau))^{3-2\gamma_1} R_0^{6} \langle y \rangle^{-1};
\\
&
\mathcal{T}^{\rm{in}}(y, \tau_0) = g_0 \eta(2 y/R_0) Z_0(y)
    \mbox{ \ with \ }
	|g_0| \lesssim R_0 \tau_0^{-1}.
\end{aligned}
\end{equation}
Thus, for $t_0 \gg 1$, $\mathcal{T}^{\rm{in}}[\mathcal{J}](y,\tau(t))$ is a compact mapping from $B_{\rm o} \times B_{\dot{\lambda}_1} \times B_{\dot{\xi}^{[1]}}$ into $B_{\rm{in}}$, and we get the qualitative H\"older continuity of $\mathcal{T}^{\rm{in}}[\mathcal{J}](y,\tau(t))$, $\nabla_y \mathcal{T}^{\rm{in}}[\mathcal{J}](y,\tau(t))$ in Lemma \ref{mu1-xi-lem}. Under the assumption \eqref{qd26Jan3-7}, we can take $\epsilon \ll 1$ to meet all parameter restrictions in this proof.
\end{proof}

\section{Continuity argument for the mappings}\label{conti-Sec}

We will solve $(\psi,\phi, \dot{\lambda}_1, \dot{\xi}^{[1]})$ in the space
\begin{equation}\label{Xspace-def}
	\mathcal{X} := B_{\rm o} \times B_{\rm{in}} \times B_{\dot{\lambda}_1} \times B_{\dot{\xi}^{[1]}}
\end{equation}
equipped with the norm 
\begin{equation}
\| (\psi,\phi, \dot{\lambda}_1, \dot{\xi}^{[1]} ) \|_{\mathcal{X}} := \max\{ \| \psi \|_{B_{\rm o}}, \|\phi\|_{\rm{in}}, \|\dot{\lambda}_1\|_{\dot{\lambda}_1}, \| \dot{\xi}^{[1]} \|_{\dot{\xi}^{[1]}} \}.
\end{equation}

Using the relationship $\lambda_1 = \lambda_1[\dot{\lambda}_1](t), 
\xi^{[1]} = \xi^{[1]}[\dot{\xi}^{[1]}](t)$ in \eqref{mu1-xi-sys}, we rewrite the dependence of $\mathcal{G} = \mathcal{G}[\psi,\phi,\dot{\lambda}_{1},\dot{\xi}_{1}]$ in \eqref{g-n=5}. The dependence of other quantities is rewritten similarly. 

We will give the continuity argument for mappings in the next lemma. The main difficulty is the continuity argument for the mappings for the inner problem and $\varrho_{0,1}^{*}(\tau(t))$, $\varrho_{1,1}^{*}(\tau(t))$ due to the $t, \tau$ transformation in \eqref{tau-def}. This transformation makes the domain of the original inner problem \eqref{inner-tau-eq} in the $\tau$ variable vary with the choice of $\dot{\lambda}_1$, so we cannot use the inner linear theory directly to obtain continuity of the mappings. This is why we extend the inner problem in Section \ref{inner-map-sec}. See the related argument in {\textbf{Step 2}} in the following proof.

\begin{lemma}\label{mapping-conti-lem}

Under the parameter restrictions \eqref{mov-26Aug11-1} and $\beta-\gamma_1<7$, then for $t_0$ sufficiently large, suppose a sequence $(\psi_i, \phi_i, \dot{\lambda}_{1,i}, \dot{\xi}^{[1,i]} )_{i\ge 1}$ converges to $(\psi_{\infty}, \phi_{\infty}, \dot{\lambda}_{1,\infty}, \dot{\xi}^{[1,\infty]} )$ in $\mathcal{X}$, that is,
\begin{equation}\label{move-26Sep6-2}
\lim_{i\to \infty} \| (\psi_i, \phi_i, \dot{\lambda}_{1,i}, \dot{\xi}^{[1,i]} ) - (\psi_{\infty}, \phi_{\infty}, \dot{\lambda}_{1,\infty}, \dot{\xi}^{[1,\infty]} ) \|_{\mathcal{X}} = 0,
\end{equation}
it holds that
\begin{equation}\label{out-map-converge}
\lim_{i\to \infty}
\big\|
\mathcal{T}_{\rm{o}} [ \mathcal{G} [\psi_{i},\phi_{i}, \dot{\lambda}_{1,i}, \dot{\xi}^{[1,i]}] \1_{|x|\le T^{9}, t\le T } ](x,t)
-
\mathcal{T}_{\rm{o}} [ \mathcal{G} [\psi_{\infty},\phi_{\infty}, \dot{\lambda}_{1,\infty}, \dot{\xi}^{[1,\infty]}] \1_{|x|\le T^{9}, t\le T} ](x,t) \big\|_{B_{\rm o}} = 0,
\end{equation}
\begin{equation}\label{S-conti} 
\lim_{i\to \infty}
\big(
\big\|
\mathcal{S}_6[\psi_{i}, \dot{\lambda}_{1,i}, \dot{\xi}^{[1,i]}](t)
-
\mathcal{S}_6[\psi_{\infty}, \dot{\lambda}_{1,\infty}, \dot{\xi}^{[1,\infty]}](t) \big\|_{\dot{\lambda}_1} +
\big\|
\vec{\mathcal{S}}[\psi_{i},\dot{\lambda}_{1,i},\dot{\xi}^{[1,i]}](t) 
-
\vec{\mathcal{S}}[\psi_{\infty},\dot{\lambda}_{1,\infty},\dot{\xi}^{[1,\infty]}](t) \big\|_{\dot{\xi}^{[1]}}
\big)
= 0, 
\end{equation}
\begin{equation}\label{calT-in-conti}
\lim_{i\to \infty}
\big\|
\mathcal{T}^{\rm{in}} [\mathcal{J}[\psi_{i}, \dot{\lambda}_{1,i}, \dot{\xi}^{[1,i]}] ](y,\tau[\dot{\lambda}_{1,i}](t)) 
-
\mathcal{T}^{\rm{in}} [ \mathcal{J} [\psi_{\infty}, \dot{\lambda}_{1,\infty}, \dot{\xi}^{[1,\infty]}] ](y,\tau[\dot{\lambda}_{1,\infty}](t)) \big\|_{\rm{in}} = 0.
\end{equation}
 
\end{lemma}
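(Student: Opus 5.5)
The plan is to prove the three convergences \eqref{out-map-converge}, \eqref{S-conti}, \eqref{calT-in-conti} separately, always writing $\lambda_{1,i}=\lambda_1[\dot\lambda_{1,i}]$ and $\xi^{[1,i]}=\xi^{[1]}[\dot\xi^{[1,i]}]$. From \eqref{move-26Sep6-2} and the explicit integral formulas \eqref{mu1-xi-sys}, $\lambda_{1,i}\to\lambda_{1,\infty}$ and $\xi^{[1,i]}\to\xi^{[1,\infty]}$ uniformly on $[t_0,T]$; we use that $T$ is fixed and that, when $3\gamma_1+\gamma_2>7$, the weight of $\dot\xi^{[1]}$ is integrable at infinity. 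Hence $\lambda_i,\xi_i$ and their derivatives converge uniformly on the compact interval $[t_0,T]$. We treat $T$ as fixed and exploit compactness of $\overline{B_{T^9}}\times[t_0,T]$: on this set all weights appearing in the norms ($w_{\rm o}$, the gradient and H\"older weights, $R_0^6t^{3-2\gamma_1}\ln t\langle y\rangle^{-1}$, and so on) are bounded above and below by positive constants depending on $T$. Convergence in each norm therefore reduces to uniform convergence of the functions, their gradients, and the relevant H\"older quotients.

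For the outer map, $\mathcal{G}[\psi_i,\phi_i,\dots]\1_{|x|\le T^9,t\le T}$ converges to the limiting source pointwise, and uniformly away from the cutoff transition sets. This holds because $\mathcal G$ is built continuously from $\psi$, $\phi(y,t)$, $\nabla_y\phi$, and $\lambda,\xi,\dot\lambda,\dot\xi$, where $y=(x-\xi)/\lambda$ depends continuously on the parameters and $\phi_i\to\phi_\infty$ in $C^1$. The nonlinearity $|u|^{4/3}u$ is locally Lipschitz. The sources are uniformly bounded by the majorant \eqref{move-26Aug16-6}. Dominated convergence in the heat-kernel convolution, together with the gradient kernel (integrable singularity $(t-s)^{-1/2}$), then gives uniform convergence of $\mathcal T_{\rm o}$ and $\nabla\mathcal T_{\rm o}$. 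For the H\"older seminorm $[\cdot]_{{\rm o},\alpha}$ we interpolate: the limit differences are bounded in $C^{\alpha_1}_t$ with $\alpha_1>\alpha$ by \eqref{out-holder-est}, and they tend to $0$ uniformly, so the $\alpha$-quotient tends to $0$.

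For $\mathcal S_6$ and $\vec{\mathcal S}$, every term in \eqref{qd26Mar3-1} and \eqref{qd25Dec25-3} except $\varrho^*$ is an integral over $B_{R_0}$ of continuous expressions in $\psi(\lambda y+\xi,t)$, $\Psi(\lambda y+\xi,t)$, $\lambda$, and $\xi$. Its continuity follows directly from uniform convergence of $\psi_i$ (with uniform equicontinuity from the gradient bound in $B_{\rm o}$) and of the parameters. The Duhamel integral in $\mathcal S_6$ is then continuous on $[t_0,T]$. The genuine issue is $\varrho^*_{0,1}(\tau(t))$, $\varrho^*_{1,1}(\tau(t))$ and the inner map, which we expect to be the main obstacle. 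Both the forcing $\mathcal J$ and the time change $\tau[\lambda]$ depend on $\dot\lambda_1$.

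We handle this in two steps, using \eqref{qd26May2-6}: the linear maps $\mathcal T^{\rm in}[\cdot]$, $\varrho^*[\cdot]$ are fixed and independent of the parameters, and they live on the fixed domain $\{\tau\in(\tau_0,C_{tm}T^{2\gamma_1-3}],\,y\in B_{R_*(\tau)}\}$. First, we show $\mathcal J_i\to\mathcal J_\infty$ in the weighted norm $\sup\tau\langle y\rangle^3|\cdot|$ of Propositions \ref{mode0-regluing-prop}, \ref{regluing-mode1-prop}, \ref{regluing-higher-mode-prop}. Pointwise convergence holds by the same arguments as above, with $t[\lambda_i](\tau)\to t[\lambda_\infty](\tau)$ uniformly since $\tau[\lambda_i]\to\tau[\lambda_\infty]$ uniformly with derivatives bounded below. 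The extension beyond $\tau(T)$ in \eqref{calJ-def} is continuous in $\tau$ apart from an indicator at the moving point $\tau_i(T)$. We absorb this by noting that it is a jump of size $O(\tau^{-1}\langle y\rangle^{-3})$ supported on an interval shrinking to zero. If the weighted-sup norm does not tolerate this, we fall back on an $L^p$-in-time version of the linear estimates, or on the uniform $C^{1+\sigma}$ bounds \eqref{T*-est} plus Arzel\`a--Ascoli. Under the latter route, any subsequence of $\mathcal T^{\rm in}[\mathcal J_i]$ has a limit solving the limiting linear problem with the same initial datum structure, and by uniqueness of the linear mapping that limit is $\mathcal T^{\rm in}[\mathcal J_\infty]$. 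Linearity then gives $\mathcal T^{\rm in}[\mathcal J_i]-\mathcal T^{\rm in}[\mathcal J_\infty]\to0$, and likewise for $\varrho^*$ and $g_0$. Second, we compose with $\tau[\lambda_i](t)\to\tau[\lambda_\infty](t)$ uniformly. The uniform H\"older continuity of $\mathcal T^{\rm in}$ and $\nabla_y\mathcal T^{\rm in}$ in $\tau$ (the $C^{1+\sigma,(1+\sigma)/2}$ bounds) turns this into uniform convergence on $\overline{B_{4R}}\times[t_0,T]$, which yields \eqref{calT-in-conti} and the convergence of the $\varrho^*$ terms entering \eqref{S-conti}.
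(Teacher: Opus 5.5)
Your overall route is the paper's. You reduce every weighted norm to uniform convergence on the bounded sets $\overline{B_{T^9}}\times[t_0,T]$, $\overline{B_{4R}}\times[t_0,T]$ and $[t_0,T]$. You prove convergence of the sources, and you use that the inner linear maps in \eqref{qd26May2-6} are fixed and live on a parameter-independent domain. You then compose with $\tau[\dot\lambda_{1,i}](t)\to\tau[\dot\lambda_{1,\infty}](t)$, using continuity of $\mathcal{T}^{\rm in}[\mathcal{J}_\infty]$ in $\tau$. Your interpolation argument for $[\cdot]_{{\rm o},\alpha}$, via \eqref{out-holder-est} with some $\alpha_1>\alpha$, is a clean treatment of a point the paper leaves implicit. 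However, your handling of the extension \eqref{calJ-def}, which is the delicate point of the lemma, is wrong as written.

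There is no jump at $\tau[\dot\lambda_{1,i}](T)$. Since $t[\dot\lambda_1](\tau[\dot\lambda_1](T))=T$ and the extension freezes $\mathcal{H}(\cdot,t)\eta(\cdot/2R(t))$ at its value at $t=T$, each $\mathcal{J}^{[i]}$ is continuous in $\tau$. Consider the shrinking interval between $\tau[\dot\lambda_{1,i}](T)$ and $\tau[\dot\lambda_{1,\infty}](T)$. There the difference has the form $\mathcal{H}_i(y,t[\dot\lambda_{1,i}](\tau))\eta-\mathcal{H}_\infty(y,T)\eta$, or the symmetric one. It tends to $0$ uniformly for three reasons:
\begin{itemize}
\item $\mathcal{H}_i\eta\to\mathcal{H}_\infty\eta$ uniformly on $\mathbb{R}^5\times[t_0,T]$;
\item $\mathcal{H}_\infty\eta$ is uniformly continuous in $t$;
\item $t[\dot\lambda_{1,i}](\tau)\to T$ uniformly on that interval.
\end{itemize}
This is exactly Lemma \ref{qd26Apr17-5-lem}.

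Your claim that a jump of size $O(\tau^{-1}\langle y\rangle^{-3})$ on an interval of vanishing length can be ``absorbed'' is false for the weighted sup norm $\|\cdot\|_{v,2+a}$ with $v=\tau^{-1}$, $a=1$, which Propositions \ref{mode0-regluing-prop}, \ref{regluing-mode1-prop} and \ref{regluing-higher-mode-prop} require. Such a discrepancy has norm of order one no matter how short its support is.

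Neither fallback closes the gap. Section \ref{re-gluing-Sec} provides no $L^p$-in-time linear theory. The Arzel\`a--Ascoli route breaks at ``uniqueness of the linear mapping''. The equation on $B_{R_*(\tau)}$ together with the form $g_0\eta(2y/R_0)Z_0$ of the initial datum does not determine a solution, because there is no boundary condition and $g_0$ is free. $\mathcal{T}^{\rm in}$ is instead selected by the re-gluing construction:
\begin{itemize}
\item Dirichlet data for the outer part on $\partial B_{4R}$;
\item a backward-in-time choice of the $Z_0$-coefficient;
\item the fixed point that determines $\varrho$.
\end{itemize}
Identifying a subsequential limit with $\mathcal{T}^{\rm in}[\mathcal{J}_\infty]$ would therefore require passing to the limit through that entire construction.

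A milder version of the same misreading appears in your outer step. Every cutoff in $\mathcal{G}$ is smooth, and $\1_{|x|\le T^9,t\le T}$ does not depend on $i$, so $\mathcal{G}_i\to\mathcal{G}_\infty$ uniformly on the whole compact set, not just away from transition sets. The only care needed is that $\phi_\infty$ is evaluated at points with $|y|\le 3R$, inside the domain $B_{4R}$ of $B_{\rm in}$. Uniform convergence of $\mathcal{T}_{\rm o}$ and $\nabla\mathcal{T}_{\rm o}$ then follows at once, whereas dominated convergence alone gives only pointwise convergence.
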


\begin{proof}

For brevity, 
\begin{equation}
\mbox{$f_{i} \rightsquigarrow f_{\infty}$ in $\Omega$ denotes $\lim\limits_{i\to \infty} \sup\limits_{z \in \Omega} |f_{i}(z) - f_{\infty}(z)| = 0$.}
\end{equation}
Similar to \eqref{qd23Dec28-1},
\begin{equation}\label{qd26Apr5-1}
\begin{aligned}
&
	| \lambda_{1,i} - \lambda_{1,\infty} |  
	\stackrel{\gamma_1<2}{\lesssim} R_0^{-\frac{\zeta}{2}} t^{2-\gamma_1} \| \dot{\lambda}_{1,i} - \dot{\lambda}_{1,\infty} \|_{\dot{\lambda}_1},
\\
& | \xi^{[1,i]} - \xi^{[1,\infty]} | \lesssim \| \dot{\xi}^{[1,i]} - \dot{\xi}^{[1,\infty]} \|_{\dot{\xi}^{[1]}}
    R_0^{-\frac{\zeta}{4}}
	\begin{cases} 
t^{\frac{7}{2}-\frac{3\gamma_1}{2} - \frac{\gamma_2}{2}}  
			& \mbox{ \ if \ } 3 \gamma_1 + \gamma_2 \ne 7
			\\
            \ln t 
			& \mbox{ \ if \ } 3 \gamma_1 + \gamma_2 = 7.
		\end{cases}
\end{aligned}
\end{equation}

Since $T<\infty$ and we restrict $B_{\rm{o}}$ in $\mathcal{D}_{\rm{o}}$, $B_{\rm{in}}$ in $\mathcal{D}_{\rm{in}}$, $B_{\dot{\lambda}_1}$ and  $B_{\dot{\xi}^{[1]}}$ in $[t_0, T]$, 
where for brevity, we denote
\begin{equation}
\mathcal{D}_{\rm{o}} := \overline{B_{T^9}} \times [t_0, T],
\quad
\mathcal{D}_{\rm{in}} := \{ (y,t)  \mid t\in [t_0,T], y\in \overline{B_{4 R}} \},
\end{equation}
then \eqref{move-26Sep6-2} implies
\begin{equation}\label{qd26May1-1}
\psi_{i} \rightsquigarrow \psi_{\infty} \mbox{ \ in \ } \mathcal{D}_{\rm{o}};
\quad
\phi_{i} \rightsquigarrow \phi_{\infty}, \ 
\nabla_{y}\phi_{i} \rightsquigarrow \nabla_{y}  \phi_{\infty} \mbox{ \ in \ } \mathcal{D}_{\rm{in}};
\quad
\dot{\lambda}_{1,i} \rightsquigarrow \dot{\lambda}_{1,\infty}, 
\ 
\dot{\xi}^{[1,i]} \rightsquigarrow \dot{\xi}^{[1,\infty]} \mbox{ \ in \ } [t_0, T].
\end{equation}

The {\textbf{key point}} is that in bounded domains, $\mathcal{D}_{\rm{o}}$, $\mathcal{D}_{\rm{in}}$, $[t_0, T]$, the weights in the weighted norms $ \| \cdot \|_{B_{\rm o}}, \|\cdot\|_{\rm{in}}, \|\cdot\|_{\dot{\lambda}_1}, \| \cdot \|_{\dot{\xi}^{[1]}}$ have positive lower and upper bounds, which could depend on $t_0, T$. Then the convergence in these weighted norms is equivalent to uniform convergence in the corresponding bounded domains.

The following basic lemma will be used frequently in the continuity argument but is not stated.
\begin{lemma}
	
	\begin{enumerate}
		
		\item 
		Given a set $\Omega \subset \mathbb{R}^n$, two sequences of functions  $(f_{i})_{i\ge 1}, (g_{i})_{i\ge 1}$ defined in $\Omega$ satisfying $\lim\limits_{i\to\infty} \sup\limits_{x\in \Omega} | f_{i}(x)  - f_{\infty}(x) | = 0$ and $\lim\limits_{i\to\infty} \sup\limits_{x\in \Omega} | g_{i}(x)  - g_{\infty}(x) | = 0$ for some functions $f_{\infty}, g_{\infty}$ defined in $\Omega$,  and $(f_{i})_{i\ge 1}, (g_{i})_{i\ge 1}$ are uniformly bounded in $\Omega$. Then $\lim\limits_{i\to\infty} \sup\limits_{x\in \Omega} | (f_{i} g_{i})(x)  - (f_{\infty} g_{\infty})(x) | = 0$.

		Under the additional assumption that $(|g_i|)_{i \ge 1}$ have a uniform positive lower bound in $\Omega$, then $\lim\limits_{i\to\infty} \sup\limits_{x\in \Omega} | ( \frac{f_{i}}{g_{i}} )(x)  - ( \frac{f_{\infty}}{g_{\infty}} )(x) | = 0$.

		\item 
		Suppose that $\Omega_{2} \subset \mathbb{R}^{n_2}$ is a compact set and $\Omega_{1} \subset \mathbb{R}^{n_1}$, a sequence of functions $g_{i} : \Omega_1 \to \Omega_2$, $i\ge 1$, and $f$ is a continuous function in $\Omega_2$. If there exists a function $g_{\infty} : \Omega_1 \to \Omega_2$ such that $\lim\limits_{i \to \infty} \sup\limits_{x \in \Omega_1} |g_i(x) - g_{\infty}(x)| = 0$, then $\lim\limits_{i \to \infty} \sup\limits_{x \in \Omega_1} |f(g_i(x)) - f(g_{\infty}(x))| = 0$.

		If we only change the above assumption about $\Omega_2$ to that $ \Omega_2 \subset \mathbb{R}^{n_2} $ is an unbounded closed set, under the additional assumption $\lim\limits_{x\in \Omega_2, |x|\to \infty} f(x) = 0$, then the conclusion still holds. 
		
	\end{enumerate}
	
\end{lemma}

\begin{proof}
	
	(1). Obviously, $f_{\infty}, g_{\infty}$ are bounded functions. Since $
	|f_{i} g_{i} - f_{\infty} g_{\infty}| \le 
	|f_{i} - f_{\infty}| |g_{i}| + |f_{\infty}| |g_i - g_{\infty}| $, we get the first result. Under the additional assumption, $|g_{\infty}|$ has a positive lower bound in $\Omega$. Since $\frac{f_i}{g_i} - \frac{f_{\infty}}{g_{\infty}} = \frac{f_i - f_{\infty}}{g_i} + \frac{f_{\infty} (g_{\infty} - g_{i})}{g_i g_{\infty}}$, we get the second result.

	(2). Both results are deduced from the fact that $f$ is uniformly continuous in $\Omega_2$.
\end{proof}

Hereafter in this proof, we always assume $t_0 \gg 1$ and $i\gg 1$.
	
\textbf{Step 1 - Proof of \eqref{out-map-converge}.}
Claim:
\begin{equation}\label{cal-G-converge}
\mathcal{G} [\psi_{i},\phi_{i}, \dot{\lambda}_{1,i}, \dot{\xi}^{[1,i]}]
\rightsquigarrow
\mathcal{G} [\psi_{\infty},\phi_{\infty}, \dot{\lambda}_{1,\infty}, \dot{\xi}^{[1,\infty]}]
\mbox{ \ in \ } \mathcal{D}_{\rm{o}}.
\end{equation}

Indeed, using $T<\infty$ and the similar scaling argument in Lemma \ref{outer-exist}, we have that \eqref{cal-G-converge} implies \eqref{out-map-converge}.

We will check $\mathcal{G}$ in \eqref{g-n=5} term by term to conclude \eqref{cal-G-converge}. Readers may focus on the typical analysis to deduce \eqref{qd26May1-2}.

Same as \eqref{move-26Sep6-5},
$\lambda^{-2}
U(y)^{\frac{4}{3}}
\big(
\eta(\tilde{y})^{\frac{4}{3}} - 1 \big)
\lambda^{-\frac{3}{2}}\phi(y,t) \eta_{R}(y) = 0$ for $(\psi,\phi, \dot{\lambda}_1, \dot{\xi}^{[1]} ) \in \mathcal{X}$.

{\textbf{For $\dot{\lambda}\lambda^{-\frac{5}{2}} Z_{6}(y) = ( \dot{\lambda}_0 + \dot{\lambda}_1 ) (\lambda_0 + \lambda_1)^{-\frac{5}{2}}  Z_{6}(\frac{x-(\xi^{[0]} + \xi^{[1]})}{\lambda_0 + \lambda_1})$.}}
In $[t_0, T]$,
$\dot{\lambda}_0 + \dot{\lambda}_{1,i} \rightsquigarrow  \dot{\lambda}_0 + \dot{\lambda}_{1,\infty} $,
$\lambda_0 + \lambda_{1,i} \rightsquigarrow \lambda_0 + \lambda_{1,\infty}$, $(|\lambda_0 + \lambda_{1,i}|)_{i\ge 1}$ have a uniform positive lower bound.
Thus, in $\mathcal{D}_{\rm{o}}$, $ \frac{x-(\xi^{[0]} + \xi^{[1,i]})}{\lambda_0 + \lambda_{1,i}} \rightsquigarrow \frac{x-(\xi^{[0]} + \xi^{[1,\infty]})}{\lambda_0 + \lambda_{1,\infty}} $, and 
$\big| \frac{x-(\xi^{[0]} + \xi^{[1,i]})}{\lambda_0 + \lambda_{1,i}} \big|$ has a uniform upper bound, which depends on $T$ and independent of $i$. Then in $\mathcal{D}_{\rm{o}}$,
\begin{equation}\label{move-26Sep6-4}
( \dot{\lambda}_0 + \dot{\lambda}_{1,i} ) (\lambda_0 + \lambda_{1,i})^{-\frac{5}{2}}  Z_{6}\Big( \frac{x-(\xi^{[0]} + \xi^{[1,i]})}{\lambda_0 + \lambda_{1,i}} \Big) 
\rightsquigarrow 
( \dot{\lambda}_0 + \dot{\lambda}_{1,\infty} ) (\lambda_0 + \lambda_{1,\infty})^{-\frac{5}{2}}  Z_{6}\Big( \frac{x-(\xi^{[0]} + \xi^{[1,\infty]})}{\lambda_0 + \lambda_{1,\infty}} \Big).
\end{equation}

The continuity argument about the following terms in $\mathcal{D}_{\rm{o}}$ are similar to \eqref{move-26Sep6-4}:
\begin{equation*}
\begin{aligned}
&
(\lambda_0 + \lambda_{1})^{-\frac{5}{2}} (\dot{\xi}^{[0]} + \dot{\xi}^{[1]}) \cdot 
(\nabla U)(\frac{x-(\xi^{[0]} + \xi^{[1]})}{\lambda_0 + \lambda_{1}});
\quad 
\eta(\frac{x-(\xi^{[0]} + \xi^{[1]})}{\sqrt{t}} ) - \eta(\frac{x-(\xi^{[0]} + \xi^{[1]})}{(\lambda_0 + \lambda_1) R});
\\
&
\mathcal{E}_{U}^{\rm{cut}}[\dot{\lambda}_{1}, \dot{\xi}^{[1]}]
	= (\lambda_0 + \lambda_1)^{-\frac{3}{2}} U\Big(\frac{x-(\xi^{[0]} + \xi^{[1]})}{\lambda_0 + \lambda_1} \Big) 
	\Big[ 2^{-1} t^{-1} \frac{x-(\xi^{[0]} + \xi^{[1]})}{\sqrt{t}}  + t^{-\frac{1}{2}} (\dot{\xi}^{[0]} + \dot{\xi}^{[1]}) \Big] 
    \\
    &
    \quad
    \cdot ( \nabla \eta ) \Big( \frac{x-(\xi^{[0]} + \xi^{[1]})}{\sqrt{t}} \Big)
	+
	2 (\lambda_0 + \lambda_1)^{-\frac{5}{2}} t^{-\frac{1}{2}} 
	( \nabla U )\Big(\frac{x-(\xi^{[0]} + \xi^{[1]})}{\lambda_0 + \lambda_1} \Big) \cdot 
	( \nabla \eta )\Big( \frac{x-(\xi^{[0]} + \xi^{[1]})}{\sqrt{t}} \Big)
	\\
	& \quad
	+
	(\lambda_0 + \lambda_1)^{-\frac{3}{2}} t^{-1} U\Big(\frac{x-(\xi^{[0]} + \xi^{[1]})}{\lambda_0 + \lambda_1} \Big) ( \Delta \eta )\Big( \frac{x-(\xi^{[0]} + \xi^{[1]})}{\sqrt{t}} \Big);
\\
&
(\lambda_0 + \lambda_{1})^{-\frac{7}{2}} U^{\frac{7}{3}}\Big(\frac{x-(\xi^{[0]} + \xi^{[1]})}{\lambda_0 + \lambda_1} \Big)
\Big[ \eta^{\frac{7}{3}}\Big( \frac{x-(\xi^{[0]} + \xi^{[1]})}{\sqrt{t}} \Big) 
-
\eta\Big( \frac{x-(\xi^{[0]} + \xi^{[1]})}{\sqrt{t}} \Big)
\Big];
\\
&
(\lambda_{0} + \lambda_{1})^{-2}
U^{\frac{4}{3}}
\Big( \frac{x-( \xi^{[0]} + \xi^{[1]} ) }{\lambda_{0} + \lambda_{1} } \Big)
\Big[
\eta^{\frac{4}{3}}\Big( \frac{x-( \xi^{[0]} + \xi^{[1]} ) }{\sqrt{t}} \Big) - \eta\Big( \frac{x-( \xi^{[0]} + \xi^{[1]} ) }{ (\lambda_{0} + \lambda_{1})R } \Big) \Big] ( \Psi + \psi).
\end{aligned}
\end{equation*}

{\textbf{For $\Lambda_1$.}}
\begin{equation*}
	\begin{aligned}  
		& \Lambda_1[\phi, \dot{\lambda}_{1}, \dot{\xi}^{[1]} ]
= (\lambda_0 + \lambda_{1} )^{-\frac{7}{2}} R^{-2} \phi\Big( \frac{x- (\xi^{[0]} + \xi^{[1]})}{\lambda_0 + \lambda_1}, t \Big)  ( \Delta \eta )\Big( \frac{x- (\xi^{[0]} + \xi^{[1]})}{(\lambda_0 + \lambda_1) R} \Big)
\\
&
		+ 
		2 (\lambda_0 + \lambda_{1} )^{-\frac{7}{2}}  R^{-1}  (\nabla_y \phi)\Big( \frac{x- (\xi^{[0]} + \xi^{[1]})}{\lambda_0 + \lambda_1}, t \Big) \cdot (\nabla \eta )\Big( \frac{x- (\xi^{[0]} + \xi^{[1]})}{(\lambda_0 + \lambda_1) R} \Big) 
		\\
		&
		+
		(\lambda_0 + \lambda_{1} )^{-\frac{3}{2}} \phi\Big( \frac{x- (\xi^{[0]} + \xi^{[1]})}{\lambda_0 + \lambda_1}, t \Big) 
		( \nabla \eta )\Big( \frac{x- (\xi^{[0]} + \xi^{[1]})}{(\lambda_0 + \lambda_1) R} \Big) 
        \\
        &
        \cdot \Big[ \frac{\dot{\xi}^{[0]} + \dot{\xi}^{[1]}}{(\lambda_0 + \lambda_{1} ) R}
		+ \frac{x- (\xi^{[0]} + \xi^{[1]})}{(\lambda_0 + \lambda_1) R} \Big( \frac{ \dot{\lambda}_0 + \dot{\lambda}_1 }{\lambda_0 + \lambda_1} + \frac{\dot{R}}{R} \Big) \Big].
	\end{aligned}
\end{equation*}
Therein, for the second part,
{\small
\begin{align}
& 
(\lambda_0 + \lambda_{1,i} )^{-\frac{7}{2}}  R^{-1}  (\nabla_y \phi_{i})\Big( \frac{x- (\xi^{[0]} + \xi^{[1,i]})}{\lambda_0 + \lambda_{1,i}}, t \Big) \cdot (\nabla \eta )\Big( \frac{x- (\xi^{[0]} + \xi^{[1,i]})}{(\lambda_0 + \lambda_{1,i}) R} \Big) 
\notag
\\
& - (\lambda_0 + \lambda_{1,\infty} )^{-\frac{7}{2}}  R^{-1}  (\nabla_y \phi_{\infty})\Big( \frac{x- (\xi^{[0]} + \xi^{[1,\infty]})}{\lambda_0 + \lambda_{1,\infty}}, t \Big) \cdot (\nabla \eta )\Big( \frac{x- (\xi^{[0]} + \xi^{[1,\infty]})}{(\lambda_0 + \lambda_{1,\infty}) R} \Big)
\notag
\\
= \ &
\big[
(\lambda_0 + \lambda_{1,i} )^{-\frac{7}{2}}
- (\lambda_0 + \lambda_{1,\infty} )^{-\frac{7}{2}}
\big]  R^{-1}  (\nabla_y \phi_{i})\Big( \frac{x- (\xi^{[0]} + \xi^{[1,i]})}{\lambda_0 + \lambda_{1,i}}, t \Big) \cdot (\nabla \eta )\Big( \frac{x- (\xi^{[0]} + \xi^{[1,i]})}{(\lambda_0 + \lambda_{1,i}) R} \Big)
\label{qd26May1-4}
\\
& + (\lambda_0 + \lambda_{1,\infty} )^{-\frac{7}{2}}  R^{-1}  
\Big[
(\nabla_y \phi_{i})\Big( \frac{x- (\xi^{[0]} + \xi^{[1,i]})}{\lambda_0 + \lambda_{1,i}}, t \Big) - (\nabla_y \phi_{\infty})\Big( \frac{x- (\xi^{[0]} + \xi^{[1,i]})}{\lambda_0 + \lambda_{1,i}}, t \Big) \Big] \cdot (\nabla \eta )\Big( \frac{x- (\xi^{[0]} + \xi^{[1,i]})}{(\lambda_0 + \lambda_{1,i}) R} \Big)
\notag
\\
& + (\lambda_0 + \lambda_{1,\infty} )^{-\frac{7}{2}}  R^{-1} \Big[ (\nabla_y \phi_{\infty})\Big( \frac{x- (\xi^{[0]} + \xi^{[1,i]})}{\lambda_0 + \lambda_{1,i}}, t \Big) - (\nabla_y \phi_{\infty})\Big( \frac{x- (\xi^{[0]} + \xi^{[1,\infty]})}{\lambda_0 + \lambda_{1,\infty}}, t \Big) \Big] \cdot (\nabla \eta )\Big( \frac{x- (\xi^{[0]} + \xi^{[1,i]})}{(\lambda_0 + \lambda_{1,i}) R} \Big) 
\notag
\\
& + (\lambda_0 + \lambda_{1,\infty} )^{-\frac{7}{2}}  R^{-1}  (\nabla_y \phi_{\infty})\Big( \frac{x- (\xi^{[0]} + \xi^{[1,\infty]})}{\lambda_0 + \lambda_{1,\infty}}, t \Big) \cdot 
\Big[
(\nabla \eta )\Big( \frac{x- (\xi^{[0]} + \xi^{[1,i]})}{(\lambda_0 + \lambda_{1,i}) R} \Big)
-
(\nabla \eta )\Big( \frac{x- (\xi^{[0]} + \xi^{[1,\infty]})}{(\lambda_0 + \lambda_{1,\infty}) R} \Big)
\Big].
\notag
\end{align}
}
The convergence of the 1st and 4th lines in $\mathcal{D}_{\rm{o}}$ is straightforward. $\nabla_{y}\phi_{i} \rightsquigarrow \nabla_{y}  \phi_{\infty}$ in $\mathcal{D}_{\rm{in}}$ in \eqref{qd26May1-1} deduces the convergence of the 2nd line.
For the 3rd line,
since $(\nabla \eta)(\cdot)$ imposes the restriction $ | \frac{x- (\xi^{[0]} + \xi^{[1,i]})}{(\lambda_0 + \lambda_{1,i}) R} | \le 2$, then
\begin{align}
		&
		\Big|
		\frac{x- (\xi^{[0]} + \xi^{[1,\infty]})}{\lambda_0 + \lambda_{1,\infty}} - \frac{x- (\xi^{[0]} + \xi^{[1,i]})}{\lambda_0 + \lambda_{1,i} } \Big|
		=  \Big|
		\frac{ \xi^{[1,i]} - \xi^{[1,\infty]} }{\lambda_0 + \lambda_{1,\infty}}
		+
		[x- (\xi^{[0]} + \xi^{[1,i]})]
		\frac{\lambda_{1,i} - \lambda_{1,\infty}}{(\lambda_0 + \lambda_{1,\infty} ) (\lambda_0 + \lambda_{1,i} )} \Big|
        \notag
		\\
		\le \ &
		\frac{ | \xi^{[1,i]} - \xi^{[1,\infty]} | }{ | \lambda_0 + \lambda_{1,\infty} | }
		+
		2 R \Big|
		\frac{\lambda_{1,i} - \lambda_{1,\infty}}{ \lambda_0 + \lambda_{1,\infty} } \Big| 
		\stackrel{\eqref{qd26Apr5-1}, \eqref{mov-26Aug11-1}, i \gg 1}{\le} R.
        \label{qd26Mar2-1}
\end{align}
It follows that $
	\big| \frac{x- (\xi^{[0]} + \xi^{[1,\infty]})}{\lambda_0 + \lambda_{1,\infty}} \big| \le 3R $. Due to the well-designed space $B_{\rm{in}}$ in \eqref{Bin-def}, 
$
(\nabla_y \phi_{\infty})\big( \frac{x- (\xi^{[0]} + \xi^{[1,\infty]})}{\lambda_0 + \lambda_{1,\infty}}, t \big) \cdot (\nabla \eta )\big( \frac{x- (\xi^{[0]} + \xi^{[1,i]})}{(\lambda_0 + \lambda_{1,i}) R} \big)
$
is well-defined. Since $\phi_{\infty} \in C\big( [t_0,T]; C^1 ( \overline{B_{4 R}} ) \big)$ and $  \frac{x- (\xi^{[0]} + \xi^{[1,i]})}{\lambda_0 + \lambda_{1,i}} \rightsquigarrow  \frac{x- (\xi^{[0]} + \xi^{[1,\infty]})}{\lambda_0 + \lambda_{1,\infty}}$ in $\mathcal{D}_{\rm{o}}$, we have the convergence of the 3rd line. Thus,
\begin{equation*}
\begin{aligned}
& 
(\lambda_0 + \lambda_{1,i} )^{-\frac{7}{2}}  R^{-1}  (\nabla_y \phi_{i})\Big( \frac{x- (\xi^{[0]} + \xi^{[1,i]})}{\lambda_0 + \lambda_{1,i}}, t \Big) \cdot (\nabla \eta )\Big( \frac{x- (\xi^{[0]} + \xi^{[1,i]})}{(\lambda_0 + \lambda_{1,i}) R} \Big) 
\\
\rightsquigarrow \ & (\lambda_0 + \lambda_{1,\infty} )^{-\frac{7}{2}}  R^{-1}  (\nabla_y \phi_{\infty})\Big( \frac{x- (\xi^{[0]} + \xi^{[1,\infty]})}{\lambda_0 + \lambda_{1,\infty}}, t \Big) \cdot (\nabla \eta )\Big( \frac{x- (\xi^{[0]} + \xi^{[1,\infty]})}{(\lambda_0 + \lambda_{1,\infty}) R} \Big) \mbox{ \ in \ } \mathcal{D}_{\rm{o}}.
\end{aligned}
\end{equation*}
The first and third parts in $\Lambda_1[\phi, \dot{\lambda}_{1}, \dot{\xi}^{[1]} ]$ can be handled similarly. Thus,
\begin{equation}\label{qd26May1-2}
\Lambda_1[\phi_{i}, \dot{\lambda}_{1,i}, \dot{\xi}^{[1,i]} ] \rightsquigarrow  \Lambda_1[\phi_{\infty}, \dot{\lambda}_{1,\infty}, \dot{\xi}^{[1,\infty]} ] 
\mbox{ \ in \ }
\mathcal{D}_{\rm{o}}.
\end{equation}

{\textbf{For $\Lambda_2$.}}
\begin{equation*}
\begin{aligned}
&
	\Lambda_2[\phi,\dot{\lambda}_{1}, \dot{\xi}^{[1]}]
= (\dot{\lambda}_0 + \dot{\lambda}_{1}) (\lambda_0 + \lambda_{1})^{-\frac{5}{2}}   \Big[ \frac{3}{2} \phi\Big(\frac{x-(\xi^{[0]} + \xi^{[1]})}{\lambda_0 + \lambda_{1}},t \Big)
+ \frac{x-(\xi^{[0]} + \xi^{[1]})}{\lambda_0 + \lambda_{1}} \cdot (\nabla_y \phi) \Big(\frac{x-(\xi^{[0]} + \xi^{[1]})}{\lambda_0 + \lambda_{1}},t \Big) \Big]
\\
& \times \eta\Big( \frac{x-(\xi^{[0]} + \xi^{[1]})}{(\lambda_0 + \lambda_{1})R} \Big)
+ 
(\lambda_0 + \lambda_{1})^{-\frac{5}{2}} (\dot{\xi}^{[0]} + \dot{\xi}^{[1]}) \cdot (\nabla_y \phi)\Big(\frac{x-(\xi^{[0]} + \xi^{[1]})}{\lambda_0 + \lambda_{1}}, t \Big) \eta\Big( \frac{x-(\xi^{[0]} + \xi^{[1]})}{(\lambda_0 + \lambda_{1})R} \Big).
\end{aligned}
\end{equation*}
For the first part, using similar analysis especially  \eqref{qd26Mar2-1} for deducing \eqref{qd26May1-2}, we have
\begin{align*}
		& (\dot{\lambda}_0 + \dot{\lambda}_{1,i}) (\lambda_0 + \lambda_{1,i})^{-\frac{5}{2}}
		\\
		& \times   
		\Big[ \frac{3}{2} \phi_{i} \Big(\frac{x-(\xi^{[0]} + \xi^{[1,i]})}{\lambda_0 + \lambda_{1,i}},t \Big)
		+ \frac{x-(\xi^{[0]} + \xi^{[1,i]})}{\lambda_0 + \lambda_{1,i}} \cdot (\nabla_y \phi_{i}) \Big(\frac{x-(\xi^{[0]} + \xi^{[1,i]})}{\lambda_0 + \lambda_{1,i}},t \Big) \Big] \eta\Big( \frac{x-(\xi^{[0]} + \xi^{[1,i]})}{(\lambda_0 + \lambda_{1,i})R} \Big)
		\\
		& - (\dot{\lambda}_0 + \dot{\lambda}_{1,\infty}) (\lambda_0 + \lambda_{1,\infty})^{-\frac{5}{2}}
		\\
		& \times   
		\Big[ \frac{3}{2} \phi_{\infty} \Big(\frac{x-(\xi^{[0]} + \xi^{[1,\infty]})}{\lambda_0 + \lambda_{1,\infty}},t \Big)
		+ \frac{x-(\xi^{[0]} + \xi^{[1,\infty]})}{\lambda_0 + \lambda_{1,\infty}}  \cdot (\nabla_y \phi_{\infty}) \Big(\frac{x-(\xi^{[0]} + \xi^{[1,\infty]})}{\lambda_0 + \lambda_{1,\infty}},t \Big) \Big] \eta\Big( \frac{x-(\xi^{[0]} + \xi^{[1,\infty]})}{(\lambda_0 + \lambda_{1,\infty})R} \Big)
		\\
		= \ & (\dot{\lambda}_0 + \dot{\lambda}_{1,i}) (\lambda_0 + \lambda_{1,i})^{-\frac{5}{2}} \eta\Big( \frac{x-(\xi^{[0]} + \xi^{[1,i]})}{(\lambda_0 + \lambda_{1,i})R} \Big)  
		\Big\{ \frac{3}{2} 
        \Big[ \phi_{i} \Big(\frac{x-(\xi^{[0]} + \xi^{[1,i]})}{\lambda_0 + \lambda_{1,i}},t \Big)
        -
        \phi_{\infty} \Big(\frac{x-(\xi^{[0]} + \xi^{[1,i]})}{\lambda_0 + \lambda_{1,i}},t \Big)
        \\
        &
        +
        \phi_{\infty} \Big(\frac{x-(\xi^{[0]} + \xi^{[1,i]})}{\lambda_0 + \lambda_{1,i}},t \Big)
        -
        \phi_{\infty} \Big(\frac{x-(\xi^{[0]} + \xi^{[1,\infty]})}{\lambda_0 + \lambda_{1,\infty}},t \Big)
        \Big]
        \\
        &
		+
        \Big[ \frac{x-(\xi^{[0]} + \xi^{[1,i]})}{\lambda_0 + \lambda_{1,i}} 
        -
        \frac{x-(\xi^{[0]} + \xi^{[1,\infty]})}{\lambda_0 + \lambda_{1,\infty}} \Big] \cdot (\nabla_y \phi_{i}) \Big(\frac{x-(\xi^{[0]} + \xi^{[1,i]})}{\lambda_0 + \lambda_{1,i}},t \Big)
        \\
        &
		+
        \frac{x-(\xi^{[0]} + \xi^{[1,\infty]})}{\lambda_0 + \lambda_{1,\infty}} \cdot 
        \Big[ 
        (\nabla_y \phi_{i}) \Big(\frac{x-(\xi^{[0]} + \xi^{[1,i]})}{\lambda_0 + \lambda_{1,i}},t \Big)
        -
        (\nabla_y \phi_{\infty}) \Big(\frac{x-(\xi^{[0]} + \xi^{[1,i]})}{\lambda_0 + \lambda_{1,i}},t \Big)
        \\
        &
        +
        (\nabla_y \phi_{\infty}) \Big(\frac{x-(\xi^{[0]} + \xi^{[1,i]})}{\lambda_0 + \lambda_{1,i}},t \Big)
        -
        (\nabla_y \phi_{\infty}) \Big(\frac{x-(\xi^{[0]} + \xi^{[1,\infty]})}{\lambda_0 + \lambda_{1,\infty}},t \Big)
        \Big] \Big\} 
		\\
		&  + \Big[ (\dot{\lambda}_0 + \dot{\lambda}_{1,i}) (\lambda_0 + \lambda_{1,i})^{-\frac{5}{2}} \eta\Big( \frac{x-(\xi^{[0]} + \xi^{[1,i]})}{(\lambda_0 + \lambda_{1,i})R} \Big)
        - (\dot{\lambda}_0 + \dot{\lambda}_{1,\infty}) (\lambda_0 + \lambda_{1,\infty})^{-\frac{5}{2}} \eta\Big( \frac{x-(\xi^{[0]} + \xi^{[1,\infty]})}{(\lambda_0 + \lambda_{1,\infty})R} \Big) \Big]
		\\
		& \times   
		\Big[ \frac{3}{2} \phi_{\infty} \Big(\frac{x-(\xi^{[0]} + \xi^{[1,\infty]})}{\lambda_0 + \lambda_{1,\infty}},t \Big)
		+ \frac{x-(\xi^{[0]} + \xi^{[1,\infty]})}{\lambda_0 + \lambda_{1,\infty}}  \cdot (\nabla_y \phi_{\infty}) \Big(\frac{x-(\xi^{[0]} + \xi^{[1,\infty]})}{\lambda_0 + \lambda_{1,\infty}},t \Big) \Big] \rightsquigarrow 0 
		\mbox{ \ in \ } \mathcal{D}_{\rm{o}}.
\end{align*}
The second part can be handled similarly. Thus, $\Lambda_2[\phi_{i},\dot{\lambda}_{1,i}, \dot{\xi}^{[1,i]}] \rightsquigarrow \Lambda_2[\phi_{\infty},\dot{\lambda}_{1,\infty}, \dot{\xi}^{[1,\infty]}]$ in $\mathcal{D}_{\rm{o}}$.

{\textbf{For $\mathcal{N}$.}}
\begin{equation*}
\begin{aligned}
&
	\mathcal{N} [\psi,\phi,\dot{\lambda}_{1}, \dot{\xi}^{[1]} ]
= 
|u|^{\frac{4}{3}} u \big|_{u= (\lambda_0 + \lambda_{1})^{-\frac{3}{2}} U \big( \frac{x-(\xi^{[0]} + \xi^{[1]})}{\lambda_0 + \lambda_{1}} \big)
	\eta\big( \frac{x-(\xi^{[0]} + \xi^{[1]})}{\sqrt{t}} \big)
	+ \Psi + \psi
    + (\lambda_0 + \lambda_{1})^{-\frac{3}{2}} \phi\big(\frac{x-( \xi^{[0]} + \xi^{[1]} ) }{\lambda_{0} + \lambda_{1} },t \big) \eta\big( \frac{x-(\xi^{[0]} + \xi^{[1]})}{(\lambda_0 + \lambda_1) R } \big) }
\\
&
	-
	(\lambda_0 + \lambda_{1})^{-\frac{7}{2}} \Big[ U\Big( \frac{x-(\xi^{[0]} + \xi^{[1]})}{\lambda_0 + \lambda_{1}} \Big) \eta\Big( \frac{x-(\xi^{[0]} + \xi^{[1]})}{\sqrt{t}} \Big) \Big]^{\frac{7}{3}}
    \\
    &
	-
	\frac{7}{3} (\lambda_0 + \lambda_{1})^{-2} 
	\Big[ U\Big( \frac{x-(\xi^{[0]} + \xi^{[1]})}{\lambda_0 + \lambda_{1}} \Big) \eta\Big( \frac{x-(\xi^{[0]} + \xi^{[1]})}{\sqrt{t}} \Big) \Big]^{\frac{4}{3}}
    \\
    & \times
	\Big[ \Psi + \psi + (\lambda_0 + \lambda_{1})^{-\frac{3}{2}} \phi\Big( \frac{x-(\xi^{[0]} + \xi^{[1]})}{\lambda_0 + \lambda_{1}} ,t \Big) \eta\Big( \frac{x-(\xi^{[0]} + \xi^{[1]})}{(\lambda_0 + \lambda_1) R } \Big) \Big].
\end{aligned}
\end{equation*}
Therein, similar to \eqref{qd26May1-4}, we have $\phi_{i} \big(\frac{x-( \xi^{[0]} + \xi^{[1,i]} ) }{\lambda_{0} + \lambda_{1,i} },t \big) \eta\big( \frac{x-(\xi^{[0]} + \xi^{[1,i]})}{(\lambda_0 + \lambda_{1,i}) R } \big)
		\rightsquigarrow
		\phi_{\infty} \big(\frac{x-( \xi^{[0]} + \xi^{[1,\infty]} ) }{\lambda_{0} + \lambda_{1,\infty} },t \big) \eta\big( \frac{x-(\xi^{[0]} + \xi^{[1,\infty]})}{(\lambda_0 + \lambda_{1,\infty}) R } \big)$ in $\mathcal{D}_{\rm{o}}$. It is ready to get the convergence of the other terms. Thus, 
$\mathcal{N} [\psi_{i},\phi_{i}, \dot{\lambda}_{1,i},\dot{\xi}^{[1,i]} ] \rightsquigarrow \mathcal{N} [\psi_{\infty},\phi_{\infty}, \dot{\lambda}_{1,\infty}, \dot{\xi}^{[1,\infty]} ]$ in $\mathcal{D}_{\rm{o}}$. We complete the proof of Claim \eqref{cal-G-converge}.

{\textbf{Step 2 - Proof of \eqref{S-conti} and \eqref{calT-in-conti}.}}
In {\textbf{Step 2}}, $y$ is regarded as a pure spatial variable. Recall $\mathcal{F}$ in \eqref{qd26Mar3-1}.
\begin{align}
&
\mathcal{F}[\psi, \dot{\lambda}_1, \dot{\xi}^{[1]}](t) 
= - \frac{7}{3} \Big(
\int_{B_{R_0}} Z_{6}(y)^2 \rmd y
\Big)^{-1}
\Big[
(\lambda_0 + \lambda_{1} )^{\frac{1}{2}}
\int_{B_{R_0}} U(y)^{\frac{4}{3}} Z_{6}(y)
\psi( (\lambda_0 + \lambda_{1} ) y+ \xi^{[0]} + \xi^{[1]}, t) \rmd y
\notag
\\
& +
(\lambda_0 + \lambda_{1} )^{\frac{1}{2}}
\int_{B_{R_0}} U(y)^{\frac{4}{3}} Z_{6}(y)
\big[ \Psi( ( \lambda_0 + \lambda_{1} ) y+ \xi^{[0]} + \xi^{[1]},t) - \Psi(0,t) \big] \rmd y
\label{move-26Sep6-7}
\\
& + \big[ (\lambda_0 + \lambda_{1} )^{\frac{1}{2}} - \lambda_0^{\frac{1}{2}} - 2^{-1} \lambda_0^{-\frac{1}{2}} \lambda_1 \big] \Psi(0, t)
\int_{B_{R_0}} U(y)^{\frac{4}{3}}  Z_{6}(y) \rmd y \Big]
\notag
\\
&
- 
\Big( \int_{B_{R_0}} Z_{6}(y)^2 \rmd y \Big)^{-1}
( \lambda_0 + \lambda_{1} )^{-1} \varrho_{0,1}^{*}\big[ \tilde{\mathcal{J}}_{0,1} [\psi, \dot{\lambda}_1, \dot{\xi}^{[1]}] \big](\tau[\dot{\lambda}_{1}](t)).
\notag
\end{align}
For $|y|\le 4R$,
$ | (\lambda_0 + \lambda_{1} ) y+ \xi^{[0]} + \xi^{[1]} |
	\le | \lambda_0 + \lambda_{1} | 4R + | \xi^{[0]} + \xi^{[1]} |
	\lesssim t^{2-\gamma_1 + \beta} \ll T^9$ by the assumption $\beta-\gamma_1<7$. Thus, the domain $B_{T^9}$ of $\psi$ is sufficient for the integral. In $[t_0, T]$,
\begin{align}
		&
		\int_{B_{R_0}} U(y)^{\frac{4}{3}} Z_{6}(y)
		\psi_{i} ( (\lambda_0 + \lambda_{1,i} ) y+ \xi^{[0]} + \xi^{[1,i]}, t) \rmd y
        \notag
		\\
		& -
		\int_{B_{R_0}} U(y)^{\frac{4}{3}} Z_{6}(y)
		\psi_{\infty} ( (\lambda_0 + \lambda_{1,\infty} ) y+ \xi^{[0]} + \xi^{[1,\infty]}, t) \rmd y
        \label{move-26Sep6-8}
		\\
		= \ & \int_{B_{R_0}} U(y)^{\frac{4}{3}} Z_{6}(y)
		\big[
		\psi_{i} ( (\lambda_0 + \lambda_{1,i} ) y+ \xi^{[0]} + \xi^{[1,i]}, t) 
		-
		\psi_{\infty} ( (\lambda_0 + \lambda_{1,i} ) y+ \xi^{[0]} + \xi^{[1,i]}, t)
		\big] \rmd y
        \notag
		\\
		& +
		\int_{B_{R_0}} U(y)^{\frac{4}{3}} Z_{6}(y)
		\big[ \psi_{\infty} ( (\lambda_0 + \lambda_{1,i} ) y+ \xi^{[0]} + \xi^{[1,i]}, t) - 
		\psi_{\infty} ( (\lambda_0 + \lambda_{1,\infty} ) y+ \xi^{[0]} + \xi^{[1,\infty]}, t) \big] \rmd y
		\rightsquigarrow 0,
        \notag
\end{align}
where for the last step, we use $\psi_{i} \rightsquigarrow \psi_{\infty}$ in $\mathcal{D}_{\rm{o}}$, and $\psi_{\infty} \in B_{\rm o}$. The continuity argument for other terms in \eqref{move-26Sep6-7} is similar to \eqref{move-26Sep6-8} except for $\varrho_{0,1}^{*}$.

We will give the continuity argument for $\varrho_{0,1}^{*}\big[ \tilde{\mathcal{J}}_{0,1} [\psi, \dot{\lambda}_1, \dot{\xi}^{[1]}] \big](\tau[\dot{\lambda}_{1}](t))$, which is much more complicated.
We consider the continuous dependence of $\mathcal{H}[ \psi,\dot{\lambda}_{1}, \dot{\xi}^{[1]} ](y,t) \eta(\frac{y}{2R})$, where
\begin{equation*}
\begin{aligned}
&
\mathcal{H}[ \psi,\dot{\lambda}_{1}, \dot{\xi}^{[1]} ](y,t) 
=  (\dot{\lambda}_{0} + \dot{\lambda}_{1}) (\lambda_{0} + \lambda_{1}) Z_{6}(y)
+ (\lambda_{0} + \lambda_{1}) (\dot{\xi}^{[0]} + \dot{\xi}^{[1]}) \cdot (\nabla U )(y) 
\\
&
+\frac{7}{3} (\lambda_{0} + \lambda_{1})^{\frac{3}{2}} U(y)^{\frac{4}{3}}
\big[
\Psi\big( (\lambda_{0} + \lambda_{1}) y+ \xi^{[0]} + \xi^{[1]},t \big)
+
\psi\big( (\lambda_{0} + \lambda_{1}) y+ \xi^{[0]} + \xi^{[1]}, t \big)
\big].
\end{aligned}
\end{equation*}
For $t \in [t_0,T]$, $|y| \le 4R$, we have
$ (\lambda_{0} + \lambda_{1,i}) y+ \xi^{[0]} + \xi^{[1,i]} 
		\rightsquigarrow
		(\lambda_{0} + \lambda_{1,\infty}) y+ \xi^{[0]} + \xi^{[1,\infty]} $.
Similar to {\textbf{Step 1}}, we have
\begin{equation}\label{qd26May2-5}
	\mathcal{H}[\psi_{i}, \dot{\lambda}_{1,i}, \dot{\xi}^{[1,i]}](y,t) \eta\Big(\frac{y}{2R}\Big) \rightsquigarrow \mathcal{H}[\psi_{\infty}, \dot{\lambda}_{1,\infty}, \dot{\xi}^{[1,\infty]}](y,t) \eta\Big(\frac{y}{2R}\Big)
	\mbox{ \ in \ } \mathbb{R}^5 \times [t_0, T].
\end{equation}

Recall \eqref{qd26May1-7}. 
\begin{equation}\label{qd26May2-1}
\begin{aligned}
\tau[\dot{\lambda}_{1}](t) = \ &
\int_{t_0}^t \Big\{ (\lambda_{0}(s) + \lambda_{1}(s) ) \1_{s\le T} + \big[ (T+1-s) (\lambda_{0}(T) + \lambda_{1}(T) ) + (s-T) s^{2-\gamma_1} \big] \1_{T < s\le T+1} 
\\
&
\qquad
+ s^{2-\gamma_1} \1_{s > T+1} \Big\}^{-2} \rmd s + \tau_0
\mbox{ \ for \ } t \in [t_0, \infty),
\end{aligned}
\end{equation}
whose inverse function is denoted by $t[\dot{\lambda}_{1}](\tau)$ defined in $[\tau_0,\infty)$.

Claim:
Given $\gamma_1 \in (\frac{3}{2}, 2)$, for $t_0\gg 1$, then for any $\dot{\lambda}_{1,1}, \dot{\lambda}_{1,2} \in B_{\dot{\lambda}_1}$, we have
\begin{equation}\label{qd26Apr17-3}
| \tau[\dot{\lambda}_{1,1}](t) - \tau[\dot{\lambda}_{1,2}](t) | 
\lesssim
R_0^{-\frac{\zeta}{2}} \| \dot{\lambda}_{1,1} - \dot{\lambda}_{1,2} \|_{\dot{\lambda}_1}
\big[ t^{2\gamma_1-3} \1_{t\le T+1} + (T+1)^{2\gamma_1-3} \1_{t > T+1} \big]
\mbox{ \ for \ } t \in [t_0, \infty),
\end{equation}
\begin{equation}\label{qd26Apr16-1}
	\begin{aligned}
		\big| t[\dot{\lambda}_{1,1}](\tau) - t[\dot{\lambda}_{1,2}](\tau) \big| \lesssim \ & 
		 \| \dot{\lambda}_{1,1} - \dot{\lambda}_{1,2} \|_{\dot{\lambda}_1}
         R_0^{-\frac{\zeta}{2}}
\big[ \tau^{\frac{1}{2\gamma_1-3} }  \1_{\tau \le C_{tm} (T+1)^{2\gamma_1 -3}} 
\\
&
+ (T+1)^{2\gamma_1 - 3} \tau^{-\frac{2\gamma_1 - 4}{2\gamma_1-3}} \1_{\tau > C_{tm}^{-1} (T+1)^{2\gamma_1 -3} } \big]
\mbox{ \ for \ } \tau \in [\tau_0,  \infty).
	\end{aligned}
\end{equation}

\begin{proof}[Proof of \eqref{qd26Apr17-3} and \eqref{qd26Apr16-1}]

We first give a useful estimate.
Denote
\begin{equation*}
\begin{aligned}
	f(\theta) := \ & \Big( \big[ \lambda_0(s) + \theta \lambda_{1,1}(s) + (1-\theta) \lambda_{1,2}(s) \big] \1_{s\le T}
    \\
    &
    + \Big\{ (T+1-s) \big[ \lambda_{0}(T) + \theta \lambda_{1,1}(T) + (1-\theta) \lambda_{1,2}(T) \big] + (s-T) s^{2-\gamma_1} \Big\} \1_{T < s\le T+1} 
    + s^{2-\gamma_1} \1_{s > T+1} \Big)^{-2}.
\end{aligned}
\end{equation*}
\begin{equation}\label{qd26May1-8}
\begin{aligned}
&
f(1) - f(0) = f'(\theta)
=
-2 \Big( \big[ \lambda_0(s) + \theta \lambda_{1,1}(s) + (1-\theta) \lambda_{1,2}(s) \big] \1_{s\le T}
    \\
    &
    + \Big\{ (T+1-s) \big[ \lambda_{0}(T) + \theta \lambda_{1,1}(T) + (1-\theta) \lambda_{1,2}(T) \big] + (s-T) s^{2-\gamma_1} \Big\} \1_{T < s\le T+1} 
    \\
    &
    + s^{2-\gamma_1} \1_{s > T+1} \Big)^{-3} \Big( \big[ \lambda_{1,1}(s) - \lambda_{1,2}(s) \big] \1_{s\le T}
    + (T+1-s) \big[ \lambda_{1,1}(T) - \lambda_{1,2}(T) \big]  \1_{T < s\le T+1} 
    \Big)
\end{aligned}
\end{equation}
for some $\theta \in [0,1]$.
Then
\begin{align}
&
|f(1) - f(0)|
\stackrel{\eqref{qd26Apr5-1}, \gamma_1<2}{\lesssim} ( s^{2-\gamma_1} )^{-3}
\big( R_0^{-\frac{\zeta}{2}} s^{2-\gamma_1} \| \dot{\lambda}_{1,1} - \dot{\lambda}_{1,2} \|_{\dot{\lambda}_1} \1_{s\le T}
    + R_0^{-\frac{\zeta}{2}} T^{2-\gamma_1} \| \dot{\lambda}_{1,1} - \dot{\lambda}_{1,2} \|_{\dot{\lambda}_1} \1_{T < s\le T+1} 
    \big)
    \notag
\\
\sim \ &  
R_0^{-\frac{\zeta}{2}} s^{2\gamma_1 - 4} \| \dot{\lambda}_{1,1} - \dot{\lambda}_{1,2} \|_{\dot{\lambda}_1} \1_{s\le T+1}.
\label{qd26May1-11}
\end{align}

Now we will prove the claim. For $t \in [t_0, \infty)$,
{\small
\begin{equation}
	\begin{aligned}
		&
		| \tau[\dot{\lambda}_{1,1}](t) - \tau[\dot{\lambda}_{1,2}](t) 
		|
        \\
        = \ & \Big|
        \int_{t_0}^t \Big( \Big\{ \big( \lambda_0(s) + \lambda_{1,1}(s) \big) \1_{s\le T} + \big[ (T+1-s) \big( \lambda_{0}(T) + \lambda_{1,1}(T) \big) + (s-T) s^{2-\gamma_1} \big] \1_{T < s\le T+1} + s^{2-\gamma_1} \1_{s > T+1} \Big\}^{-2} 
        \\
        & - \Big\{ \big( \lambda_0(s) + \lambda_{1,2}(s) \big) \1_{s\le T} + \big[ (T+1-s) \big( \lambda_{0}(T) + \lambda_{1,2}(T) \big) + (s-T) s^{2-\gamma_1} \big] \1_{T < s\le T+1} + s^{2-\gamma_1} \1_{s > T+1} \Big\}^{-2}
        \Big) \rmd s \Big|
        \\
        \stackrel{\eqref{qd26May1-11}}{\lesssim}  \ & R_0^{-\frac{\zeta}{2}} \| \dot{\lambda}_{1,1} - \dot{\lambda}_{1,2} \|_{\dot{\lambda}_1}
        \int_{t_0}^t  s^{2\gamma_1 - 4}  \1_{s\le T + 1} \rmd s
\stackrel{\gamma_1>\frac{3}{2}}{\lesssim} 
R_0^{-\frac{\zeta}{2}} \| \dot{\lambda}_{1,1} - \dot{\lambda}_{1,2} \|_{\dot{\lambda}_1}
\big[ t^{2\gamma_1-3} \1_{t\le T+1} + (T+1)^{2\gamma_1-3} \1_{t > T+1} \big].
	\end{aligned}
\end{equation}
}

By the inverse function of \eqref{qd26May2-1}. For $\tau \in [\tau_0, \infty)$,
\begin{align*}
&
\tau = \int_{t_0}^{t[\dot{\lambda}_{1,1}](\tau)} \Big\{ (\lambda_{0}(s) + \lambda_{1,1}(s) ) \1_{s\le T} + \big[ (T+1-s) (\lambda_{0}(T) + \lambda_{1,1}(T) ) + (s-T) s^{2-\gamma_1} \big] \1_{T < s\le T+1} 
\\
&
+ s^{2-\gamma_1} \1_{s > T+1} \Big\}^{-2} \rmd s + \tau_0
\\
= \ & \Big( \int_{t_0}^{t[\dot{\lambda}_{1,1}](\tau)}
		+
		\int_{t[\dot{\lambda}_{1,1}](\tau)}^{t[\dot{\lambda}_{1,2}](\tau)}
		\Big) \Big\{ (\lambda_{0}(s) + \lambda_{1,2}(s) ) \1_{s\le T} + \big[ (T+1-s) (\lambda_{0}(T) + \lambda_{1,2}(T) ) + (s-T) s^{2-\gamma_1} \big] \1_{T < s\le T+1} 
\\
&
+ s^{2-\gamma_1} \1_{s > T+1} \Big\}^{-2} \rmd s + \tau_0
\end{align*}
$\Rightarrow$
\begin{align*}
& 
\mathbf{(LP)} :=
\int_{t_0}^{t[\dot{\lambda}_{1,1}](\tau)} \Big( \Big\{ (\lambda_{0}(s) + \lambda_{1,1}(s) ) \1_{s\le T} + \big[ (T+1-s) (\lambda_{0}(T) + \lambda_{1,1}(T) ) + (s-T) s^{2-\gamma_1} \big] \1_{T < s\le T+1} 
\\
&
+ s^{2-\gamma_1} \1_{s > T+1} \Big\}^{-2} - \Big\{ (\lambda_{0}(s) + \lambda_{1,2}(s) ) \1_{s\le T} + \big[ (T+1-s) (\lambda_{0}(T) + \lambda_{1,2}(T) ) + (s-T) s^{2-\gamma_1} \big] \1_{T < s\le T+1} 
\\
&
+ s^{2-\gamma_1} \1_{s > T+1} \Big\}^{-2} \Big) \rmd s
\\
= \ & \int_{t[\dot{\lambda}_{1,1}](\tau)}^{t[\dot{\lambda}_{1,2}](\tau)} \Big\{ (\lambda_{0}(s) + \lambda_{1,2}(s) ) \1_{s\le T} + \big[ (T+1-s) (\lambda_{0}(T) + \lambda_{1,2}(T) ) + (s-T) s^{2-\gamma_1} \big] \1_{T < s\le T+1} 
\\
&
+ s^{2-\gamma_1} \1_{s > T+1} \Big\}^{-2} \rmd s =: \mathbf{(RP)}.
\end{align*}
Therein, for the left part,
\begin{equation*}
\begin{aligned}
& | \mathbf{(LP)} |
\stackrel{\eqref{qd26May1-11}}{\lesssim}   R_0^{-\frac{\zeta}{2}} \| \dot{\lambda}_{1,1} - \dot{\lambda}_{1,2} \|_{\dot{\lambda}_1} \int_{t_0}^{t[\dot{\lambda}_{1,1}](\tau)}  s^{2\gamma_1 - 4}  \1_{s\le T+1} \rmd s
\\
\stackrel{\gamma_1>\frac{3}{2}}{\lesssim} \ & R_0^{-\frac{\zeta}{2}} \| \dot{\lambda}_{1,1} - \dot{\lambda}_{1,2} \|_{\dot{\lambda}_1} 
\big[ (t[\dot{\lambda}_{1,1}](\tau))^{2\gamma_1 - 3}  \1_{t[\dot{\lambda}_{1,1}](\tau) \le T+1} + (T+1)^{2\gamma_1 - 3} \1_{t[\dot{\lambda}_{1,1}](\tau) > T+1} \big]
\\
\stackrel{\eqref{tau-est}}{\lesssim} \ & R_0^{-\frac{\zeta}{2}} \| \dot{\lambda}_{1,1} - \dot{\lambda}_{1,2} \|_{\dot{\lambda}_1} 
\big[ \tau  \1_{\tau \le C_{tm} (T+1)^{2\gamma_1 -3}} + (T+1)^{2\gamma_1 - 3} \1_{\tau > C_{tm}^{-1} (T+1)^{2\gamma_1 -3} } \big].
\end{aligned}
\end{equation*}
For the right part, 
\begin{equation*}
| \mathbf{(RP)} |
\sim \Big| \int_{t[\dot{\lambda}_{1,1}](\tau)}^{t[\dot{\lambda}_{1,2}](\tau)} s^{2\gamma_1 - 4} \rmd s \Big|
\stackrel{\eqref{tau-est}}{\sim}
\tau^{\frac{2\gamma_1 - 4}{2\gamma_1 -3}}
\Big| \int_{t[\dot{\lambda}_{1,1}](\tau)}^{t[\dot{\lambda}_{1,2}](\tau)} \rmd s \Big|
=
\tau^{\frac{2\gamma_1 - 4}{2\gamma_1 -3}}
\big| t[\dot{\lambda}_{1,2}](\tau)  - t[\dot{\lambda}_{1,1}](\tau) \big|.
\end{equation*}
Thus, we conclude \eqref{qd26Apr16-1}.
\end{proof}

The following lemma will be used to extend the right-hand side of the inner problem.
\begin{lemma}\label{qd26Apr17-5-lem}
	Given $1 \ll t_0<T <\infty$, $\Omega \subset \mathbb{R}^n$, a sequence 
$(g_{i}(y,t))_{i\ge 1}$ defined in $t\in [t_0, T], y\in \Omega$, suppose that for any fixed $i$, $g_{i}$ is uniformly continuous in $t\in [t_0, T]$, $y\in \Omega$; $\lim_{i\to \infty} \sup_{t\in [t_0, T], y\in \Omega} |g_{i}(y,t) - g_{\infty}(y,t)| = 0$, then 
\begin{equation}\label{qd26May2-3}
\mbox{$g_{\infty}$ is uniformly continuous in $t\in [t_0, T]$, $y\in \Omega$.}
\end{equation}
For $\gamma_1 \in (\frac{3}{2}, 2)$, 
under the additional assumption $\lim_{i\to \infty} \|\dot{\lambda}_{1,i} - \dot{\lambda}_{1,\infty}\|_{\dot{\lambda}_1} = 0$, then
    \begin{equation}
    \begin{aligned}
    \lim_{i\to \infty} \sup_{\tau\in [\tau_0, \infty), y\in \Omega}
    &
    \big|
    \big[ 
		g_{i}(y, t[\dot{\lambda}_{1,i}](\tau) ) \1_{\tau_0 \le \tau \le  \tau[\dot{\lambda}_{1,i}](T)}
		+
		g_{i}(y,T) \1_{\tau > \tau[\dot{\lambda}_{1,i}](T)}
        \big]
        \\
        &
		-
        \big[
		g_{\infty}(y,t[\dot{\lambda}_{1,\infty}](\tau)) \1_{\tau_0 \le \tau \le  \tau[\dot{\lambda}_{1,\infty}](T)}
		+ g_{\infty}(y,T) \1_{\tau > \tau[\dot{\lambda}_{1,\infty}](T)}\big]\big| = 0.
       \end{aligned}
	\end{equation}

\end{lemma}

\begin{proof}
For coherence of the continuity argument, we postpone the proof of Lemma \ref{qd26Apr17-5-lem} to Section \ref{ext-lem-proof}.
\end{proof}

$
\mathcal{H}[\psi_{i}, \dot{\lambda}_{1,i}, \dot{\xi}^{[1,i]}](y,t[\dot{\lambda}_{1,i}](\tau)) \eta\big( \frac{y}{2R(t[\dot{\lambda}_{1,i}](\tau))} \big)
$
is well-defined for $y\in \mathbb{R}^5$, $\tau \in [\tau_0, \tau[\dot{\lambda}_{1,i}](T)]$. Similar to \eqref{calJ-def},
for $l=i$ or $\infty$, we make the extension
\begin{equation*}
\begin{aligned}
\mathcal{J}^{[l]}(y,\tau) := \ & \mathcal{H}[\psi_{l}, \dot{\lambda}_{1,l}, \dot{\xi}^{[1,l]}](y,t[\dot{\lambda}_{1,l}](\tau)) \eta\Big(\frac{y}{2R(t[\dot{\lambda}_{1,l}](\tau))}\Big) \1_{\tau_0 \le \tau \le \tau[\dot{\lambda}_{1,l}](T)}
\\
&
+
\mathcal{H}[\psi_{l}, \dot{\lambda}_{1,l}, \dot{\xi}^{[1,l]}](y,T) \eta\Big(\frac{y}{2R(T)}\Big) \1_{ \tau > \tau[\dot{\lambda}_{1,l}](T)}.
\end{aligned}
\end{equation*}
By \eqref{qd26May2-5} and Lemma \ref{qd26Apr17-5-lem}, we have
\begin{equation*}
\mathcal{J}^{[i]}(y,\tau) \rightsquigarrow \mathcal{J}^{[\infty]}(y,\tau)
\mbox{ \ for \ } (y, \tau) \in \mathbb{R}^5 \times [\tau_0, \infty).
\end{equation*}
Similar to \eqref{qd26Jan3-1} and \eqref{move-26Aug21-5}, for $(j,k) \in \mathbf{M_{0,1}} \cup \{ \perp \}$, we can define $\tilde{\mathcal{J}}_{j,k}^{[l]}(y,\tau)$. Then
\begin{equation*}
		\tilde{\mathcal{J}}_{j,k}^{[i]}(y,\tau) \rightsquigarrow \tilde{\mathcal{J}}_{j,k}^{[\infty]}(y,\tau)
		\mbox{ \ for \ } (y,\tau) \in \mathbb{R}^5 \times [\tau_0, \infty),
		\quad (j,k) \in \mathbf{M_{0,1}} \cup \{ \perp \},
\end{equation*}
which implies
\begin{equation*}
	\sup_{\tau \in [\tau_0, C_{tm} T^{2\gamma_1 -3}], y\in B_{R_*(\tau)}}
	(v(\tau) \langle y \rangle^{-2-a} )^{-1}
	\big| \tilde{\mathcal{J}}_{j,k}^{[i]}(y,\tau) - \tilde{\mathcal{J}}_{j,k}^{[\infty]}(y,\tau) \big| \to 0 
	\mbox{ \ as \ } i \to \infty
\end{equation*}
with $v(\tau) = \tau^{-1}$ or $\tau^{-\frac{5\gamma_1 + \gamma_2 - 9}{4\gamma_1 - 6}}$, $a=1$. We {\textbf{emphasize}} that the mappings in \eqref{qd26May2-6} are independent of the choice of $(\psi_{i},\phi_{i}, \dot{\lambda}_{1,i}, \dot{\xi}^{[1,i]})$. Thus, by Propositions \ref{mode0-regluing-prop}, \ref{regluing-mode1-prop}, \ref{regluing-higher-mode-prop}, for $(j,k) \in \mathbf{M_{0,1}} \cup \{ \perp \}$,
\begin{equation*}
\begin{aligned}
&
		\big|
		\mathcal{T}_{j,k}^{\rm{in}} [ \tilde{\mathcal{J}}_{j,k}^{[i]} ]  (y,\tau)
		- \mathcal{T}_{j,k}^{\rm{in}} [ \tilde{\mathcal{J}}_{j,k}^{[\infty]} ](y,\tau) \big|
	+ 
		\big| \nabla_{y}
		\big(
		\mathcal{T}_{j,k}^{\rm{in}} [ \tilde{\mathcal{J}}_{j,k}^{[i]} ] (y,\tau)
		- \mathcal{T}_{j,k}^{\rm{in}} [ \tilde{\mathcal{J}}_{j,k}^{[\infty]} ](y,\tau) \big) \big|
\\
		\rightsquigarrow \ & 0
		\mbox{ \ in \ } \tau \in [\tau_0, C_{tm} T^{2\gamma_1 -3}], y\in B_{R_*(\tau)}.
\end{aligned}
\end{equation*}
It follows that
\begin{equation*}
	\begin{aligned}
		&
		\big|
		\mathcal{T}_{j,k}^{\rm{in}} [\tilde{\mathcal{J}}_{j,k}^{[i]} ]  (y,\tau[\dot{\lambda}_{1,i}](t))
		- \mathcal{T}_{j,k}^{\rm{in}} [\tilde{\mathcal{J}}_{j,k}^{[\infty]} ](y,\tau[\dot{\lambda}_{1,\infty}](t)) \big|
		\\
= \ & \big|
\mathcal{T}_{j,k}^{\rm{in}} [\tilde{\mathcal{J}}_{j,k}^{[i]} ]  (y,\tau[\dot{\lambda}_{1,i}](t))
- \mathcal{T}_{j,k}^{\rm{in}} [\tilde{\mathcal{J}}_{j,k}^{[\infty]} ](y,\tau[\dot{\lambda}_{1,i}](t))
+ \mathcal{T}_{j,k}^{\rm{in}} [\tilde{\mathcal{J}}_{j,k}^{[\infty]} ](y,\tau[\dot{\lambda}_{1,i}](t))
- \mathcal{T}_{j,k}^{\rm{in}} [\tilde{\mathcal{J}}_{j,k}^{[\infty]} ](y,\tau[\dot{\lambda}_{1,\infty}](t)) \big|
\\
\stackrel{\eqref{qd26Apr17-3}}{\rightsquigarrow}  \ & 0
		\mbox{ \ in \ } t \in [t_0, T], y\in \overline{B_{4 R(t)}}.
	\end{aligned}
\end{equation*}
And the gradient estimate is similar. Thus, \eqref{calT-in-conti} holds. Moreover,
\begin{equation*}
\varrho_{j,k}^{*} [\tilde{\mathcal{J}}_{j,k}^{[i]}](\tau)
\rightsquigarrow \varrho_{j,k}^{*} [\tilde{\mathcal{J}}_{j,k}^{[\infty]} ](\tau)
\mbox{ \ in \ } [\tau_0, C_{tm} T^{2\gamma_1 -3}]
\mbox{ \ for \ } (j,k) \in \mathbf{M_{0,1}}.
\end{equation*}
It follows that
\begin{equation*}
	\begin{aligned}
		&
		\varrho_{j,k}^{*} [\tilde{\mathcal{J}}_{j,k}^{[i]} ]  (\tau[\dot{\lambda}_{1,i}](t))
		- \varrho_{j,k}^{*} [\tilde{\mathcal{J}}_{j,k}^{[\infty]} ](\tau[\dot{\lambda}_{1,\infty}](t))
\\
= \ & \varrho_{j,k}^{*} [\tilde{\mathcal{J}}_{j,k}^{[i]} ]  (\tau[\dot{\lambda}_{1,i}](t))
- \varrho_{j,k}^{*} [\tilde{\mathcal{J}}_{j,k}^{[\infty]} ](\tau[\dot{\lambda}_{1,i}](t))
+ \varrho_{j,k}^{*} [\tilde{\mathcal{J}}_{j,k}^{[\infty]} ](\tau[\dot{\lambda}_{1,i}](t))
- \varrho_{j,k}^{*} [\tilde{\mathcal{J}}_{j,k}^{[\infty]}](\tau[\dot{\lambda}_{1,\infty}](t))
\stackrel{\eqref{qd26Apr17-3}}{\rightsquigarrow} 0
	\end{aligned}
\end{equation*}
in $t \in [t_0, T]$.
Thus, we arrive that
\begin{equation}\label{qd26May2-7}
\mathcal{F}[\psi_{i}, \dot{\lambda}_{1,i}, \dot{\xi}^{[1,i]}](t) \rightsquigarrow \mathcal{F}[\psi_{\infty}, \dot{\lambda}_{1,\infty}, \dot{\xi}^{[1,\infty]}](t),
\quad
\mathcal{S}_6[\psi_{i}, \dot{\lambda}_{1,i}, \dot{\xi}^{[1,i]}](t) 
\stackrel{\eqref{mu1-xi-sys}}{\rightsquigarrow} \mathcal{S}_6[\psi_{\infty}, \dot{\lambda}_{1,\infty}, \dot{\xi}^{[1,\infty]}](t)
\mbox{ \ in \ } [t_0, T].
\end{equation}

By \eqref{qd25Dec25-3}, for $j\in \overline{1,5}$,
\begin{equation*}
\begin{aligned}
&\mathcal{S}_{j}[\psi, \dot{\lambda}_1, \dot{\xi}^{[1]}](t)
= - \frac{7}{3}
\Big( \int_{B_{R_0}} Z_1(y)^2 \rmd y \Big)^{-1}
\Big\{ \big[ (\lambda_0 + \lambda_{1} )^{\frac{3}{2}} - \lambda_0^{\frac{3}{2}} \big] (\partial_{x_j} \Psi)(0,t)
\int_{B_{R_0}} U(y)^{\frac{4}{3}} y_1 Z_{1}(y) \rmd y
\\
& + (\lambda_0 + \lambda_{1} )^{\frac{1}{2}}
\int_{B_{R_0}} U(y)^{\frac{4}{3}} Z_{j}(y)
\big[
\Psi\big( (\lambda_0 + \lambda_{1} ) y+ \xi^{[0]} + \xi^{[1]},t \big) - \Psi(0,t)
\\
&
-
(\nabla \Psi)(0,t) \cdot \big[ ( \lambda_0 + \lambda_{1} ) y+ \xi^{[0]} + \xi^{[1]} \big]
+
\psi\big( ( \lambda_0 + \lambda_{1} ) y+ \xi^{[0]} + \xi^{[1]}, t \big) - \psi(0, t)
\big] \rmd y
\Big\}
\\
& - \Big( \int_{B_{R_0}} Z_1(y)^2 \rmd y \Big)^{-1} (\lambda_0 + \lambda_{1} )^{-1} \varrho_{1,j}^{*}\big[ \tilde{\mathcal{J}}_{1,j} [\psi, \dot{\lambda}_1, \dot{\xi}^{[1]}] \big](\tau[\dot{\lambda}_1](t)).
\end{aligned}
\end{equation*}
Similar to deduce \eqref{qd26May2-7}, we have $\vec{\mathcal{S}}[\psi_{i}, \dot{\lambda}_{1,i}, \dot{\xi}^{[1,i]}](t) \rightsquigarrow \vec{\mathcal{S}}[\psi_{\infty}, \dot{\lambda}_{1,\infty}, \dot{\xi}^{[1,\infty]}](t)$ in $[t_0, T]$. Thus, \eqref{S-conti} holds.
\end{proof}

\section{Completion of construction}\label{Final-u-Sec}

\subsection{Proof of Theorem \ref{5d-main-th}}
\begin{proof}[Proof of Theorem \ref{5d-main-th}]

Collecting all parameter restrictions $\gamma_1, \gamma_2 \in (0,5)$, $\beta>0$, \eqref{mov-26Aug11-1}, \eqref{qd25Dec24-2}, \eqref{qd26Jan3-7}, and $\beta-\gamma_1<7$, we use the software Mathematica (\href{https://raw.githubusercontent.com/qdz-wonderful/Mathematica-codes-for-One-bubble-solution-with-a-moving-maximum-point/refs/heads/main/Moving%20maximum%20point-parameters%20in%20Th%201.1.wl}{code link}) to find parameters
\begin{align}
&
\zeta \in (0,1), \quad \frac{3}{2} < \gamma_1 < 2, 
\notag
\\
&
\begin{aligned}
\mbox{$\gamma_2$ and $\beta$ satisfy \ } &
\frac{1}{11} (3+9\gamma_1) < \gamma_2 \le \gamma_1, \quad 
\frac{1}{4}(-3+\gamma_1 + \gamma_2) < \beta < \frac{1}{6} (-6-3\gamma_1 + 7 \gamma_2),
\\
\mbox{or \ } &
\gamma_1 < \gamma_2 < \frac{1}{3}(-3+5\gamma_1),
\quad
\frac{1}{4} (-3+\gamma_1+\gamma_2) < \beta < \frac{1}{3}(-3+2\gamma_1),
\end{aligned}
\end{align}
which lead to the assumption $\zeta \in (0, 1)$ and \eqref{move-26Aug23-1}.

One integrating Lemmas \ref{outer-exist}, \ref{mu1-xi-lem}, \ref{mapping-conti-lem}, 
for $(\psi,\phi, \dot{\lambda}_{1}, \dot{\xi}^{[1]}) \in \mathcal{X}$ and $\lambda_1[\dot{\lambda}_1], \xi^{[1]}[\dot{\xi}^{[1]}]$ given in \eqref{mu1-xi-sys},
\begin{equation}
\mathcal{T}_{\rm{o}} [ \mathcal{G} [\psi,\phi, \dot{\lambda}_{1}, \dot{\xi}^{[1]}] \1_{|x|\le T^{9}, t\le T } ](x,t)
\times
\mathcal{T}^{\rm{in}} [\mathcal{J}[\psi, \dot{\lambda}_{1}, \dot{\xi}^{[1]}] ](y,\tau[\dot{\lambda}_{1}](t))
\times
\mathcal{S}_6[\psi, \dot{\lambda}_{1}, \dot{\xi}^{[1]}](t)
\times
\vec{\mathcal{S}}[\psi,\dot{\lambda}_{1},\dot{\xi}^{[1]}](t)
\end{equation}
is a compact and continuous mapping system from $\mathcal{X}$ into itself. By the Schauder fixed-point theorem, we find a solution $(\psi,\phi, \dot{\lambda}_{1}, \dot{\xi}^{[1]}) \in \mathcal{X}$ and then a solution $u_{T}$ for
\begin{equation}\label{qd26May3-1}
\partial_t u_{T} =\Delta u_{T} + |u_{T}|^{\frac{4}{3}} u_{T}
		\mbox{ \ in \ } 
		  B_{T^9} \times (t_0,T]
	\end{equation}
with the form
{\small
\begin{equation}\label{move26Sep6-9} 
    u_{T}(x,t) = \lambda_{T}^{-\frac{3}{2}} U\Big( \frac{x-\xi_{T}}{\lambda_{T}} \Big)
	\eta\Big( \frac{x-\xi_{T}}{\sqrt{t}} \Big)
	+ \Psi(x,t) + \psi_{T}(x,t) + \lambda_{T}^{-\frac{3}{2}}\phi_{T}\Big(\frac{x-\xi_{T}}{\lambda_{T}},t \Big) \eta\Big( \frac{x-\xi_{T}}{\lambda_{T} R} \Big),
\end{equation}
\begin{equation}\label{move-26Aug24-1}
    u_{T}(x,t_0) 
\stackrel{\eqref{T*-est}}{=}  \lambda_{T}(t_0)^{-\frac{3}{2}} U\Big( \frac{x-\xi_{T}(t_0)}{\lambda_{T}(t_0)} \Big)
	\eta\Big( \frac{x-\xi_{T}(t_0)}{\sqrt{t_0}} \Big)
	+ \Psi(x,t_0) 
    + \lambda_{T}(t_0)^{-\frac{3}{2}}
    g_{0,T} \eta\Big( \frac{2}{R_0} \frac{x-\xi_{T}(t_0)}{\lambda_{T}(t_0)} \Big)
    Z_{0}\Big( \frac{x-\xi_{T}(t_0)}{\lambda_{T}(t_0)} \Big),
\end{equation}
}
where $\psi_{T} \in B_{\rm o}$, $\phi_{T} \in B_{\rm{in}}$, $\lambda_{T} = \lambda_{0} + \lambda_{1, T}$, $\xi_{T} = \xi^{[0]} + \xi^{[1,T]}$, $\lambda_{1, T} = \lambda_{1, T}[\dot{\lambda}_{1, T}]$, $\xi^{[1, T]} = \xi^{[1, T]}[\dot{\xi}^{[1, T]}]$ defined in \eqref{mu1-xi-sys} with $\dot{\lambda}_{1, T} \in B_{\dot{\lambda}_1}$, $\dot{\xi}^{[1, T]} \in B_{\dot{\xi}^{[1]}}$, $g_{0,T}$ is a constant possibly depending on $t_0$ and satisfying $|g_{0,T}| \stackrel{\eqref{qd26Apr3-5}}{\le} C R_0 t_0^{3-2\gamma_1}$ with a constant $C>0$ independent of $t_0, T$.

The following convergence argument is similar to \cite[Subsection 7.4]{WYZZ2024}.
We will always take a subsequence when $T\to \infty$ but will not state it. Obviously, $g_{0,T} \to g_{0,*}$ for some $|g_{0,*}| \le C R_0 t_0^{3-2\gamma_1}$. By Lemma \ref{outer-exist}, there exists a function $\psi_{*}$ such that $\psi_{T} \to \psi_{*}$, $\nabla \psi_{T} \to \nabla \psi_{*}$ in $L_{\rm{loc}}^{\infty}(\mathbb{R}^5 \times [t_0,\infty))$ and then 
\begin{equation}\label{move-26Sep6-11}
\begin{aligned}
&
|\psi_{*}| \le w_{\rm o}(x,t)
	=
	t^{-\frac{\gamma_1}{2} } (\ln t)^2 R_0^6 R^{-1} 
	( \1_{|x| \le t^{\frac{1}{2}}} + t |x|^{-2} \1_{|x| > t^{\frac{1}{2}} } )
\mbox{ \ in \ }
\mathbb{R}^5 \times [t_0,\infty),
\\
&
\mbox{$\psi_{*}(x,t)$ is even with respect to the $i$-th component of $x$, $i =2,3,4,5$.}
\end{aligned}
\end{equation}
By Lemma \ref{mu1-xi-lem}, $\dot{\lambda}_{1, T} \to \dot{\lambda}_{1, *}$, $\dot{\xi}_{1, T} \to \dot{\xi}^{[1, *]} = (\dot{\xi}_1^{[1, *]}, 0,0,0,0)$ in $C_{\rm{loc}}([t_0,\infty))$, and we define $\lambda_{1, *} = \lambda_{1, *}[\dot{\lambda}_{1, *}]$, $\xi^{[1, *]} = \xi^{[1, *]}[\dot{\xi}^{[1, *]}]$ as \eqref{mu1-xi-sys}. Then, similar to \eqref{qd23Dec28-1}, for $t\in [t_0,\infty)$,
\begin{equation*}
\begin{aligned}
&
|\dot{\lambda}_{1, *}| \le  R_0^{-\frac{\zeta}{2}} t^{1-\gamma_1},
\quad
|\dot{\xi}^{[1, *]}| \le   R_0^{-\frac{\zeta}{4}} t^{\frac{5}{2}-\frac{3\gamma_1}{2} - \frac{\gamma_2}{2} },
\\
&
|\lambda_{1, *}| \lesssim R_0^{-\frac{\zeta}{2}} t^{2-\gamma_1},
    \quad
    \xi^{[1, *]} = (\xi_1^{[1, *]},0,0,0,0),
	\quad
	|\xi^{[1, *]}| \lesssim R_0^{-\frac{\zeta}{4}}
	\begin{cases} 
t^{\frac{7}{2}-\frac{3\gamma_1}{2} - \frac{\gamma_2}{2}}  
			& \mbox{ \ if \ } 3 \gamma_1 + \gamma_2 \ne 7
			\\
            \ln t 
			& \mbox{ \ if \ } 3 \gamma_1 + \gamma_2 = 7.
		\end{cases}
\end{aligned}
\end{equation*}
For any compact set $K\subset [t_0, \infty)$, $\phi_{T} \to \phi_{*}$, $\nabla_{y}\phi_{T} \to \nabla_{y} \phi_{*}$ uniformly in $\{ (y,t) \mid t\in K, y\in \overline{B_{4R(t)}} \}$. Then
\begin{equation}\label{move-26Sep6-12}
\begin{aligned}
&
\langle y\rangle |\nabla \phi_{*}(y,t)| + |\phi_{*}(y,t)| \le  R_0^{6} t^{3-2\gamma_1} \ln t \langle y \rangle^{-1}
\mbox{ \ for \ } t \in [t_0,\infty), y\in \overline{B_{4 R(t)}},
\\
&
\mbox{$\phi_{*}(y,t)$ is even with respect to the $i$-th component of $y$, $i =2,3,4,5$.}
\end{aligned}
\end{equation}

Set
\begin{equation}\label{move-26Sep6-10}
\begin{aligned}
&
g_0(t_0) := g_{0,*},
\quad
\lambda := \lambda_{0} + \lambda_{1, *},
\quad
\xi := \xi^{[0]} + \xi^{[1,*]}
\mbox{ \ with the form  \ } \xi = (\xi_1, 0,0,0,0),
\\
&
u := \lambda^{-\frac{3}{2}} U\Big( \frac{x-\xi}{\lambda} \Big)
	\eta\Big( \frac{x-\xi}{\sqrt{t}} \Big)
	+ \Psi + h_{\rm{error}}, 
\quad
h_{\rm{error}} :=
\psi_{*} + \lambda^{-\frac{3}{2}}\phi_{*}\Big(\frac{x-\xi}{\lambda},t \Big) \eta\Big( \frac{x-\xi}{\lambda R} \Big).
\end{aligned}
\end{equation}
Then \eqref{u-behavior}, \eqref{move-26Aug23-6} hold. Similar to continuity argument in \eqref{qd26May1-4}, we have $u_{T} \to u$ in $L_{\rm{loc}}^{\infty}(\mathbb{R}^5 \times [t_0, \infty) )$. Given any $ f\in C_c^{\infty}(\mathbb{R}^5 \times [t_0,\infty) )$, for $T \gg 1$, we test \eqref{qd26May3-1} with $f$, and then take $T\to \infty$. It deduces that $u$ is a weak solution of  
\eqref{u-eq-5d}. By \eqref{move-26Aug24-1}, we get $u(x,t_0)$ \eqref{move-26Sep7-3}. By \eqref{move-26Sep6-11}, \eqref{move-26Sep6-12}, and $\xi = (\xi_1, 0,0,0,0)$, $u(x,t)$ is even with respect to the $i$-th component of $x$, $i =2,3,4,5$. Since $\eta_{R}(y) \le \1_{|x| \le t^{\frac{1}{2}}}$, we get $|h_{\rm{error}}| \lesssim t^{-\frac{\gamma_1}{2}} \ln t (\ln t_0)^{6} \langle t^{-1} |x|^2 \rangle^{-1}$.
\end{proof}

\subsection{Proof of Corollary \ref{move26Sep8-1-cor}}
\begin{proof}[Proof of Corollary \ref{move26Sep8-1-cor}]

{\textbf{For (i).}} 
Denote $f(t) :=  t^{-\frac{1}{2} \min\{\gamma_1, \gamma_2 \} } + t^{-\frac{\gamma_1}{2}} \ln t (\ln t_0)^{6} $ for brevity. By \eqref{u-behavior}, \eqref{qd25Dec25-4},
\begin{equation}\label{move-26Aug23-9}
		u(\xi , t) = 
		15^{\frac{3}{4}}
		\lambda^{-\frac{3}{2}} 
		+
		O( f(t) )
= 
		15^{\frac{3}{4}}
		\lambda^{-\frac{3}{2}} \big[ 1
		+
		O\big( t^{-\frac{1}{2} \min\{\gamma_1, \gamma_2 \} + 3 - \frac{3}{2} \gamma_1 } + t^{3-2 \gamma_1} \ln t (\ln t_0)^{6} \big) \big] \sim \lambda^{-\frac{3}{2}} > 0,
	\end{equation}
where $-\frac{1}{2} \min\{\gamma_1, \gamma_2 \} + 3 - \frac{3}{2} \gamma_1 < 0$ and $3-2 \gamma_1 < 0$ by the assumption \eqref{move-26Aug23-1}. In other word, for $t_0 \gg 1$,
\begin{equation}\label{move-26Aug23-10}
f(t) / (\lambda^{-\frac{3}{2}})
\ll 1.
\end{equation}

For $t>t_0$ and $\mathcal{M}_{t} = \{ w \mid u(w, t) = \sup_{x \in \mathbb{R}^5} u(x, t) \} $ as the set of maximum points of $u(\cdot,t)$, since $\lim_{|x| \to \infty} u(x,t) = 0$, then $\mathcal{M}_{t} \ne \emptyset$.
Obviously, $\max_{x\in\mathbb{R}^5} u(x,t) \ge u(\xi , t)$. On the other hand, by \eqref{u-behavior}, there exists a constant $C_1 >0$ such that
$ - C_1 f(t) \le u \le 15^{\frac{3}{4}} \lambda^{-\frac{3}{2}} + C_1 f(t)$. By \eqref{move-26Aug23-9} and \eqref{move-26Aug23-10}, $\| u(\cdot,t)\|_{L^\infty(\mathbb{R}^5)}$ is not attained by the non-positive value of $u(\cdot,t)$ and
\begin{equation}\label{move-26Aug23-11}
\max_{x\in\mathbb{R}^5} u(x,t)
=
\| u(\cdot,t)\|_{L^\infty(\mathbb{R}^5)} = 
		15^{\frac{3}{4}}
		\lambda^{-\frac{3}{2}} \big[ 1
		+
		O\big( t^{-\frac{1}{2} \min\{\gamma_1, \gamma_2 \} + 3 - \frac{3}{2} \gamma_1 } + t^{3-2 \gamma_1} \ln t (\ln t_0)^{6} \big) \big].
\end{equation}
We will estimate the location of $\mathcal{M}_{t}$. The basic idea is that for $x$ such that $u(x,t)$ is smaller than $u(\xi,t)$, then $x \notin \mathcal{M}_{t}$.
When $| \frac{x-\xi}{\lambda} | > 2^{-1}$,
\begin{equation*}
u \le 15^{\frac{3}{4}}
		\lambda^{-\frac{3}{2}} 
		\Big(\frac{5}{4} \Big)^{-\frac{3}{2}}
		+
		O(f(t)) < u(\xi, t)
\end{equation*}
by \eqref{move-26Aug23-9} and \eqref{move-26Aug23-10}, which implies that $x \notin \mathcal{M}_{t}$. When $| \frac{x-\xi}{\lambda} | \le 2^{-1}$, then $|\frac{x-\xi}{\sqrt{t}}| \lesssim t^{\frac{3}{2} - \gamma_1}$ and $\eta( \frac{x-\xi}{\sqrt{t}} ) =1$.
\begin{align*}
&
		u = 
		15^{\frac{3}{4}}
		\lambda^{-\frac{3}{2}} 
        \Big[ 1 + 
		\Big( 1+
		\Big|  \frac{x-\xi}{\lambda} \Big|^2 \Big)^{-\frac{3}{2}}
        - 1
        \Big]
		+
		O(f(t))
\\
= \ & 15^{\frac{3}{4}}
		\lambda^{-\frac{3}{2}} 
        \Big[ 1 + (-\frac{3}{2})
		\Big( 1+ \theta
		\Big|  \frac{x-\xi}{\lambda} \Big|^2 \Big)^{-\frac{5}{2}}
        \Big|  \frac{x-\xi}{\lambda} \Big|^2
        \Big]
		+
		O(f(t))
\\
\le \ & 15^{\frac{3}{4}}
		\lambda^{-\frac{3}{2}}
        -
        15^{\frac{3}{4}}
		\lambda^{-\frac{3}{2}} \frac{3}{2}
		\Big( \frac{5}{4} \Big)^{-\frac{5}{2}}
        \Big|  \frac{x-\xi}{\lambda} \Big|^2
		+
		O(f(t))
\end{align*}
for some $\theta\in [0,1]$. If
\begin{equation}\label{move-26Aug23-3}
\lambda^{-\frac{3}{2}} \Big|  \frac{x-\xi}{\lambda} \Big|^2 \ge (\ln \ln t)^{\frac{1}{9}} f(t)
\end{equation}
holds, then $x \notin \mathcal{M}_{t}$ since $u - u(\xi , t) \le O(f(t)) - 15^{\frac{3}{4}} \frac{3}{2} ( \frac{5}{4} )^{-\frac{5}{2}} (\ln \ln t)^{\frac{1}{9}} f(t) < 0$. To make \eqref{move-26Aug23-3} hold, it suffices to ensure
\begin{equation*}
\begin{aligned}
&
\lambda^{-\frac{3}{2}} \Big|  \frac{x-\xi}{\lambda} \Big|^2 \ge (\ln \ln t)^{\frac{1}{8}}
\ln t (\ln t_0)^{6} t^{-\frac{1}{2} \min\{\gamma_1, \gamma_2 \} },
\\
& 
\mbox{which can be satisfied by }
|x-\xi| \ge C_1
[(\ln \ln t)^{\frac{1}{8}}
\ln t (\ln t_0)^{6}]^{\frac{1}{2}} t^{\frac{7}{2} -\frac{7}{4} \gamma_1 -\frac{1}{4} \min\{\gamma_1, \gamma_2 \}}
\end{aligned}
\end{equation*}
with a sufficiently large constant $C_1 >0$. Thus,
\begin{equation}\label{move-26Aug23-14}
|x- \xi| \le [(\ln \ln t)
\ln t (\ln t_0)^{6}]^{\frac{1}{2}} t^{\frac{7}{2} -\frac{7}{4} \gamma_1 -\frac{1}{4} \min\{\gamma_1, \gamma_2 \}}
\mbox{ \ for \ } x\in \mathcal{M}_{t}.
\end{equation}

By Lemma \ref{Psi-maxpoint-lem}, the set of maximum points of $\Psi(\cdot,t)$ is not empty and the lower bound of the location of the maximum points of $\Psi(\cdot,t)$ is given by $\min\{ t^{\frac{1}{2}},  |c_{\sharp}| t^{\frac{\gamma_1 - \gamma_2}{2} + \frac{1}{2}}  \}$ up to a constant multiplicity. By \eqref{move-26Aug23-14}, the upper bound of the location of the maximum points of $u(\cdot, t)$ is given by $|\xi| + [(\ln \ln t)
\ln t (\ln t_0)^{6}]^{\frac{1}{2}} t^{\frac{7}{2} -\frac{7}{4} \gamma_1 -\frac{1}{4} \min\{\gamma_1, \gamma_2 \}}$. By \eqref{move-26Aug23-1}, 
\begin{equation}
|\xi| + [(\ln \ln t)
\ln t (\ln t_0)^{6}]^{\frac{1}{2}} t^{\frac{7}{2} -\frac{7}{4} \gamma_1 -\frac{1}{4} \min\{\gamma_1, \gamma_2 \}}
\lesssim
t^{-\epsilon_1} \min\{ t^{\frac{1}{2}},  |c_{\sharp}| t^{\frac{\gamma_1 - \gamma_2}{2} + \frac{1}{2}}  \}
\end{equation}
with a constant $0< \epsilon_1 \ll 1$.

{\textbf{For (ii).}}
The additional assumption $\gamma_1 > \gamma_2$ implies $[(\ln \ln t)
\ln t (\ln t_0)^{6}]^{\frac{1}{2}} t^{\frac{7}{2} -\frac{7}{4} \gamma_1 -\frac{1}{4} \min\{\gamma_1, \gamma_2 \}} \le |\xi|/2$. Then \eqref{move-26Aug23-12} follows from \eqref{move-26Aug23-14}. By \eqref{move-26Aug23-6}, $|\xi_1| \to \infty$ as $t\to \infty$ if $3\gamma_1 + \gamma_2 \le 7$, which implies the maximum points of $u(\cdot,t)$ goes to infinity as $t\to \infty$. The parameter restrictions \eqref{move-26Aug23-1}, $\gamma_1 > \gamma_2$, $3\gamma_1 + \gamma_2 \le 7$ are solvable. See Mathematica \href{https://raw.githubusercontent.com/qdz-wonderful/Mathematica-codes-for-One-bubble-solution-with-a-moving-maximum-point/refs/heads/main/Moving%20maximum%20point-parameters%20in%20Cor%20(ii).wl}{code link} here.
\end{proof}

\subsection{Proof of Remark \ref{move-26Aug24-2-rmk} (\ref{move26Sep7-4-rmk})}\label{move-26Sep8-2-subsec}

\begin{proof}[Proof of Remark \ref{move-26Aug24-2-rmk} \eqref{move26Sep7-4-rmk}]

When $\gamma_1 = \gamma_2$ and $0<|c_{\sharp}|<1$, we have $\Psi(x,0) \ge (1-|c_{\sharp}|) |x|^{-\gamma_1}$ and then $\Psi > 0$. Recall $u(x,t_0)$ given in \eqref{move-26Sep7-3}. Notice that $(\ln t_0) \lambda(t_0) \ll \sqrt{t_0}$.
Denote $y(t_0)=\frac{x-\xi(t_0)}{\lambda(t_0)}$. There exists a large constant $C_1>0$ such that
\begin{equation*}
15^{\frac{3}{4}} \langle y(t_0) \rangle^{-3} + g_{0}(t_0) \eta( 2y(t_0) / \ln t_0) Z_{0}(y(t_0))
\ge 
15^{\frac{3}{4}} \langle y(t_0) \rangle^{-3}
-
C_1
(\ln t_0) t_0^{3-2\gamma_1} Z_0(y(t_0)) \1_{|y(t_0)| \le \ln t_0} > 0
\end{equation*} 
by $\gamma_1>3/2$ and $t_0\gg 1$.
With $\Psi > 0$, we have $u(x,t_0)>0$, and then $u>0$ by the maximum principle.
\end{proof}

\section{Re-gluing inner linear theory}\label{re-gluing-Sec}

This section presents the re-gluing inner linear theory, which is {\textbf{independent of the other sections}} of this paper. Some symbols, like $\tau_0$, $y$, $R$, $R_0$, $\Psi$, appearing in other sections are abused, but there is no relationship between them. The proof merge the arguments in \cite[Section 7]{Green16JEMS}, \cite[Section 7]{infi4d}, \cite[Section 8]{Wei-Zhang-Zhou2022LLG}, and \cite[Proposition 7.2]{TriHMF2026}. We provide details for completeness and supplement the argument for even symmetry in higher modes. For the first reading, readers may focus on the typical argument for mode $0$.

In this section, we always denote
\begin{equation}
p=\frac{n+2}{n-2},\quad
U(y) 
= [n(n-2)]^{\frac{n-2}{4} } ( 1+|y|^2 )^{-\frac{n-2}{2} }.
\end{equation}
Recall that the linearized operator $\Delta + pU^{p-1}$
has only one positive eigenvalue $\gamma_0>0$ such that
\begin{equation}\label{def-Z0Z0}
\Delta Z_0 + pU^{p-1} Z_0=\gamma_0 Z_0,
\end{equation}
where the corresponding eigenfunction $Z_0 \in L^{\infty}(\R^n)$ is radially symmetric and $|Z_0| \le C \rme^{-c|y|}$ with some positive constants $c, C$. The bounded kernels of $\Delta + pU^{p-1}$ are given by $Z_{i}$, $i\in \overline{1,n+1}$ in \eqref{Zi-def}.

Define the weighted $L^{\infty}$ norm
\begin{equation}
	\|h\|_{v, a}^{\mathcal{R}} :=
	\inf\big\{ C \mid |h(y,\tau)| \le C v(\tau) \langle y \rangle^{-a} \mbox{ holds for all } \tau \in (\tau_0, \tau_1), |y| < \mathcal{R}(\tau) \big\},
\end{equation}
where $a \in \mathbb{R}$, $1\le \tau_0 < \tau_1 \le \infty$, $v=v(\tau) \ge 0$, $\mathcal{R} = \mathcal{R}(\tau) >0$ are some functions defined in $(\tau_0, \tau_1)$. We abuse the symbol $\|h\|_{v, a}^{\infty}$ to denote the norm when $\mathcal{R} = \infty$. We emphasize that in this section, {\textbf{all constants are independent of the choice of $\tau_0, \tau_1$}}, and functions do not take the value $\pm \infty$ unless otherwise specified.

The following coercive estimate is from \cite[Lemma 7.2]{Green16JEMS} when $n\ge 5$. The lower dimensions $n=3,4$ can be handled similarly.
\begin{lemma}\label{eigenvalue-problem} Given an integer $n\ge 3$, there exists a constant $c_0 >0$ such that for all sufficiently large $R$ and all radially symmetric functions $f \in H_0^1(B_{R})$
	with  $\int_{B_{R}} f Z_0 \rmd y = 0$, we have
	\begin{equation}\label{Coer-est}
	c_0  \lambda_{R}  \int_{B_{R}} |f|^2 \rmd y \le \int_{B_{R}} ( |\nabla f|^2 - pU^{p-1}|f|^2 ) \rmd y, 
	\mbox{ \ where \ }
	\lambda_{R} :=
	\begin{cases}
		R^{-2} 
		& \mbox{ \ if \ } n=3
		\\
		(R^2 \ln R)^{-1} 
		& \mbox{ \ if \ } n=4
		\\
		R^{2-n}
		& \mbox{ \ if \ } n\ge 5.
	\end{cases}
	\end{equation}
\end{lemma}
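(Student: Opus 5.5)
We reduce Lemma \ref{eigenvalue-problem} to a one-dimensional radial problem and argue by contradiction with a quantitative blow-up scale. Write $Q_R(f):=\int_{B_R}(|\nabla f|^2-pU^{p-1}f^2)\rmd y$ on radial $f\in H_0^1(B_R)$. The radial part of $-\Delta-pU^{p-1}$ on $\mathbb{R}^n$ has exactly one negative eigenvalue $-\gamma_0$ (eigenfunction $Z_0$) and a zero-energy resonance/kernel $Z_{n+1}$, which is radial and changes sign exactly once. Consequently $Q_R\ge 0$ on radial functions orthogonal to $Z_0$, and $Q_R>0$ there for finite $R$, since $Z_{n+1}\notin H_0^1(B_R)$. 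The only issue is the rate at which the bottom of the constrained spectrum,
\begin{equation*}
\mu_R:=\inf\Big\{Q_R(f)\ \Big|\ f \mbox{ radial},\ \|f\|_{L^2(B_R)}=1,\ \int_{B_R} fZ_0\rmd y=0\Big\},
\end{equation*}
tends to $0$. The plan is to show $\mu_R\gtrsim\lambda_R$, where $\lambda_R$ is the rate at which an $H_0^1(B_R)$ truncation of $Z_{n+1}$ fails to be a kernel element.

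\textbf{Upper-bound heuristic (fixing the rate).} Take the test function $f=Z_{n+1}-c_R Z_0$ cut off near $|y|=R$, with $c_R$ chosen so that $\int_{B_R} fZ_0\rmd y=0$. Since $Z_{n+1}\sim |y|^{2-n}$ at infinity, one finds $Q_R(f)\sim R^{2-n}$ (the boundary flux) and $\|f\|_{L^2}^2\sim \int_1^R r^{4-2n}r^{n-1}\rmd r$. This $L^2$ norm is $O(1)$ for $n\ge5$, $\sim\ln R$ for $n=4$, and $\sim R$ for $n=3$. For $n=3,4$, however, the better competitor is the first Dirichlet mode of the exterior problem on scale $R$, which gives $R^{-2}$ and $(R^2\ln R)^{-1}$ respectively. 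This explains the three cases of $\lambda_R$; the lemma asserts that these competitors are optimal up to constants.

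\textbf{Lower bound.} We follow \cite[Lemma 7.2]{Green16JEMS}. Let $f_R$ be a minimizer, with Euler--Lagrange equation $-\Delta f_R-pU^{p-1}f_R=\mu_Rf_R+\nu_RZ_0$ in $B_R$ and $f_R(R)=0$. Suppose $\mu_R\le\varepsilon\lambda_R$ with $\varepsilon\to0$. Testing the equation against $Z_0$ shows that $\nu_R$ is exponentially small times $\|f_R\|$, so $\nu_R$ is negligible.

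On the inner region $|y|\le R_1$ (fixed large), the ODE solution is close to a multiple $\alpha Z_{n+1}$. On the outer region, the potential $pU^{p-1}\sim |y|^{-4}$ is a perturbation, so $f_R$ is close to a combination of $r^{2-n}$ and $1$ (with logarithmic corrections when $n=4$), corrected by the small $\mu_R$ term. Matching these pieces with the boundary condition $f_R(R)=0$ forces the coefficient relation $\alpha R^{2-n}\approx$ the $\mu_R$-driven correction. For $n\ge5$ this correction is of size $\mu_R\alpha$; for $n=3,4$ the outer ODE becomes Bessel-like on the scale $\mu_R^{-1/2}$, and one needs $\mu_R^{-1/2}\lesssim R$, with the logarithm entering when $n=4$. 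This contradicts $\mu_R\ll\lambda_R$.

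An equivalent route avoids the ODE: decompose $f=\alpha\chi_RZ_{n+1}+g$, where $g$ is orthogonal (in a weighted sense) to $Z_{n+1}$. Use the $\mathbb{R}^n$ coercivity of $Q$ on the orthogonal complement of $\{Z_0,Z_{n+1}\}$, in the weighted form $Q(g)\gtrsim\int\langle y\rangle^{-2}g^2$, together with a Hardy inequality on $B_R$ giving $\int\langle y\rangle^{-2}g^2\gtrsim R^{-2}\int g^2$. Then control the cross terms and the cost $\alpha^2R^{2-n}$ of truncating $Z_{n+1}$.

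\textbf{Main obstacle.} The hardest step is the low-dimensional cases. For $n=3,4$, $Z_{n+1}\notin L^2$ (or is only logarithmically divergent), so the mass of a near-minimizer spreads to scale $R$. There the lower bound $\mu_R\gtrsim R^{-2}$ (respectively $(R^2\ln R)^{-1}$) must come from a Hardy or Poincaré-type inequality on the annulus $\{R_1<|y|<R\}$, combined with the fact that the solution cannot stay close to $\alpha r^{2-n}$ all the way out to $R$ because of the Dirichlet condition. We would handle this via the ODE matching, using the explicit fundamental system of $-\Delta-\mu$ (Bessel functions) on the outer region. For $n=4$, extra care is needed to track the logarithm exactly.
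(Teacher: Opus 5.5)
Your proposal is consistent with the paper, which gives no argument of its own: it quotes \cite[Lemma 7.2]{Green16JEMS} for $n\ge 5$ and states that $n=3,4$ are handled similarly. Your contradiction argument on the radial ODE is the natural way to carry this out: the regular solution stays close to $\alpha Z_{n+1}$, the multiplier $\nu_R$ is exponentially small, and the condition $f_R(R)=0$ forces $\mu_R\gtrsim\lambda_R$.

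One correction to your upper-bound heuristic, which also removes your ``main obstacle''. The truncated $Z_{n+1}$ already has Rayleigh quotient $R^{2-n}/\|f\|_{L^2}^2\sim\lambda_R$ in every dimension, because $\|f\|_{L^2}^2\sim 1$, $\ln R$, $R$ for $n\ge 5$, $n=4$, $n=3$ respectively; no separate Dirichlet-mode competitor is needed. Moreover, under the contradiction hypothesis $\mu_R\ll\lambda_R$ one has $\mu_R R^2\ll 1$, and $\mu_R R^2\ln R\ll 1$ when $n=4$. So the $\mu_R$-term stays perturbative on all of $B_R$, and the same variation-of-parameters matching you use for $n\ge 5$ also closes $n=3,4$, with no Bessel functions or annular Hardy inequality.
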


We improve \cite[Lemma 7.3]{Green16JEMS}, \cite[Lemma 7.4]{infi4d} into the following form. Recall \eqref{SH-def} about the orthonormal basis $\mathbf{SH}$ in $L^2(S^{n-1})$ made up of spherical harmonic functions $\Upsilon_{i,j}$. We agree that
\begin{equation}
\begin{aligned}
f(y) = f_1(|y|) \Upsilon_{i,j}(\frac{y}{|y|}) 
&
\mbox{ means that $f$ can be written as}
\\
&
\mbox{ a multiplicity of a radially symmetric function $f_1$ and $\Upsilon_{i,j}$.}
\end{aligned}
\end{equation}

\begin{lemma}\label{chiM-eq-lem} Given an integer $n\ge 3$, $1\le \tau_0 <\tau_1 \le \infty$, functions $R(\tau) \ge 2$, $v(\tau) \ge 0$ defined in $(\tau_0, \tau_1)$, consider
	\begin{equation*}
		\begin{cases}
			\partial_{\tau} \phi = \Delta \phi + V(|y|) \phi + h 
			\mbox{ \ for \ } \tau \in (\tau_0, \tau_1), y\in B_{R},
			\\
			\phi  = 0 \mbox{ \ for \ } \tau \in (\tau_0, \tau_1), y\in \partial B_{R},
			\quad
			\phi(\cdot, \tau_0)= 0
			\mbox{ \ in \ } B_{R(\tau_0)},
		\end{cases}
	\end{equation*}
	where $h$ is bounded locally in time, $V(|y|)$ is a radially symmetric and bounded function. There exists a unique solution $\phi = \phi[h]$ linearly depending on $h$.

$(1).$ If $h = h_1(|y|,\tau) \Upsilon_{i,j}(\frac{y}{|y|})$ with some $\Upsilon_{i,j} \in \mathbf{SH}$, then $\phi = \phi_1(|y|,\tau) \Upsilon_{i,j}(\frac{y}{|y|})$.

$(2).$ If $\int_{S^{n-1}} h(r w,\tau) \Upsilon_{i,j}(w) \rmd w = 0$ with some $\Upsilon_{i,j} \in \mathbf{SH}$ for $\tau \in (\tau_0, \tau_1)$, $r \in (0, R)$, then $\int_{S^{n-1}} \phi(r w,\tau) \Upsilon_{i,j}(w) $ $ \rmd w = 0$ for $\tau \in (\tau_0, \tau_1)$, $r \in (0, R)$.

$(3).$ Suppose that $\|h\|_{v,a}^{R} <\infty$, $a \in \mathbb{R}$, and there exists $g(r)$ such that $\big(\frac{\rmd^2}{\rmd r^2} + \frac{n-1}{r} \frac{\rmd}{\rmd r} + V(r) \big) g(r) = 0$ and $C_1^{-1} \le g(r) \le C_1$ with a constant $C_1 \ge 1$ for $r\in (0,\infty)$; and one of the following cases holds,
	
	Case 1: $\dot{v} \ge 0$ and $ \dot{R} \ge 0$;

	Case 2: $|\dot{v}|  \begin{cases}
		R^{2}  &\mbox{ if } a \in (-\infty,n) \setminus \{2\}
		\\
		R^2 \ln R                   
		&\mbox{ if } a \in \{2, n\}
		\\
		R^{2+\epsilon}
		&\mbox{ if } a>n
	\end{cases}
	\le c v$ and 
	$\begin{cases}
		|\dot{R}| R 
		&
		\mbox{ if } a <n
		\\
		|\dot{R}| R \ln R
		&
		\mbox{ if }  a = n
		\\
		|\dot{R}| R^{1+\epsilon}
		&
		\mbox{ if }  a > n
	\end{cases}
	\le c$ with a constant $\epsilon>0$ and a sufficiently small constant $c>0$. Then there exists a constant $C_2>0$ depending on $n, C_1, a$, and additionally on $\epsilon$ if $a>n$ such that
	\begin{equation}\label{Theta0-def}
		|\phi| \le C_2 \|h \|_{v,a}^{R} v
		\Theta_{R, a}^0(|y|),
		\mbox{ where } \Theta_{R, a}^0(|y|) := 
		\begin{cases}
			R^{2-a} &\mbox{ \ if \ } a<2 
			\\
			\ln R                      
			&\mbox{ \ if \ } a=2 
			\\
			\langle |y| \rangle^{2-a} 
			&  \mbox{ \ if \ } 2<a<n
			\\
			\langle |y| \rangle^{2-n}\ln(|y|+2)   
			&\mbox{ \ if \ } a = n
			\\
			\langle |y| \rangle^{2-n}   
			&\mbox{ \ if \ } a > n.
		\end{cases}
	\end{equation}

$(4).$ If $h$ is even with respect to the $i$-th component of $y$ for some $i \in \{1,2,\dots, n\}$ and all $\tau \in (\tau_0, \tau_1)$, then so is $\phi$.
\end{lemma}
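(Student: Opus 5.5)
Existence and uniqueness come from standard linear parabolic theory on the moving ball $\{(y,\tau): \tau\in(\tau_0,\tau_1),\ |y|<R(\tau)\}$ with zero Dirichlet data and bounded $V$. We may flatten the domain by $z=y/R(\tau)$ and apply Galerkin or Schauder theory locally in time. Uniqueness follows from the maximum principle, or from an energy estimate with a Gronwall bound on $\|V\|_\infty$. Linearity in $h$ is then immediate.

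For (1), (2) and (4) I would use uniqueness together with symmetry.

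\textbf{(1).} Substitute the ansatz $\phi=\phi_1(|y|,\tau)\Upsilon_{i,j}$. Since $V$ is radial and $\Delta_{S^{n-1}}\Upsilon_{i,j}=-i(n-2+i)\Upsilon_{i,j}$, the equation reduces to a radial problem:
\[
\partial_\tau\phi_1=\partial_r^2\phi_1+\tfrac{n-1}{r}\partial_r\phi_1-\tfrac{i(n-2+i)}{r^2}\phi_1+V\phi_1+h_1 .
\]
This problem is solvable; for $i\ge1$ the solution behaves like $r^{i}$ near $0$, which one sees by regarding $\phi_1\Upsilon_{i,j}$ as a solution of the original $n$-dimensional equation. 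Uniqueness then identifies it with $\phi$.

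\textbf{(2).} Set $\phi^\sharp(r,\tau)=\int_{S^{n-1}}\phi(rw,\tau)\Upsilon_{i,j}(w)\,\rmd w$. Then $\phi^\sharp\Upsilon_{i,j}$ solves the same problem with data $h^\sharp\Upsilon_{i,j}=0$. Indeed, projection commutes with $\Delta$ and with multiplication by the radial $V$. Hence it vanishes by uniqueness.

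\textbf{(4).} The reflection $y_i\mapsto-y_i$ preserves the ball, $\Delta$ and $V$. So $\phi(\dots,-y_i,\dots)$ solves the same problem, and uniqueness gives evenness.

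The substantive part is (3), the pointwise bound. I would build a supersolution $\bar\phi=C\|h\|_{v,a}^R\,v(\tau)\,g(|y|)\,w(|y|)$ and use the maximum principle. The kernel element $g$ conjugates away the potential. Writing $\phi=g\,\tilde\phi$, the operator becomes
\[
\Delta\tilde\phi+2\nabla\log g\cdot\nabla\tilde\phi,
\]
a drift Laplacian with no zeroth-order term. Since $g\sim1$, this operator is comparable to the radial Laplacian in weighted form, $g^{-2}r^{1-n}(r^{n-1}g^2 w')'$.

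Choose $w$ to solve $-(r^{n-1}g^2w')'=r^{n-1}g^2\langle r\rangle^{-a}$ on $(0,2R)$ with $w(2R)=0$ and $w'(0)=0$. By explicit integration,
\[
w(r)=\int_r^{2R}\frac{\int_0^s\rho^{n-1}g^2\langle\rho\rangle^{-a}\,\rmd\rho}{s^{n-1}g(s)^2}\,\rmd s\ \lesssim\ \Theta^0_{R,a}(r).
\]
The different cases of $\Theta^0_{R,a}$ arise from whether $\int_0^s\rho^{n-1-a}\,\rmd\rho$ grows like $s^{n-a}$, like $\ln s$, or stays bounded. The outer integral then gives $R^{2-a}$, $\ln R$, $r^{2-a}$, $r^{2-n}\ln r$ or $r^{2-n}$ respectively. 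For the $a>n$ and $a=n$ cases I would add a small multiple of a modified $w$ so that $w\gtrsim\langle r\rangle^{2-n}$ up to $R$; the factor $R^{\epsilon}$ in Case 2 is there to control this. The construction gives $-\Delta_g w=\langle r\rangle^{-a}$, so in the region $|y|<R$ we have $w>0$ and $w\gtrsim R^{-(n-2)}$ roughly.

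Then
\[
(\partial_\tau-\Delta-V)\bar\phi=C\|h\|\,g\big(\dot v\,w+v\langle y\rangle^{-a}\big)+(\text{terms from }\dot R),
\]
where the $\dot R$-terms enter through $w$'s dependence on $R$ via $\partial_R w$.

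\textbf{Case 1.} Here $\dot v\ge0$, and $\dot R\ge0$ makes $\partial_R w\,\dot R\ge0$. So $\bar\phi$ is a supersolution dominating $|h|$ once $C\gtrsim C_1^2$. On the boundary $\bar\phi\ge0=\phi$, and initially $\phi=0$. The comparison principle applied to $\pm\phi$ gives the bound.

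\textbf{Case 2.} This is the main obstacle. The negative contributions $\dot v\,w$ and $v\,\dot R\,\partial_Rw$ must be absorbed by $v\langle y\rangle^{-a}/2$. This needs $|\dot v|\sup_{|y|<R}\big(w\langle y\rangle^{a}\big)\le\tfrac12v$. One computes $w\langle y\rangle^{a}\lesssim R^2$, $R^2\ln R$ or $R^{2+\epsilon}$ according to the range of $a$, which matches exactly the stated hypotheses. Similarly $|\partial_R w|\lesssim R^{1-a}$ (with the appropriate $\ln R$ or $R^{\epsilon}$ adjustments), giving the conditions on $|\dot R|R$. I would carry out these two estimates carefully, case by case in $a$, with the small constant $c$ chosen last. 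The comparison argument is then the same as in Case 1.
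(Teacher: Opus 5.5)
Your existence/uniqueness argument and your treatment of (1), (2), (4) by uniqueness match the paper. In (3), your supersolution $C\|h\|_{v,a}^{R}\,v\,g\,w$ is the paper's $\bar\phi_1=Cvf_a(r,R)$ up to harmless normalizations: $g^2$ in the inner integral, and $w(2R)=0$ instead of $w(R)=0$. Case 1 (all $a$) and Case 2 for $0\le a\le n$ go through as you describe.

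\textbf{The gap: Case 2 for $a<0$ and $a>n$.} Here $w$ is built from the forcing $\langle r\rangle^{-a}$ itself, and your claim that $\sup_{|y|<R} w\langle y\rangle^{a}$ is $\lesssim R^2$, $R^2\ln R$ or $R^{2+\epsilon}$ is false.
\begin{itemize}
\item For $a<0$: $w(0)\sim R^{2-a}$ while $\langle 0\rangle^{a}=1$. Relative to the good term $v\langle y\rangle^{-a}$, the $\dot v$-term at the origin is therefore $\sim|\dot v|R^{2-a}/v$, and the $\dot R$-term is $\sim|\dot R|R^{1-a}$. Neither is controlled by $|\dot v|R^2\le cv$ and $|\dot R|R\le c$.
\item For $a>n$: $w\sim\langle r\rangle^{2-n}$ and $\partial_R w\sim R^{1-n}$. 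At $|y|\sim R$ the relative error terms are $\sim|\dot v|R^{2+a-n}/v$ and $\sim|\dot R|R^{1+a-n}$. The hypotheses $|\dot v|R^{2+\epsilon}\le cv$ and $|\dot R|R^{1+\epsilon}\le c$ do not control these once $a>n+\epsilon$.
\end{itemize}
Your suggested repair (adding a multiple of a modified $w$ so that $w\gtrsim\langle r\rangle^{2-n}$) points the wrong way. Enlarging $w$ only enlarges the bad term $\dot v\,g\,w$; what is needed is a larger forcing for a $w$ of the same size. For $a=n$ nothing needs repairing.

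\textbf{The missing idea: change the majorant of $h$ before building $w$.}
\begin{itemize}
\item For $a>n$: choose $b\in(n,\min\{a,n+\epsilon\}]$, use $|h|\le\|h\|_{v,a}^{R}v\langle y\rangle^{-b}$, and take $\bar\phi=C\|h\|_{v,a}^{R}v\,g\,w_b$ with $w_b$ built from $\langle r\rangle^{-b}$. Since $b>n$, still $w_b\lesssim\langle r\rangle^{2-n}=\Theta^0_{R,a}$. Now the error terms are $\lesssim|\dot v|R^{2+b-n}\le|\dot v|R^{2+\epsilon}$ and $\lesssim|\dot R|R^{1+b-n}\le|\dot R|R^{1+\epsilon}$. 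This is exactly what the $\epsilon$ in the hypothesis is for.
\item For $a<0$: use $|h|\lesssim\|h\|_{v,a}^{R}vR^{-a}$ on $B_R$ and take $\bar\phi=C\|h\|_{v,a}^{R}vR^{-a}g\,w_0$. Differentiating $R^{-a}$ in $\tau$ produces an extra term $O(|\dot R|R\cdot R^{-a})$. All error terms are then controlled by $|\dot v|R^2\le cv$ and $|\dot R|R\le c$, and the bound obtained is $vR^{2-a}=v\Theta^0_{R,a}$.
\end{itemize}
These are the paper's $\bar\phi_2=Cvf_b$ and $\bar\phi_3=CvR^{-a}f_0$.
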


\begin{remark}\label{qd26Mar13-3-rmk}

For convenience of application, we note that when $|\dot{v}| = O(\tau^{-1} v)$, $|\dot{R}| = O(\tau^{-1} R)$, and $R^{2+\epsilon} \ll \tau$ with $\epsilon>0$, Case $2$ always holds.
\end{remark}

\begin{proof}

$(1).$ Plug $\phi = \phi_1(|y|,\tau) \Upsilon_{i,j}(\frac{y}{|y|})$. It suffices to solve
	\begin{equation*}
		\begin{cases}
			\begin{aligned}
				\partial_{\tau} \phi_1(|y|,\tau) = & \Big[ \partial_{|y| |y|} + \frac{n-1}{|y|} \partial_{|y|}
				+
				\frac{- i (n-2+i)}{|y|^2} \Big] \phi_1(|y|,\tau) 
				\\
				&
				+ V(|y|) \phi_1(|y|,\tau) + h_1(|y|,\tau) 
				\mbox{ \ for \ } \tau \in (\tau_0, \tau_1), y\in B_{R},
			\end{aligned}
			\\
			\phi_1(|y|,\tau) = 0 \mbox{ \ for \ } \tau \in (\tau_0, \tau_1), y\in \partial B_{R},
			\quad
			\phi_1(|y|,\tau_0) = 0
			\mbox{ \ for \ } y \in B_{R(\tau_0)}.
		\end{cases}
	\end{equation*}
	By uniqueness, we conclude.

$(2).$	Denote $f(r,\tau) := \int_{S^{n-1}} \phi(r w,\tau) \Upsilon_{i,j}(w) \rmd w$. We multiply $\Upsilon_{i,j}$ and integrate in $S^{n-1}$. Then
	\begin{equation*}
		\begin{cases}
			\partial_{\tau} f
			= 
			\big[ \partial_{rr} + \frac{n-1}{r} \partial_{r} +
			\frac{- i (n-2+i)}{r^2} \big] f 
			+ V(r) f
			\mbox{ \ for \ } \tau \in (\tau_0, \tau_1), r \in (0,R),
			\\
			f(R,\tau) = 0 \mbox{ \ for \ } \tau \in (\tau_0, \tau_1),
			\quad
			f(r,\tau_0) = 0
			\mbox{ \ for \ } r \in (0,R(\tau_0)).
		\end{cases}
	\end{equation*}
	Notice that $|f(0,\tau)| < \infty$. By uniqueness, $f \equiv 0$.

$(3).$ Assume $\|h \|_{v,a}^{R} \ne 0$. Otherwise $\phi\equiv 0$ and the result is trivial.
	
	Denote $r=|y|$, $L [\phi] := \Delta \phi + V(|y|) \phi$, $P[\phi] := L[\phi] + h - \partial_{\tau} \phi $. Set a supersolution $\bar{\phi}_1  := C v(\tau) f_a(r,R)$ with a constant $C>0$ to be determined later, where 
	\begin{equation}
		f_a(r,R) := g(r) \int_r^{R} \frac{\rmd \rho}{g^2(\rho) \rho^{n-1}} \int_{0}^{\rho} g(s)s^{n-1} \langle s \rangle^{-a} \rmd s
        \mbox{ \ satisfies \ } L f_a(r,R) = - \langle r \rangle^{-a}.
	\end{equation}
By patient calculation, for $r\in [0,R]$,
	\begin{equation}
		\langle r\rangle^{a} f_a(r,R) \lesssim 
		\langle r\rangle^{a} 
		\begin{cases}
			R^{2-a} 
			&\mbox{ \ if \ } a<2 
			\\
			\ln R    &\mbox{ \ if \ } a=2 
			\\
			\langle r\rangle^{2-a}  &\mbox{ \ if \ } 2<a<n 
			\\
			\langle r\rangle^{2-n}\ln(r+2) 
			&\mbox{ \ if \ } a=n  \\
			\langle r\rangle^{2-n}  &\mbox{ \ if \ } a>n
		\end{cases}
		\lesssim
		\begin{cases}
			R^{2-a}  
			& \mbox{ \ if \ } a<0 
			\\
			R^{2}  &\mbox{ \ if \ } 0\le a<2  
			\\
			R^2 \ln R                   &\mbox{ \ if \ } a=2 
			\\
			R^2 
			&\mbox{ \ if \ } 2<a<n  
			\\
			R^2\ln R 
			&\mbox{ \ if \ } a=n 
			\\  
			R^{2-n+a}
			&\mbox{ \ if \ } a>n.
		\end{cases}
	\end{equation}
	Direct calculation gives
\begin{equation}\label{qd26Apr24-1}
		\begin{aligned}
			&
			P[\bar{\phi}_1] 
			=  
				- C v\langle r\rangle^{-a} + h - C \dot{v} f_a(r,R) 
				- \frac{C v g(r) \dot{R}}{g^2(R) R^{n-1}}
				\int_0^{R} g(s)s^{n-1}
				\langle s\rangle^{-a}  \rmd s
			\\
			\le \ &  C \langle r\rangle^{-a} \Big( - v + C^{-1} v \|h\|_{v,a}^{R} - \dot{v} \langle r\rangle^{a} f_a(r,R) 
			- \frac{v \langle r\rangle^{a} g(r) \dot{R}}{g^2(R) R^{n-1}}
			\int_0^{R} g(s)s^{n-1}
			\langle s\rangle^{-a}  \rmd s \Big).
		\end{aligned}
	\end{equation}
	If Case $1$ holds, then $ P[\bar{\phi}_1] \le 
	C \langle r\rangle^{-a} ( - v + C^{-1} v \|h\|_{v,a}^{R} ) \le 0 $
	when we take $C=\|h\|_{v,a}^{R}$.

	Hereafter, we always assume Case $2$ holds. For $0\le a\le n $,
	\begin{equation*}
		\begin{aligned}
			&
			|\dot{v} \langle r \rangle^{a} f_a(r,R) | \lesssim |\dot{v}|
			\begin{cases}
				R^{2}  & \mbox{ \ if \ } 0 \le a<2 \mbox{ or } 2<a<n
				\\
				R^2 \ln R                   &\mbox{ \ if \ } a=2 \mbox{ or } n,
			\end{cases}
			\\
			&
			\Big|
			\frac{\langle r \rangle^{a}  g(r) \dot{R}}{g^2(R) R^{n-1}}
			\int_0^{R} g(s)s^{n-1} \langle s\rangle^{-a} \rmd s \Big|
			\sim 
			\Big|
			\frac{\langle r \rangle^{a} \dot{R}}{ R^{n-1}}
			\int_0^{R} s^{n-1} \langle s\rangle^{-a} \rmd s
			\Big|
			\lesssim 
			\begin{cases}
				|\dot{R}| R
				&
				\mbox{ \ if \ } 0\le a <n
				\\
				|\dot{R}| R \ln R
				&
				\mbox{ \ if \ }  a =n.
			\end{cases}
		\end{aligned}
	\end{equation*}
	By Case $2$, we have $P[\bar{\phi}_1] \le C \langle r\rangle^{-a} ( - 2^{-1} v + C^{-1} v \|h\|_{v,a}^{R} ) \le 0$ when we take $C= 2\|h\|_{v,a}^{R}$.

	For $a > n$, we take $\bar{\phi}_2 = C v(\tau) f_b(r,R)$ with $b \in (n, \min\{a, n+\epsilon\}]$.  Same as \eqref{qd26Apr24-1},
	\begin{equation*}
		P[\bar{\phi}_2]
		\le
		C \langle r\rangle^{-b} \Big( - v - \dot{v} \langle r \rangle^{b} f_b(r,R)  
		- 
		\frac{v \langle r \rangle^{b}  g(r) \dot{R} }{g^2(R) R^{n-1}}
		\int_0^{R} g(s)s^{n-1} \langle s\rangle^{-b} \rmd s
		\Big)
		+ 
		v \langle r\rangle^{-a}  \|h\|_{v,a}^{R}.
	\end{equation*}
	Notice that $
	|\dot{v} \langle r \rangle^{b} f_b(r,R) |
	 \lesssim |\dot{v}| R^{b-n+2} \le |\dot{v}| R^{2+\epsilon} \le c v$,
	$
	\big| \frac{\langle r \rangle^{b}  g(r) \dot{R} }{g^2(R) R^{n-1}}
	\int_0^{R} g(s)s^{n-1} \langle s\rangle^{-b} \rmd s \big| 
	 \lesssim  |\dot{R}| R^{b-n+1} \le |\dot{R}| R^{1+\epsilon} \le c
	$ by Case $2$. For $c\ll 1$, $ P[\bar{\phi}_2] \le - 2^{-1} C v \langle r\rangle^{-b} + v \langle r\rangle^{-a}  \|h\|_{v,a}^{R} \le 0 $ when we take $C= 2\|h\|_{v,a}^{R}$.

	For $a<0$, we take $\bar{\phi}_3 := C v(\tau) R^{-a} f_0(r,R)$.  Then
	\begin{equation*}
		\begin{aligned}
			&	
			P[\bar{\phi}_3] 
			=  
				- C v R^{-a} + h - C \dot{v} R^{-a} f_0(r,R) 
				- \frac{C v R^{-a} g(r) \dot{R}}{g^2(R) R^{n-1}}
				\int_0^{R} g(s)s^{n-1}
				\rmd s
				+
				a C v R^{-a-1} \dot{R} f_0(r,R)
			\\
			\le \ & 
			C R^{-a} \Big( -v - \dot{v} f_0(r,R) - 
			\frac{v g(r) \dot{R}}{g^2(R) R^{n-1}}
			\int_0^{R} g(s)s^{n-1}  \rmd s
			+ a v R^{-1} \dot{R} f_0(r,R)
			\Big) 
			+ 
			v \langle r\rangle^{-a}  \|h\|_{v,a}^{R}.
		\end{aligned}
	\end{equation*}
	Therein, $ |\dot{v} f_0(r,R) | \lesssim |\dot{v}| R^{2} $,
	$
	\big| \frac{g(r) \dot{R}}{g^2(R) R^{n-1}}
	\int_0^{R} g(s)s^{n-1}  \rmd s 
	\big| +
	| a R^{-1} \dot{R} f_0(r,R) |
	\lesssim |\dot{R}| R
	$.
	By Case 2, then
	$ P[\bar{\phi}_3] \le 
	- 2^{-1} C v R^{-a} + 
	v \langle r\rangle^{-a}  \|h\|_{v,a}^{R} \le 0 $
	when we take $C= 2^{1-\frac{a}{2}}\|h\|_{v,a}^{R}$, where we use $1 \le R^2$.

(4). It is deduced by uniqueness.
\end{proof}

\subsection{Mode $0$}

The linear theory for mode $0$ without orthogonality.
\begin{lemma}\label{mode0-nonorth}

Given an integer $n\ge 3$, $1\le \tau_0 <\tau_1 \le \infty$, functions $R(\tau) \ge 2$, $v(\tau) \ge 0$ defined in $(\tau_0, \tau_1)$, consider
\begin{equation}\label{m0-eq-nonorth}
		\partial_{\tau} \phi =  \Delta \phi + p U^{p-1} \phi
		+ h
		\mbox{ \ for \ } \tau \in (\tau_0, \tau_1), y \in B_{R},
		\quad
		\phi(\cdot,\tau_0) =  g_0 Z_0(y)
		\mbox{ \ in \ } B_{R(\tau_0)},
\end{equation}
where $h = h_1(|y|,\tau) \Upsilon_{0,1}(\frac{y}{|y|})$, $\| h \|_{v,a}^{R} < \infty$, 
$a \in \mathbb{R}$. Suppose that
\begin{equation}\label{25Mar14-6}
\begin{aligned}
&
|\dot{v}| = O(\tau^{-1} v), \ 
|\dot{R}| = O(\tau^{-1} R), \ 
v, R, \ln R \in \mathbf{AP}((\tau_0,\infty)), \  
R^{2+\epsilon} = O(\tau) \mbox{ with a constant } \epsilon >0,
\\ 
&
\mbox{either Case 1: $\mathbf{P}_1[\lambda_R] > -1 $ or Case 2: $\mathbf{P}_1[\lambda_R] < -1$ and $\mathbf{P}_1[\lambda_R^{-1} ( v \theta_{R,a}^0 )^2] > -1$ holds}  
\end{aligned}
\end{equation}
with 
\begin{equation*}
\lambda_{R} =
\begin{cases}
	R^{-2} 
	& \mbox{ \ if \ } n=3
	\\
	(R^2 \ln R)^{-1} 
	& \mbox{ \ if \ } n=4
	\\
	R^{2-n}
	& \mbox{ \ if \ } n\ge 5,
\end{cases}
\end{equation*}
then for $\inf\limits_{s \ge \tau_0} R(s) \ge C_0$ with a constant $C_0>0$ sufficiently large, there exists a solution  $(\phi, g_0)= ( \phi[h], g_0[h])$ linearly depending on $h$ with the estimate
\begin{equation*}
\langle y\rangle |\nabla \phi|	+|\phi| \le C v \big[ \min\{ \tau^{\frac 12}, \lambda_R^{- \frac 12 } \}  \lambda_R^{-\frac 12} \theta_{R,a}^0  \langle y \rangle^{2-n}    + 	 \Theta_{R, a}^0( |y| ) \big] \|h\|_{v,a}^{R},
\quad
|g_0|\le C v(\tau_0) \theta_{R(\tau_0),a}^0 \| h\|_{v,a}^{R},
\end{equation*}
where
\begin{equation}\label{little-theta0-def}
\theta_{R,a}^0  := 
\begin{cases}
	R^{2-a}  & \mbox{ \ if \ }  a<2 
	\\
	\ln R 
	& \mbox{ \ if \ } a=2 
	\\
	1 & \mbox{ \ if \ } a>2,
\end{cases}
\quad
\Theta_{R, a}^0(|y|) = 
\begin{cases}
	R^{2-a} &\mbox{ \ if \ } a<2 
	\\
	\ln R                      
	&\mbox{ \ if \ } a=2 
	\\
	\langle |y| \rangle^{2-a} 
	&  \mbox{ \ if \ } 2<a<n
	\\
	\langle |y| \rangle^{2-n}\ln(|y|+2)   
	&\mbox{ \ if \ } a = n
	\\
	\langle |y| \rangle^{2-n}   
	&\mbox{ \ if \ } a > n,
\end{cases}
\end{equation}
$C>0$ is a constant independent of $\tau_0, \tau_1, R$.
Moreover, $\phi = \phi_1(|y|,\tau) \Upsilon_{0,1}(\frac{y}{|y|})$.

\end{lemma}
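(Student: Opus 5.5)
The plan is to follow the Cortázar–del Pino–Musso scheme (\cite[Lemma 7.3]{Green16JEMS}, \cite[Lemma 7.4]{infi4d}). We first reduce to a Dirichlet problem and then extract the unstable direction $Z_0$ by choosing $g_0$.

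\emph{Reduction.} By Lemma \ref{chiM-eq-lem} (1), every object below stays radial (times $\Upsilon_{0,1}$). Since the estimate is linear in $h$, we normalize $\|h\|_{v,a}^R=1$. We impose zero Dirichlet data on $\partial B_R$ and look for a solution of the form
\[
\phi=\phi_{\rm o}+\phi_{\perp}.
\]
Here $\phi_{\rm o}$ solves $\partial_\tau\phi_{\rm o}=L\phi_{\rm o}+h$ with zero initial and boundary data, where $L=\Delta+pU^{p-1}$. We do not apply the barrier of Lemma \ref{chiM-eq-lem} (3) with $V=pU^{p-1}$ directly: the kernel element $Z_{n+1}$ changes sign, so no positive $g$ is available. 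Instead we decompose $h$ into a part whose profile $\Theta^0_{R,a}$ comes from the elliptic problem and a remainder living in the spectral gap.

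\emph{Elliptic step.} We solve $L\Phi_0=-h$ radially in $B_R$, using the variation of parameters formula built from $Z_{n+1}$ and a second radial solution $\sim|y|^{2-n}$ near $0$. This gives $|\Phi_0|\lesssim v\Theta^0_{R,a}$ plus a multiple of $Z_{n+1}$ of size $v\theta^0_{R,a}$. Writing $\phi=\Phi_0+\tilde\phi$, the function $\tilde\phi$ solves the same equation with right-hand side $-\partial_\tau\Phi_0=O(\tau^{-1}v\Theta^0_{R,a})$, which is smaller by the factor $\tau^{-1}$. Its boundary value $\Phi_0|_{\partial B_R}$ is handled by a harmonic-type cutoff correction. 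The conditions $|\dot v|\lesssim\tau^{-1}v$, $|\dot R|\lesssim \tau^{-1}R$ and $R^{2+\epsilon}\lesssim\tau$ (Remark \ref{qd26Mar13-3-rmk}) make this error negligible in the weighted norm.

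\emph{Energy step (main obstacle).} For the remainder we choose $g_0$ so that $\int_{B_R}\tilde\phi Z_0\,\rmd y$ stays bounded. Projecting the equation onto $Z_0$ gives an ODE $\dot c=\gamma_0 c+\text{forcing}$. We solve it backward from $\tau_1$, namely
\[
c(\tau)=-\int_\tau^{\tau_1} \rme^{\gamma_0(\tau-s)}\,\text{forcing}(s)\,\rmd s ,
\]
which fixes $g_0=c(\tau_0)$ and yields $|g_0|\lesssim v(\tau_0)\theta^0_{R(\tau_0),a}$. Then $\tilde\phi-cZ_0$ is orthogonal to $Z_0$ up to exponentially small boundary terms, and Lemma \ref{eigenvalue-problem} gives
\[
\partial_\tau\|\tilde\phi\|_{L^2}^2\le -c_0\lambda_R\|\tilde\phi\|_{L^2}^2+\|\tilde\phi\|_{L^2}\,\|f\|_{L^2},
\]
with forcing of size $v\theta^0_{R,a}$ in $L^2$ on the support of $Z_0$ and $U^{p-1}$. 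Integrating in $\tau$ yields
\[
\|\tilde\phi\|_{L^2(B_R)}\lesssim v\theta^0_{R,a}\min\{\tau^{1/2},\lambda_R^{-1/2}\}.
\]
This is where the $\mathbf{AP}$ hypotheses and the case split enter. In Case 1, $\mathbf{P}_1[\lambda_R]>-1$, so the Gronwall integral $\int^\tau \rme^{-\int_s^\tau\lambda_R}\cdots$ is comparable to $\lambda_R^{-1}$ times the integrand. In Case 2 the damping is ineffective, so we bound $\int_{\tau_0}^\tau$ directly; the condition $\mathbf{P}_1[\lambda_R^{-1}(v\theta^0)^2]>-1$ then makes the integral dominated by its upper endpoint, which gives the factor $\tau^{1/2}$. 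The care needed here is in matching the powers of $\lambda_R$ exactly, so that the final bound is $\lambda_R^{-1/2}\min\{\cdots\}$.

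\emph{$L^2\to L^\infty$ and gradient.} Local parabolic estimates on unit cylinders convert the $L^2$ bound into a pointwise bound on a fixed ball $B_{R_1}$. Outside $B_{R_1}$ the potential $pU^{p-1}$ is small, so we compare with a barrier $\sim\langle y\rangle^{2-n}$ for $\Delta+pU^{p-1}$ on $B_R\setminus B_{R_1}$, which exists for $R_1$ large. This yields the term $\min\{\tau^{1/2},\lambda_R^{-1/2}\}\lambda_R^{-1/2}\theta^0_{R,a}\langle y\rangle^{2-n}$, while $\Theta^0_{R,a}$ comes from $\Phi_0$. Scaled parabolic Schauder estimates on cylinders of radius $\sim\langle y\rangle$ give the $\langle y\rangle|\nabla\phi|$ bound. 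Linearity in $h$ holds by construction. The largeness assumption $\inf R\ge C_0$ is used in Lemma \ref{eigenvalue-problem} and in the barrier comparison.
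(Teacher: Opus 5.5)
There is a genuine gap in your elliptic step, and the energy step you then run does not fit the splitting you set up.

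\emph{The elliptic corrector.} Writing $\phi=\Phi_0+\tilde\phi$ with $L\Phi_0=-h(\cdot,\tau)$ makes $-\partial_\tau\Phi_0$ the forcing for $\tilde\phi$, and $\partial_\tau\Phi_0$ contains $\partial_\tau h$. Nothing controls $\partial_\tau h$. The hypotheses $|\dot v|=O(\tau^{-1}v)$ and $|\dot R|=O(\tau^{-1}R)$ concern the weights only, while $h$ is merely assumed to satisfy $\|h\|_{v,a}^R<\infty$. In the application, $h=\tilde{\mathcal J}_{0,1}$ involves $\psi$, $\dot\lambda$, $\dot\xi$ and a piecewise extension in $\tau$, so it is not differentiable in $\tau$. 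Hence the claim $-\partial_\tau\Phi_0=O(\tau^{-1}v\Theta^0_{R,a})$ is unjustified.

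\emph{Non-decay of $\Phi_0$.} Without the orthogonality $\int_{B_R}hZ_{n+1}\,\rmd y=0$, your description of $\Phi_0$ is also wrong for $a>2$. Up to sign and up to adding $cZ_{n+1}$, the regular radial solution is $\tilde Z_{n+1}(r)\int_0^r h_1Z_{n+1}s^{n-1}\rmd s-Z_{n+1}(r)\int_0^r h_1\tilde Z_{n+1}s^{n-1}\rmd s$. Since $\tilde Z_{n+1}\sim1$ at infinity, near $|y|\sim R$ this equals the generically nonzero constant $\tilde Z_{n+1}(\infty)\int_0^R h_1Z_{n+1}s^{n-1}\rmd s=O(v)$ plus $O(v\langle y\rangle^{2-a})$. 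It is not $v\Theta^0_{R,a}$ plus a multiple of $Z_{n+1}$; this non-decaying mode is exactly why Proposition \ref{mode0-orth-prop} needs orthogonality. In Case 2, which forces $n\ge5$, the target bound at $|y|\sim R$ is $\lesssim v[(\tau\lambda_R)^{1/2}+\Theta^0_{R,a}(R)]\ll v$. So this piece would have to cancel against $\tilde\phi$, and bounding $\Phi_0$ and $\tilde\phi$ separately cannot capture that.

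\emph{Loose ends.} The boundary value of $\Phi_0$ and the initial mismatch $\tilde\phi(\tau_0)=g_0Z_0-\Phi_0(\tau_0)$ are left open. Your energy step uses a forcing of size $v\theta^0_{R,a}$ supported where $Z_0$ and $U^{p-1}$ live, but in your splitting the forcing is $-\partial_\tau\Phi_0$, spread over all of $B_R$. The exterior comparison with $\langle y\rangle^{2-n}$ (cf.\ Lemma \ref{qd26Mar7-1-lem}) also needs forcing $\lesssim v\langle y\rangle^{-a_1}$ with $a_1>n$ and decaying initial data, which you do not have.

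\emph{The missing idea: cut the potential.} You correctly identify the sign change of $Z_{n+1}$; the paper handles it by cutting the potential rather than solving an elliptic problem with $h$.
\begin{itemize}
\item With $\chi_M$ a cutoff equal to $1$ on $B_M$, the radial operator $\partial_{rr}+\frac{n-1}{r}\partial_r+pU^{p-1}(1-\chi_M)$ has a positive kernel $\mathcal K\sim1$ for $M$ large.
\item Lemma \ref{chiM-eq-lem}(3) with $V=pU^{p-1}(1-\chi_M)$ then gives $\mathcal T_*[h]$, with zero initial and boundary data, satisfying $|\mathcal T_*[h]|\lesssim v\Theta^0_{R,a}\|h\|_{v,a}^R$. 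There the $\tau$-derivative falls on the barrier $vf_a(r,R)$, which is why only $\dot v,\dot R$ are needed.
\item The remainder $\tilde\phi=\phi-\mathcal T_*[h]$ solves $\partial_\tau\tilde\phi=L\tilde\phi+pU^{p-1}\chi_M\mathcal T_*[h]$. Its forcing is $O(v\theta^0_{R,a})$ and supported in $B_{M+1}$, and its initial datum is just $g_0Z_0$.
\end{itemize}
Now your $Z_0$-projection and coercivity argument works. Write $\tilde\phi=\tilde\phi_1+gZ_0$ with $\tilde\phi_1=0$ on $\partial B_R$. Put the flux $\int_{\partial B_R}Z_0\partial_n\tilde\phi_1\,\rmd S$ into the ODE for $g$, so that $\int_{B_R}\tilde\phi_1Z_0\,\rmd y=0$ holds exactly; Lemma \ref{eigenvalue-problem} needs exact orthogonality. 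The resulting coupling is absorbed through the factor $\rme^{-c\inf R}$, which is where $\inf R\ge C_0$ enters.

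Finally, the factor $\lambda_R^{-1/2}$ comes from Young's inequality $\|\tilde\phi_1\|\|f\|\le\frac{c_0\lambda_R}{4}\|\tilde\phi_1\|^2+C\lambda_R^{-1}\|f\|^2$. This gives $\int_{B_R}\tilde\phi_1^2\,\rmd y\lesssim\min\{\tau,\lambda_R^{-1}\}\lambda_R^{-1}(v\theta^0_{R,a})^2$; the intermediate $L^2$ bound you display lacks it.
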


\begin{remark}\label{qd26Mar15-1-rmk}

For convenience of application, we note that given $n\ge 3$, $a\in \mathbb{R}$, there exists a constant $C>0$ such that
$\Theta_{R, a}^0(|y|) \le C  \lambda_R^{-1} \theta_{R,a}^0  \langle y \rangle^{2-n}$ for all $|y|\le R$, $R\ge 2$.

\end{remark}

\begin{proof}

This proof is a refined version of
\cite[pp. 334-336]{Green16JEMS} with some modification for $\mathbf{AP}((\tau_0,\infty))$ class. We give details for completeness. 
\begin{equation}\label{chiM-def}
\begin{aligned}
&
\mbox{Denote $\chi_{M}(|y|)$ as a radially symmetric smooth function satisfying $0\le \chi_{M}(|y|) \le 1$}
\\
&
\mbox{and $\chi_{M}(|y|) = \begin{cases}
1 & \mbox{ if } |y| \le M
\\
0 & \mbox{ if } |y| \ge M + 1
\end{cases}$
with a large constant $M>0$.}
\end{aligned}
\end{equation}
$\frac{\rmd^2}{\rmd r^2} + \frac{n-1}{r} \frac{\rmd}{\rmd r} + p U(r)^{p-1} (1 - \chi_{M}(r))$ has a positive radially symmetric kernel $\mathcal{K}(r)$ satisfying $\mathcal{K}(r) \sim 1$ for $r \in (0,\infty)$. Under the assumption in Lemma \ref{chiM-eq-lem} as well as Remark \ref{qd26Mar13-3-rmk}, $\mathcal{T}_*[h] = (\mathcal{T}_*[h])_{1}(|y|,\tau) \Upsilon_{0,1}(\frac{y}{|y|})$ is given by Lemma \ref{chiM-eq-lem} satisfying
\begin{equation}\label{phi*-h0-est}
\partial_{\tau} \mathcal{T}_*[h]
	= \Delta \mathcal{T}_*[h] 
	+ p U^{p-1} (1- \chi_{M}(|y|) ) \mathcal{T}_*[h]
	+ h
	\mbox{ \ for \ } \tau \in (\tau_0, \tau_1), y \in B_{R},
    \quad
	|\mathcal{T}_*[h]| \lesssim
	v \Theta_{R, a}^0(|y|)
	\|h\|_{v,a}^{R}.
\end{equation}

We decompose
$ \phi = \mathcal{T}_*[h] + \tilde{\phi} $. Then it suffices to solve
\begin{equation*}
	\partial_{\tau} \tilde{\phi}
	= \Delta \tilde{\phi}  
	+ p U^{p-1} \tilde{\phi} + p U^{p-1} \chi_{M}(|y|) \mathcal{T}_*[h] 
	\mbox{ \ for \ } \tau \in (\tau_0, \tau_1), y \in B_{R}.
\end{equation*}
Take $\tilde{\phi} = \tilde{\phi}_1(y,\tau) + g(\tau) Z_0(y)$. Using \eqref{def-Z0Z0}, we will find a solution $(\tilde{\phi}_1, g)$ for the equation
\begin{equation}\label{tildephi-h0}
	\begin{cases}
		\partial_{\tau} \tilde{\phi}_1  = \Delta \tilde{\phi}_1 + pU^{p-1} \tilde{\phi}_1  
		-  \dot{g} Z_0 + \gamma_0 g Z_0
		+ pU^{p-1} \chi_{M}(|y|) \mathcal{T}_*[h] \mbox{ \ for \ } \tau \in (\tau_0, \tau_1), y \in B_{R},
		\\
		\tilde{\phi}_1 = 0 
		\mbox{ \ for \ } \tau \in (\tau_0, \tau_1), y \in \partial B_{R}, \quad
		\tilde{\phi}_1(\cdot,\tau_0) = 0 
			\mbox{ \ in \ }  B_{R(\tau_0)}, \quad
		\int_{B_{R} } 
		\tilde{\phi}_1(y, \tau )  Z_0(y)  \rmd y = 0
		 \mbox{ \ for \ } \tau \in (\tau_0, \tau_1).
	\end{cases}
\end{equation}

Multiplying $Z_{0}$ and integrating by parts, since $\tilde{\phi}_1 = 0$ for $\tau \in (\tau_0, \tau_1), y \in \partial B_{R}$, by \eqref{def-Z0Z0}, we have
\begin{equation*}
	\begin{aligned}
		&
	\partial_{\tau} \int_{B_{R}} \tilde{\phi}_1 Z_0 \rmd y  = 	\int_{B_{R}}\partial_{\tau} \tilde{\phi}_1 Z_0 \rmd y 
		= 
		\gamma_0
		\int_{B_{R}} \tilde{\phi}_1  Z_0 \rmd y + 
		\int_{\partial B_{R}} Z_{0} \partial_{n} \tilde{\phi}_1 \rmd S
\\
& - (\dot{g} -\gamma_0 g)
		\int_{B_{R}}   Z_0^2 \rmd y
		+ \int_{B_{R}}  pU^{p-1} \chi_{M}(|y|) \mathcal{T}_*[h] Z_0 \rmd y.
	\end{aligned}
\end{equation*}

By $\tilde{\phi}_1(\cdot,\tau_0) = 0 $, the orthogonality $\int_{B_{ R} } 
\tilde{\phi}_1 Z_0 \rmd y = 0$ holds for all $\tau>\tau_0$ if and only if
\begin{equation}\label{qd26Apr25-1}
\dot{g} - \gamma_0 g
=
\Big( \int_{B_{R}}   Z_0^2 \rmd y \Big)^{-1}
\Big[
\int_{\partial B_{R}} Z_{0} \partial_{n} \tilde{\phi}_1 \rmd S
+ \int_{B_{R}}  pU^{p-1} \chi_{M}(|y|) \mathcal{T}_*[h] Z_0 \rmd y
\Big].
\end{equation}
We take
\begin{equation*}
\begin{aligned}
g = \ & - \rme^{\gamma_0 \tau} 
		\int_{\tau}^{\tau_1} \rme^{- \gamma_0 s} 
 \Big( \int_{B_{R(s)}}   Z_0^2(y) \rmd y \Big)^{-1}
 \\
& \quad \times
\Big[
\int_{\partial B_{R(s)}} Z_{0}(y) \partial_{n} \tilde{\phi}_1(y,s) \rmd S_{y}
+ \int_{B_{R(s)}}  pU^{p-1}(y) \chi_{M}(|y|) \mathcal{T}_*[h](y,s) Z_0(y) \rmd y
\Big] \rmd s.
\end{aligned}
\end{equation*}

Set
\begin{equation*}
	\|\tilde{\phi}_{1} \|_{w} = \sup\limits_{\tau\in (\tau_0,\tau_1)} 
	\big[\big( \min\{ \tau^{\frac 12}, \lambda_R^{-\frac 12} \} 
	\lambda_R^{-\frac 12}  v \theta_{R,a}^0  \big)(\tau) \big]^{-1}
	\big( \| \tilde{\phi}_{1}(\cdot,\tau) \|_{L^{\infty} (B_{R(\tau)})} + \| \langle \cdot \rangle \nabla \tilde{\phi}_{1}(\cdot,\tau) \|_{L^{\infty} (B_{R(\tau)})} \big).
\end{equation*}

By \eqref{phi*-h0-est}, it is straightforward to get 
\begin{equation}\label{e-t-est}
	\begin{aligned}
	&	|g|
		\lesssim  
		\rme^{\gamma_0 \tau} 
		\int_{\tau}^{\tau_1} \rme^{-\gamma_0 s} 
		\Big( 
		\rme^{-c R(s) }
		\| \nabla \tilde{\phi}_1(\cdot,s) \|_{L^{\infty} (B_{R(s)}  )}
		+ v(s) \theta_{R,a}^0(s) \|h\|_{v,a}^{R}
		\Big) \rmd s
		\\
		\lesssim \ &
\rme^{\gamma_0 \tau} 
\int_{\tau}^{\tau_1} \rme^{-\gamma_0 s} 
\Big( \rme^{-c \inf\limits_{s \ge \tau_0} R(s) }
 \min\{ s^{\frac 12}, \lambda_R^{-\frac 12}(s) \} 
\lambda_R^{-\frac 12}(s)  v(s) \theta_{R,a}^0(s) 
\| \tilde{\phi}_1\|_{w}
+ v(s) \theta_{R,a}^0(s) \|h\|_{v,a}^{R}
\Big) \rmd s	
\\
\lesssim \ &
\rme^{-c \inf\limits_{s \ge \tau_0} R(s)} \min\{ \tau^{\frac 12}, \lambda_R^{-\frac 12} \} 
\lambda_R^{-\frac 12}  v \theta_{R,a}^0
\| \tilde{\phi}_1\|_{w}
+ v \theta_{R,a}^0 \|h\|_{v,a}^{R}
	\end{aligned}
\end{equation}
for some small constant $c>0$, where for last step, we require $v, R, \ln R \in \mathbf{AP}((\tau_0,\infty))$, which implies $ 
s^{\frac 12} 
\lambda_R^{-\frac 12}(s)  v(s) \theta_{R,a}^0(s)$,   $\lambda_R^{-\frac 12}(s) 
\lambda_R^{-\frac 12}(s)  v(s) \theta_{R,a}^0(s)$,
$v(s) \theta_{R,a}^0(s) \in \mathbf{AP}((\tau_0,\infty))$ for all cases.  By \eqref{qd26Apr25-1}, it follows that 
\begin{equation}\label{qd26Mar6-2}
|\dot{g}|
\lesssim \rme^{-c \inf\limits_{s \ge \tau_0} R(s)} \min\{ \tau^{\frac 12}, \lambda_R^{-\frac 12} \} 
\lambda_R^{-\frac 12}  v \theta_{R,a}^0
\| \tilde{\phi}_1\|_{w}
+ v \theta_{R,a}^0 \|h\|_{v,a}^{R}.
\end{equation}

With the choice of $g$, \eqref{tildephi-h0} without $\int_{B_{R} } 
\tilde{\phi}_1 Z_0 \rmd y = 0$ is a linear equation. Local existence and uniqueness imply the global existence and uniqueness of \eqref{tildephi-h0} and that $\tilde{\phi}_{1} = \tilde{\phi}_{1}[h]$ depends on $h$ linearly. Then $g=g[h]$ also depends on $h$ linearly. Since $Z_0$, $pU^{p-1} \chi_{M}(|y|) \mathcal{T}_*[h]$ are radially symmetric, uniqueness and Lemma \ref{chiM-eq-lem}-$(1)$ give that $\tilde{\phi}_{1} = \tilde{\phi}_{1,1}(|y|,\tau) \Upsilon_{0,1}(\frac{y}{|y|})$. By the choice of $g$, $\int_{B_{R} } 
\tilde{\phi}_1 Z_0 \rmd y = 0$ is satisfied automatically.

Multiplying \eqref{tildephi-h0} with $\tilde{\phi}_1$ and integrating by parts, we have
\begin{equation*}
		\frac 12
		\partial_{\tau} \int_{B_{R}} (\tilde{\phi}_1 )^2 \rmd y +
		\int_{B_{R}} (|\nabla \tilde{\phi}_1|^2 - pU^{p-1} (\tilde{\phi}_1)^2 ) \rmd y
		= \int_{B_{R}} pU^{p-1} \chi_{M}(|y|) \mathcal{T}_*[h] \tilde{\phi}_1 \rmd y.
\end{equation*}

By Lemma \ref{eigenvalue-problem}, we get
\begin{equation*}
\frac 12 \partial_{\tau} \int_{B_{R}} (\tilde{\phi}_1 )^2 \rmd y +
		c_0 \lambda_R
		\int_{B_{R}}  (\tilde{\phi}_1)^2 \rmd y
		\le 
		\int_{B_{R}} \frac{4}{c_0 \lambda_R}(pU^{p-1} \chi_{M}(|y|) \mathcal{T}_*[h] )^2  \rmd y
		+ \int_{B_{R}} \frac{c_0 \lambda_R}{4} (\tilde{\phi}_1)^2 \rmd y
\end{equation*}
for some constant $c_0 > 0$. By \eqref{phi*-h0-est}, we get
\begin{equation*}
\frac 12 \partial_{\tau} \int_{B_{R}} (\tilde{\phi}_1 )^2 \rmd y + \frac {c_0 \lambda_R}{4}
		\int_{B_{R}}  (\tilde{\phi}_1)^2 \rmd y
		\lesssim
		 \lambda_R^{-1} (v \theta_{R,a}^0 \| h\|_{v,a}^{R} )^2.
\end{equation*}
Since $\tilde{\phi}_{1}(\cdot,\tau_0)=0$, we have
 \begin{equation*}
 \int_{B_{R}}  (\tilde{\phi}_1)^2  \rmd y 
 \lesssim
 \rme^{- \frac{c_0}{2} \int^{\tau} \lambda_R(u) \rmd u } \int_{\tau_0}^{\tau}
 \rme^{ \frac{c_0}{2} \int^{s} \lambda_R(u) \rmd u } \lambda_R^{-1}(s)  \big( v(s) \theta_{R,a}^0(s) \| h\|_{v,a}^{R} \big)^2 \rmd s
 \lesssim
 \min\{ \tau, \lambda_R^{-1} \} 
 \lambda_R^{-1}  (v \theta_{R,a}^0  \| h\|_{v,a}^{R} )^2,
 \end{equation*}
where for the last step, similar to \cite[Lemma 8.3, (8.23)]{Wei-Zhang-Zhou2022LLG}, we require either Case 1: $\mathbf{P}_1[\lambda_R] > -1$ or Case 2: $\mathbf{P}_1[\lambda_R] < -1$ and $\mathbf{P}_1[\lambda_R^{-1} ( v \theta_{R,a}^0 )^2] > -1$ holds. Since
\begin{equation*}
		\big| - \dot{g} Z_0 + \gamma_0 g Z_0
		+ pU^{p-1} \chi_{M}(|y|) \mathcal{T}_*[h] \big|
		\lesssim 
		\big( \rme^{-c \inf\limits_{s \ge \tau_0} R(s)} \min\{ \tau^{\frac 12}, \lambda_R^{-\frac 12} \} 
		\lambda_R^{-\frac 12}  v \theta_{R,a}^0
		\| \tilde{\phi}_1\|_{w}
		+ v \theta_{R,a}^0 \|h\|_{v,a}^{R} \big) \rme^{-c|y|},
\end{equation*}
by parabolic estimates, we have
\begin{equation*}
\| \tilde{\phi}_{1}(\cdot,\tau) \|_{L^{\infty} (B_{R})}
\lesssim
\min\{ \tau^{\frac 12}, \lambda_R^{-\frac 12} \} 
\lambda_R^{-\frac 12}  v \theta_{R,a}^0 \big( \|h\|_{v,a}^{R}
+
\rme^{-c \inf\limits_{s \ge \tau_0} R(s)}
\| \tilde{\phi}_1\|_{w} \big).
\end{equation*}
By $|\dot{v}| = O(\tau^{-1} v)$, $|\dot{R}| = O(\tau^{-1} R)$, $\langle R \rangle^{2+\epsilon} = O(\tau)$, $\epsilon>0$, and Lemma \ref{qd26Mar7-1-lem}, the spatial decay of $\tilde{\phi}_1$ can be improved to $\langle y \rangle^{2-n}$. By the scaling argument, we have
\begin{equation*}
\langle y \rangle | \nabla \tilde{\phi}_1  |  +	| \tilde{\phi}_1 | \lesssim 
\min\{ \tau^{\frac 12}, \lambda_R^{-\frac 12} \} 
\lambda_R^{-\frac 12}  v \theta_{R,a}^0   
\big( \| h\|_{v,a}^{R} + \rme^{-c \inf\limits_{s \ge \tau_0} R(s)} \| \tilde{\phi}_1\|_{w} \big) \langle y \rangle^{2-n}.
\end{equation*}
For $\inf\limits_{s \ge \tau_0} R(s) \ge C_0$ with a constant $C_0 \gg 1$, we have 
\begin{equation*}
\langle y \rangle 	|\nabla \tilde{\phi}_1   |  +	| \tilde{\phi}_1 | 
	\lesssim 
	\min\{ \tau^{\frac 12}, \lambda_R^{- \frac 12 } \}  \lambda_R^{-\frac 12} v \theta_{R,a}^0   
	  \langle y \rangle^{2-n}  \| h\|_{v,a}^{R}.
\end{equation*}

Repeating the operation in \eqref{e-t-est}, we have $|g|
		\lesssim  v \theta_{R,a}^0 \|h\|_{v,a}^{R}$ and
		 then
\begin{equation}\label{tildephi-est}
| \tilde{\phi} | = |\tilde{\phi}_{1} + g Z_0| \lesssim 
\min\{ \tau^{\frac 12}, \lambda_R^{- \frac 12 } \}  \lambda_R^{-\frac 12}
v \theta_{R,a}^0  \langle y \rangle^{2-n}  \|h\|_{v,a}^{R}.
\end{equation}

We take $g_0 = g(\tau_0)$.
Combining \eqref{phi*-h0-est}, \eqref{tildephi-est},  and the scaling argument, we get the conclusion.
\end{proof}

The linear theory for mode $0$ with orthogonality.
\begin{proposition}\label{mode0-orth-prop}
	
Given an integer $n\ge 3$, $1\le \tau_0 <\tau_1 \le \infty$, functions $R(\tau) \ge 2$, $v(\tau) \ge 0$ defined in $(\tau_0, \tau_1)$, consider
	\begin{equation*}
		\partial_{\tau} \phi = 
		\Delta \phi + pU^{p-1} \phi + h 
		\mbox{ \ for \ } \tau \in (\tau_0, \tau_1), y \in B_{R},
		\quad
		\phi(y,\tau_0) = g_0 Z_{0}(y)
		\mbox{ \ in \ } 
		B_{R(\tau_0)},
	\end{equation*}
	where
	$\| h\|_{v, 2+a}^{R} < \infty$,
	$a>0$, $h = h_1(|y|,\tau) \Upsilon_{0,1}(\frac{y}{|y|})$, and $h$ satisfies the orthogonality condition
	\begin{equation}\label{26Mar14-7}
		\int_{ B_{R} } h(y,\tau) Z_{n+1}(y) \rmd y = 0 \mbox{ \ for \ } \tau \in (\tau_0, \tau_1).
	\end{equation}
Suppose that
\begin{equation*}
	\begin{aligned}
		&
		|\dot{v}| = O(\tau^{-1} v), \ 
		|\dot{R}| = O(\tau^{-1} R), \ 
		v, R, \ln R \in \mathbf{AP}((\tau_0,\infty)), \  
		R^{2+\epsilon} = O(\tau) \mbox{ with a constant } \epsilon >0,
		\\ 
		&
		\mbox{either Case 1: $\mathbf{P}_1[\lambda_R] > -1 $ or Case 2: $\mathbf{P}_1[\lambda_R] < -1$ and $\mathbf{P}_1[\lambda_R^{-1} ( v \theta_{R, \hat{a}_{0}}^0 )^2] > -1$ holds}  
	\end{aligned}
\end{equation*}
with
\begin{equation}
\lambda_{R} =
\begin{cases}
	R^{-2} 
	& \mbox{ \ if \ } n=3
	\\
	(R^2 \ln R)^{-1} 
	& \mbox{ \ if \ } n=4
	\\
	R^{2-n}
	& \mbox{ \ if \ } n\ge 5,
\end{cases}
\quad
\theta_{R,\hat{a}_{0}}^0 = 
\begin{cases}
	R^{2-\hat{a}_{0}}  & \mbox{ \ if \ }  \hat{a}_{0}<2 
	\\
	\ln R 
	& \mbox{ \ if \ } \hat{a}_{0}=2 
	\\
	1 & \mbox{ \ if \ } \hat{a}_{0}>2,
\end{cases}
\quad
\hat{a}_{0} := 
\begin{cases}
	a & \mbox{ if } a \ne n-2
	\\
	(n-2)- & \mbox{ if } a = n-2,
\end{cases}
\end{equation}
then for $\inf\limits_{s \ge \tau_0} R(s) \ge C_0$ with a constant $C_0$ sufficiently large, there exists a solution $(\phi, g_0) = (\phi[h], g_0[h] )$ linearly depending on $h$ with the estimate
	\begin{equation*}
		\langle y\rangle |\nabla \phi| +	|\phi| 
		\le C
		v \big[
		\min\{ \tau^{\frac 12}, \lambda_R^{- \frac 12 } \}  \lambda_R^{-\frac 12}  \theta_{R,\hat{a}_{0}}^0 \langle y \rangle^{-n}  
		+ \Theta_{R, \hat{a}_{0} }^0(|y| )  \langle y\rangle^{-2}
		\big] \|h\|_{v,2+a}^{R},
\quad
		|g_0|\le C
		v(\tau_0) \theta_{R(\tau_0), \hat{a}_{0}}^0 \| h \|_{v,2+a}^{R} 
	\end{equation*}
with
\begin{equation}
\Theta_{R, \hat{a}_{0}}^0(|y|) = 
\begin{cases}
	R^{2-\hat{a}_{0}} &\mbox{ \ if \ } \hat{a}_{0}<2 
	\\
	\ln R                      
	&\mbox{ \ if \ } \hat{a}_{0}=2 
	\\
	\langle |y| \rangle^{2-\hat{a}_{0}} 
	&  \mbox{ \ if \ } 2<\hat{a}_{0}<n
	\\
	\langle |y| \rangle^{2-n}\ln(|y|+2)   
	&\mbox{ \ if \ } \hat{a}_{0} = n
	\\
	\langle |y| \rangle^{2-n}   
	&\mbox{ \ if \ } \hat{a}_{0} > n,
\end{cases}
\end{equation}
where $C>0$ is a constant independent of $\tau_0, \tau_1, R$.
Moreover, $\phi = \phi_1(|y|,\tau) \Upsilon_{0,1}(\frac{y}{|y|})$.
	
\end{proposition}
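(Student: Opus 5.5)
We reduce Proposition \ref{mode0-orth-prop} to Lemma \ref{mode0-nonorth} through an elliptic preconditioning step, in the spirit of \cite[Section 7]{Green16JEMS} and \cite[Proposition 7.1]{infi4d}. The orthogonality \eqref{26Mar14-7} is what allows the extra decay $\langle y\rangle^{-2}$ to be gained. Since $h=h_1(|y|,\tau)\Upsilon_{0,1}$ is radial, we first solve, for each fixed $\tau$, the radial elliptic problem
\begin{equation*}
\Delta H + pU^{p-1} H = h \quad\mbox{in } B_{R}.
\end{equation*}
We solve it by variation of parameters, using the radial kernel $Z_{n+1}$ together with a second radial solution that grows like $|y|^{2-n}$ near the origin and is regularized there. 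The orthogonality $\int_{B_R} h Z_{n+1}\,\rmd y=0$ lets us write the integral $\int_0^r h_1 Z_{n+1}s^{n-1}\rmd s$ as $-\int_r^R$. This gives $|H|\lesssim v\langle y\rangle^{-a}\|h\|_{v,2+a}^R$ when $a<n-2$. When $a=n-2$ a logarithm appears, and this is the reason for replacing $a$ by $\hat a_0=(n-2)-$. When $a>n-2$ the bound is $\langle y\rangle^{2-n}$. We extend $H$ by zero or by a cutoff beyond $R$ as needed. The Wronskian argument gives the same type of bound for $\langle y\rangle|\nabla H|$ and for $\partial_\tau H$, the latter using $|\dot v|\lesssim\tau^{-1}v$ and $|\dot R|\lesssim \tau^{-1}R$.

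Next we apply Lemma \ref{mode0-nonorth} to the radial right-hand side $H$, which has size $v\langle y\rangle^{-\hat a_0}$. This produces $(\Phi, g_0)$ solving $\partial_\tau\Phi=L\Phi+H$ with $\Phi(\cdot,\tau_0)=g_0Z_0$, and
\begin{equation*}
|\Phi|\lesssim v\bigl[\min\{\tau^{\frac12},\lambda_R^{-\frac12}\}\lambda_R^{-\frac12}\theta^0_{R,\hat a_0}\langle y\rangle^{2-n}+\Theta^0_{R,\hat a_0}\bigr]\|h\|_{v,2+a}^R,
\end{equation*}
where $|g_0|\lesssim v(\tau_0)\theta^0_{R(\tau_0),\hat a_0}\|h\|_{v,2+a}^R$. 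The hypotheses of the lemma, namely the Case 1 or Case 2 condition stated with $\hat a_0$, are exactly the ones assumed in the proposition. We then set $\phi := L\Phi + \text{(correction)}$ with $L=\Delta+pU^{p-1}$. Because $L$ commutes with $\partial_\tau$ and $L(g_0Z_0)=\gamma_0g_0Z_0$, formally $\phi=L\Phi$ solves $\partial_\tau\phi=L\phi+LH=L\phi+h$. Its initial value is $\gamma_0 g_0Z_0$, so we rename $g_0$. The linearity of $\phi[h]$ and the fact that $\phi$ has the form $\phi_1(|y|,\tau)\Upsilon_{0,1}$ are inherited directly.

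The decay bound for $\phi=L\Phi$ is the crucial and delicate step. Applying $L$ to the bound for $\Phi$ formally gains $\langle y\rangle^{-2}$, which turns $\langle y\rangle^{2-n}$ into $\langle y\rangle^{-n}$ and $\Theta^0_{R,\hat a_0}$ into $\Theta^0_{R,\hat a_0}\langle y\rangle^{-2}$. But $L\Phi=\partial_\tau\Phi - H$, so we need a time-derivative estimate on $\Phi$. We obtain it by differentiating the equation in $\tau$: $\partial_\tau\Phi$ solves the same linear problem with right-hand side $\partial_\tau H$, which is of size $\tau^{-1}v\langle y\rangle^{-\hat a_0}$. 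We feed this back into Lemma \ref{mode0-nonorth}, with the $\mathbf{AP}$ conditions ensuring that $\tau^{-1}v$ remains admissible. Alternatively, we rescale the parabolic estimate on the annuli $|y|\sim \rho$ over time windows of length $\rho^2\ll\tau$, which is possible since $R^{2+\epsilon}=O(\tau)$. Either way this yields $|\partial_\tau\Phi|+|D^2\Phi|\lesssim \rho^{-2}\times$ the bound on $\Phi$. Interior scaling gives the $\langle y\rangle|\nabla\phi|$ estimate. The Dirichlet-type boundary behavior at $|y|=R$ needs care: near $\partial B_R$ we would use the weight $\min\{\tau^{\frac12},\lambda_R^{-\frac12}\}$ together with $R^2\ll\tau$ to absorb boundary-layer terms, and if necessary we would add a small correction solving the homogeneous problem, controlled by Lemma \ref{chiM-eq-lem}(3).
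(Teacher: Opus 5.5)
Your architecture matches the paper's. You invert $\Delta+pU^{p-1}$ on the radial mode using the orthogonality, apply Lemma \ref{mode0-nonorth} to $H$, and set $\phi=(\Delta+pU^{p-1})\Phi$ with $g_0$ replaced by $\gamma_0 g_0$. The gap is in the step you call crucial, the pointwise bound on $L\Phi$.

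\textbf{Why your bound on $L\Phi$ fails.} Your main route controls $\partial_\tau\Phi$ by differentiating in $\tau$ and asserting $|\partial_\tau H|\lesssim \tau^{-1}v\langle y\rangle^{-\hat a_0}$. Nothing in the hypotheses supports this.
\begin{itemize}
\item The only assumption on $h$ is $\|h\|_{v,2+a}^{R}<\infty$, with no regularity in $\tau$. The conditions $|\dot v|=O(\tau^{-1}v)$ and $|\dot R|=O(\tau^{-1}R)$ constrain the weights, not $h$. In the applications $h$ contains $\psi$, which is only H\"older in time. In Proposition \ref{mode0-regluing-prop} this result is applied to right-hand sides containing an arbitrary $h$ and $\varrho(\tau)$.
\item Even granting time regularity, re-applying Lemma \ref{mode0-nonorth} would not return $\partial_\tau\Phi$. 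That lemma is an existence statement that selects its own initial datum $g_0Z_0$, whereas formally $\partial_\tau\Phi(\cdot,\tau_0)=\gamma_0\bar g_0Z_0+H(\cdot,\tau_0)$.
\item The Case 1/Case 2 conditions are not automatically inherited by the weight $\tau^{-1}v$.
\item Your fallback, rescaled parabolic estimates on annuli, gives $L^\infty$ control of $D^2\Phi$ and $\partial_\tau\Phi$ only through Schauder theory. That again needs H\"older continuity of $H$ in time, which you do not have.
\end{itemize}

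\textbf{The missing idea: no time derivative is needed.} The paper bounds $\langle y\rangle^2|\nabla^2\Phi|$ directly. It uses only the spatial estimate $\|\partial_r H\|^{\infty}_{v,\hat a_0+1}+\|H\|^{\infty}_{v,\hat a_0}\lesssim\|h\|^{R}_{v,2+a}$, which your Wronskian computation does provide. It combines this with a second-derivative estimate for parabolic equations whose forcing is Lipschitz in $x$ but merely measurable in $t$ (Dong--Escauriaza--Kim, Lemma 4.13), plus the scaling argument. Then $|\phi|=|\Delta\Phi+pU^{p-1}\Phi|$ gains $\langle y\rangle^{-2}$ at once, and $\langle y\rangle|\nabla\phi|$ follows by scaling from $\partial_\tau\phi=L\phi+h$.

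\textbf{The boundary issue.} The paper does not use boundary layers or a homogeneous correction. It solves for $\Phi$ on $B_{2R}$, which is allowed because the proposition imposes no boundary condition on $\phi$, so $B_R$ is interior. This requires $H$ to be defined and Lipschitz beyond $|y|=R$. The paper arranges this by extending $h$ (not $H$) by zero and solving the radial ODE on $(0,\infty)$, so $H$ stays $C^1$ across $|y|=R$. Extending $H$ itself by zero, as you suggest, would destroy the Lipschitz bound the regularity estimate needs.

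\textbf{A smaller slip for $a>n-2$.} Your particular solution only gives $|H|\lesssim\langle y\rangle^{2-n}$ there. That is too weak for the stated estimate and inconsistent with your later use of $\langle y\rangle^{-\hat a_0}$. The fix is to replace $-Z_{n+1}(r)\int_0^r h_1\tilde Z_{n+1}s^{n-1}\,\mathrm{d}s$ by $Z_{n+1}(r)\int_r^\infty h_1\tilde Z_{n+1}s^{n-1}\,\mathrm{d}s$, with $\tilde Z_{n+1}$ your second radial solution and $h_1$ extended by zero. This restores the decay $\langle y\rangle^{-a}$.
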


\begin{proof}
	
	$h(\cdot,\tau)$ is radially symmetric. Denote $r=|y|$, $\|h\| = \|h\|_{v,2+a}^{R}$ for brevity. Consider
	\begin{equation*}
		\Delta H + p U^{p-1} H = \tilde{h}(r,\tau) \mbox{ \ in \ } \mathbb{R}^n, 
	\end{equation*}
	where $\tilde{h}$ is the extension of $h$ as zero outside $B_{R}$. \eqref{26Mar14-7} is rewritten as $ \int_0^{\infty} \tilde{h}(r,\tau) Z_{n+1}(r) r^{n-1} \rmd r = 0 $.
	Take $H(r,\tau)$ as the form
	\begin{equation*}
		H(r,\tau) =  \tilde{Z}_{n+1}(r) \int_0^{r} \tilde{h}(s,\tau)Z_{n+1}(s)s^{n-1} \rmd s 
		- Z_{n+1}(r) \int_0^{r} \tilde{h}(s,\tau) \tilde{Z}_{n+1}(s)s^{n-1} \rmd s
		\mbox{ \ if \ } a\le n-2,
	\end{equation*}
	\begin{equation*}
		H(r,\tau) =  \tilde{Z}_{n+1}(r) \int_0^{r} \tilde{h}(s,\tau)Z_{n+1}(s)s^{n-1} \rmd s 
		+ Z_{n+1}(r) \int_{r}^{\infty} \tilde{h}(s,\tau) \tilde{Z}_{n+1}(s)s^{n-1} \rmd s
		\mbox{ \ if \ } a> n-2,
	\end{equation*}
	where  $\tilde{Z}_{n+1}(r)$ is another kernel satisfying that the Wronskian $W[Z_{n+1},\tilde{Z}_{n+1}]=r^{1-n}$, $\tilde{Z}_{n+1}(r) \sim r^{2-n}$ if $r\rightarrow 0$ and $\tilde{Z}_{n+1}(r) \sim  1$ if $r\rightarrow \infty$. By \eqref{26Mar14-7}, direct calculation, and the scaling argument, we have
	\begin{equation}\label{26Mar14-2}
		\|\partial_{r} H\|_{v,\hat{a}_{0} + 1}^{\infty}
		+
		\|H\|_{v,\hat{a}_{0}}^{\infty} \lesssim
		\|h\|,
	\end{equation}
	where $a>0$ is used to make the spatial decay of $ \tilde{h}(s,\tau) Z_{n+1}(s)s^{n-1}$ faster than $s^{-1}$ for $s\ge 1$. Consider
	\begin{equation}
		\partial_{ \tau} \Phi = \Delta \Phi + p U^{p-1} \Phi + H \mbox{ \ for \ } \tau \in (\tau_0, \tau_1), y \in B_{2 R},
		\quad
		\Phi(\cdot,\tau_0) = \bar{g}_0 Z_0
		\mbox{ \ in \ } B_{2 R(\tau_0)},
	\end{equation}
	where $(\Phi, \bar{g}_0) = (\Phi[H], \bar{g}_0[H])$ is given by Lemma \ref{mode0-nonorth} under the corresponding assumption. By \eqref{26Mar14-2}, the regularity theory in Dong-Escauriaza-Kim \cite[Lemma 4.13]{dong21-C0-para}, and the scaling argument \cite[Proposition 6.2]{Wei-Zhang-Zhou2022LLG}, we have
	\begin{equation}\label{26Mar14-3}
		\langle y \rangle^2 | \nabla^2 \Phi |
		+
		\langle y \rangle |\nabla \Phi| + |\Phi|
		\lesssim 
		v \big[ \min\{ \tau^{\frac 12}, \lambda_R^{- \frac 12 } \}  \lambda_R^{-\frac 12}
		\theta_{R, \hat{a}_{0}}^0  \langle y \rangle^{2-n}    + 	\Theta_{R, \hat{a}_{0}}^0( |y| ) \big]
		\|h\|,
\quad
		|\bar{g}_0| 
		\lesssim v(\tau_0) \theta_{R(\tau_0), \hat{a}_{0}}^0 \| h\|.
	\end{equation}	
Denote $\phi := (\Delta + pU^{p-1}) \Phi$ and $g_0 := \gamma_0 \bar{g}_{0}$. We get the equation for $\phi$, and the estimate is deduced from \eqref{26Mar14-3}.
\end{proof}

The re-gluing inner linear theory for mode $0$.
\begin{proposition}\label{mode0-regluing-prop}

Given an integer $n\ge 3$, $1\le \tau_0 <\tau_1 \le \infty$, consider
	\begin{equation*}
		\partial_{\tau} \phi=\Delta \phi + pU^{p-1}\phi + h + \varrho(\tau) \eta(y) Z_{n+1}(y) 
		\mbox{ \ for \ } \tau \in (\tau_0, \tau_1), y \in B_{R},
        \quad
		\phi(y,\tau_0)= g_0 \eta\Big( \frac{2 y}{R_0(\tau_0)} \Big) Z_0(y) 
		\mbox{ \ in \ } B_{R(\tau_0)},
	\end{equation*}
	where $\|h\|_{v,2+a}^{R}<\infty$, $h = h_1(|y|,\tau) \Upsilon_{0,1}(\frac{y}{|y|})$. Suppose that 
\begin{align}
				&
				\begin{cases}
					a\in (1,2), \epsilon_0 \in (0, a-1) & \mbox{ if } n=3
					\\
					a\in (0,2), \epsilon_0 \in (0, a) & \mbox{ if } n\ge 4
				\end{cases}, 
		\ 
				 4 \le R_0 < R, \ 
				|\dot{v}| = O(\tau^{-1} v), \  |\dot{R}_0| = O(\tau^{-1} R_0),
				\\
				&
				v, R_0, \ln R_0  \in \mathbf{AP}((\tau_0,\infty)),
				\ 
				\mathbf{P}_1[\lambda_{R_0}] > -1,
				\ |\dot{R}| = O(\tau^{-1} R),
				\ 
				R^{2+\epsilon} = O(\tau) 
				\mbox{ with a small constant } \epsilon>0 
                \notag
        \end{align}
with
\begin{equation}
\lambda_{R_0} =
\begin{cases}
	R_0^{-2} 
	& \mbox{ \ if \ } n=3
	\\
	(R_0^2 \ln R_0)^{-1} 
	& \mbox{ \ if \ } n=4
	\\
	R_0^{2-n}
	& \mbox{ \ if \ } n\ge 5,
\end{cases}
\end{equation}
then for $\inf_{s \ge \tau_0} R_0(s) \ge C_0$ with a constant $C_0$ sufficiently large, there exists a solution $(\phi,g_0,\varrho) = (\phi[h],g_0[h],\varrho[h])$ linearly depending on $h$, and satisfying $\phi \in C^{1+\sigma,\frac{1+\sigma}{2}} \big( \{ (y,\tau) \mid \tau \in (\tau_0, \tau_1), y\in B_{R} \} \big)$, $\sigma \in (0,1)$,
	 \begin{equation}
	 		\langle y\rangle |\nabla \phi| + |\phi|
	 		\le D_1 \lambda_{R_0}^{-1} R_0^{2-a} v \langle y\rangle^{ -\min\{ a, n-2\} } \|h\|_{v,2+a}^{R},
\quad
		|g_0| \le D_1
		v(\tau_0) R_0^{2-a}(\tau_0) \|h \|_{v,2+a }^{R},
	\end{equation}
	\begin{equation}
		\varrho(\tau)
		=
		-\Big(\int_{B_{2} } \eta(y) Z_{n+1}^2(y) \rmd y\Big)^{-1} \Big(\int_{B_{R_0} }
		h(y,\tau)  Z_{n+1}(y) \rmd y
		+ \varrho_{1}[h](\tau)  \Big),
	\end{equation}
	where $\varrho_{1}[h](\tau)$ linearly depends on $h$ and satisfies the estimate
	\begin{equation}
		|\varrho_{1}[h](\tau)| \le D_1 v
		R_{0}^{-\epsilon_0}
		\| h \|_{v,2+a}^{R},
	\end{equation}
where $D_1>0$ is a constant independent of $\tau_0, \tau_1, R, R_0$. Moreover, $\phi = \phi_1(|y|,\tau) \Upsilon_{0,1}(\frac{y}{|y|})$.

Under the additional assumption that $C_{R}^{-1} R(\tau) \le R(s) \le C_{R} R(\tau)$, $R_0(s) \le (9 C_{R})^{-1} R(\tau)$ with a constant $C_{R} \ge 1$ for all $\tau \in (\tau_0, \tau_1)$, $s\in [\max\{ \tau_0, \tau/2 \}, \tau]$, given $\alpha \in (0,1)$, then for all $\tau \in (\tau_0, \tau_1)$, $\kappa_1, \kappa_2 \in ( \max\{\tau_0, \tau- \delta_1^2 \}, \tau]$, $\delta_1 \in (0, \min\{ (\frac{\tau}{8})^{\frac{1}{2}}, \frac{R}{9 C_R} \} ]$,  we have
\begin{equation}
		|\varrho_{1}(\kappa_1) - \varrho_{1}(\kappa_2)|
		\le D_2 v
		\big[
		| \kappa_1 - \kappa_2|^{\alpha}
		\delta_1^{- 2 \alpha}
		R_{0}^{-\epsilon_0} 
		(1 + \delta_1^{2})
		+
		R_{0}^{-\epsilon_0-3} | R_0(\kappa_1) - R_0(\kappa_2) | \big] \|h\|_{v,2+a}^{R},
\end{equation}
where $D_2>0$ is a constant independent of $\tau_0, \tau_1, R, R_0$.

\end{proposition}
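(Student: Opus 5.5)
We prove Proposition \ref{mode0-regluing-prop} by re-gluing: a small inner problem on $B_{R_0}$ carries the orthogonality correction, and the remainder on $B_R$ has compactly supported source and is handled by Lemma \ref{mode0-nonorth}.

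\textbf{Step 1: split the source.} Write $h = h\eta(y/R_0) + h(1-\eta(y/R_0)) =: h^{\rm in}+h^{\rm out}$. Look for
\[
\phi = \eta_{R_0}^{*}\,\phi^{\rm in} + \phi^{\rm out},
\]
where $\eta_{R_0}^{*}$ is a cutoff equal to $1$ on $B_{R_0}$ and supported in $B_{2R_0}$.

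\textbf{Step 2: inner problem on $B_{2R_0}$.} Set $\varrho = \varrho_0 + \varrho_1$, with
\[
\varrho_0 := -\Big(\int_{B_2}\eta Z_{n+1}^2\,\rmd y\Big)^{-1}\int_{B_{R_0}} h Z_{n+1}\,\rmd y .
\]
Then $h^{\rm in} + \varrho_0\,\eta Z_{n+1}$ is orthogonal to $Z_{n+1}$ (modulo the tail $\int h(1-\eta(\cdot/R_0))Z_{n+1}$, which I put into $\varrho_1$). Solve
\[
\partial_\tau\phi^{\rm in} = L\phi^{\rm in} + h^{\rm in} + \varrho_0\,\eta Z_{n+1} + (\text{coupling term } pU^{p-1}\phi^{\rm out}\ \text{projected})
\]
on $B_{2R_0}$ using Proposition \ref{mode0-orth-prop} with $R$ replaced by $2R_0$ and weight $2+a$. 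The hypothesis $\mathbf{P}_1[\lambda_{R_0}]>-1$ is Case 1 there. It yields
\[
|\phi^{\rm in}| \lesssim \lambda_{R_0}^{-1}R_0^{2-a}\,v\,\langle y\rangle^{-\min\{a,n-2\}}\|h\|.
\]
It also fixes $g_0$ with $|g_0|\lesssim vR_0^{2-a}$, and hence the initial datum $g_0\eta(2y/R_0)Z_0$. To justify inserting the cutoff $\eta(2y/R_0)$, use the exponential decay of $Z_0$.

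\textbf{Step 3: outer problem on $B_R$.} The outer equation is
\[
\partial_\tau\phi^{\rm out} = \Delta\phi^{\rm out} + pU^{p-1}(1-\chi_M)\phi^{\rm out} + h^{\rm out} + [\text{cutoff errors } \nabla\eta_{R_0}^{*}\cdot\nabla\phi^{\rm in},\ \Delta\eta_{R_0}^{*}\,\phi^{\rm in},\ \partial_\tau\eta_{R_0}^{*}\,\phi^{\rm in}] + pU^{p-1}\chi_M\phi^{\rm out}\ \text{term moved to inner}.
\]
Solve it by Lemma \ref{chiM-eq-lem}(3), Case 2 via Remark \ref{qd26Mar13-3-rmk}, with zero boundary data. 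Since $h^{\rm out}$ is supported in $|y|\ge R_0$, its $v\langle y\rangle^{-2-a'}$ norm with $a'$ slightly less than $a$ gains a factor $R_0^{-(a-a')}$. Choose $\epsilon_0 < a - a'$ (for $n=3$ use $a-1$ to handle $\Theta^0$).

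\textbf{Step 4: close the fixed point and identify $\varrho_1$.} Set up a contraction on $(\phi^{\rm in},\phi^{\rm out})$ in weighted norms. The cutoff errors live at $|y|\sim R_0$, where $\phi^{\rm in}$ is small. The coupling $pU^{p-1}\phi^{\rm out}$ is small thanks to the $R_0^{-\epsilon_0}$ gain, provided $C_0$ is large. The term $\varrho_1$ then collects the orthogonality defect of the coupling term and of the tail, and each piece is bounded by $vR_0^{-\epsilon_0}\|h\|$. Radial symmetry is preserved by uniqueness (Lemma \ref{chiM-eq-lem}(1)). The $C^{1+\sigma}$ regularity follows from parabolic estimates.

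\textbf{Main obstacle.} I expect the hardest step to be the H\"older estimate for $\varrho_1$. The plan is to apply interior parabolic H\"older estimates on cylinders of size $\delta_1$ to $\phi^{\rm out}$ and $\phi^{\rm in}$ near $|y|\lesssim R_0$, using $\delta_1\le R/(9C_R)$ and $\delta_1^2\le\tau/8$ so that the cylinders fit in the domain. This gives the $|\kappa_1-\kappa_2|^\alpha\delta_1^{-2\alpha}(1+\delta_1^2)$ term. The time variation of the integration domain $B_{R_0(\tau)}$ is bounded by $R_0^{n-1}\cdot|h|\,|Z_{n+1}|$ on the sphere $|y|=R_0$, which produces the $R_0^{-\epsilon_0-3}|R_0(\kappa_1)-R_0(\kappa_2)|$ term. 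Keeping the constants uniform in $\tau_0,\tau_1$ is delicate here.
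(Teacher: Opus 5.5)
There is a genuine gap, and it sits in your definition of $\varrho_1$. Your inner problem lives on $B_{2R_0}$ with source $h\,\eta(y/R_0)$, so Proposition~\ref{mode0-orth-prop} requires orthogonality on $B_{2R_0}$. The $h$-part of $\varrho$ is then $\int_{B_{2R_0}} h\,\eta(\cdot/R_0)Z_{n+1}\,\rmd y$ rather than $\int_{B_{R_0}} hZ_{n+1}\,\rmd y$. The annular difference, which you place into $\varrho_1$, is a linear functional of $h(\cdot,\tau)$ evaluated at the single time $\tau$. Its size, $O(vR_0^{-a}\|h\|_{v,2+a}^R)$, is fine for the bound $|\varrho_1|\le D_1vR_0^{-\epsilon_0}\|h\|_{v,2+a}^R$. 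The H\"older estimate, however, fails, because $h$ is only assumed finite in the weighted $L^\infty$ norm and has no regularity in $\tau$. Take $R_0$ constant (as in the application, $R_0=\ln t_0$) and $h=v\langle y\rangle^{-2-a}s(\tau)\1_{R_0<|y|<2R_0}$ with a sign $s(\tau)$ that flips. Your $\varrho_1$ then jumps by an amount of order $vR_0^{-a}$ across the flip, while the right-hand side of the claimed estimate tends to $0$ as $\kappa_1\to\kappa_2$. Your ``main obstacle'' paragraph accounts only for the motion of the domain and silently drops $h(\cdot,\kappa_1)-h(\cdot,\kappa_2)$. Moreover, the domain-motion term you compute, $R_0^{n-1}\cdot vR_0^{-2-a}\cdot R_0^{2-n}|R_0(\kappa_1)-R_0(\kappa_2)|=vR_0^{-1-a}|R_0(\kappa_1)-R_0(\kappa_2)|$, is larger than the claimed $vR_0^{-\epsilon_0-3}|R_0(\kappa_1)-R_0(\kappa_2)|$, since $a<2$.

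The missing idea is to split so that $\varrho_1$ depends only on the unknowns, never directly on $h$. The paper writes $\phi=\Psi+\eta(2y/R_0)\zeta$ and solves for $\zeta$ on $B_{R_0}$ with the full source $pU^{p-1}\Psi+h+\varrho\,\eta Z_{n+1}$, using Proposition~\ref{mode0-orth-prop} with radius $R_0$. It sends $(pU^{p-1}\Psi+h)(1-\eta(2y/R_0))$, together with the cutoff errors of $\zeta$, to an outer problem for $\Psi$ that is a pure heat equation on $B_{4R}$ with zero data. Orthogonality on $B_{R_0}$ then yields exactly $\varrho=-\big(\int_{B_2}\eta Z_{n+1}^2\,\rmd y\big)^{-1}\big(\int_{B_{R_0}}hZ_{n+1}\,\rmd y+\varrho_1\big)$ with $\varrho_1=\int_{B_{R_0}}pU^{p-1}\Psi Z_{n+1}\,\rmd y$. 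The H\"older bound comes from scaled interior estimates for the heat equation satisfied by $\Psi$, whose source is merely bounded by $vR_0^{-\epsilon_0}\langle y\rangle^{-2-a_1}\|h\|_{v,2+a}^R$. The domain-motion term comes from $pU^{p-1}Z_{n+1}\sim R_0^{-2-n}$ and $|\Psi|\lesssim vR_0^{-\epsilon_0}\langle y\rangle^{-a_1}\|h\|_{v,2+a}^R$ on $|y|\sim R_0$, which gives $R_0^{-\epsilon_0-a_1-3}$. Using a cutoff supported in $B_{R_0}$ with $\Psi(\cdot,\tau_0)=0$ also produces the initial datum $g_0\eta(2y/R_0)Z_0$ exactly. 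Your $\eta^*_{R_0}g_0Z_0$ differs from it, and appealing to the decay of $Z_0$ does not make them equal. The rest of your plan matches the paper: the $R_0^{-\epsilon_0}$ gain from sources supported in $|y|\gtrsim R_0$, the contraction for $C_0$ large, and symmetry and linearity by uniqueness.
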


\begin{remark}\label{qd26Apr28-1-rek}

When $\tau_1 < \infty$, we can use replace $(\tau_0, \tau_1)$ by $(\tau_0, \tau_1]$.

For brevity in application, we note that
$\lambda_{R_0}^{-1} R_0^{2-a} \lesssim  R_0^{n+1}$,
and
$\begin{cases}
\mathbf{P}_1[R_0] < 2^{-1} & \mbox{ if } n=3
\\
\mathbf{P}_1[R_0] < (n-2)^{-1} & \mbox{ if } n \ge 4
\end{cases}$
implies $\mathbf{P}_1[\lambda_{R_0}] > -1$.
\end{remark}

\begin{remark}

The mapping $(\phi,g_0,\varrho) = (\phi[h],g_0[h],\varrho[h])$ depends on $\tau_0, \tau_1, R, R_0$.

\end{remark}

\begin{remark}

We restrict $a\in (0,2)\setminus \{n-2\}$ for the conciseness of the conclusion and partial due to $|U^{p-1}| \sim \langle y \rangle^{-4}$.

\end{remark}

\begin{proof}

Denote $\| h \| = \| h \|_{v,2+a}^{R}$ in this proof. Set 
\begin{equation*}
\phi = \Psi(y,\tau) + \eta(2 y / R_0) \zeta(y,\tau).
\end{equation*}
To solve $\partial_{\tau} \phi=\Delta \phi + pU^{p-1}\phi + h + \varrho \eta Z_{n+1}$ for $\tau \in (\tau_0, \tau_1)$, $y \in B_{R}$, since $4 \le R_0 < R$ implies $\eta =  \eta(2 y / R_0) \eta$, then it suffices to  consider the following gluing system for $(\Psi, \zeta)$
	\begin{equation}\label{1d26Mar8-3}
    \begin{aligned}
&
			\partial_{\tau} \Psi
			= 
			\Delta \Psi 
			+
			J[\Psi,\zeta] \1_{|y|<R}
			\mbox{ for } \tau \in (\tau_0, \tau_1), y \in B_{4 R},
\\
&
			\Psi = 0
			\mbox{ for } \tau \in (\tau_0, \tau_1), y \in \partial B_{4 R},
			\quad
			\Psi(\cdot,\tau_0) = 0
			\mbox{ in } B_{4 R(\tau_0)},
    \end{aligned}
	\end{equation}
	\begin{equation}\label{qd26Mar8-4}
		\partial_{\tau} \zeta
		= 
		\Delta \zeta + pU^{p-1} \zeta + H[\Psi]
		\mbox{ \ for \ } \tau \in (\tau_0, \tau_1), y \in B_{R_0},
	\end{equation}
	where
	\begin{equation}
	\begin{aligned}
&
			J[\Psi,\zeta] :=
			(pU^{p-1}\Psi + h) [
			1- \eta(2 y / R_0)
			]
			+ 
			A[\zeta],
			\\
			&
			A[\zeta] :=
			\zeta \Delta [\eta(2 y / R_0)]  + 2 \nabla \zeta \cdot \nabla  [\eta(2 y / R_0)]   - \zeta \partial_{\tau} [\eta(2 y / R_0)],
\quad
H[\Psi] := p U^{p-1} \Psi
+ h + \varrho \eta Z_{n+1},
	\end{aligned}
	\end{equation}
and we consider \eqref{1d26Mar8-3} in a larger domain $B_{4 R}$ for convenience of the scaling argument for $\nabla \phi$ later.

To meet the orthogonality condition
\begin{equation}\label{qd26Mar17-2}
	\int_{B_{R_0}} H[\Psi](z,\tau) Z_{n+1}(z) \rmd z = 0 \mbox{ \ for \ } \tau \in (\tau_0, \tau_1),
\end{equation}
we take
	\begin{equation}\label{qd26Mar20-3}
	\begin{aligned}
&
		\varrho = \varrho[\Psi](\tau) :=
		C_1 
		\Big(
		\int_{B_{R_0}} h(z,\tau) Z_{n+1}(z) \rmd z 
		+
		\varrho_{1}
		\Big),
		\quad
		C_{1} :=  -\Big( \int_{B_{2} } \eta(z) Z_{n+1}^2(z) \rmd z \Big)^{-1},
\\
&
\varrho_{1} = \varrho_{1}[\Psi](\tau) := \int_{B_{R_0} } pU^{p-1} (z) \Psi(z,\tau) Z_{n+1}(z) \rmd z.
	\end{aligned}
	\end{equation}

The mappings for solving \eqref{1d26Mar8-3} and \eqref{qd26Mar8-4} are given as the following forms
	\begin{equation}\label{qd26Mar8-7}
			\Psi =
			\mathcal{T}_{{\rm{ou}}} [
			J[\Psi,\zeta] \1_{|y|<R}  ](y,\tau),
			\quad
			\zeta = 
			\mathcal{T}_{\rm{in}} [ H[\Psi] ](y,\tau) \mbox{ \ with \ } \zeta(y,\tau_0) = g_0 Z_0(y),
			\quad
			g_0 =
			\mathcal{T}_{g_0}[ H[\Psi] ],
	\end{equation}
	where $\mathcal{T}_{\rm{ou}}$ is the linear mapping given by solving the homogeneous part of \eqref{1d26Mar8-3}; and $\mathcal{T}_{{\rm{in}}}$, $\mathcal{T}_{g_0}$ are given by Proposition \ref{mode0-orth-prop} (with $R(\tau) = R_0(\tau)$) since \eqref{qd26Mar17-2} are satisfied by the choice of $\varrho$.

	Denote the leading term of $H[\Psi]$ as $H_{1} := h+ C_{1} \eta Z_{n+1} \int_{B_{R_0} }
	h(z,\tau) Z_{n+1}(z) \rmd z$. For $a>0$, we have $\| H_{1} \|_{v,2+a}^{R_0} \lesssim \|h\|$. Since $h = h_1(|y|,\tau) \Upsilon_{0,1}(\frac{y}{|y|})$ and $\eta Z_{n+1}$ is radially symmetric, then $H_{1} = (H_{1})_{1}(|y|,\tau) \Upsilon_{0,1}(\frac{y}{|y|})$.
	If $H_1$ satisfies the orthogonality condition in $B_{R_0}$, under the assumption 
\begin{equation}\label{qd26Mar17-1}
\begin{aligned}
&
|\dot{v}| = O(\tau^{-1} v), \  |\dot{R}_0| = O(\tau^{-1} R_0),
\ 
v, R_0, \ln R_0 \in \mathbf{AP}((\tau_0,\infty)),
\\
&  
R_0^{2+\epsilon_1} = O(\tau) 
\mbox{ with } \epsilon_1>0, 
\ 
\mathbf{P}_1[\lambda_{R_0}] > -1,
\
\inf_{s \ge \tau_0} R_0(s) \ge C_0 \gg 1,
\end{aligned}
\end{equation}
since $\mathbf{P}_1[\lambda_{R_0}] > -1$ implies $\lambda_{R_0}^{-\frac{1}{2}} \lesssim \tau^{\frac{1}{2}}$, then for $a\in (0,2)\setminus \{n-2\}$, Proposition \ref{mode0-orth-prop} and Remark \ref{qd26Mar15-1-rmk} give the following apriori estimate
	\begin{equation}\label{qd26Mar20-2}
		\langle y \rangle 
		|\nabla \mathcal{T}_{{\rm{in}}}[H_1]|
		+	|\mathcal{T}_{{\rm{in}}}[H_1] | \le
		D_{\rm{in}} \|h\|  v \lambda_{R_0}^{-1} R_0^{2-a} \langle y \rangle^{-n},
		\quad
		\lambda_{R_0}^{-1} R_0^{2-a} = 
		\begin{cases}
		R_0^{4-a} & \mbox{ if } n=3
		\\
		R_0^{4-a} \ln R_0 & \mbox{ if } n=4
		\\
		R_0^{n-a} & \mbox{ if } n\ge 5,
		\end{cases} 
	\end{equation}
	where  $D_{{\rm{in}}}\ge 1$ is a large constant. Moreover, $\mathcal{T}_{{\rm{in}}}[H_1] = (\mathcal{T}_{{\rm{in}}}[H_1])_{1}(|y|,\tau) \Upsilon_{0,1}(\frac{y}{|y|})$. For this reason, we will solve for $\zeta$ in the space
	\begin{equation*}
		\mathcal{B}_{\rm{in}} := \big\{
		g \mid
		\| g \|_{\rm{in}} \le 
		2D_{\rm{in}} \|h\|, \  g = g_1(|y|,\tau) \Upsilon_{0,1}(\frac{y}{|y|}) \big\}
	\end{equation*}
endowed with the norm
\begin{equation*}
	\| g \|_{\rm{in}} := \inf \big\{ C \mid \langle y \rangle |\nabla g(y,\tau)|  + |g(y,\tau)| \le C  v \lambda_{R_0}^{-1} R_0^{2-a} \langle y \rangle^{-n} \mbox{ for all }  \tau\in (\tau_0, \tau_1), y\in B_{R_0} \big\}.
\end{equation*}
It is ready to get that $\mathcal{B}_{\rm{in}}$ is a complete normed space.

For any $\zeta \in \mathcal{B}_{{\rm{in}}}$, let us estimate $J[0,\zeta] = h [
1- \eta(2 y / R_0)
]
+ 
A[\zeta]$. Under the assumption $a\in (0,2) \setminus \{n-2\}$ and $|\dot{R}_0| = O(R_0^{-1})$, then
\begin{equation}\label{qd26Mar18-1}
		|A[\zeta]| \lesssim \1_{R_0/2 \le |y| \le R_0} \| \zeta \|_{\rm{in}} v 
		\begin{cases}
			\langle y \rangle^{-1-a}
			& \mbox{ \ if \ } n=3
			\\
			\langle y \rangle^{-2-a} \ln R_0 
			& \mbox{ \ if \ } n=4
			\\
			\langle y \rangle^{-2-a}
			& \mbox{ \ if \ } n\ge 5
		\end{cases}
		\lesssim \1_{R_0/2 \le |y| \le R_0}
		\| \zeta \|_{\rm{in}} v 
		R_0^{-\epsilon_0} \langle y \rangle^{-2- 2 a_1},
\end{equation}
where for the last step, we require
\begin{equation}
	\begin{cases}
		a>1, \  0<\epsilon_0<a-1, \  0<a_1 < (a-1-\epsilon_0)/2
		&
		\mbox{ \ if \ } n=3
		\\
		0<\epsilon_0 < a, \  0<a_1 < (a-\epsilon_0)/2
		&
		\mbox{ \ if \ } n\ge 4.
	\end{cases}
\end{equation}
The choice of $a_1$ is prepared for \eqref{qd26Mar19-2} about the contraction mapping property later. Since for $\zeta \in \mathcal{B}_{{\rm{in}}}$, $\zeta = \zeta_1(|y|,\tau) \Upsilon_{0,1}(\frac{y}{|y|})$ and $\eta(\cdot)$ is radially symmetric, we have 
\begin{equation*}
	\begin{aligned}
		&
		\nabla \zeta \cdot \nabla  [\eta(2 y / R_0)]
		= \partial_{|y|} \big[ \zeta_1(|y|,\tau) \Upsilon_{0,1}(\frac{y}{|y|}) \big] \partial_{|y|}  [\eta(2 y / R_0)]
		+
		\Big\langle 
		\nabla_{S^{n-1}} \big[ \zeta_1(|y|,\tau) \Upsilon_{0,1}(\frac{y}{|y|}) \big], \nabla_{S^{n-1}} [\eta(2 y / R_0)]
		\Big\rangle
		\\
		= \ & \Upsilon_{0,1}(\frac{y}{|y|}) \partial_{|y|} [ \zeta_1(|y|,\tau) ] \partial_{|y|}  [\eta(2 |y| / R_0)].
	\end{aligned}
\end{equation*}
It follows that $A[\zeta] = (A[\zeta])_{1}(|y|,\tau) \Upsilon_{0,1}(\frac{y}{|y|})$.	
Also, we have
\begin{equation}\label{qd26Mar18-2}
		| h [ 1- \eta(2 y / R_0) ] | \lesssim 
		\1_{ |y|\ge R_0/2 } v\langle y\rangle^{-2-a } \| h\|
		\lesssim 
		\1_{ |y|\ge R_0/2 } v
		R_{0}^{-\epsilon_0}
		\langle y \rangle^{-2-a_1}
		\| h \|
	\end{equation}
provided
\begin{equation*}
0<\epsilon_0<a, \ 0< a_1 \le a-\epsilon_0.
\end{equation*}
Since $h = h_1(|y|,\tau) \Upsilon_{0,1}(\frac{y}{|y|})$ and $\eta(\cdot)$ is radially symmetric, then $h [ 1- \eta(2 y / R_0) ] = h_1(|y|,\tau) \Upsilon_{0,1}(\frac{y}{|y|}) [ 1- \eta(2 |y| / R_0) ]$.
In sum,
\begin{equation*}
|J[0, \zeta] \1_{|y|<R} | \lesssim D_{\rm{in}} \|h\| v 
R_0^{-\epsilon_0} \langle y \rangle^{-2-a_1}
\mbox{ \ and \ }
J[0, \zeta] \1_{|y|<R} = (J[0, \zeta] \1_{|y|<R})_{1}(|y|,\tau) \Upsilon_{0,1}(\frac{y}{|y|}).
\end{equation*}
We restrict $a_1 \in (0, n-2)$.
Under the assumption 
\begin{equation}\label{qd26Mar21-3}
R\ge 2, 
\ 
v\ge 0,
\ 
|\dot{v}| = O(\tau^{-1} v),
\ 
|\dot{R}_0| = O(\tau^{-1} R_0),
\ 
|\dot{R}| = O(\tau^{-1} R),
\ 
R^{2+\epsilon_1} \ll \tau 
\mbox{ with } 
\epsilon_1 > 0,
\end{equation}
by Lemma \ref{chiM-eq-lem} and Remark \ref{qd26Mar13-3-rmk}, then
	\begin{equation}\label{qd26Mar20-1}
	| \mathcal{T}_{{\rm{ou}}}[J[0, \zeta] \1_{|y|<R} ] |
		\le 
		D_{\rm{ou}}	D_{\rm{in}} \|h\| v
		R_{0}^{-\epsilon_0} \langle y \rangle^{-a_1}
\mbox{ \ for \ } \tau\in (\tau_0, \tau_1), y\in B_{4R}
	\end{equation}
with a large constant $D_{\rm{ou}} \ge 1$. Moreover, $\mathcal{T}_{\rm{ou}}[J[0, \zeta] \1_{|y|<R}] = (\mathcal{T}_{\rm{ou}}[J[0, \zeta] \1_{|y|<R} ])_{1}(|y|,\tau) \Upsilon_{0,1}(\frac{y}{|y|})$.

	This suggests that we solve $\Psi$ in the space
	\begin{equation*}
		\mathcal{B}_{\rm{ou}} := \big\{
		f \mid
		\| f \|_{\rm{ou}} \le
		2 D_{\rm{ou}}	D_{\rm{in}} \|h\|, \ f = f_1(|y|,\tau) \Upsilon_{0,1}(\frac{y}{|y|}) \big\}
	\end{equation*}
endowed with the norm
\begin{equation*}
	\| f \|_{\rm{ou}} := \inf
	\big\{ C \mid  |f(y,\tau)| \le C  v
	R_{0}^{-\epsilon_0} \langle y \rangle^{-a_1}
	\mbox{ for all } \tau\in (\tau_0,\tau_1), y\in B_{4 R} \big\}.
\end{equation*}
$\mathcal{B}_{\rm{ou}}$ is a complete norm space. For any $\Psi \in \mathcal{B}_{\rm{ou}}$, we have $pU^{p-1} \Psi [ 1- \eta(2 y / R_0) ] \1_{|y|<R} = pU^{p-1}(|y|) \Psi_1(|y|,\tau) $ $ \Upsilon_{0,1}(\frac{y}{|y|}) [ 1- \eta(2 |y| / R_0) ] \1_{|y|<R}$, and
\begin{equation}\label{qd26Mar21-1}
	\begin{aligned}
&
		| pU^{p-1} \Psi [ 1- \eta(2 y / R_0) ] \1_{|y|<R} |
		\lesssim 
		\1_{R_0/2 \le |y| < R} \| \Psi \|_{\rm{ou}}  v
		R_{0}^{-\epsilon_0} \langle y \rangle^{-4-a_1} 
		\\
		\lesssim \ & 
		\1_{R_0/2 \le |y| < R} \big( \inf_{s \ge \tau_0} R_0(s) \big)^{-2} \| \Psi \|_{\rm{ou}} v
		R_{0}^{-\epsilon_0} \langle y \rangle^{-2-a_1}.
	\end{aligned}
	\end{equation}
Since $\| \Psi \|_{\rm{ou}} \le 2 D_{\rm{ou}}	D_{\rm{in}} \|h\|$ and $\inf_{s \ge \tau_0} R_0(s) \ge C_0 \gg 1$, by \eqref{qd26Mar21-3} and Lemma \ref{chiM-eq-lem}, we have $ \mathcal{T}_{\rm{ou}}[
		J[\Psi, \zeta] \1_{|y|<R} ]  \in \mathcal{B}_{\rm{ou}} $. For $\Psi \in \mathcal{B}_{\rm{ou}}$ and $a\in (0,2]$, we have $p U^{p-1} \Psi 
		+ \eta Z_{n+1} C_{1} \varrho_{1} = (p U^{p-1} \Psi 
		+ \eta Z_{n+1} C_{1} \varrho_{1})_{1}(|y|,\tau) \Upsilon_{0,1}(\frac{y}{|y|}) $ and
	\begin{equation}\label{qd26Mar21-2}
		\| 
		p U^{p-1} \Psi 
		+ \eta Z_{n+1} C_{1} \varrho_{1} \|_{v,2+a}^{R_0} 
		\lesssim 
		\| \Psi \|_{\rm{ou}} 
		(\inf_{s \ge \tau_0} R_0(s))^{-\epsilon_0}.
	\end{equation}
Under the assumption \eqref{qd26Mar17-1},
by \eqref{qd26Mar17-2} and Proposition \ref{mode0-orth-prop}, since $\| \Psi \|_{\rm{ou}} \le 2 D_{\rm{ou}}	D_{\rm{in}} \|h\|$ and $\inf_{s \ge \tau_0} R_0(s) \ge C_0 \gg 1$, we have
$ \mathcal{T}_{\rm{in}}[H[\Psi] ] \in \mathcal{B}_{\rm{in}} $.

The contraction mapping property can be deduced similarly. Indeed, for any $(\tilde{\Psi}_j, \tilde{\zeta}_j) \in \mathcal{B}_{\rm{ou}} \times \mathcal{B}_{\rm{in}}$, $j=1,2$,
	\begin{equation*}
			( J[\tilde{\Psi}_1, \tilde{\zeta}_1]
			-
			J[\tilde{\Psi}_2, \tilde{\zeta}_2] ) \1_{|y|<R}
=
p U^{p-1} (\tilde{\Psi}_1 - \tilde{\Psi}_2) [
1- \eta(2 y / R_0)
] \1_{|y|<R}
+ 
A[\tilde{\zeta}_1 - \tilde{\zeta}_2] \1_{|y|<R}.
	\end{equation*}
Since
	\begin{equation*}
	\begin{aligned}
&
			\big| p U^{p-1} (\tilde{\Psi}_1 - \tilde{\Psi}_2) [
			1- \eta(2 y / R_0)
			] \1_{|y|<R} \big|
\stackrel{\eqref{qd26Mar21-1}}{\lesssim}
\1_{R_0/2 \le |y| < R} (\inf_{s\ge \tau_0} R_0(s))^{-2}
\| \tilde{\Psi}_1 - \tilde{\Psi}_2 \|_{\rm{ou}} v
R_{0}^{-\epsilon_0} \langle y \rangle^{-2-a_1},
\\
&
| A[\tilde{\zeta}_1 - \tilde{\zeta}_2] |
\stackrel{\eqref{qd26Mar18-1}}{\lesssim} 
\1_{R_0/2 \le |y| \le R_0}
(\inf_{s\ge \tau_0} R_0(s))^{-a_1}
\| \tilde{\zeta}_1 - \tilde{\zeta}_2 \|_{\rm{in}} v 
R_0^{-\epsilon_0} \langle y \rangle^{-2-a_1},
	\end{aligned}
	\end{equation*}
then $|( J[\tilde{\Psi}_1, \tilde{\zeta}_1]
-
J[\tilde{\Psi}_2, \tilde{\zeta}_2] ) \1_{|y|<R}| \lesssim (\inf_{s\ge \tau_0} R_0(s))^{-a_1}
( \| \tilde{\Psi}_1 - \tilde{\Psi}_2 \|_{\rm{ou}} + 
\| \tilde{\zeta}_1 - \tilde{\zeta}_2 \|_{\rm{in}} ) v 
R_0^{-\epsilon_0} \langle y \rangle^{-2-a_1}$. Similar to \eqref{qd26Mar20-1},
\begin{equation}\label{qd26Mar19-2}
\big\| \mathcal{T}_{\rm{ou}} \big[ J[\tilde{\Psi}_1, \tilde{\zeta}_1] \1_{|y|<R} \big] - \mathcal{T}_{\rm{ou}} \big[
J[\tilde{\Psi}_2, \tilde{\zeta}_2] \1_{|y|<R} \big] \big\|_{\rm{ou}}
\lesssim 
(\inf_{s\ge \tau_0} R_0(s))^{-a_1}
\big(
\| \tilde{\Psi}_1 - \tilde{\Psi}_2 \|_{\rm{ou}}
+
\| \tilde{\zeta}_1 - \tilde{\zeta}_2 \|_{\rm{in}}
\big).
\end{equation}
	\begin{equation*}
		H[\tilde{\Psi}_1] - H[\tilde{\Psi}_2]
		=
		p U^{p-1} (\tilde{\Psi}_1 - \tilde{\Psi}_2) + \eta Z_{n+1} C_1
		\int_{B_{R_0} } pU^{p-1} (z) (\tilde{\Psi}_1 - \tilde{\Psi}_2)(z,\tau) Z_{n+1}(z) \rmd z.
	\end{equation*}
It holds that
\begin{equation*}
\int_{B_{R_0}} (H[\tilde{\Psi}_1] - H[\tilde{\Psi}_2])(w,\tau) Z_{n+1}(w) \rmd w = 0,
\quad
| H[\tilde{\Psi}_1] - H[\tilde{\Psi}_2] |
\lesssim  (\inf_{s \ge \tau_0} R_0(s))^{-\epsilon_0} \| \tilde{\Psi}_1 - \tilde{\Psi}_2 \|_{\rm{ou}} v \langle y \rangle^{-2-a},
\end{equation*}
where the second part is similar to \eqref{qd26Mar21-2}.
Similar to \eqref{qd26Mar20-2},
\begin{equation}\label{qd26Mar19-3}\|  \mathcal{T}_{\rm{in}}[H[\tilde{\Psi}_1]] - \mathcal{T}_{\rm{in}}[H[\tilde{\Psi}_2]]  \|_{\rm{in}} \lesssim  (\inf_{s \ge \tau_0} R_0(s))^{-\epsilon_0} \| \tilde{\Psi}_1 - \tilde{\Psi}_2 \|_{\rm{ou}}.
\end{equation}

Combining \eqref{qd26Mar19-2}, \eqref{qd26Mar19-3}, we get the contraction mapping property for \eqref{qd26Mar8-7}. By the Banach fixed-point theorem, we find a solution 
$
(\Psi, \zeta) \in \mathcal{B}_{\rm{ou}} \times \mathcal{B}_{\rm{in}}
$
for \eqref{1d26Mar8-3} and \eqref{qd26Mar8-4}. Denote
\begin{equation*}
\mathcal{X} := \big\{
f \mid
\| f \|_{\rm{ou}} <\infty, \ f = f_1(|y|,\tau) \Upsilon_{0,1}(\frac{y}{|y|}) \big\} \times \big\{
g \mid
\| g \|_{\rm{in}} <\infty, \  g = g_1(|y|,\tau) \Upsilon_{0,1}(\frac{y}{|y|}) \big\}
\end{equation*}  
including $\mathcal{B}_{\rm{ou}} \times \mathcal{B}_{\rm{in}}$. We can repeat the contraction mapping argument in $\mathcal{X}$. Since \eqref{1d26Mar8-3} and \eqref{qd26Mar8-4} is a linear equation system, by the uniqueness in the Banach fixed-point theorem applied in $\mathcal{X}$, $(\Psi, \zeta)$ depends on $h$ linearly.

Hereafter we always will regard $D_{\rm{ou}}$, $D_{{\rm{in}}}$  as general constants. By Proposition \ref{mode0-orth-prop} and \eqref{qd26Mar8-7}, we have
	\begin{equation*}
		|g_0| = |\mathcal{T}_{g_0}[ H[\Psi] ]| \lesssim 
		v(\tau_0) R_0^{2-a}(\tau_0) \|h \|.
	\end{equation*}
\begin{equation*}
	\phi(y,\tau_0) = \Psi(y,\tau_0) + \eta( 2 y / R_0(\tau_0)) \zeta(y,\tau_0)
	= g_0 \eta( 2 y / R_0(\tau_0)) Z_0(y) \mbox{ \ in \ } B_{4 R(\tau_0)}.
\end{equation*}
	By \eqref{qd26Mar20-3} and $\Psi \in \mathcal{B}_{\rm{ou}}$,
	 \begin{equation*}
	 	|\varrho_{1}(\tau)| 
	 	 \lesssim v
	 	R_{0}^{-\epsilon_0}
	 	\|h\|.
	 \end{equation*}

	 By \eqref{qd26Mar18-1} and \eqref{qd26Mar18-2},
	 $ |J[0,\zeta]| \lesssim \|h\| v R_0  \langle y \rangle^{-2-a} $. To improve the spatial decay of $\Psi$, under the assumption \eqref{qd26Mar21-3}, applying Lemma \ref{chiM-eq-lem} to \eqref{1d26Mar8-3} finitely many time, we have
$
	|\Psi| \lesssim
R_0 v \langle y\rangle^{ -\min\{ a, n-2\} } \|h\|
$.
Thus,
\begin{equation*}
|\phi| = | \Psi + \eta(2 y / R_0) \zeta |
\lesssim 
R_0 v \langle y\rangle^{ -\min\{ a, n-2\} } \|h\| + 
\1_{|y| \le R_0} \|h\| v \lambda_{R_0}^{-1}
R_0^{2-a} \langle y \rangle^{-n}
\lesssim  \lambda_{R_0}^{-1} R_0^{2-a} v \langle y\rangle^{ -\min\{ a, n-2\} } \|h\|.
\end{equation*}
Since
$
| (h + \varrho \eta Z_{n+1}) \1_{|y|<R} |  \lesssim \| h \| v \langle y \rangle^{-2-a} \1_{|y|<R}
$, by the scaling argument, we get the estimate of $\nabla \phi$.

Finally, we give the quantitative H\"older continuity of $\varrho_{1}$. For any $\tau \in (\tau_0, \tau_1)$, $\kappa_1, \kappa_2 \in ( \max\{\tau_0, \tau- \delta_1^2 \}, \tau] \subset ( \max\{\tau_0, \tau/2 \}, \tau]$ with $\delta_1 \in (0, \min\{ (\frac{\tau}{8})^{\frac{1}{2}}, \frac{R}{9 C_{R}} \} ]$,
\begin{equation*}
	\begin{aligned}
		&
		\varrho_{1}(\kappa_1) - \varrho_{1}(\kappa_2)
		= \int_{B_{R_0(\kappa_1)}} pU^{p-1} (z) [\Psi(z,\kappa_1) - \Psi(z,\kappa_2)] Z_{n+1}(z) \rmd z
		\\
		& + \int_{B_{R_0(\kappa_1)}} pU^{p-1} (z) \Psi(z,\kappa_2) Z_{n+1}(z) \rmd z
		-
		\int_{B_{R_0(\kappa_2)}} pU^{p-1} (z) \Psi(z,\kappa_2) Z_{n+1}(z) \rmd z.
	\end{aligned}
\end{equation*}
Since $\Psi(\cdot,\kappa_2)$ is defined in $B_{4 R(\kappa_2)}$, and $R_0(\kappa_1) \le (9 C_{R})^{-1} R$, $4 R(\kappa_2) \ge 4 C_{R}^{-1} R$, the integral above is well-defined. Since we are concerned about time decay here, we use 
$ |J[\Psi,\zeta] \1_{|y|<R}|
	\lesssim \|h\| v 
	R_0^{-\epsilon_0} \langle y \rangle^{-2-a_1} $, $
	|\Psi| \lesssim \|h\| v
	R_{0}^{-\epsilon_0} \langle y \rangle^{-a_1} $. By the scaling argument, for $\alpha \in (0,1)$, any $\kappa_1, \kappa_2\in ( \max\{\tau_0, \tau - \delta_1^2\}, \tau]$, $|y| \le (9 C_{R})^{-1} R$, 
\begin{equation*}
	\begin{aligned}
		&
		\frac{|\Psi(y, \kappa_1) -\Psi(y, \kappa_2) |}{| \kappa_1 - \kappa_2|^{\alpha}}
		\lesssim
		(4 \delta_1)^{- 2 \alpha} \| \Psi(y + 4 \delta_1 z , \tau + (4 \delta_1)^2 s)   \|_{L^{\infty} (B(0,\frac 1{2}) \times (-\frac 14,0))  } 
		\\
		&
		+
		(4 \delta_1)^{2- 2 \alpha}
		\|  (J[\Psi,\zeta] \1_{|\cdot|<R})(y + 4 \delta_1 z , \tau + (4 \delta_1)^2 s) \|_{L^{\infty} (B(0,\frac 1{2}) \times (-\frac 14,0))  }
\\
\lesssim \ &
\delta_1^{- 2 \alpha} \|h\| v
R_{0}^{-\epsilon_0} 
+
\delta_1^{2- 2 \alpha}
\|h\| v 
R_0^{-\epsilon_0}
= 
\delta_1^{- 2 \alpha} \|h\| v
R_{0}^{-\epsilon_0}
(1 + \delta_1^{2}),
	\end{aligned}
\end{equation*}
where we use $\delta_1 \in (0, \min\{ (\frac{\tau}{8})^{\frac{1}{2}}, \frac{R}{9 C_{R}} \} ]$; $\tau + (4 \delta_1)^2 s \in [\frac{\tau}{2}, \tau]$; $C_{R}^{-1} R \le R(s_1) \le C_{R} R$ for $s_1 \in [\max\{ \tau_0, \tau/2 \}, \tau]$; $4 R(\tau + (4 \delta_1)^2 s) \ge 4 C_{R}^{-1} R$, $ | y + 4 \delta_1 z | \le (9 C_{R})^{-1} R + 2 \delta_1 
\le C_{R}^{-1} R$; and $v, R_0 \in \mathbf{AP}((\tau_0,\infty))$. It follows that
\begin{equation*}
	\Big|
	\int_{B_{R_0(\kappa_1)}} pU^{p-1} (z) [\Psi(z,\kappa_1) - \Psi(z,\kappa_2)] Z_{n+1}(z) \rmd z
	\Big|
	\lesssim | \kappa_1 - \kappa_2|^{\alpha}
	\delta_1^{- 2 \alpha} \|h\| v
	R_{0}^{-\epsilon_0} 
	(1 + \delta_1^{2}).
\end{equation*}

For the other part in $\varrho_{1}(\kappa_1) - \varrho_{1}(\kappa_2)$, without loss of generality, we can assume $R_0(\kappa_1) \ge R_0(\kappa_2)$. Using $v, R_0 \in \mathbf{AP}((\tau_0,\infty))$, we have
\begin{equation*}
	\begin{aligned}
		& \Big| \int_{ B_{R_0(\kappa_1)} \setminus B_{R_0(\kappa_2)} } pU^{p-1} (z) \Psi(z,\kappa_2) Z_{n+1}(z) \rmd z \Big|
		\lesssim R_0^{-2-n} \int_{ B_{R_0(\kappa_1)} \setminus B_{R_0(\kappa_2)} } \|h\| (v
		R_{0}^{-\epsilon_0})(\kappa_2) \langle R_0 \rangle^{-a_1} \rmd z
		\\
		\sim \ & R_0^{-2-n} \|h\| v
		R_{0}^{-\epsilon_0} \langle R_0 \rangle^{-a_1}
		\big| (R_0(\kappa_1))^n - (R_0(\kappa_2))^n \big|
\sim  \|h\| v
R_{0}^{-a_1-\epsilon_0-3} | R_0(\kappa_1) - R_0(\kappa_2) |.
	\end{aligned}
\end{equation*}
Hence, we get the estimate of $|\varrho_{1}(\kappa_1) - \varrho_{1}(\kappa_2)|$.
\end{proof}

\subsection{Mode $1$}

The linear theory for mode $1$ without orthogonality.
\begin{lemma}\label{M1-nonortho-lem}

Given an integer $n\ge 3$, $1\le \tau_0 <\tau_1 \le \infty$, functions $R(\tau) \ge 2$, $v(\tau) \ge 0$ defined in $(\tau_0, \tau_1)$, consider 
	\begin{equation*}
    \left\{
    \begin{aligned}
    &
			\partial_{\tau} \phi = \Delta \phi + 
			pU^{p-1} \phi + h
			\mbox{ \ for \ } \tau \in (\tau_0, \tau_1), y \in B_{R},
            \\
            &
			\phi = 0 
			\mbox{ \ for \ } \tau \in (\tau_0, \tau_1), y \in \partial B_{R},
	\quad
			\phi(\cdot,\tau_0) = 0
			\mbox{ \ in \ } B_{R(\tau_0)},
        \end{aligned}
        \right.
	\end{equation*}
where $\|h\|_{v,a}^{R}<\infty$, $a\in \mathbb{R}$, $h = h_1(|y|,\tau) \Upsilon_{1,i}(\frac{y}{|y|})$, $i= 1,2,\dots, n$. There exists a unique solution $\phi = \phi[h]$ linearly depending on $h$ of the form
$ \phi = \phi_1(|y|,\tau) \Upsilon_{1,i}(\frac{y}{|y|}) $. Suppose that $|\dot{v}| = O(\tau^{-1} v)$,
$|\dot{R}| = O(\tau^{-1} R)$, then for $\sup_{s\ge \tau_0} s^{-1} R(s)^n \theta_{R(s),a}^1 \le d_0$ with a constant $d_0>0$ sufficiently small and
\begin{equation}
	\theta_{R,a}^1 := 
	\begin{cases}
		R^{1-a}  & \mbox{ \ if \ } a < 1
		\\
		\ln R   &\mbox{ \ if \ }  a = 1
		\\
		1 &\mbox{ \ if \ }  a>1,
	\end{cases}
\end{equation}
there exists a a constant $C>0$ independent of $\tau_0, \tau_1, R$ such that
	\begin{equation}
	\langle y\rangle |\nabla \phi| +	|\phi|
		\le C
		v
		\theta_{R,a}^1  R^n  
		\langle y \rangle^{1-n}
		\| h \|_{v,a}^{R}.
	\end{equation}

\end{lemma}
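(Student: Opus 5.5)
Existence, uniqueness and linearity come from standard linear parabolic theory on the moving ball; by Lemma \ref{chiM-eq-lem} (1) with $V=pU^{p-1}$ the solution has the separated form $\phi=\phi_1(|y|,\tau)\Upsilon_{1,i}(\frac{y}{|y|})$. So everything reduces to the radial problem
\begin{equation*}
\partial_\tau\phi_1=\partial_{rr}\phi_1+\frac{n-1}{r}\partial_r\phi_1-\frac{n-1}{r^2}\phi_1+pU^{p-1}\phi_1+h_1 ,\qquad r\in(0,R),
\end{equation*}
with zero boundary and initial data. The difficulty is that $Z_i$ is a bounded kernel of the mode-$1$ elliptic operator, so the barrier argument of Lemma \ref{chiM-eq-lem} (3) is not directly available: the function $g$ there must be positive and bounded, and no such mode-$1$ kernel exists. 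I will instead build a supersolution from $w(r)=-\partial_r U(r)/C_{{\rm har},1}\sim r\langle r\rangle^{-n}>0$. This is the radial profile of $Z_i$; it is positive on $(0,\infty)$ and satisfies $L_1w=0$, where $L_1:=\partial_{rr}+\frac{n-1}{r}\partial_r-\frac{n-1}{r^2}+pU^{p-1}$.

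The barrier is obtained by reduction of order. I set
\begin{equation*}
f(r)=w(r)\int_r^R\frac{\rmd\rho}{w^2(\rho)\rho^{n-1}}\int_0^\rho w(s)s^{n-1}\langle s\rangle^{-a}\,\rmd s ,
\end{equation*}
so that $L_1 f=-\langle r\rangle^{-a}$, $f(R)=0$ and $f\ge0$. Using $w\sim r\langle r\rangle^{-n}$, the inner integral is $\lesssim \rho^{n+1}\langle\rho\rangle^{-n}\theta^1_{\rho,a}$ up to $\langle\rho\rangle$ factors. For large $\rho$ this is $\lesssim\theta^1_{R,a}\rho^{1-a}$ when $a<1$ and $\lesssim\theta^1_{R,a}$ in general. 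The outer weight is $w^{-2}\rho^{1-n}\sim\rho^{n-1}$ for large $\rho$ and $\sim\rho^{-n-1}$ near $0$. Integrating gives $f(r)\lesssim \theta^1_{R,a}R^n\langle r\rangle^{1-n}$ up to the factor $w(r)\langle r\rangle^{n-1}\sim r/\langle r\rangle$, which vanishes at $r=0$ as required by the mode-$1$ regularity. This is the claimed spatial profile, and it loses $R^n$ exactly as in the statement.

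Next I will take $\bar\phi=Cv(\tau)f(r,R(\tau))$ and compute $P[\bar\phi]:=L_1\bar\phi+h_1-\partial_\tau\bar\phi$. As in \eqref{qd26Apr24-1},
\begin{equation*}
P[\bar\phi]\le -Cv\langle r\rangle^{-a}+v\langle r\rangle^{-a}\|h\|_{v,a}^R+C|\dot v|f+Cv|\dot R||\partial_R f| .
\end{equation*}
I expect this step to be the main obstacle. The error terms must be controlled pointwise by $v\langle r\rangle^{-a}$, but the crude bound $f\lesssim\theta^1_{R,a}R^n\langle r\rangle^{1-n}$ is large compared with $\langle r\rangle^{-a}$ in the far region. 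The hypothesis $\sup s^{-1}R^n\theta^1_{R,a}\le d_0$ is what closes this. For the $|\dot v|$ term I will check that $\langle r\rangle^{a}f\lesssim R^n\theta^1_{R,a}$ for $r\le R$, splitting into $r\le R/2$ and $r\ge R/2$ and using $f\lesssim (R-r)\times$(profile) near the boundary if needed. Then $|\dot v|\langle r\rangle^a f\lesssim \tau^{-1}vR^n\theta^1_{R,a}\le Cd_0v$. For the $\dot R$ term, $\partial_R f=w(r)w(R)^{-2}R^{1-n}\int_0^Rws^{n-1}\langle s\rangle^{-a}\rmd s$, and $\langle r\rangle^a$ times this is bounded by $\lesssim R^{n}\theta^1_{R,a}$ as well, so $|\dot R|$ contributes $\lesssim \tau^{-1}R^{n}\theta^1_{R,a}$ up to a factor $R^{-1}$. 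With $d_0$ small and $C=2\|h\|_{v,a}^R$ this gives $P[\bar\phi]\le0$. Comparison applied to $\pm\phi_1$ is legitimate: both vanish at $r=R$ and at $\tau_0$, and the zeroth-order term is handled after dividing by the positive function $w$, or by the classical maximum principle for the $r$-equation on $(0,R)$ with $\phi_1/r$ bounded at the origin. This yields $|\phi|\lesssim v\theta^1_{R,a}R^n\langle y\rangle^{1-n}\|h\|_{v,a}^R$.

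Finally, the gradient bound $\langle y\rangle|\nabla\phi|$ follows from the scaling argument on parabolic cylinders of size $\sim\langle y\rangle$. These cylinders are admissible because $|\dot R|=O(\tau^{-1}R)$, $|\dot v|=O(\tau^{-1}v)$ and $\tau\gtrsim R^n$. The argument uses the same parabolic estimates invoked in the proof of Lemma \ref{mode0-nonorth}, with the right-hand side bounded by $v\langle y\rangle^{-a}\|h\|$, which is dominated by the profile after a factor $\langle y\rangle^{2}$. Near $\partial B_R$, boundary Schauder estimates for the zero Dirichlet condition are used.
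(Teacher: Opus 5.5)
This is the argument the paper relies on. Its proof of Lemma~\ref{M1-nonortho-lem} defers to \cite[pp. 336--337]{Green16JEMS} and \cite[Lemma 7.6]{infi4d}: a comparison with a supersolution $Cv(\tau)f(r,R(\tau))$ built by reduction of order from the positive mode-$1$ kernel $-U'(r)$. The hypothesis $\tau^{-1}R^n\theta^1_{R,a}\le d_0$ is exactly what absorbs the $\dot v$ and $\dot R$ errors. Your separation of variables, the profile bound $f\lesssim \theta^1_{R,a}R^n\langle r\rangle^{1-n}$, the comparison after dividing by $w$, and the scaling argument for the gradient are all fine.

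One step fails as stated. The pointwise bound $\langle r\rangle^a f(r)\lesssim R^n\theta^1_{R,a}$ on $[0,R]$ that you plan to verify is false when $a>n-1$. In that range $\theta^1_{R,a}=1$, and at $r=R/2$ one has $f\sim w(R/2)\int_{R/2}^R\rho^{n-1}\,\rmd\rho\sim R$, so $\langle r\rangle^a f\sim R^{a+1}\gg R^n$. Likewise $\langle r\rangle^a\partial_R f\sim R^{a}$ there. So the terms $|\dot v|f$ and $v|\dot R|\partial_R f$ are not dominated by $v\langle r\rangle^{-a}$ under $\tau^{-1}R^n\le d_0$ alone, and splitting into $r\le R/2$ and $r\ge R/2$ does not help.

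The repair is cheap: run the barrier with $b=\min\{a,n-1\}$ in place of $a$. Since $\|h\|_{v,b}^{R}\le\|h\|_{v,a}^{R}$, and $\theta^1_{R,b}=\theta^1_{R,a}=1$ when $a>n-1\ge 2$, the conclusion is unchanged. This is the same device as the choice $b\in(n,\min\{a,n+\epsilon\}]$ in the proof of Lemma~\ref{chiM-eq-lem}(3).

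For $a\le n-1$ your estimates hold, but state the $\dot R$ bound precisely. The correct bound is $\langle r\rangle^a\partial_R f\lesssim \langle r\rangle^{a+1-n}R^{n-1}\theta^1_{R,a}\le R^{n-1}\theta^1_{R,a}$, not $R^n\theta^1_{R,a}$. Only with it does $|\dot R|=O(\tau^{-1}R)$ give an error of size $O(\tau^{-1}R^n\theta^1_{R,a})$.
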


\begin{proof}

The proof is similar to \cite[Lemma 7.6]{infi4d}, \cite[pp. 336-337]{Green16JEMS}. $ \phi = \phi_1(|y|,\tau) \Upsilon_{1,i}(\frac{y}{|y|}) $ is deduced by Lemma \ref{chiM-eq-lem}-$(1)$.
\end{proof}

The linear theory for mode $1$ with orthogonality.
\begin{proposition}\label{mode1-orth-prop}
	
	Given an integer $n\ge 3$, $1\le \tau_0 <\tau_1 \le \infty$, functions $R(\tau) \ge 2$, $v(\tau) \ge 0$ defined in $(\tau_0, \tau_1)$, consider
	\begin{equation*}
		\partial_{\tau} \phi = 
		\Delta \phi + pU^{p-1} \phi + h 
		\mbox{ \ for \ } \tau \in (\tau_0, \tau_1), y \in B_{R},
\quad
\phi(\cdot, \tau_0) = 0 
\mbox{ \ in \ } B_{R(\tau_0)},
	\end{equation*}
where $\| h\|_{v, 2+a}^{R} < \infty$, $a > -1$, $h = h_1(|y|,\tau) \Upsilon_{1,i}(\frac{y}{|y|})$ for some $i\in\{ 1,2,\dots, n\}$, and $h$ satisfies the orthogonality condition
	\begin{equation}
		\int_{ B_{R} } h(y,\tau) Z_{i}(y) \rmd y = 0 \mbox{ \ for \ } \tau \in (\tau_0, \tau_1).
	\end{equation}
Suppose that $|\dot{v}| = O(\tau^{-1} v)$, $|\dot{R}| = O(\tau^{-1} R)$, then for $\sup_{s\ge \tau_0} s^{-1} R(s)^n \theta_{R(s), a}^1 \le d_0$ with a constant $d_0>0$ sufficiently small and
\begin{equation}
	\theta_{R,a}^1 = 
	\begin{cases}
		R^{1-a}  & \mbox{ \ if \ } a < 1
		\\
		\ln R   &\mbox{ \ if \ }  a = 1
		\\
		1 &\mbox{ \ if \ }  a>1,
	\end{cases}
\end{equation} 
there exists a solution $\phi = \phi[h]$ linearly depending on $h$ with the estimate
\begin{equation}
\langle y \rangle |\nabla \phi| +
	|\phi| \le C v
	\theta_{R, a}^1  R^n  \langle y\rangle^{-1-n}
	\|h\|_{v,2+a}^{R},
\end{equation}
where $C>0$ is a constant independent of $\tau_0, \tau_1, R$. Moreover, $\phi = \phi_1(|y|,\tau) \Upsilon_{1,i}(\frac{y}{|y|})$.
\end{proposition}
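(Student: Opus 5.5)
We will follow the same scheme as the proof of Proposition \ref{mode0-orth-prop}: invert the elliptic operator first, using the orthogonality condition, and then apply the non-orthogonal theory of Lemma \ref{M1-nonortho-lem} to the resulting potential.

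\textbf{Step 1: the elliptic step.} Write $h=h_1(r,\tau)\Upsilon_{1,i}$ and extend $h_1$ by zero outside $B_R$, calling the extension $\tilde h_1$. In mode $1$, the radial operator
\[
L_1=\partial_{rr}+\tfrac{n-1}{r}\partial_r-\tfrac{n-1}{r^2}+pU^{p-1}
\]
has the kernel $z(r):=Z_i(y)/\Upsilon_{1,i}$. This kernel is radial and satisfies $z\sim r$ near $0$ and $z\sim r^{1-n}$ at infinity. A second kernel $\tilde z$ with Wronskian $r^{1-n}$ satisfies $\tilde z\sim r^{1-n}$ near $0$ and $\tilde z\sim r$ at infinity. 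Variation of parameters gives
\[
H_1(r,\tau)=\tilde z(r)\int_0^r \tilde h_1 z s^{n-1}\,\rmd s - z(r)\int_0^r \tilde h_1\tilde z s^{n-1}\,\rmd s .
\]
The orthogonality condition $\int_0^\infty \tilde h_1 z s^{n-1}\,\rmd s=0$ lets us rewrite the first integral as $-\int_r^\infty$. Because the source is only $\langle s\rangle^{-2-a}$ and $a>-1$, this gives $|\tilde z\int_r^\infty \tilde h_1 z s^{n-1}|\lesssim r\cdot r^{-a-n}\cdot r^{n-1}=r^{-a}$. The second term is bounded by $r^{1-n}\int_0^r s^{-1-a+n}\,\rmd s\lesssim r^{1-a}$ for $a<n$. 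Since $h$ vanishes for $r>R$, the second integral saturates there. We therefore expect
\[
|H|\lesssim v\,\theta\,\langle y\rangle^{-\hat a}\|h\|_{v,2+a}^R,
\]
with the weight tracked as in Lemma \ref{M1-nonortho-lem}: effectively $H\in\|\cdot\|_{v,\hat a}$ with $\hat a=a$ (up to $a-$ at the borderline values). We also get $|\partial_r H|\lesssim \langle y\rangle^{-\hat a-1}$, and we set $H=H_1\Upsilon_{1,i}$.

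\textbf{Step 2: the parabolic step.} Solve
\[
\partial_\tau\Phi=\Delta\Phi+pU^{p-1}\Phi+H \quad\text{in } B_{2R},\qquad \Phi=0 \text{ on the boundary},\qquad \Phi(\cdot,\tau_0)=0,
\]
by Lemma \ref{M1-nonortho-lem} with weight exponent $\hat a$. The hypothesis $s^{-1}R^n\theta^1_{R,\hat a}\le d_0$ is compatible with the one we are given. The lemma yields
\[
|\Phi|+\langle y\rangle|\nabla\Phi|\lesssim v\,\theta^1_{R,\hat a}R^n\langle y\rangle^{1-n}\|H\|,
\]
and $\Phi=\Phi_1\Upsilon_{1,i}$.

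\textbf{Step 3: second derivatives.} Apply the parabolic regularity theory \cite[Lemma 4.13]{dong21-C0-para} together with the scaling argument on balls of radius $\sim\langle y\rangle$, as in \eqref{26Mar14-3}. This upgrades the bound to $\langle y\rangle^2|\nabla^2\Phi|$ with the same weight. The estimate must be taken in the ball $B_{2R}$ so that interior regularity is available up to $|y|\le R$.

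\textbf{Step 4: conclusion.} Define $\phi:=(\Delta+pU^{p-1})\Phi$. Since $L$ commutes with $\partial_\tau$,
\[
\partial_\tau\phi=L\phi+LH=L\phi+h \quad\text{in } B_R,
\]
with $\phi(\cdot,\tau_0)=0$. The bound $|\phi|\lesssim \langle y\rangle^{-2}(\cdots)\langle y\rangle^{1-n}$ gives the claimed decay $\langle y\rangle^{-1-n}$. The gradient estimate follows from another scaling argument, using that $|h|\lesssim v\langle y\rangle^{-2-a}$ is dominated by the target bound. Linearity in $h$ and the $\Upsilon_{1,i}$ structure are inherited at each step.

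\textbf{Main obstacle.} The main difficulty is Step 1. We must verify that the orthogonality condition really removes the growing $\tilde z\sim r$ branch. We must also check that the resulting weight exponent $\hat a$ gives exactly $\theta^1_{R,a}$, including the logarithm at $a=1$ and the $R^{1-a}$ growth for $a<1$, since the second integral can grow up to $r=R$. The plan is to treat the regimes $a<1$, $a=1$ and $a>1$ separately. If $\hat a$ shifts by one relative to $a$, we will instead use the crude bound $|H|\lesssim v\,\theta^1_{R,a}\langle y\rangle^{-1}$ and feed that into the lemma.
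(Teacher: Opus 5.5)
Your plan follows the paper's proof step for step. You invert the mode-$1$ radial operator using the orthogonality condition, which gives $H=H_1\Upsilon_{1,i}$ with $\|H_1\|_{v,\hat a}^{\infty}+\|\partial_r H_1\|_{v,\hat a+1}^{\infty}\lesssim\|h\|_{v,2+a}^R$. You then solve for $\Phi$ in $B_{2R}$ by Lemma \ref{M1-nonortho-lem}, upgrade to $\langle y\rangle^2|\nabla^2\Phi|$ via \cite[Lemma 4.13]{dong21-C0-para} and scaling, and set $\phi=(\Delta+pU^{p-1})\Phi$. The paper writes $H_1$ by reduction of order with the single kernel $U_r$. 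That formula differs from your two-kernel one only by a multiple of $z$, so the two are equivalent.

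Step 1 as written contains an arithmetic error, and it affects the sharp estimate. In the second term the integrand is $\tilde h_1\tilde z s^{n-1}\sim s^{-2-a}\cdot s\cdot s^{n-1}=s^{n-2-a}$, not $s^{n-1-a}$. Hence $\big|z\int_0^r\tilde h_1\tilde z s^{n-1}\,\rmd s\big|\lesssim r^{1-n}r^{n-1-a}=r^{-a}$ for $a<n-1$. At $a=n-1$ there is an extra logarithm, and for $a>n-1$ the term is $O(r^{1-n})$. This holds uniformly in $R$; the cutoff at $r=R$ only helps.

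So $H$ carries no factor $\theta$. The factor $\theta^1_{R,a}$ enters exactly once, through Lemma \ref{M1-nonortho-lem} applied with weight $\hat a$. Moreover $\theta^1_{R,\hat a}=\theta^1_{R,a}$, because $\hat a$ differs from $a$ only when $a\ge n-1\ge 2$, and there both equal $1$.

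This matters for two things in your write-up:
\begin{itemize}
\item If you keep the $\theta$ in your bound for $H$, you double-count and get $\theta^2$, for instance $R^{2-2a}$ instead of $R^{1-a}$ when $a<1$.
\item Your fallback of feeding $|H|\lesssim v\,\theta^1_{R,a}\langle y\rangle^{-1}$ into the lemma costs an extra $\theta^1_{R,1}=\ln R$.
\end{itemize}
Neither version gives the stated estimate. With the exponent corrected, the "main obstacle" you describe does not arise, and your argument goes through as in the paper.
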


\begin{proof}

The proof is similar to \cite[pp. 117-118]{infi4d}, \cite[pp. 341-342]{Green16JEMS} with application of the regularity theory in Dong-Escauriaza-Kim \cite[Lemma 4.13]{dong21-C0-para}. We give details in Section \ref{mode1-orth-pf-sec} for completeness.
\end{proof}

Re-gluing inner linear theory for mode $1$.
\begin{proposition}\label{regluing-mode1-prop}
	
	Given an integer $n\ge 3$, $1\le \tau_0 <\tau_1 \le \infty$, $i\in\{ 1,2,\dots, n\}$, consider
	\begin{equation*}
		\partial_{\tau} \phi=\Delta \phi + pU^{p-1}\phi + h + \varrho(\tau) \eta(y) Z_i(y)
		\mbox{ \ for \ } \tau \in (\tau_0, \tau_1), y \in B_{R}, 
		\quad
		\phi(y,\tau_0) = 0
		\mbox{ \ in \ } B_{R(\tau_0)},
	\end{equation*}
	where $\|h\|_{v,2+a}^{R}<\infty$,  $h = h_1(|y|,\tau) \Upsilon_{1,i}(\frac{y}{|y|})$.
	Suppose that 
	\begin{equation}
	\begin{aligned}
&
		a\in (0,2] \setminus \{n-2\},
		\
		\epsilon_0 \in (0, \min\{a,1\} ),
		\ 
		4 \le R_0 < R,
		\ 
        v\ge 0,
        \ 
		|\dot{v}| = O(\tau^{-1} v), \  |\dot{R}_0| = O(\tau^{-1} R_0),
		\\
		&
		|\dot{R}| = O(\tau^{-1} R),
		\ 
		R^{2+\epsilon} = O(\tau) 
	\mbox{ with a small constant } \epsilon>0,
	\end{aligned}
	\end{equation}
then for $\sup_{s\ge \tau_0} s^{-1} R_0(s)^n \theta_{R_0(s), a}^1 \le d_0$ with a constant $d_0>0$ sufficiently small and
\begin{equation}
	\theta_{R_0,a}^1 = 
	\begin{cases}
		R_0^{1-a}  & \mbox{ \ if \ } a < 1
		\\
		\ln R_0   &\mbox{ \ if \ }  a = 1
		\\
		1 &\mbox{ \ if \ }  a>1,
	\end{cases}
\end{equation}
and $\inf_{s \ge \tau_0} R_0(s) \ge C_0$ with a constant $C_0>0$ sufficiently large, there exists a solution $(\phi, \varrho) = (\phi[h], \varrho[h])$ linearly depending on $h$, and satisfying $\phi \in C^{1+\sigma,\frac{1+\sigma}{2}} \big( \{ (y,\tau) \mid \tau \in (\tau_0, \tau_1), y\in B_{R} \} \big)$, $\sigma \in (0,1)$,
	\begin{equation}
		\langle y\rangle |\nabla \phi| + |\phi|
		\le D_1 \theta_{R_0, a}^1  R_0^n  v \langle y \rangle^{- \min\{ a, n-2 \} } \|h\|_{v,2+a}^{R},
	\end{equation}
	\begin{equation}
		\varrho(\tau)
		=
		-\Big(\int_{B_{2} } \eta(y) Z_{i}^2(y) \rmd y\Big)^{-1} \Big(\int_{B_{R_0} }
		h(y,\tau)  Z_{i}(y) \rmd y
		+ \varrho_{1}[h](\tau)  \Big),
	\end{equation}
	where $\varrho_{1}[h](\tau)$ linearly depends on $h$ and satisfies the estimate
	\begin{equation}
		|\varrho_{1}[h](\tau)| \le D_1 v
		R_{0}^{-\epsilon_0} \| h \|_{v,2+a}^{R},
	\end{equation}
	where $D_1 >0$ is a constant independent of $\tau_0, \tau_1, R, R_0$. Moreover, $\phi = \phi_1(|y|,\tau) \Upsilon_{1,i}(\frac{y}{|y|})$.

	Under the additional assumption that $v, R_0 \in \mathbf{AP}((\tau_0,\infty))$, $C_{R}^{-1} R(\tau) \le R(s) \le C_{R} R(\tau)$, $R_0(s) \le (9 C_{R})^{-1} R(\tau)$ with a constant $C_{R} \ge 1$ for all $\tau \in (\tau_0, \tau_1)$, $s\in [\max\{ \tau_0, \tau/2 \}, \tau]$, then given $\alpha \in (0,1)$, for all $\tau \in (\tau_0, \tau_1)$, $\kappa_1, \kappa_2 \in ( \max\{\tau_0, \tau- \delta_1^2 \}, \tau]$, $\delta_1 \in (0, \min\{ (\frac{\tau}{8})^{\frac{1}{2}}, \frac{R}{9 C_R} \} ]$, we have
	\begin{equation}
		|\varrho_{1}(\kappa_1) - \varrho_{1}(\kappa_2)|
		\le D_2 v
		\big[ | \kappa_1 - \kappa_2|^{\alpha}
		\delta_1^{- 2 \alpha}
		R_{0}^{-\epsilon_0}
		(1 + \delta_1^{2}) +
		R_{0}^{-\epsilon_0-4} | R_0(\kappa_1) - R_0(\kappa_2) | \big] \|h\|_{v,2+a}^{R},
	\end{equation}
	where $D_2>0$ is a constant independent of $\tau_0, \tau_1, R, R_0$.

\end{proposition}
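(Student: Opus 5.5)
The plan is to copy the re-gluing argument of Proposition \ref{mode0-regluing-prop}, with Lemma \ref{M1-nonortho-lem} and Proposition \ref{mode1-orth-prop} taking the place of their mode $0$ analogues. Since mode $1$ has no unstable direction, no initial-data correction $g_0$ appears. We write $\phi = \Psi + \eta(2y/R_0)\zeta$. The outer part $\Psi$ satisfies the heat equation $\partial_\tau \Psi = \Delta\Psi + J[\Psi,\zeta]\1_{|y|<R}$ in $B_{4R}$ with zero boundary and initial data, where
\[
J[\Psi,\zeta] = (pU^{p-1}\Psi + h)[1-\eta(2y/R_0)] + A[\zeta]
\]
and $A[\zeta]$ collects the cutoff commutator terms. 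The inner part $\zeta$ solves $\partial_\tau\zeta = \Delta\zeta + pU^{p-1}\zeta + H[\Psi]$ in $B_{R_0}$ with $H[\Psi] = pU^{p-1}\Psi + h + \varrho\eta Z_i$. We choose $\varrho$ so that $\int_{B_{R_0}} H[\Psi]Z_i = 0$. This forces the stated formula for $\varrho$, with
\[
\varrho_1[\Psi](\tau) = \int_{B_{R_0}} pU^{p-1}\Psi Z_i \,\rmd z .
\]
Because $\eta Z_i$ and $pU^{p-1}\Psi$ stay in the $\Upsilon_{1,i}$ class when $\Psi$ does, the whole system preserves the mode, by Lemma \ref{chiM-eq-lem}-(1) and the computation of $\nabla\zeta\cdot\nabla\eta(2y/R_0)$ done in the mode $0$ proof.

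For the inner map we apply Proposition \ref{mode1-orth-prop} with $R$ replaced by $R_0$; the hypothesis $\sup_s s^{-1}R_0^n\theta^1_{R_0,a}\le d_0$ is exactly what that proposition needs. It gives
\[
\langle y\rangle|\nabla\zeta| + |\zeta| \lesssim v\,\theta^1_{R_0,a}R_0^n\langle y\rangle^{-1-n}\|h\|_{v,2+a}^{R},
\]
so $\zeta$ lives in the ball of the corresponding weighted space. On the annulus $R_0/2\le|y|\le R_0$ the commutator satisfies
\[
|A[\zeta]| \lesssim v\,\theta^1_{R_0,a}R_0^{n}\langle y\rangle^{-2-n} \sim v\,\theta^1_{R_0,a}\langle y\rangle^{-2}.
\]
The tail $h[1-\eta(2y/R_0)]$ is bounded by $vR_0^{-\epsilon_0}\langle y\rangle^{-2-a_1}$ once $a_1\le a-\epsilon_0$. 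For $A[\zeta]$ we also want a bound of the form $vR_0^{-\epsilon_0}\langle y\rangle^{-2-2a_1}$. This holds when $a>1$ because $\theta^1=1$. For $a\le1$ we have $\theta^1 R_0^{-2}\sim R_0^{-1-a}$ (up to a logarithm at $a=1$), so the bound again holds if $\epsilon_0<\min\{a,1\}$ and $a_1$ is small. This is where the restriction $\epsilon_0\in(0,\min\{a,1\})$ enters, and it is the bookkeeping I expect to be the main obstacle.

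The outer map is then estimated by Lemma \ref{chiM-eq-lem}(3) with $V=0$ and $g\equiv1$; Case 2 is available through Remark \ref{qd26Mar13-3-rmk}, since $R^{2+\epsilon}=O(\tau)$. This yields
\[
|\Psi| \lesssim vR_0^{-\epsilon_0}\langle y\rangle^{-a_1}, \qquad a_1<n-2 .
\]
In the other direction, $pU^{p-1}\Psi$ decays like $\langle y\rangle^{-4-a_1}$, so $\|pU^{p-1}\Psi + C\varrho_1\eta Z_i\|_{v,2+a}^{R_0} \lesssim R_0^{-\epsilon_0}\|\Psi\|$, using $a\le 2$. The constants in the contraction come with factors $(\inf R_0)^{-a_1}$ or $(\inf R_0)^{-\epsilon_0}$. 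We then apply Banach's fixed point theorem, and uniqueness in the enlarged linear space gives linear dependence on $h$. The bound $|\varrho_1|\lesssim vR_0^{-\epsilon_0}\|h\|$ follows directly from the bound on $\Psi$.

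To obtain the final decay $\langle y\rangle^{-\min\{a,n-2\}}$ for $\phi$, we reapply Lemma \ref{chiM-eq-lem} to the outer equation finitely many times. The factor $\theta^1R_0^n$ absorbs both the inner and outer contributions, and the gradient bound and $C^{1+\sigma}$ regularity come from scaling. For the Hölder estimate on $\varrho_1$ we argue as in mode $0$. The difference $\Psi(\cdot,\kappa_1)-\Psi(\cdot,\kappa_2)$ is controlled by parabolic scaling on cylinders of radius $4\delta_1$, which gives the factor $|\kappa_1-\kappa_2|^\alpha\delta_1^{-2\alpha}(1+\delta_1^2)$. The change of the domain $B_{R_0(\kappa_1)}\setminus B_{R_0(\kappa_2)}$ contributes an annulus integral involving $U^{p-1}Z_i\sim\langle y\rangle^{-3-n}$. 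That annulus term is bounded by
\[
R_0^{-3-n}R_0^{-\epsilon_0}R_0^{n-1}|R_0(\kappa_1)-R_0(\kappa_2)| = R_0^{-\epsilon_0-4}|R_0(\kappa_1)-R_0(\kappa_2)|,
\]
which is the stated second term.
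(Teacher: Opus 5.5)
Your proposal follows the paper's proof essentially step by step. It uses the same splitting $\phi=\Psi+\eta(2y/R_0)\zeta$ and the same choice of $\varrho$ from $\int_{B_{R_0}}H[\Psi]Z_i=0$. The inner map comes from Proposition \ref{mode1-orth-prop} on $B_{R_0}$, and the outer map from Lemma \ref{chiM-eq-lem} with $V=0$, $g\equiv 1$. It then applies Banach's fixed point theorem, runs a finite bootstrap for the decay $\langle y\rangle^{-\min\{a,n-2\}}$, and uses the same cylinder/annulus argument for the H\"older bound on $\varrho_1$.

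One displayed estimate is wrong, and as written it breaks the contraction. On the annulus $R_0/2\le|y|\le R_0$, both $\zeta\,\Delta[\eta(2y/R_0)]$ and $\nabla\zeta\cdot\nabla[\eta(2y/R_0)]$ gain a factor $R_0^{-2}$ relative to $|\zeta|\lesssim v\theta^1_{R_0,a}R_0^{n}\langle y\rangle^{-1-n}$. The $\partial_\tau$ term is smaller still, since $|\dot R_0|=O(\tau^{-1}R_0)$ and $R_0^2\ll\tau$. So the correct bound is
\[
|A[\zeta]|\lesssim v\,\theta^1_{R_0,a}R_0^{n}\langle y\rangle^{-3-n}\sim v\,\theta^1_{R_0,a}\langle y\rangle^{-3},
\]
not $v\theta^1_{R_0,a}\langle y\rangle^{-2}$. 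With the bound you wrote, the target $vR_0^{-\epsilon_0}\langle y\rangle^{-2-2a_1}$ fails in both regimes. For $a>1$ it would need $R_0^{-2}\lesssim R_0^{-2-\epsilon_0-2a_1}$. For $a<1$ it would need $a\ge 1+\epsilon_0+2a_1$. There would then be no small factor in the outer-to-inner feedback.

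With the corrected bound you get $v\langle y\rangle^{-2-a}$ for $a<1$, $v\ln R_0\,\langle y\rangle^{-3}$ for $a=1$, and $v\langle y\rangle^{-3}$ for $a>1$. These give the required smallness precisely when $\epsilon_0+2a_1<\min\{a,1\}$, which is where the restriction $\epsilon_0\in(0,\min\{a,1\})$ really comes from. The same corrected bound also drives the final decay bootstrap. Write it as $vR_0^{\max\{\epsilon_2,a-1\}}\langle y\rangle^{-2-a}$ when $a<n-2$, or as $vR_0^{n+\epsilon_3-3}\langle y\rangle^{-n-\epsilon_3}$ when $a>n-2$. In both cases the resulting factor is dominated by $\theta^1_{R_0,a}R_0^n$.
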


\begin{remark}\label{qd26Apr30-1-rmk}

For brevity in application, we note that for $a\ge 0$, $R_0\ge 4$, we have $\theta_{R_0, a}^1  R_0^n \lesssim R_0^{n+1}$, and then $R_0^{n+1} \ll \tau$ implies $\tau^{-1} R_0^n \theta_{R_0, a}^1 \ll 1$.
\end{remark}

\begin{proof}

The proof is similar to Proposition \ref{mode0-regluing-prop}. We give details for completeness.
Denote $\| h \| = \| h \|_{v,2+a}^{R}$ in this proof. Set 
	\begin{equation*}
		\phi = \Psi(y,\tau) + \eta(2 y / R_0) \zeta(y,\tau).
	\end{equation*}
	To solve $\partial_{\tau} \phi=\Delta \phi + pU^{p-1}\phi + h + \varrho \eta Z_i$ for $\tau \in (\tau_0, \tau_1)$, $y \in B_{R}$, since $4 \le R_0 < R$ implies $\eta =  \eta(2 y / R_0) \eta$, then it suffices to  consider the following gluing system for $(\Psi, \zeta)$
\begin{equation}\label{1d26Mar819-3}
\begin{aligned}
&
		\partial_{\tau} \Psi
		= 
		\Delta \Psi 
		+
		J[\Psi,\zeta] \1_{|y|<R}
		\mbox{ for } \tau \in (\tau_0, \tau_1), y \in B_{4 R},
\\
&
		\Psi = 0
		\mbox{ for } \tau \in (\tau_0, \tau_1), y \in \partial B_{4 R},
		\quad
		\Psi(\cdot,\tau_0) = 0
		\mbox{ in } B_{4 R(\tau_0)},
\end{aligned}
	\end{equation}
\begin{equation}\label{qd26Mar819-4}
		\partial_{\tau} \zeta
		= 
		\Delta \zeta + pU^{p-1} \zeta + H[\Psi]
		\mbox{ \ for \ } \tau \in (\tau_0, \tau_1), y \in B_{R_0},
	\end{equation}
	where
	\begin{equation}
		\begin{aligned}
			&
			J[\Psi,\zeta] :=
			(pU^{p-1}\Psi + h) [
			1- \eta(2 y / R_0)
			]
			+ 
			A[\zeta],
			\\
			&
			A[\zeta] :=
			\zeta \Delta [\eta(2 y / R_0)]  + 2 \nabla \zeta \cdot \nabla  [\eta(2 y / R_0)]   - \zeta \partial_{\tau} [\eta(2 y / R_0)],
			\quad
			H[\Psi] := p U^{p-1} \Psi
			+ h + \varrho \eta Z_{i}.
		\end{aligned}
	\end{equation}

	To meet the orthogonality condition
\begin{equation}\label{qd26Mar23-2}
		\int_{B_{R_0}} H[\Psi](z,\tau) Z_{i}(z) \rmd z = 0 \mbox{ \ for \ } \tau \in (\tau_0, \tau_1),
	\end{equation}
	we take
	\begin{equation}\label{qd26Mar23-3}
		\begin{aligned}
			&
			\varrho = \varrho[\Psi](\tau) :=
			C_1 
			\Big(
			\int_{B_{R_0}} h(z,\tau) Z_{i}(z) \rmd z 
			+
			\varrho_{1}
			\Big),
			\quad
			C_{1} :=  -\Big( \int_{B_{2} } \eta(z) Z_{i}^2(z) \rmd z \Big)^{-1},
			\\
			&
			\varrho_{1} = \varrho_{1}[\Psi](\tau) := \int_{B_{R_0} } pU^{p-1} (z) \Psi(z,\tau) Z_{i}(z) \rmd z.
		\end{aligned}
	\end{equation}

	The mappings for solving \eqref{1d26Mar819-3} and \eqref{qd26Mar819-4} are given as the following forms
	\begin{equation}\label{qd26Mar23-7}
		\Psi =
		\mathcal{T}_{{\rm{ou}}} [
		J[\Psi,\zeta] \1_{|y|<R}  ](y,\tau),
		\quad
		\zeta = 
		\mathcal{T}_{\rm{in}} [ H[\Psi] ](y,\tau) \mbox{ \ with \ } \zeta(y,\tau_0) = 0 \mbox{ \ in \ } B_{R_0(\tau_0)},
	\end{equation}
	where $\mathcal{T}_{\rm{ou}}$ is the linear mapping given by solving the homogeneous part of \eqref{1d26Mar819-3}; and $\mathcal{T}_{\rm{in}}$ is given by Proposition \ref{mode1-orth-prop} (with $R(\tau) = R_0(\tau)$) since \eqref{qd26Mar23-2} are satisfied by the choice of $\varrho$.

	Denote the leading term of $H[\Psi]$ as $H_{1} := h+ C_{1} \eta Z_{i} \int_{B_{R_0} }
	h(z,\tau) Z_{i}(z) \rmd z$. For $a>-1$, we have $\| H_{1} \|_{v,2+a}^{R_0} \lesssim \|h\|$. Since $h = h_1(|y|,\tau) \Upsilon_{1,i}(\frac{y}{|y|})$ and $\eta$ is radially symmetric, then $H_{1} = (H_{1})_{1}(|y|,\tau) \Upsilon_{1,i}(\frac{y}{|y|})$.
	If $H_1$ satisfies the orthogonality condition in $B_{R_0}$, under the assumption
\begin{equation}\label{qd26Mar23-1} 
			R_0 \ge 2,
            \ 
            v\ge 0,
			\ 
            a>-1,
\
			|\dot{v}| = O(\tau^{-1} v), \  |\dot{R}_0| = O(\tau^{-1} R_0),
			\ 
			\tau^{-1} R_0^n \theta_{R_0, a}^1 \le d_0 \ll 1,
	\end{equation}
then Proposition \ref{mode1-orth-prop} gives the apriori estimate
	\begin{equation}\label{qd26Mar20-192}
		\langle y \rangle 
		|\nabla \mathcal{T}_{{\rm{in}}}[H_1]|
		+	|\mathcal{T}_{{\rm{in}}}[H_1] | \le
		D_{\rm{in}} \|h\| v
		\theta_{R_0, a}^1  R_0^n  \langle y\rangle^{-1-n}, 
	\end{equation}
	where  $D_{{\rm{in}}}\ge 1$ is a large constant. Moreover, $\mathcal{T}_{{\rm{in}}}[H_1] = (\mathcal{T}_{{\rm{in}}}[H_1])_{1}(|y|,\tau) \Upsilon_{1,i}(\frac{y}{|y|})$. For this reason, we will solve $\zeta$ in the space
	\begin{equation*}
		\mathcal{B}_{\rm{in}} := \big\{
		g \mid
		\| g \|_{\rm{in}} \le 
		2D_{\rm{in}} \|h\|, \  g = g_1(|y|,\tau) \Upsilon_{1,i}(\frac{y}{|y|}) \big\}
	\end{equation*}
	endowed with the norm
	\begin{equation*}
		\| g \|_{\rm{in}} := \inf \big\{ C \mid \langle y \rangle |\nabla g(y,\tau)|  + |g(y,\tau)| \le C v
		\theta_{R_0, a}^1  R_0^n  \langle y\rangle^{-1-n} \mbox{ for all }  \tau\in (\tau_0, \tau_1), y\in B_{R_0} \big\}.
	\end{equation*}
	It is ready to get that $\mathcal{B}_{\rm{in}}$ is a complete norm space.

	For any $\zeta \in \mathcal{B}_{{\rm{in}}}$, let us estimate $J[0,\zeta] = h [
	1- \eta(2 y / R_0)
	]
	+ 
	A[\zeta]$. Under the assumption $|\dot{R}_0| = O(R_0^{-1})$, then
	\begin{equation}\label{qd26Mar18-191}
		|A[\zeta]| \lesssim
		\1_{R_0/2 \le |y| \le R_0}
		\| \zeta \|_{\rm{in}} v 
		\begin{cases}
			\langle y \rangle^{-a-2}  & \mbox{ \ if \ } a < 1
			\\
			\langle y \rangle^{-3} \ln R_0   &\mbox{ \ if \ }  a = 1
			\\
			\langle y \rangle^{-3} &\mbox{ \ if \ }  a>1
		\end{cases}
		\lesssim
		\1_{R_0/2 \le |y| \le R_0}
		\| \zeta \|_{\rm{in}} v R_0^{-\epsilon_0} \langle y \rangle^{-2-2 a_1},
\end{equation}
where for the last step, we require
\begin{equation}
	0< \epsilon_0 < \min\{a,1\}, \ 0< a_1 < (\min\{a,1\}-\epsilon_0)/2.
\end{equation}
Since for $\zeta \in \mathcal{B}_{{\rm{in}}}$, $\zeta = \zeta_1(|y|,\tau) \Upsilon_{1,i}(\frac{y}{|y|})$ and $\eta(\cdot)$ is radially symmetric, we have 
	\begin{equation*}
		\nabla \zeta \cdot \nabla  [\eta(2 y / R_0)] = \Upsilon_{1,i}(\frac{y}{|y|}) \partial_{|y|} [ \zeta_1(|y|,\tau) ] \partial_{|y|}  [\eta(2 |y| / R_0)].
	\end{equation*}
	It follows that $A[\zeta] = (A[\zeta])_{1}(|y|,\tau) \Upsilon_{1,i}(\frac{y}{|y|})$. Also, we have
\begin{equation}\label{qd26Mar18-192}
		| h [ 1- \eta(2 y / R_0) ] | \lesssim 
		\1_{ |y| \ge R_0/2 } v\langle y\rangle^{-2-a } \| h\|
		\lesssim 
		\1_{ |y| \ge R_0/2 } v
		R_{0}^{-\epsilon_0}
		\langle y \rangle^{-2-a_1}
		\| h \|
	\end{equation}
	provided
	\begin{equation*}
		0<\epsilon_0<a, \ 0< a_1 \le a-\epsilon_0.
	\end{equation*}
	Since $h = h_1(|y|,\tau) \Upsilon_{1,i}(\frac{y}{|y|})$ and $\eta(\cdot)$ is radially symmetric, then $h [ 1- \eta(2 y / R_0) ] = h_1(|y|,\tau) \Upsilon_{1,i}(\frac{y}{|y|}) [ 1- \eta(2 |y| / R_0) ]$.
	Thus, we conclude
	\begin{equation*}
		|J[0, \zeta] \1_{|y|<R} | \lesssim D_{\rm{in}} \|h\| v 
		R_0^{-\epsilon_0} \langle y \rangle^{-2-a_1}
		\mbox{ \ and \ }
		J[0, \zeta] \1_{|y|<R} = (J[0, \zeta] \1_{|y|<R})_{1}(|y|,\tau) \Upsilon_{1,i}(\frac{y}{|y|}).
	\end{equation*}
	We restrict $a_1 \in (0, n-2)$.
	Under the assumption 
\begin{equation}\label{qd26Mar21-193}
		R\ge 2, 
		\
        v\ge 0,
        \ 
		|\dot{v}| = O(\tau^{-1} v),
		\ 
		|\dot{R}_0| = O(\tau^{-1} R_0),
		\ 
		|\dot{R}| = O(\tau^{-1} R),
		\ 
		R^{2+\epsilon_1} \ll \tau 
		\mbox{ with } 
		\epsilon_1 > 0,
	\end{equation}
	by Lemma \ref{chiM-eq-lem} and Remark \ref{qd26Mar13-3-rmk}, then
	\begin{equation}\label{qd26Mar2019-1}
		| \mathcal{T}_{{\rm{ou}}}[J[0, \zeta] \1_{|y|<R}] |
		\le 
		D_{\rm{ou}}	D_{\rm{in}} \|h\| v
		R_{0}^{-\epsilon_0} \langle y \rangle^{-a_1}
		\mbox{ \ for \ } \tau\in (\tau_0, \tau_1), y\in B_{4R}
	\end{equation}
	with a large constant $D_{\rm{ou}} \ge 1$. Moreover, $\mathcal{T}_{\rm{ou}}[J[0, \zeta] \1_{|y|<R} ] = (\mathcal{T}_{\rm{ou}}[J[0, \zeta] \1_{|y|<R} ])_{1}(|y|,\tau) \Upsilon_{1,i}(\frac{y}{|y|})$.

	This suggests that we solve $\Psi$ in the space
	\begin{equation*}
		\mathcal{B}_{\rm{ou}} := \big\{
		f \mid
		\| f \|_{\rm{ou}} \le
		2 D_{\rm{ou}}	D_{\rm{in}} \|h\|, \ f = f_1(|y|,\tau) \Upsilon_{1,i}(\frac{y}{|y|}) \big\}
	\end{equation*}
	endowed with the norm
	\begin{equation*}
		\| f \|_{\rm{ou}} := \inf
		\big\{ C \mid  |f(y,\tau)| \le C  v
		R_{0}^{-\epsilon_0} \langle y \rangle^{-a_1}
		\mbox{ for all } \tau\in (\tau_0,\tau_1), y\in B_{4 R} \big\}.
	\end{equation*}
	$\mathcal{B}_{\rm{ou}}$ is a complete norm space.
	For any $\Psi \in \mathcal{B}_{\rm{ou}}$, we have $pU^{p-1} \Psi [ 1- \eta(2 y / R_0) ] \1_{|y|<R} = pU^{p-1}(|y|) \Psi_1(|y|,\tau) $ $ \Upsilon_{1,i}(\frac{y}{|y|}) [ 1- \eta(2 |y| / R_0) ] \1_{|y|<R}$, and
\begin{equation}\label{qd26Mar21-191}
		\begin{aligned}
			&
			| pU^{p-1} \Psi [ 1- \eta(2 y / R_0) ] \1_{|y|<R} |
			\lesssim 
			\1_{R_0/2 \le |y| < R} \| \Psi \|_{\rm{ou}}  v
			R_{0}^{-\epsilon_0} \langle y \rangle^{-4-a_1} 
			\\
			\lesssim \ & 
			\1_{R_0/2 \le |y| < R} \big( \inf_{s \ge \tau_0} R_0(s) \big)^{-2} \| \Psi \|_{\rm{ou}} v
			R_{0}^{-\epsilon_0} \langle y \rangle^{-2-a_1}.
		\end{aligned}
	\end{equation}
	Since $\| \Psi \|_{\rm{ou}} \le 2 D_{\rm{ou}}	D_{\rm{in}} \|h\|$ and $\inf_{s \ge \tau_0} R_0(s) \ge C_0 \gg 1$, by \eqref{qd26Mar21-193} and Lemma \ref{chiM-eq-lem}, we have $ \mathcal{T}_{\rm{ou}}[
	J[\Psi, \zeta] \1_{|y|<R} ]  \in \mathcal{B}_{\rm{ou}} $. For $\Psi \in \mathcal{B}_{\rm{ou}}$ and $a\in (0,2]$, we have $p U^{p-1} \Psi 
	+ \eta Z_{i} C_{1} \varrho_{1} = (p U^{p-1} \Psi 
	+ \eta Z_{i} C_{1} \varrho_{1})_{1}(|y|,\tau) \Upsilon_{1,i}(\frac{y}{|y|}) $ and
	\begin{equation}\label{qd26Mar21-192}
		\| 
		p U^{p-1} \Psi 
		+ \eta Z_{i} C_{1} \varrho_{1} \|_{v,2+a}^{R_0} 
		\lesssim 
		\| \Psi \|_{\rm{ou}} 
		(\inf_{s \ge \tau_0} R_0(s))^{-\epsilon_0}.
	\end{equation}
	Under the assumption \eqref{qd26Mar23-1},
	by \eqref{qd26Mar23-2} and Proposition \ref{mode1-orth-prop}, since $\| \Psi \|_{\rm{ou}} \le 2 D_{\rm{ou}}	D_{\rm{in}} \|h\|$ and $\inf_{s \ge \tau_0} R_0(s) \ge C_0 \gg 1$, we have
	$ \mathcal{T}_{\rm{in}}[H[\Psi] ] \in \mathcal{B}_{\rm{in}} $.

	The contraction mapping property can be deduced similarly. Indeed, for any $(\tilde{\Psi}_j, \tilde{\zeta}_j) \in \mathcal{B}_{\rm{ou}} \times \mathcal{B}_{\rm{in}}$, $j=1,2$,
\begin{equation*}
			( J[\tilde{\Psi}_1, \tilde{\zeta}_1]
			-
			J[\tilde{\Psi}_2, \tilde{\zeta}_2] ) \1_{|y|<R}
			=
			p U^{p-1} (\tilde{\Psi}_1 - \tilde{\Psi}_2) [
			1- \eta(2 y / R_0)
			] \1_{|y|<R}
			+ 
			A[\tilde{\zeta}_1 - \tilde{\zeta}_2] \1_{|y|<R}.
	\end{equation*}
	Since
	\begin{equation*}
		\begin{aligned}
			&
			\big| p U^{p-1} (\tilde{\Psi}_1 - \tilde{\Psi}_2) [
			1- \eta(2 y / R_0)
			] \1_{|y|<R} \big|
			\stackrel{\eqref{qd26Mar21-191}}{\lesssim}
			\1_{R_0/2 \le |y| < R} (\inf_{s\ge \tau_0} R_0(s))^{-2}
			\| \tilde{\Psi}_1 - \tilde{\Psi}_2 \|_{\rm{ou}} v
			R_{0}^{-\epsilon_0} \langle y \rangle^{-2-a_1},
			\\
			&
			| A[\tilde{\zeta}_1 - \tilde{\zeta}_2] |
			\stackrel{\eqref{qd26Mar18-191}}{\lesssim} 
			\1_{R_0/2 \le |y| \le R_0}
			(\inf_{s\ge \tau_0} R_0(s))^{-a_1}
			\| \tilde{\zeta}_1 - \tilde{\zeta}_2 \|_{\rm{in}} v 
			R_0^{-\epsilon_0} \langle y \rangle^{-2-a_1},
		\end{aligned}
	\end{equation*}
	then $|( J[\tilde{\Psi}_1, \tilde{\zeta}_1]
	-
	J[\tilde{\Psi}_2, \tilde{\zeta}_2] ) \1_{|y|<R}| \lesssim (\inf_{s\ge \tau_0} R_0(s))^{-a_1}
	( \| \tilde{\Psi}_1 - \tilde{\Psi}_2 \|_{\rm{ou}} + 
	\| \tilde{\zeta}_1 - \tilde{\zeta}_2 \|_{\rm{in}} ) v 
	R_0^{-\epsilon_0} \langle y \rangle^{-2-a_1}$. Similar to \eqref{qd26Mar2019-1},
	\begin{equation}\label{qd26Mar19-192}
		\big\| \mathcal{T}_{\rm{ou}} \big[ J[\tilde{\Psi}_1, \tilde{\zeta}_1] \1_{|y|<R} \big] - \mathcal{T}_{\rm{ou}} \big[
		J[\tilde{\Psi}_2, \tilde{\zeta}_2] \1_{|y|<R} \big] \big\|_{\rm{ou}}
		\lesssim 
		(\inf_{s\ge \tau_0} R_0(s))^{-a_1}
		\big(
		\| \tilde{\Psi}_1 - \tilde{\Psi}_2 \|_{\rm{ou}}
		+
		\| \tilde{\zeta}_1 - \tilde{\zeta}_2 \|_{\rm{in}}
		\big).
	\end{equation}
	\begin{equation*}
		H[\tilde{\Psi}_1] - H[\tilde{\Psi}_2]
		=
		p U^{p-1} (\tilde{\Psi}_1 - \tilde{\Psi}_2) + \eta Z_{i} C_1
		\int_{B_{R_0} } pU^{p-1} (z) (\tilde{\Psi}_1 - \tilde{\Psi}_2)(z,\tau) Z_{i}(z) \rmd z.
	\end{equation*}
	It holds that
	\begin{equation*}
		\int_{B_{R_0}} (H[\tilde{\Psi}_1] - H[\tilde{\Psi}_2])(w,\tau) Z_{i}(w) \rmd w = 0,
		\quad
		| H[\tilde{\Psi}_1] - H[\tilde{\Psi}_2] |
		\lesssim  (\inf_{s \ge \tau_0} R_0(s))^{-\epsilon_0} \| \tilde{\Psi}_1 - \tilde{\Psi}_2 \|_{\rm{ou}} v \langle y \rangle^{-2-a},
	\end{equation*}
	where the second part is similar to \eqref{qd26Mar21-192}.
	Similar to \eqref{qd26Mar20-192},
	\begin{equation}\label{qd26Mar19-193}
		\|  \mathcal{T}_{\rm{in}}[H[\tilde{\Psi}_1]] - \mathcal{T}_{\rm{in}}[H[\tilde{\Psi}_2]]  \|_{\rm{in}} \lesssim  (\inf_{s \ge \tau_0} R_0(s))^{-\epsilon_0} \| \tilde{\Psi}_1 - \tilde{\Psi}_2 \|_{\rm{ou}}.
	\end{equation}

	Combining \eqref{qd26Mar19-192}, \eqref{qd26Mar19-193}, we get the contraction mapping property for \eqref{qd26Mar23-7}. By the Banach fixed-point theorem, we find a solution 
	$
	(\Psi, \zeta) \in \mathcal{B}_{\rm{ou}} \times \mathcal{B}_{\rm{in}}
	$
	for \eqref{1d26Mar819-3} and \eqref{qd26Mar819-4}. Denote
	\begin{equation*}
		\mathcal{X} := \big\{
		f \mid
		\| f \|_{\rm{ou}} <\infty, \ f = f_1(|y|,\tau) \Upsilon_{1,i}(\frac{y}{|y|}) \big\} \times \big\{
		g \mid
		\| g \|_{\rm{in}} <\infty, \  g = g_1(|y|,\tau) \Upsilon_{1,i}(\frac{y}{|y|}) \big\}
	\end{equation*}  
	including $\mathcal{B}_{\rm{ou}} \times \mathcal{B}_{\rm{in}}$. We can repeat the contraction mapping argument in $\mathcal{X}$. Since \eqref{1d26Mar819-3} and \eqref{qd26Mar819-4} is a linear equation system, by the uniqueness in the Banach fixed-point theorem applied in $\mathcal{X}$, $(\Psi, \zeta)$ depends on $h$ linearly.
	\begin{equation*}
		\phi(y,\tau_0) = \Psi(y,\tau_0) + \eta( 2 y / R_0(\tau_0)) \zeta(y,\tau_0)
		= 0 \mbox{ \ in \ } B_{4 R(\tau_0)}.
	\end{equation*}
Hereafter we always will regard $D_{\rm{ou}}$, $D_{{\rm{in}}}$  as general constants. By \eqref{qd26Mar23-3} and $\Psi \in \mathcal{B}_{\rm{ou}}$,
	\begin{equation*}
		|\varrho_{1}(\tau)| 
		 \lesssim v
		R_{0}^{-\epsilon_0} \|h\|.
	\end{equation*}

Recall \eqref{qd26Mar18-191} and \eqref{qd26Mar18-192}.  Since
$
\1_{R_0/2 \le |y| \le R_0} \langle y \rangle^{-3} \sim \1_{R_0/2 \le |y| \le R_0} R_0^{a-1} \langle y \rangle^{-2-a}
$,
then
$
|A[\zeta]| \lesssim $ $
	\1_{R_0/2 \le |y| \le R_0} $ $
	\|h\| v  R_0^{\max\{\epsilon_2, a-1\}} \langle y \rangle^{-2-a}
$
with $0< \epsilon_2 \ll 1$. It follows that
$
|J[0,\zeta]|
\lesssim \|h\| v  R_0^{\max\{\epsilon_2, a-1\}} \langle y \rangle^{-2-a}
$.
To improve the spatial decay of $\Psi$, under the assumption \eqref{qd26Mar21-193}, when $a\in (0, n-2)$, applying Lemma \ref{chiM-eq-lem} to \eqref{1d26Mar819-3} finitely many time, we have
$
|\Psi| \lesssim
\|h\| v  R_0^{\max\{\epsilon_2, a-1\}} \langle y \rangle^{-a}
$.
Thus,
\begin{equation*}
	|\phi| = | \Psi + \eta(2 y / R_0) \zeta |
	\lesssim 
	\|h\| v  R_0^{\max\{\epsilon_2, a-1\}} \langle y \rangle^{-a} + 
	\1_{|y| \le R_0} \|h\| v
	\theta_{R_0, a}^1  R_0^n  \langle y\rangle^{-1-n}
\lesssim   \|h\| \theta_{R_0, a}^1  R_0^n  v \langle y \rangle^{-a}.
\end{equation*}

When $a>n-2 \ge 1$, we have
$
	|A[\zeta]| \lesssim \1_{R_0/2 \le |y| \le R_0} \|h\| v R_0^{(n+\epsilon_3)-3} \langle y \rangle^{-(n+\epsilon_3)}
$ with $0< \epsilon_3 \ll 1$.

It follows that
$ |J[0,\zeta]|
	\lesssim \|h\| v R_0^{(n+\epsilon_3)-3} \langle y \rangle^{-(n+\epsilon_3)}
$. By the assumption \eqref{qd26Mar21-193}, when $a > n-2$, applying Lemma \ref{chiM-eq-lem} to \eqref{1d26Mar819-3} finitely many time, we have $|\Psi| \lesssim \|h\| v R_0^{(n+\epsilon_3)-3} \langle y \rangle^{2-n}$. Thus,
\begin{equation*}
		|\phi| = | \Psi + \eta(2 y / R_0) \zeta |
		\lesssim 
		\|h\| v R_0^{(n+\epsilon_3)-3} \langle y \rangle^{2-n} + 
		\1_{|y| \le R_0} \|h\| v
		\theta_{R_0, a}^1  R_0^n  \langle y\rangle^{-1-n}
		\lesssim  \|h\| \theta_{R_0, a}^1  R_0^n v \langle y \rangle^{2-n}.
\end{equation*}

In sum, $|\phi| \lesssim \|h\| \theta_{R_0, a}^1  R_0^n  v \langle y \rangle^{- \min\{ a, n-2 \} }$. Since
$
| (h + \varrho \eta Z_{i}) \1_{|y|<R} |  \lesssim \| h \| v \langle y \rangle^{-2-a} \1_{|y|<R}
$, by the scaling argument, we get the estimate of $\nabla \phi$.

The quantitative estimate of $ \varrho_{1}(\kappa_1) - \varrho_{1}(\kappa_2) $ is similar to Proposition \ref{mode0-regluing-prop} with slight modification that $|Z_{i}| \lesssim \langle y \rangle^{1-n}$ for the second part of $\varrho_{1}(\kappa_1) - \varrho_{1}(\kappa_2)$.
\end{proof}

\subsection{Higher modes}

Given $0< \tau_0 <\tau_1 \le \infty$, a function $\mathcal{R} = \mathcal{R}(\tau) >0$, denote
\begin{equation}
\begin{aligned}
{\mathbf{SO}}_{\tau_0, \tau_1}^{\mathcal{R}} := \Big\{ f(y,\tau) \ \Big| \ & 
	\int_{S^{n-1}} f(r w, \tau) \Upsilon_{i,j}(w) \rmd w =0
	\mbox{ \ for \ } \tau \in (\tau_0, \tau_1),
    \
    r\in (0, \mathcal{R}),
    \\
    &
	(i,j) \in \{ (0,1), (1,k) \mid k=1,2,\dots,n \} \Big\}.
\end{aligned}
\end{equation}
Obviously, given $0< R_1(\tau) \le R_2(\tau)$, we have $ {\mathbf{SO}}_{\tau_0, \tau_1}^{R_2}
\subset
{\mathbf{SO}}_{\tau_0, \tau_1}^{R_1} $.

\begin{lemma}\label{phi-perp-lemma}
	
	Given an integer $n\ge 3$, $1\le \tau_0 <\tau_1 \le \infty$, functions $R(\tau) \ge 2$, $v(\tau) \ge 0$ defined in $(\tau_0, \tau_1)$, consider
	\begin{equation*}
			\partial_{\tau} \phi 
			= 
			\Delta \phi + p U^{p-1} \phi + h 
			\mbox{ \ for \ } \tau \in (\tau_0, \tau_1), y \in B_{R},
			\quad
			\phi =  0
			\mbox{ \ for \ } \tau \in (\tau_0, \tau_1), y \in \partial B_{R},
	\quad
			\phi(\cdot,\tau_0) = 0 
			\mbox{ \ in \ } B_{R(\tau_0)},
	\end{equation*}
	where $\|h\|_{v,a}^{R} <\infty$, $a\in \mathbb{R}$, $h \in {\mathbf{SO}}_{\tau_0, \tau_1}^{R}$. There exists a unique solution $\phi = \phi[h]$ linearly depending on $h$, and satisfying $\phi \in {\mathbf{SO}}_{\tau_0, \tau_1}^{R}$.

If $h$ is even with respect to the $i$-th component of $y$ for some $i \in \{1,2,\dots, n\}$ and all $\tau \in (\tau_0, \tau_1)$, then so is $\phi$.

    Suppose that $|\dot{v}| = O(\tau^{-1} v)$, $|\dot{R}| = O(\tau^{-1} R)$, $v, R, \ln R \in \mathbf{AP}((\tau_0,\infty))$, $\mathbf{P}_1[R] < 2^{-1}$, for $\sup_{s\ge \tau_0} s^{-1} R(s)^{2 + \epsilon} \le d_0$ with a constant $\epsilon > 0$ and a sufficiently small constant $d_0>0$, then it holds that	\begin{equation}\label{qd26Mar8-1}
\langle y\rangle |\nabla \phi| 	+	|\phi| \le C 
		v
		\big(
		\Theta_{R, a}^0(|y|) 
		+
		\theta_{R, a}^0  R 	\langle  y \rangle^{2-n}
		\big)
		\|h\|_{v,a}^{R}
	\end{equation}
with
\begin{equation}
\Theta_{R, a}^0(|y|) = 
\begin{cases}
	R^{2-a} &\mbox{ \ if \ } a<2 
	\\
	\ln R                      
	&\mbox{ \ if \ } a=2 
	\\
	\langle |y| \rangle^{2-a} 
	&  \mbox{ \ if \ } 2<a<n
	\\
	\langle |y| \rangle^{2-n}\ln(|y|+2)   
	&\mbox{ \ if \ } a = n
	\\
	\langle |y| \rangle^{2-n}   
	&\mbox{ \ if \ } a > n,
\end{cases}
\quad
\theta_{R,a}^0  = 
\begin{cases}
	R^{2-a}  & \mbox{ \ if \ }  a<2 
	\\
	\ln R 
	& \mbox{ \ if \ } a=2 
	\\
	1 & \mbox{ \ if \ } a>2,
\end{cases}
\end{equation}
where
$C>0$ is a constant independent of $\tau_0, \tau_1, R$.

\end{lemma}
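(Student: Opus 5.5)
The plan has three parts: qualitative facts, an energy estimate exploiting the spectral gap on higher modes, and a barrier argument for the pointwise bound.

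\textbf{Qualitative facts.} Existence, uniqueness and linear dependence are standard for a linear parabolic Dirichlet problem on the moving ball $B_{R(\tau)}$ with bounded potential $pU^{p-1}$ and locally bounded forcing. The property $\phi\in\mathbf{SO}^{R}_{\tau_0,\tau_1}$ follows from Lemma \ref{chiM-eq-lem}-(2), applied with $V=pU^{p-1}$ to each $\Upsilon_{0,1}$ and $\Upsilon_{1,k}$. The even symmetry follows from Lemma \ref{chiM-eq-lem}-(4).

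\textbf{Energy estimate.} The key structural fact is that on $\mathbf{SO}$ the linearized operator has a uniform spectral gap. For $f\in H^1_0(B_R)$ with vanishing projections onto modes $0$ and $1$, I would show
\[
\int_{B_R}\big(|\nabla f|^2-pU^{p-1}f^2\big)\,\rmd y\ \ge\ c\int_{B_R}|y|^{-2}f^2\,\rmd y .
\]
This follows by expanding in spherical harmonics with $i\ge2$. Each mode $i\ge2$ carries the Hardy-type term $i(n-2+i)|y|^{-2}\ge 2n|y|^{-2}$. It is a known fact that the radial operators at modes $i\ge2$ are positive with no bounded kernel, since the only kernels of $\Delta+pU^{p-1}$ are $Z_1,\dots,Z_{n+1}$. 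Combining this with a compactness argument gives the inequality with a constant independent of $R$.

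Multiplying the equation by $\phi$ then yields
\[
\tfrac12\partial_\tau\!\int\phi^2+c\!\int|y|^{-2}\phi^2\le \int|h\phi|.
\]
Since $|y|^{-2}\ge R^{-2}$ on $B_R$, this gives damping at rate $R^{-2}$. The hypotheses $\mathbf{P}_1[R]<1/2$, $v,R\in\mathbf{AP}$ and $R^{2+\epsilon}\ll\tau$ then let me integrate in time as in the mode-$0$ argument, now with $\lambda_R$ replaced by $R^{-2}$. The $\mathbf{AP}$ conditions make the time-integration Lemma \cite[Lemma 8.3]{Wei-Zhang-Zhou2022LLG} applicable. The result should be an $L^2$ bound of size $vR\,\theta^0_{R,a}\|h\|$ in a suitably weighted $L^2$ norm. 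Local parabolic estimates near $|y|\lesssim1$ convert this into an $L^\infty$ bound of size $vR\theta^0_{R,a}\|h\|$ on compact sets.

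\textbf{Pointwise bound and main obstacle.} This step I expect to be the main difficulty: upgrading to the profile $\Theta^0_{R,a}+\theta^0_{R,a}R\langle y\rangle^{2-n}$ without losing further powers of $R$. I would proceed in two pieces.

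\emph{Inner/outer decomposition.} Split $\phi=\phi_{\rm o}+\phi_{\rm i}$, where $\phi_{\rm o}$ solves the equation with potential $pU^{p-1}(1-\chi_M)$ as in Lemma \ref{mode0-nonorth}. By Lemma \ref{chiM-eq-lem}-(3) with Remark \ref{qd26Mar13-3-rmk}, $\phi_{\rm o}$ is bounded by $v\Theta^0_{R,a}\|h\|$. The remainder $\phi_{\rm i}$ is forced by the compactly supported term $pU^{p-1}\chi_M\phi_{\rm o}$. Its projections onto modes $0$ and $1$ still vanish, because $\chi_M$ and $U$ are radial and $\phi_{\rm o}\in\mathbf{SO}$.

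\emph{Barrier for $\phi_{\rm i}$.} The energy step bounds $\phi_{\rm i}$ on compact sets by $v\theta^0_{R,a}R\|h\|$. Outside a large ball $B_{M'}$, $\phi_{\rm i}$ solves an equation with small potential and no forcing. There I would use the supersolution $Cv\,\theta^0_{R,a}R\langle y\rangle^{2-n}$. It is valid because $\Delta\langle y\rangle^{2-n}\le -c\langle y\rangle^{-n}$ dominates $pU^{p-1}\langle y\rangle^{2-n}$ once $M'$ is large, and $|\dot v|,|\dot R|=O(\tau^{-1})$ together with $R^{2+\epsilon}\ll\tau$ make the time-derivative terms negligible. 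Comparison on $B_R\setminus B_{M'}$ then gives the $\langle y\rangle^{2-n}$ tail; this is the refinement supplied by Lemma \ref{qd26Mar7-1-lem}.

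\emph{Gradient.} Finally, the gradient bound follows from the scaling argument: parabolic estimates on parabolic cylinders of radius $\sim\langle y\rangle$.
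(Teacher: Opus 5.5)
Your architecture matches the paper's proof. The qualitative facts come from Lemma \ref{chiM-eq-lem}-(2),(4). You split off $\mathcal{T}_*[h]$ with potential $pU^{p-1}(1-\chi_M)$ and bound it by Lemma \ref{chiM-eq-lem}-(3). The remainder has compactly supported forcing $pU^{p-1}\chi_M\mathcal{T}_*[h]$ of size $v\theta^0_{R,a}\|h\|_{v,a}^{R}$. For it you run an energy estimate using Hardy-type coercivity on $\mathbf{SO}$, with damping at rate $R^{-2}$ integrated via $\mathbf{P}_1[R]<\tfrac12$. A decay upgrade and scaling then finish. The energy step must be run only on this remainder, as you eventually do. Applied to $\phi$ with the full forcing $h$, the term $\int_{B_R}|y|^2h^2$ is of size $v^2R^{n+2-2a}$ for $a<\frac{n+2}{2}$, and the resulting bound is useless.

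The step that fails is your barrier. Since $\langle y\rangle^{2-n}=\alpha_n^{-1}U$ and $\Delta U=-U^p$, one has $\Delta\langle y\rangle^{2-n}=-n(n-2)\langle y\rangle^{-n-2}$. So the bound $\Delta\langle y\rangle^{2-n}\le -c\langle y\rangle^{-n}$ is false at infinity. The Laplacian decays at the same rate as $pU^{p-1}\langle y\rangle^{2-n}=n(n+2)\langle y\rangle^{-n-2}$, and in fact
\[
(\Delta+pU^{p-1})\langle y\rangle^{2-n}=4n\langle y\rangle^{-n-2}>0 .
\]
Hence $w(\tau)\langle y\rangle^{2-n}$ with $w=Cv\theta^0_{R,a}R$ is a strict subsolution of the spatial operator on every annulus, and enlarging $M'$ does not help. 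Independently, a Laplacian of size $\langle y\rangle^{-n-2}$ could absorb the time-derivative term $O(\tau^{-1})w\langle y\rangle^{2-n}$ up to $|y|=R$ only if $R^4\lesssim\tau$. You only have $R^{2+\epsilon}\lesssim\tau$.

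The missing idea is a barrier that still behaves like $\langle y\rangle^{2-n}$ but whose Laplacian decays strictly slower than $\langle y\rangle^{-n-2}$. Take $\bar\phi=D\,w(\tau)\,(-\Delta)^{-1}(\langle y\rangle^{-b})$ with $n<b<\min\{n+2,n+\epsilon\}$. Then $-\Delta\bar\phi=Dw\langle y\rangle^{-b}$ beats $pU^{p-1}\bar\phi\lesssim Dw\langle y\rangle^{-n-2}$ for $|y|\ge M'$, because $b<n+2$. It also beats the time-derivative term on $B_R$:
\[
|\dot w|\,D(-\Delta)^{-1}(\langle y\rangle^{-b})\lesssim D\tau^{-1}w\langle y\rangle^{2-n}\lesssim D\,d_0\,w\langle y\rangle^{-n-\epsilon},
\]
using $b\le n+\epsilon$ and $\langle y\rangle^{2+\epsilon}\lesssim R^{2+\epsilon}\le d_0\tau$. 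This is exactly the construction inside Lemma \ref{qd26Mar7-1-lem}, which you cite but whose mechanism you misdescribe. With that barrier, or simply by invoking that lemma for the remainder as the paper does, your argument closes.
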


\begin{proof}

The proof is similar to \cite[pp. 114-115]{infi4d}, \cite[pp. 337-338]{Green16JEMS} with some modification for $\mathbf{AP}((\tau_0,\infty))$ class. We give details in Section \ref{high-mode-pf-sec} for completeness.
\end{proof}

The re-gluing inner linear theory for higher modes.
\begin{proposition}\label{regluing-higher-mode-prop}
	
	Given an integer $n\ge 4$, $1\le \tau_0 <\tau_1 \le \infty$, consider
	\begin{equation*}
		\partial_{\tau} \phi=\Delta \phi + pU^{p-1}\phi + h 
		\mbox{ \ for \ } \tau \in (\tau_0, \tau_1), y \in B_{R}, 
		\quad
		\phi(y,\tau_0)= 0
		\mbox{ \ in \ } B_{R(\tau_0)},
	\end{equation*}
	where $\|h\|_{v,2+a}^{R}<\infty$, $h \in {\mathbf{SO}}_{\tau_0, \tau_1}^{R}$. Suppose that
	\begin{equation*}
		a\in (0,2],
		\ 
	4 \le R_0 < R, \ 
		|\dot{v}| = O(\tau^{-1} v), \  
		v \in \mathbf{AP}((\tau_0,\infty)),
		\ 
		R^{2+\epsilon} = O(\tau) \mbox{ with a constant } \epsilon>0,
		\
		|\dot{R}| = O(\tau^{-1} R),
	\end{equation*}
then for the constant $R_0 \ge C_0$ with a constant $C_0>0$ sufficiently large, there exists a solution $\phi = \phi[h] \in {\mathbf{SO}}_{\tau_0, \tau_1}^{R}$ linearly depending on $h$, and satisfying $\phi \in C^{1+\sigma,\frac{1+\sigma}{2}} \big( \{ (y,\tau) \mid \tau \in (\tau_0, \tau_1), y\in B_{R} \} \big)$, $\sigma \in (0,1)$,
	\begin{equation}
		\langle y\rangle |\nabla \phi| + |\phi|
		\le C R_0 v \langle y\rangle^{ -\min\{ a, n-3\} } \|h\|_{v,2+a}^{R},
	\end{equation}
where $C>0$ is a constant independent of $\tau_0, \tau_1, R, R_0$.

Moreover, if $h$ is even with respect to the $i$-th component of $y$ for some $i \in \{1,2,\dots, n\}$ and all $\tau \in (\tau_0, \tau_1)$, then $\phi$ can be chosen to inherit the same even symmetry.
	
\end{proposition}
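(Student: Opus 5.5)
The plan follows the re-gluing scheme of Propositions \ref{mode0-regluing-prop} and \ref{regluing-mode1-prop}, with one simplification. In higher modes there is no kernel to project out, so no $\varrho$ term and no orthogonality correction are needed. Write
\[
\phi=\Psi+\eta(2y/R_0)\zeta .
\]
Here $\Psi$ solves the outer heat equation
\[
\partial_\tau\Psi=\Delta\Psi+J[\Psi,\zeta]\1_{|y|<R}\quad\text{in } B_{4R},\qquad \Psi=0 \text{ on } \partial B_{4R},
\]
with
\[
J=(pU^{p-1}\Psi+h)\,[1-\eta(2y/R_0)]+A[\zeta],
\]
and $A[\zeta]$ the cut-off commutator exactly as before. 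The inner function solves
\[
\partial_\tau\zeta=\Delta\zeta+pU^{p-1}\zeta+pU^{p-1}\Psi+h \quad\text{in } B_{R_0},
\]
with zero initial and boundary data. Since $R_0$ is a fixed constant, the inner problem is solved by Lemma \ref{phi-perp-lemma} with $R=R_0$. Its hypotheses $\mathbf{P}_1[R_0]=0<1/2$ and $s^{-1}R_0^{2+\epsilon}\le d_0$ hold for $\tau_0$ large relative to $R_0$; otherwise I would simply note that $\tau\ge\tau_0\ge1$ and $R_0$ is a constant, and absorb the discrepancy into $C$. With right-hand side of size $v\langle y\rangle^{-2-a}$, this gives
\[
|\zeta|+\langle y\rangle|\nabla\zeta|\lesssim v\,(R_0\langle y\rangle^{2-n}+\Theta^0_{R_0,2+a}),
\]
and for $a>0$ the right side is $\lesssim vR_0\langle y\rangle^{-\min\{a,n-2\}}$. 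I take $\mathcal{B}_{\rm in}$ with weight $vR_0\langle y\rangle^{-\min\{a,n-3\}}$ on $B_{R_0}$.

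For the outer part, $|A[\zeta]|$ is supported in $R_0/2\le|y|\le R_0$, where it is $\lesssim \|\zeta\|_{\rm in} vR_0^{-1}\langle y\rangle^{-1-\min\{a,n-3\}}$. Since $R_0$ is bounded there, this is $\lesssim vR_0^{-\epsilon_0}\langle y\rangle^{-2-a_1}$ times a constant; the losses are only powers of the fixed $R_0$. Likewise $h(1-\eta)$ is $\lesssim vR_0^{-\epsilon_0}\langle y\rangle^{-2-a_1}$ with $0<a_1<\min\{a,n-3\}$. Lemma \ref{chiM-eq-lem}(3), Case 2, via Remark \ref{qd26Mar13-3-rmk} with potential $V=0$ and $g\equiv1$, then yields $|\Psi|\lesssim vR_0^{-\epsilon_0}\langle y\rangle^{-a_1}$. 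The term $pU^{p-1}\Psi(1-\eta)$ gains a factor $R_0^{-2}$. Feeding $pU^{p-1}\Psi$ back into the inner problem costs $\lesssim R_0^{-\epsilon_0}$ relative to $\|h\|$. Hence the map $(\Psi,\zeta)\mapsto(\mathcal{T}_{\rm ou}[J],\mathcal{T}_{\rm in}[pU^{p-1}\Psi+h])$ sends $\mathcal{B}_{\rm ou}\times\mathcal{B}_{\rm in}$ into itself and is a contraction for $R_0\ge C_0$. Linearity in $h$ follows from uniqueness in the full linear space.

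\textbf{The main obstacle} is that the outer problem is the plain heat equation, which does not preserve the orthogonality $\mathbf{SO}$. I therefore project the outer right-hand side onto the complement of modes $0,1$. Projection commutes with multiplication by radial functions, and $\eta(2y/R_0)$, $U$ are radial, so $J\in\mathbf{SO}$ whenever $\Psi,\zeta,h\in\mathbf{SO}$, and by Lemma \ref{chiM-eq-lem}(2) so is $\mathcal{T}_{\rm ou}[J]$. The spaces are thus invariant with the $\mathbf{SO}^{4R}$ and $\mathbf{SO}^{R_0}$ constraints, and Lemma \ref{phi-perp-lemma} is legitimately applicable to $\zeta$. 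Evenness in $y_i$ is inherited in the same way: include it in both spaces, and use Lemma \ref{chiM-eq-lem}(4) together with the evenness clause of Lemma \ref{phi-perp-lemma}.

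To finish, I improve the decay of $\Psi$ by rerunning Lemma \ref{chiM-eq-lem}(3) finitely many times with $|J|\lesssim vR_0\langle y\rangle^{-2-\min\{a,n-3\}}$. This gives $|\Psi|\lesssim R_0v\langle y\rangle^{-\min\{a,n-3\}}$; the exponent $n-3$ instead of $n-2$ avoids the logarithmic borderline case. Then $|\phi|\le|\Psi|+|\zeta|$ obeys the claimed bound. The gradient bound and the $C^{1+\sigma,\frac{1+\sigma}{2}}$ regularity follow from parabolic scaling, since $h$ satisfies $|h|\lesssim v\langle y\rangle^{-2-a}$.
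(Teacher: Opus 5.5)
Your architecture matches the paper's. You use the same splitting $\phi=\Psi+\eta(2y/R_0)\zeta$ with no $\varrho$-term, solve the inner problem by Lemma~\ref{phi-perp-lemma} with constant radius $R_0$ and the outer one by Lemma~\ref{chiM-eq-lem} with $V=0$, keep $\mathbf{SO}$ and parity invariant, and finish with a decay improvement.

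\textbf{The gap is the inner weight.} Lemma~\ref{phi-perp-lemma} actually gives $v(\langle y\rangle^{-a}+R_0\langle y\rangle^{2-n})$, and you replace it by the cruder $vR_0\langle y\rangle^{-m}$, with $m:=\min\{a,n-3\}$.
\begin{itemize}
\item The two weights agree up to constants near $y=0$. On the cut-off annulus $R_0/2\le|y|\le R_0$, however, yours is $\sim vR_0^{1-m}$ instead of $\sim vR_0^{-m}$.
\item Computed from your weight, $|\nabla\zeta\cdot\nabla[\eta(2y/R_0)]|\lesssim\|\zeta\|_{\rm in}vR_0\langle y\rangle^{-1-m}R_0^{-1}$. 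Hence $|A[\zeta]|\lesssim\|\zeta\|_{\rm in}v\langle y\rangle^{-1-m}\1_{R_0/2\le|y|\le R_0}$, and your displayed bound, with its extra $R_0^{-1}$, is off by exactly this factor.
\item Consequently $\|\Psi\|_{\rm ou}\lesssim R_0^{1+a_1+\epsilon_0-m}\|\zeta\|_{\rm in}$.
\item The return map $\Psi\mapsto\mathcal{T}_{\rm in}[pU^{p-1}\Psi]$ gains only $R_0^{-\epsilon_0}$ in your norm, because the lemma's output $vR_0\langle y\rangle^{2-n}$ saturates your weight at the origin.
\end{itemize}
So with the lemmas you invoke, the loop $\zeta\mapsto\Psi\mapsto\zeta$ is only bounded by $R_0^{1+a_1-m}$. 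This is $\ge R_0^{a_1}$ whenever $m\le1$. That covers every $a$ when $n=4$, and in particular $n=5$, $a=1$, the case used in Step~6 of the proof of Lemma~\ref{mu1-xi-lem}.

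Your sentence ``the losses are only powers of the fixed $R_0$'' is exactly what the scheme cannot afford. The contraction is produced by taking $R_0\ge C_0$ large, so every link of the loop must carry a negative power of $R_0$.

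\textbf{The fix is the paper's choice:} measure $\zeta$ against $v(\langle y\rangle^{-a}+R_0\langle y\rangle^{2-n})$ on $B_{R_0}$. This weight is $\sim vR_0^{-m}$ on the annulus, so for $0<\epsilon_0<m$ and $2a_1\le m-\epsilon_0$,
\[
|A[\zeta]|\lesssim\|\zeta\|_{\rm in}v\langle y\rangle^{-2-m}\1_{R_0/2\le|y|\le R_0}\lesssim\|\zeta\|_{\rm in}vR_0^{-\epsilon_0}\langle y\rangle^{-2-2a_1}.
\]
This supplies an extra factor $R_0^{-a_1}$ in the map $\zeta\mapsto\Psi$, and the loop then gains $R_0^{-a_1-\epsilon_0}$. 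This is where $n\ge4$ (that is, $m>0$) enters. It is also the real origin of the exponent $n-3$: the term $R_0\langle y\rangle^{2-n}$ evaluated at $|y|\sim R_0$. It has nothing to do with avoiding a logarithm.

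Two smaller points:
\begin{itemize}
\item The smallness hypothesis of Lemma~\ref{phi-perp-lemma} cannot be ``absorbed into $C$''. The constant $C$ must be independent of $R_0$, and $d_0$ is needed for the lemma to hold at all. It does follow from the assumptions: $R_0^{2+\epsilon/2}\tau^{-1}\le R^{2+\epsilon/2}\tau^{-1}\lesssim R^{-\epsilon/2}\le C_0^{-\epsilon/2}$.
\item The heat flow does preserve $\mathbf{SO}$; this is Lemma~\ref{chiM-eq-lem}(2) with $V=0$. So there is no obstacle and no projection is needed. You only need to check $J\in\mathbf{SO}$, using $\nabla\zeta\cdot\nabla[\eta(2y/R_0)]=\partial_r\zeta\,\partial_r[\eta(2|y|/R_0)]$ and the fact that spherical projection commutes with $\partial_r$.
\end{itemize}
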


\begin{proof}
	
The proof is similar to Proposition \ref{mode0-regluing-prop}. We give details for completeness. Without loss of generality, we assume that $h$ is evenly symmetric about $y_k$ for $k\in \{1,2,\dots, n\}$. The other cases, including no even symmetry assumption or multiple even symmetry assumptions about spatial variables of $h$, can be handled similarly. Denote $\| h \| = \| h \|_{v,2+a}^{R}$ in this proof. Set 
	\begin{equation*}
		\phi = \Psi(y,\tau) + \eta(2 y / R_0) \zeta(y,\tau)
	\end{equation*}
with a large constant $R_0$. To solve $\partial_{\tau} \phi=\Delta \phi + pU^{p-1}\phi + h$ for $\tau \in (\tau_0, \tau_1)$, $y \in B_{R}$, since $4 \le R_0 < R$ implies $\eta =  \eta(2 y / R_0) \eta$, then it suffices to  consider the following gluing system for $(\Psi, \zeta)$
\begin{equation}\label{1d26Mar408-3}
\begin{aligned}
&
		\partial_{\tau} \Psi
		= 
		\Delta \Psi 
		+
		J[\Psi,\zeta] \1_{|y|<R}
		\mbox{ for } \tau \in (\tau_0, \tau_1), y \in B_{4 R},
        \\
        &
		\Psi = 0
		\mbox{ for } \tau \in (\tau_0, \tau_1), y \in \partial B_{4 R},
		\quad
		\Psi(\cdot,\tau_0) = 0
		\mbox{ in } B_{4 R(\tau_0)},
\end{aligned}
	\end{equation}
	\begin{equation}\label{qd26Mar408-4}
		\partial_{\tau} \zeta
		= 
		\Delta \zeta + pU^{p-1} \zeta + H[\Psi]
		\mbox{ \ for \ } \tau \in (\tau_0, \tau_1), y \in B_{R_0},
	\end{equation}
	where
	\begin{equation}
		\begin{aligned}
			&
			J[\Psi,\zeta] :=
			(pU^{p-1}\Psi + h) [
			1- \eta(2 y / R_0)
			]
			+ 
			A[\zeta],
			\\
			&
			A[\zeta] :=
			\zeta \Delta [\eta(2 y / R_0)]  + 2 \nabla \zeta \cdot \nabla  [\eta(2 y / R_0)],
			\quad
			H[\Psi] := p U^{p-1} \Psi
			+ h.
		\end{aligned}
	\end{equation}

	The mappings for solving \eqref{1d26Mar408-3} and \eqref{qd26Mar408-4} are given as the following forms
	\begin{equation}\label{qd26Mar408-7}
		\Psi =
		\mathcal{T}_{{\rm{ou}}} [
		J[\Psi,\zeta] \1_{|y|<R}  ](y,\tau),
		\quad
		\zeta = 
		\mathcal{T}_{\rm{in}} [ H[\Psi] ](y,\tau) \mbox{ \ with \ } \zeta(y,\tau_0) = 0 \mbox{ \ in \ } B_{R_0(\tau_0)},
	\end{equation}
	where $\mathcal{T}_{\rm{ou}}$ is the linear mapping given by solving the homogeneous part of \eqref{1d26Mar408-3}; and $\mathcal{T}_{{\rm{in}}}$ is given by Lemma \ref{phi-perp-lemma} (with $R(\tau) = R_0$). The leading term of $H[\Psi]$ is $h$. Obviously, $\|h\|_{v,2+a}^{R_0} \le \|h\|$. Since $h \in  {\mathbf{SO}}_{\tau_0, \tau_1}^{R}$ is evenly symmetric about $y_k$, under the assumption
	\begin{equation}
		R_0\ge 2, \ 
        v\ge 0,
        \ 
			|\dot{v}| = O(\tau^{-1} v), \  
			v \in \mathbf{AP}((\tau_0,\infty)),
			\ 
			\tau^{-1} R_0^{2+\epsilon_1} \ll 1 \mbox{ with } \epsilon_1>0,
	\end{equation}
then for $a > 0$, Lemma \ref{phi-perp-lemma} gives the following apriori estimate
	\begin{equation}\label{qd26Mar4020-2}
		\langle y \rangle 
		|\nabla \mathcal{T}_{\rm{in}}[h]|
		+	|\mathcal{T}_{\rm{in}}[h] | \le
		D_{\rm{in}} \|h\| v (\langle y \rangle^{-a} + R_0 \langle  y \rangle^{2-n})
        \mbox{ \ for \ } |y|<R_0, 
	\end{equation}
where $D_{{\rm{in}}}\ge 1$ is a large constant. Moreover, $\mathcal{T}_{\rm{in}}[h] \in {\mathbf{SO}}_{\tau_0, \tau_1}^{R_0}$ is evenly symmetric about $y_k$. For this reason, we will solve for $\zeta$ in the space
	\begin{equation*}
		\mathcal{B}_{\rm{in}} := \{
		g \mid
		\| g \|_{\rm{in}} \le 
		2D_{\rm{in}} \|h\|, \ g \in {\mathbf{SO}}_{\tau_0, \tau_1}^{R_0} \mbox{ is evenly symmetric about $y_k$} \}
	\end{equation*}
	endowed with the norm
	\begin{equation*}
		\| g \|_{\rm{in}} := \inf \big\{ C \mid \langle y \rangle |\nabla g(y,\tau)|  + |g(y,\tau)| \le C v (\langle y \rangle^{-a} + R_0 \langle  y \rangle^{2-n}) \mbox{ for all }  \tau\in (\tau_0, \tau_1), y\in B_{R_0} \big\}.
	\end{equation*}
	It is ready to get that $\mathcal{B}_{\rm{in}}$ is a complete norm space.

	For any $\zeta \in \mathcal{B}_{{\rm{in}}}$, let us estimate $J[0,\zeta] = h [
	1- \eta(2 y / R_0)
	]
	+ 
	A[\zeta]$. Under the assumption
\begin{equation*}
a>0, \quad 
	n>3, \quad 
	0 < \epsilon_0 < \min\{a, n-3\}, \quad 
	0<a_1 \le (\min\{a, n-3\} - \epsilon_0)/2,
\end{equation*}
then
\begin{equation}\label{qd26Mar4018-1}
		|A[\zeta]|
		\lesssim
		\1_{R_0/2 \le |y| \le R_0}
		\| \zeta \|_{\rm{in}} v \langle y \rangle^{-2-\min\{a, n-3\} }
		\lesssim
		\1_{R_0/2 \le |y| \le R_0}
		\| \zeta \|_{\rm{in}} v R_0^{-\epsilon_0} \langle y \rangle^{-2-2a_1}.
\end{equation}
Since for $\zeta \in \mathcal{B}_{{\rm{in}}}$ and $\eta(\cdot)$ is radially symmetric, for $(i,j) \in \{ (0,1), (1,k) \mid k=1,2,\dots,n \}$, we have
\begin{equation*}
	\begin{aligned}
		&
		\int_{S^{n-1}} \big( \nabla \zeta \cdot \nabla  [\eta(2 y / R_0)] \big)(|y| w, \tau) \Upsilon_{i,j}(w) \rmd w
		\\
		= \ & \int_{S^{n-1}}
		\Big( \partial_{|y|} \big[ \zeta(|y| w, \tau) \big] \partial_{|y|}  [\eta(2 |y| / R_0)]
		+
		\Big\langle 
		\nabla_{S^{n-1}} \big[ \zeta(|y| w, \tau) \big], \nabla_{S^{n-1}} [\eta(2 |y| / R_0)]
		\Big\rangle \Big)
		\Upsilon_{i,j}(w) \rmd w
		\\
		&
		\mbox{since $\eta$ is radially symmetric, we have $\nabla_{S^{n-1}} [\eta(2 |y| / R_0)] = 0$. Then}
		\\
		= \ & \partial_{|y|} [\eta(2 |y| / R_0)] \partial_{|y|} \int_{S^{n-1}} \zeta(|y| w, \tau) \Upsilon_{i,j}(w) \rmd w
		= 0.
	\end{aligned}
\end{equation*} 
It follows that $A[\zeta] \in {\mathbf{SO}}_{\tau_0, \tau_1}^{R}$ is evenly symmetric about $y_k$. Also, we have
\begin{equation}\label{qd26Mar4018-2}
		| h [ 1- \eta(2 y / R_0) ] | \lesssim 
		\1_{ |y|\ge R_0/2 } v\langle y\rangle^{-2-a } \| h\|
		\lesssim 
		\1_{ |y|\ge R_0/2 } v
		R_{0}^{-\epsilon_0}
		\langle y \rangle^{-2-a_1}
		\| h \|
	\end{equation}
	provided
	\begin{equation*}
		0<\epsilon_0<a, \ 0< a_1 \le a-\epsilon_0.
	\end{equation*}
	Since $\eta(\cdot)$ is radially symmetric, then $h [ 1- \eta(2 y / R_0) ] \in {\mathbf{SO}}_{\tau_0, \tau_1}^{R}$ is evenly symmetric about $y_k$.
	In sum,
	\begin{equation*}
		|J[0, \zeta] \1_{|y|<R} | \lesssim D_{\rm{in}} \|h\| v 
		R_0^{-\epsilon_0} \langle y \rangle^{-2-a_1}
		\mbox{ \ and \ }
		J[0, \zeta] \1_{|y|<R} \in {\mathbf{SO}}_{\tau_0, \tau_1}^{4 R}
        \mbox{ is evenly symmetric about $y_k$}.
	\end{equation*}
	We restrict $a_1 \in (0, n-2)$.
	Under the assumption 
	\begin{equation}\label{qd26Mar4021-3}
		R\ge 2, 
		\ 
        v\ge 0,
        \ 
		|\dot{v}| = O(\tau^{-1} v),
		\ 
		|\dot{R}| = O(\tau^{-1} R),
		\ 
		R^{2+\epsilon_1} \ll \tau 
		\mbox{ with } 
		\epsilon_1 > 0,
	\end{equation}
	by Lemma \ref{chiM-eq-lem} and Remark \ref{qd26Mar13-3-rmk}, then
	\begin{equation}\label{qd26Mar4020-1}
		| \mathcal{T}_{{\rm{ou}}}[J[0, \zeta] \1_{|y|<R} ] |
		\le 
		D_{\rm{ou}}	D_{\rm{in}} \|h\| v
		R_{0}^{-\epsilon_0} \langle y \rangle^{-a_1}
		\mbox{ \ for \ } \tau\in (\tau_0, \tau_1), y\in B_{4R}
	\end{equation}
	with a large constant $D_{\rm{ou}} \ge 1$. Moreover, $\mathcal{T}_{\rm{ou}}[J[0, \zeta] \1_{|y|<R}] \in {\mathbf{SO}}_{\tau_0, \tau_1}^{4 R}$ is evenly symmetric about $y_k$. This suggests that we solve $\Psi$ in the space
	\begin{equation*}
		\mathcal{B}_{\rm{ou}} := \big\{
		f \mid
		\| f \|_{\rm{ou}} \le
		2 D_{\rm{ou}}	D_{\rm{in}} \|h\|, \ f \in {\mathbf{SO}}_{\tau_0, \tau_1}^{4 R} 
        \mbox{ is evenly symmetric about $y_k$}
        \big\}
	\end{equation*}
	endowed with the norm
	\begin{equation*}
		\| f \|_{\rm{ou}} := \inf
		\big\{ C \mid  |f(y,\tau)| \le C  v
		R_{0}^{-\epsilon_0} \langle y \rangle^{-a_1}
		\mbox{ for all } \tau\in (\tau_0,\tau_1), y\in B_{4 R} \big\}.
	\end{equation*}
	$\mathcal{B}_{\rm{ou}}$ is a complete norm space.

	For any $\Psi \in \mathcal{B}_{\rm{ou}}$, since $U, \eta$ are radially symmetric, we have that $pU^{p-1} \Psi [ 1- \eta(2 y / R_0) ] \1_{|y|<R} \in {\mathbf{SO}}_{\tau_0, \tau_1}^{4 R}$ is evenly symmetric about $y_k$, and
	\begin{equation}\label{qd26Mar4021-1}
			| pU^{p-1} \Psi [ 1- \eta(2 y / R_0) ] \1_{|y|<R} |
			\lesssim 
			\1_{R_0/2 \le |y| < R} \| \Psi \|_{\rm{ou}}  v
			R_{0}^{-\epsilon_0} \langle y \rangle^{-4-a_1} 
			\lesssim  
			\1_{R_0/2 \le |y| < R} R_0^{-2} \| \Psi \|_{\rm{ou}} v
			R_{0}^{-\epsilon_0} \langle y \rangle^{-2-a_1}.
	\end{equation}
	Since $\| \Psi \|_{\rm{ou}} \le 2 D_{\rm{ou}}	D_{\rm{in}} \|h\|$ and $R_0 \gg 1$, by \eqref{qd26Mar4021-3} and Lemma \ref{chiM-eq-lem}, we have $ \mathcal{T}_{\rm{ou}}[
	J[\Psi, \zeta] \1_{|y|<R} ]  \in \mathcal{B}_{\rm{ou}} $.  For $\Psi \in \mathcal{B}_{\rm{ou}}$ and $a\in (0,2]$, we have that $p U^{p-1} \Psi \in {\mathbf{SO}}_{\tau_0, \tau_1}^{4 R}$ is evenly symmetric about $y_k$, and
	\begin{equation}\label{qd26Mar4021-2}
		\| p U^{p-1} \Psi \|_{v,2+a}^{R_0} 
		\lesssim 
		\| \Psi \|_{\rm{ou}} 
		R_0^{-\epsilon_0}.
	\end{equation}
By Lemma \ref{phi-perp-lemma}, since $\| \Psi \|_{\rm{ou}} \le 2 D_{\rm{ou}}	D_{\rm{in}} \|h\|$ and $R_0 \gg 1$, we have
	$ \mathcal{T}_{\rm{in}}[H[\Psi] ] \in \mathcal{B}_{\rm{in}} $.

	The contraction mapping property can be deduced similarly. Indeed, for any $(\tilde{\Psi}_j, \tilde{\zeta}_j) \in \mathcal{B}_{\rm{ou}} \times \mathcal{B}_{\rm{in}}$, $j=1,2$,
	\begin{equation*}
			( J[\tilde{\Psi}_1, \tilde{\zeta}_1]
			-
			J[\tilde{\Psi}_2, \tilde{\zeta}_2] ) \1_{|y|<R}
			= 
			p U^{p-1} (\tilde{\Psi}_1 - \tilde{\Psi}_2) [
			1- \eta(2 y / R_0)
			] \1_{|y|<R}
			+ 
			A[\tilde{\zeta}_1 - \tilde{\zeta}_2] \1_{|y|<R}.
	\end{equation*}
	Since
	\begin{equation*}
		\begin{aligned}
			&
			\big| p U^{p-1} (\tilde{\Psi}_1 - \tilde{\Psi}_2) [
			1- \eta(2 y / R_0)
			] \1_{|y|<R} \big|
			\stackrel{\eqref{qd26Mar4021-1}}{\lesssim}
			\1_{R_0/2 \le |y| < R} R_0^{-2}
			\| \tilde{\Psi}_1 - \tilde{\Psi}_2 \|_{\rm{ou}} v
			R_{0}^{-\epsilon_0} \langle y \rangle^{-2-a_1},
			\\
			&
			| A[\tilde{\zeta}_1 - \tilde{\zeta}_2] |
			\stackrel{\eqref{qd26Mar4018-1}}{\lesssim} 
			\1_{R_0/2 \le |y| \le R_0}
			R_0^{-a_1}
			\| \tilde{\zeta}_1 - \tilde{\zeta}_2 \|_{\rm{in}} v 
			R_0^{-\epsilon_0} \langle y \rangle^{-2-a_1},
		\end{aligned}
	\end{equation*}
	then $|( J[\tilde{\Psi}_1, \tilde{\zeta}_1]
	-
	J[\tilde{\Psi}_2, \tilde{\zeta}_2] ) \1_{|y|<R}| \lesssim R_0^{-a_1}
	( \| \tilde{\Psi}_1 - \tilde{\Psi}_2 \|_{\rm{ou}} + 
	\| \tilde{\zeta}_1 - \tilde{\zeta}_2 \|_{\rm{in}} ) v 
	R_0^{-\epsilon_0} \langle y \rangle^{-2-a_1}$. Similar to \eqref{qd26Mar4020-1},
	\begin{equation}\label{qd26Mar4019-2}
		\big\| \mathcal{T}_{\rm{ou}} \big[ J[\tilde{\Psi}_1, \tilde{\zeta}_1] \1_{|y|<R} \big] - \mathcal{T}_{\rm{ou}} \big[
		J[\tilde{\Psi}_2, \tilde{\zeta}_2] \1_{|y|<R} \big] \big\|_{\rm{ou}}
		\lesssim 
		R_0^{-a_1}
		\big(
		\| \tilde{\Psi}_1 - \tilde{\Psi}_2 \|_{\rm{ou}}
		+
		\| \tilde{\zeta}_1 - \tilde{\zeta}_2 \|_{\rm{in}}
		\big).
	\end{equation}
\begin{equation*}
		| H[\tilde{\Psi}_1] - H[\tilde{\Psi}_2] | =
		| p U^{p-1} (\tilde{\Psi}_1 - \tilde{\Psi}_2) |
		\lesssim R_0^{-\epsilon_0} \| \tilde{\Psi}_1 - \tilde{\Psi}_2 \|_{\rm{ou}} v \langle y \rangle^{-2-a},
	\end{equation*}
Similar to \eqref{qd26Mar4020-2},
	\begin{equation}\label{qd26Mar4019-3}
		\|  \mathcal{T}_{\rm{in}}[H[\tilde{\Psi}_1]] - \mathcal{T}_{\rm{in}}[H[\tilde{\Psi}_2]]  \|_{\rm{in}} \lesssim  R_0^{-\epsilon_0} \| \tilde{\Psi}_1 - \tilde{\Psi}_2 \|_{\rm{ou}}.
	\end{equation}

	Combining \eqref{qd26Mar4019-2}, \eqref{qd26Mar4019-3}, we get the contraction mapping property for \eqref{qd26Mar408-7}. By the Banach fixed-point theorem, we find a solution 
	$
	(\Psi, \zeta) \in \mathcal{B}_{\rm{ou}} \times \mathcal{B}_{\rm{in}}
	$
	for \eqref{1d26Mar408-3} and \eqref{qd26Mar408-4}. Denote
	\begin{equation*}
    \begin{aligned}
		\mathcal{X} := \ & \big\{
		f \mid
		\| f \|_{\rm{ou}} <\infty, \ f \in {\mathbf{SO}}_{\tau_0, \tau_1}^{4 R} \mbox{ is evenly symmetric about $y_k$} \big\} 
        \\
        &
        \times \big\{
		g \mid
		\| g \|_{\rm{in}} <\infty, \  g \in {\mathbf{SO}}_{\tau_0, \tau_1}^{R_0} \mbox{ is evenly symmetric about $y_k$} \big\}
           \end{aligned}
	\end{equation*}  
	including $\mathcal{B}_{\rm{ou}} \times \mathcal{B}_{\rm{in}}$. We can repeat the contraction mapping argument in $\mathcal{X}$. Since \eqref{1d26Mar408-3} and \eqref{qd26Mar408-4} is a linear equation system, by the uniqueness in the Banach fixed-point theorem applied in $\mathcal{X}$, $(\Psi, \zeta)$ depends on $h$ linearly.

	Hereafter we always will regard $D_{\rm{ou}}$, $D_{{\rm{in}}}$  as general constants. By \eqref{qd26Mar408-7}, we have
	\begin{equation*}
		\phi(y,\tau_0) = \Psi(y,\tau_0) + \eta( 2 y / R_0) \zeta(y,\tau_0)
		= 0 \mbox{ \ in \ } B_{4 R(\tau_0)}.
	\end{equation*}

	By \eqref{qd26Mar4018-1} and \eqref{qd26Mar4018-2},
	$ |J[0,\zeta]| \lesssim \|h\| v \langle y \rangle^{-2-\min\{a, n-3\} }$. To improve the spatial decay of $\Psi$, under the assumption \eqref{qd26Mar4021-3}, applying Lemma \ref{chiM-eq-lem} to \eqref{1d26Mar408-3} finitely many time, we have
	$
	|\Psi| \lesssim v \langle y\rangle^{ -\min\{ a, n-3\} } \|h\|
	$. Thus,
	\begin{equation*}
		|\phi| = | \Psi + \eta(2 y / R_0) \zeta |
		\lesssim 
		v \langle y\rangle^{ -\min\{ a, n-3\} } \|h\| + 
		\1_{|y| \le R_0} \|h\| v (\langle y \rangle^{-a} + R_0 \langle  y \rangle^{2-n})
		\lesssim R_0 v \langle y\rangle^{ -\min\{ a, n-3\} } \|h\|.
	\end{equation*}
	Since
	$
	| h \1_{|y|<R} | \le \| h \| v \langle y \rangle^{-2-a} \1_{|y|<R}
	$, by the scaling argument, we get the estimate of $\nabla \phi$.
\end{proof}

\appendix

\section{Proof of Lemma \ref{qd25Oct22-3-lem}}\label{lem-proof-26June10-sec}

We first present a useful inequality. See \cite[Lemma B.1 (1)]{WYZZ2024} for instance.
\begin{lemma}
	
	Given $a \in \mathbb{R}$, $b<0$, $r_0>0$, then for any $r>r_0$,
	\begin{equation}\label{23Oct07-1}
		\big( C(a,b,r_0) \big)^{-1}  r^a  \rme^{br} \le \int_r^\infty  x^a \rme^{bx} \rmd x
		\le  C(a,b,r_0)  r^a  \rme^{br}
	\end{equation}
	with a constant $C(a,b,r_0)>1$ depending on $a, b, r_0$.
	
\end{lemma}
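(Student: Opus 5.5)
The plan is to obtain \eqref{23Oct07-1} by reducing to a single scale-free comparison and then splitting into the cases $a\le 0$ and $a>0$.

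\textbf{Normalization.} Substituting $x=r+s/|b|$ gives
\begin{equation*}
\int_r^\infty x^a \rme^{bx}\rmd x = |b|^{-1} r^{a}\rme^{br}\int_0^\infty \Big(1+\frac{s}{|b| r}\Big)^{a}\rme^{-s}\rmd s .
\end{equation*}
It therefore suffices to show that
\begin{equation*}
I(r):=\int_0^\infty (1+s/(|b|r))^{a}\rme^{-s}\rmd s
\end{equation*}
is bounded above and below by positive constants, uniformly in $r>r_0$.

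\textbf{Lower bound.} Restrict the integral to $s\in[0,1]$. There $1+s/(|b|r)\in[1,1+1/(|b|r_0)]$. Hence the integrand is at least $\min\{1,(1+1/(|b|r_0))^{a}\}\rme^{-1}$, which gives $I(r)\ge c(a,b,r_0)>0$.

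\textbf{Upper bound.}
\begin{itemize}
\item If $a\le 0$, then $(1+s/(|b|r))^a\le 1$, so $I(r)\le 1$.
\item If $a>0$, use $1+s/(|b|r)\le (1+1/(|b|r_0))(1+s)$ for $r>r_0$ and $s\ge0$. This gives $I(r)\le (1+1/(|b|r_0))^{a}\int_0^\infty(1+s)^a\rme^{-s}\rmd s<\infty$.
\end{itemize}

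Combining the bounds with the factor $|b|^{-1}$ yields \eqref{23Oct07-1}, with $C(a,b,r_0)$ depending only on $a,b,r_0$. There is no real obstacle here. The only point needing care is that the constants must be uniform in $r$, which is why $r$ is kept away from $0$ by $r_0$: it makes $1/(|b|r)$ bounded.
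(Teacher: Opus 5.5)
Your proof is correct. The paper does not prove this lemma itself; it only quotes it from \cite[Lemma B.1 (1)]{WYZZ2024}. Your argument is therefore a self-contained proof where the text gives none, rather than a variant of an argument in the paper.

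The substitution $x=r+s/|b|$ factors out exactly $|b|^{-1}r^{a}\rme^{br}$, so the claim reduces to a uniform two-sided bound on $I(r)$. You then obtain that bound with explicit constants:
\begin{itemize}
\item $I(r)\ge \rme^{-1}\min\{1,(1+1/(|b|r_0))^{a}\}$;
\item $I(r)\le 1$ if $a\le 0$, and $I(r)\le (1+1/(|b|r_0))^{a}\int_0^\infty(1+s)^a\rme^{-s}\rmd s$ if $a>0$.
\end{itemize}

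A common alternative is a soft compactness argument. The ratio $r^{-a}\rme^{-br}\int_r^\infty x^a\rme^{bx}\rmd x$ is continuous and positive on $[r_0,\infty)$. It tends to $|b|^{-1}$ as $r\to\infty$, by L'H\^opital or by dominated convergence applied to your $I(r)$. Hence it is bounded above and below. That route is shorter to state but gives no explicit constants.

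Your version gives explicit constants and shows that $r_0$ enters only through $1/(|b|r)\le 1/(|b|r_0)$. That dependence is genuinely needed: as $r\to0^+$, the upper bound fails for $a>0$ and the lower bound fails for $a<0$.

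One cosmetic point: the statement asks for a single constant $C(a,b,r_0)>1$. Take $C$ to be the maximum of your upper constant, the reciprocal of your lower constant, and $2$.
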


\begin{proof}[Proof of Lemma \ref{qd25Oct22-3-lem}]

	First, we have 
	\begin{equation}\label{qd25Oct22-4}
		\int_{\mathbb{R}^n}	\rme^{-C_0 | z- w |^2 } |w|^b  \rmd w \sim \1_{|z| \le 1} + |z|^{b} \1_{|z| > 1}.
	\end{equation}
	Indeed, for the upper bound,
	\begin{equation*}
		\begin{aligned}
			&
			\int_{\mathbb{R}^n}	\rme^{-C_0 | z- w |^2 } |w|^b  \rmd w
			= \Big( \int_{|w| \le |z|/2} + \int_{ |z|/2 < |w| \le 2|z| } + \int_{ |w| > 2|z| } \Big)	\rme^{-C_0 | z- w |^2 } |w|^b  \rmd w
			\\
			\lesssim \ & \rme^{- 4^{-1} C_0 |z|^2 } \int_{|w| \le |z|/2} |w|^b  \rmd w
			+ 
			|z|^b \int_{ |z- w| \le 3|z| } \rme^{-C_0 | z- w |^2 } \rmd w
			+ 
			\int_{ |w| > 2|z| } \rme^{- 4^{-1} C_0 |w|^2 } |w|^b  \rmd w
			\\
			\sim \ & \rme^{- 4^{-1} C_0 |z|^2 } \int_{0}^{|z|/2} r^{b+n-1}  \rmd r
			+ 
			|z|^b \int_{0}^{3|z|} \rme^{-C_0 r^2 } r^{n-1} \rmd r
			+ 
			\int_{2|z|}^{\infty} \rme^{- 4^{-1} C_0 r^2 } r^{b+n-1}  \rmd r
			\\
			\stackrel{b>-n}{\sim} \ & \rme^{- 4^{-1} C_0 |z|^2 } |z|^{b+n}
			+ 
			|z|^b
			\big( |z|^n \1_{|z| \le 1} + \1_{|z| > 1} \big)
			+ \1_{|z| \le 1} + |z|^{b+n-2} \rme^{-C_0 |z|^2} \1_{|z| > 1}
			\\
			\sim \ & \1_{|z| \le 1} + |z|^{b} \1_{|z| > 1},
		\end{aligned}
	\end{equation*}
	where we use that for $A >0$, then
	\begin{equation*}
    \int_{2|z|}^{\infty} \rme^{- A r^2 } r^{b+n-1}  \rmd r
			\sim  \int_{4|z|^2}^{\infty} \rme^{- A a } a^{\frac{b+n}{2}-1} \rmd a
			\stackrel{b>-n, \eqref{23Oct07-1}}{\sim} \1_{|z| \le 1} + |z|^{b+n-2} \rme^{- 4 A |z|^2} \1_{|z| > 1}.
	\end{equation*}
	For the lower bound, similarly,
	\begin{equation*}
		\begin{aligned}
			&
			\int_{\mathbb{R}^n}	\rme^{-C_0 | z- w |^2 } |w|^b  \rmd w
            \\
			\gtrsim \ & \rme^{- \frac{9}{4} C_0 |z|^2 } \int_{|w| \le |z|/2} |w|^b  \rmd w
			+ 
			|z|^b \int_{ |z|/2 < |w| \le 2|z| } \rme^{-C_0 | z- w |^2 } \rmd w
			+ 
			\int_{ |w| > 2|z| } \rme^{- \frac{9}{4} C_0 |w|^2 } |w|^b  \rmd w
			\\
			\gtrsim \ & \rme^{- \frac{9}{4} C_0 |z|^2 } \int_{|w| \le |z|/2} |w|^b  \rmd w
			+ 
			|z|^b \int_{ |z-w| < |z|/2 } \rme^{-C_0 | z- w |^2 } \rmd w
			+ 
			\int_{ |w| > 2|z| } \rme^{- \frac{9}{4} C_0 |w|^2 } |w|^b  \rmd w
			\\
			\sim \ & \rme^{- \frac{9}{4} C_0 |z|^2 } \int_{0}^{|z|/2} r^{b+n-1}  \rmd r
			+ 
			|z|^b \int_{0}^{|z|/2} \rme^{-C_0 r^2 } r^{n-1} \rmd r
			+ 
			\int_{2|z|}^{\infty} \rme^{- \frac{9}{4} C_0 r^2 } r^{b+n-1}  \rmd r
			\\
			\stackrel{b>-n}{\sim} \ & \rme^{- \frac{9}{4} C_0 |z|^2 } |z|^{b+n}
			+ 
			|z|^b
			\big( |z|^n \1_{|z| \le 1} + \1_{|z| > 1} \big)
			+ \1_{|z| \le 1} + |z|^{b+n-2} \rme^{- 9 C_0 |z|^2} \1_{|z| > 1}
			\\
			\sim \ & \1_{|z| \le 1} + |z|^{b} \1_{|z| > 1}.
		\end{aligned}
	\end{equation*}
	Then,
	\begin{equation*}
		\begin{aligned}
			&
			\int_{\mathbb{R}^n}	\rme^{-C_0 \frac{|x-z|^2}{t} } |z|^{b} \rmd z
			= t^{\frac{b+n}{2}} \int_{\mathbb{R}^n}	\rme^{-C_0 | t^{-\frac{1}{2}} x- w |^2 } |w|^b  \rmd w
			\\
			\stackrel{\eqref{qd25Oct22-4}}{\sim} \ & t^{\frac{b+n}{2}}
			\big( \1_{|t^{-\frac{1}{2}} x| \le 1} + |t^{-\frac{1}{2}} x|^{b} \1_{|t^{-\frac{1}{2}} x| > 1} \big)
			= t^{\frac{b+n}{2}} \1_{|x| \le t^{\frac{1}{2}}} + t^{\frac{n}{2}} |x|^{b} \1_{|x| > t^{\frac{1}{2}}}.
		\end{aligned}
	\end{equation*}
\end{proof}

\section{Proof of (\ref{move-26Sep2-2})}

\eqref{move-26Sep2-2} follows directly from the following lemma.
\begin{lemma}\label{move-26Spe2-1-lem}

Given an integer $n > 0$, constants $a > -n$, $C_1 >0$, and $\mathbf{m} = (m_1,m_2,\dots, m_n) \in \mathbb{N}^n$, suppose
\begin{equation*}
|\partial_{z}^{\mathbf{k}} f(z)| \le C_1 |z|^{a - \| \mathbf{k} \|_{\ell_1}}
\mbox{ \ for all $z\in \mathbb{R}^n$, $\mathbf{k} = (k_1,k_2,\dots, k_n) \in \mathbb{N}^n$ satisfying $k_i \le m_i$, $i=1,2,\dots, n$},  
\end{equation*}
then there exist a constant $C>0$ depending on $n, a, \mathbf{m}$ such that
\begin{equation*}
\Big| \partial_{x}^{\mathbf{m}} \Big[ ( 4\pi t )^{-\frac n2} \int_{\mathbb{R}^n} 
	\rme^{-\frac{|x-z|^2}{4t} } f(z) \rmd z \Big] \Big|
\le C C_1 |x|^{a - \| \mathbf{m} \|_{\ell_1}}
\mbox{ \ for \ } |x| \ge t^{\frac{1}{2}} >0.
\end{equation*}
\end{lemma}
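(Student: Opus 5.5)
The plan is to split the convolution according to the distance from $x$. Set $G_t(w):=(4\pi t)^{-\frac n2}\rme^{-\frac{|w|^2}{4t}}$ and fix $x$ with $|x|\ge t^{\frac12}$. Take a smooth radial cutoff $\chi(z)=\eta\big(\tfrac{4(z-x)}{|x|}\big)$, which equals $1$ on $|z-x|\le |x|/4$ and vanishes for $|z-x|\ge |x|/2$. Write $f=f\chi+f(1-\chi)$ and treat the two resulting convolutions separately.

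For the near part $G_t*(f\chi)$, move all derivatives onto $f\chi$, which is legitimate since $f\chi$ is compactly supported away from the origin (there $|z|\ge |x|/2$). By the Leibniz rule, the hypothesis on $\partial^{\mathbf k}f$ with $\mathbf k\le\mathbf m$, and $|\partial^{\mathbf j}\chi|\lesssim |x|^{-\|\mathbf j\|_{\ell_1}}$, we get $|\partial^{\mathbf m}(f\chi)|\lesssim C_1|x|^{a-\|\mathbf m\|_{\ell_1}}$ on the support, where $|z|\sim|x|$. Since $\int G_t=1$, this gives the claimed bound for the near part directly.

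For the far part $G_t*(f(1-\chi))$, put all derivatives on the kernel. For $|x-z|\ge |x|/4$ we have
\[
|\partial_x^{\mathbf m}G_t(x-z)|\lesssim t^{-\frac n2-\frac{\|\mathbf m\|_{\ell_1}}2}\rme^{-\frac{|x-z|^2}{5t}}.
\]
We then bound $\int_{|x-z|\ge|x|/4}\rme^{-\frac{|x-z|^2}{5t}}|z|^{a}\,\rmd z$ by splitting into $|z|\le 2|x|$ and $|z|>2|x|$.
\begin{itemize}
\item On $|z|\le 2|x|$, the exponential is at most $\rme^{-c|x|^2/t}$. Since $a>-n$, $\int_{|z|\le 2|x|}|z|^a\,\rmd z\sim|x|^{a+n}$.
\item On $|z|>2|x|$, we have $|x-z|\sim|z|$, so the integral is at most $t^{\frac{a+n}2}\rme^{-c|x|^2/t}$, up to powers of $|x|/\sqrt t$, by the tail estimate \eqref{23Oct07-1}.
\end{itemize}
Combining, the far part is $\lesssim C_1t^{-\frac n2-\frac{\|\mathbf m\|_{\ell_1}}2}\rme^{-c|x|^2/t}\big(|x|^{a+n}+t^{\frac{a+n}2}(|x|/\sqrt t)^{N}\big)$. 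Writing $s=|x|/\sqrt t\ge1$, this equals $C_1|x|^{a-\|\mathbf m\|_{\ell_1}}\,s^{\,n+\|\mathbf m\|_{\ell_1}+N'}\rme^{-cs^2}$. Because $s^K\rme^{-cs^2}$ is bounded on $s\ge1$, the far part is $\lesssim C_1|x|^{a-\|\mathbf m\|_{\ell_1}}$.

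The main obstacle is bookkeeping rather than any single estimate: the near-part Leibniz bound must use $|z|\sim|x|$ uniformly on the support of $\chi$, and in the far part every polynomial factor must be absorbed into $\rme^{-cs^2}$ with $s\ge1$. This absorption is exactly where the restriction $|x|\ge t^{\frac12}$ enters.
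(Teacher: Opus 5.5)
Your proof is correct, but it uses a different decomposition from the paper's.

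\textbf{The paper's route.} It cuts $f$ with $\eta\big(z/(9^{-1}t^{1/2})\big)$, i.e.\ around the origin at the parabolic scale $t^{1/2}$, independently of $x$. On that small ball it keeps all derivatives on the heat kernel, using $|x-z|\sim|x|$ and the integrability of $|z|^{a}$ for $a>-n$. On the complement it integrates by parts once onto $f(1-\eta)$. It then has to split $\{|z|\ge t^{1/2}/9\}$ into three regions $P_{21},P_{22},P_{23}$, namely $|z|\le|x|/2$, $|z|\sim|x|$ and $|z|>2|x|$. This needs a three-case evaluation of $\int_{t^{1/2}/9}^{|x|/2}r^{a+n-\|\mathbf m\|_{\ell_1}-1}\,\rmd r$, including a logarithmic case, plus Gaussian absorption in the two outer regions.

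\textbf{Your route.} You cut around the evaluation point $x$ at scale $|x|$, so derivatives fall on $f$ only where $|z|\sim|x|$. That piece is bounded at once by $\|\partial^{\mathbf m}(f\chi)\|_{L^\infty}\int G_t$ and does not even need $|x|\ge t^{1/2}$. The remaining piece has $|x-z|\ge|x|/4$, so every polynomial factor is absorbed by $\rme^{-c|x|^2/t}$ with $s=|x|/t^{1/2}\ge1$.

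\textbf{Comparison.} Your version avoids the case analysis and shows that the derivative hypothesis on $f$ is only used on the annulus $|z|\sim|x|$; elsewhere only the size bound $|f|\le C_1|z|^a$ matters. The paper's version has the minor advantage that its cutoff does not depend on $x$, so the splitting and the integration by parts are global. In your version the decomposition should be read as follows: fix $x$, write $f=f\chi_x+f(1-\chi_x)$ with $\chi_x$ a fixed function, differentiate each convolution in the free variable, and evaluate at $x$. This is legitimate and is what you implicitly do.
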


\begin{proof}

Without loss of generality, we take $C_1=1$ in the proof. It suffices to estimate
\begin{align*}
&
P_1 :=  (4\pi t)^{-\frac n2} \int_{\mathbb{R}^n} 
\partial_{x}^{\mathbf{m}} \Big( \rme^{-\frac{|x-z|^2}{4t} } \Big) f(z) \eta\Big(\frac{z}{9^{-1} t^{\frac{1}{2}}} \Big) \rmd z,
\\
&
P_2 :=  (4\pi t)^{-\frac n2} \int_{\mathbb{R}^n} 
\partial_{x}^{\mathbf{m}} \Big( \rme^{-\frac{|x-z|^2}{4t} } \Big) f(z) \Big[ 1 - \eta\Big(\frac{z}{9^{-1} t^{\frac{1}{2}}} \Big) \Big] \rmd z.
\end{align*}
For $P_1$,
\begin{equation*}
\begin{aligned}
&
|P_1| \lesssim t^{-\frac{n}{2} - \frac{1}{2}\|\mathbf{m}\|_{\ell_1} } \int_{\mathbb{R}^n} \rme^{-\frac{|x-z|^2}{5 t}} |z|^{a} \eta\Big(\frac{z}{9^{-1} t^{\frac{1}{2}} } \Big) \rmd z
\\
\le \ &
t^{-\frac{n}{2} - \frac{1}{2}\|\mathbf{m}\|_{\ell_1} } \rme^{- (\frac{7}{9})^2 \frac{|x|^2}{5t} } \int_{\mathbb{R}^n} |z|^{a} \eta\Big(\frac{z}{9^{-1} t^{\frac{1}{2}} } \Big) \rmd z
\stackrel{a>-n}{\lesssim}
t^{\frac{1}{2} (a - \|\mathbf{m}\|_{\ell_1}) } \rme^{- (\frac{7}{9})^2 \frac{|x|^2}{5t} }
\end{aligned}
\end{equation*}
since for $|z| \le \frac{2}{9} t^{\frac{1}{2}}$, $|x| \ge t^{\frac{1}{2}}$, we have $\frac{7}{9} |x| \le |x - z| \le \frac{11}{9} |x|$.

For $P_2$,
\begin{align*}
&
P_2 = (4\pi t)^{-\frac n2}
(-1)^{-\|\mathbf{m}\|_{\ell_1}} \int_{\mathbb{R}^n} 
\partial_{z}^{\mathbf{m}} \Big( \rme^{-\frac{|x-z|^2}{4t} } \Big) f(z) \Big[ 1 - \eta\Big(\frac{z}{9^{-1} t^{\frac{1}{2}}} \Big) \Big] \rmd z
\\
= \ & (4\pi t)^{-\frac n2} \int_{\mathbb{R}^n} \rme^{-\frac{|x-z|^2}{4t} } 
\partial_{z}^{\mathbf{m}} \Big\{
f(z) \Big[ 1 - \eta\Big(\frac{z}{9^{-1} t^{\frac{1}{2}}} \Big) \Big]
\Big\} \rmd z
\\
= \ & (4\pi t)^{-\frac n2} \int_{\mathbb{R}^n} \rme^{-\frac{|x-z|^2}{4t} } 
\Big\{ \sum_{\mathbf{i}, \mathbf{j} \in \mathbb{N}^n, \mathbf{i} + \mathbf{j} = \mathbf{m}} 
\partial_{z}^{\mathbf{i}} f(z) \partial_{z}^{\mathbf{j}} \Big[ 1 - \eta\Big(\frac{z}{9^{-1} t^{\frac{1}{2}}} \Big) \Big]
\Big\} \rmd z.
\end{align*}
Then
\begin{equation*}
\begin{aligned}
&
|P_2|
\lesssim t^{-\frac n2} \int_{\mathbb{R}^n} \rme^{-\frac{|x-z|^2}{4t} } 
\Big( |z|^{a - \|\mathbf{m}\|_{\ell_1}}
\1_{|z| \ge \frac{1}{9} t^{\frac{1}{2}}}
+ \sum_{\mathbf{i}, \mathbf{j} \in \mathbb{N}^n, \,  \mathbf{i} + \mathbf{j} = \mathbf{m}, \, \mathbf{j} \ne 0} |z|^{a - \|\mathbf{i}\|_{\ell_1}}
t^{-\frac{1}{2} \|\mathbf{j}\|_{\ell_1} } \1_{\frac{1}{9} t^{\frac{1}{2}} \le |z| \le \frac{2}{9} t^{\frac{1}{2}}}
\Big) \rmd z
\\
\sim \ & t^{-\frac n2} \int_{\mathbb{R}^n} \rme^{-\frac{|x-z|^2}{4t} } |z|^{a - \|\mathbf{m}\|_{\ell_1} } \big( 
\underbrace{\1_{\frac{1}{9} t^{\frac{1}{2}} \le |z| \le \frac{1}{2} |x|}}_{=: P_{21}}
+
\underbrace{\1_{\frac{1}{2} |x| < |z| \le 2|x|}}_{=: P_{22}} 
+
\underbrace{\1_{|z| > 2|x|}}_{=: P_{23}}
\big) \rmd z.
\end{aligned}
\end{equation*}
Therein,
\begin{equation*}
\begin{aligned}
&
P_{21} 
\le t^{-\frac n2} \rme^{-\frac{|x|^2}{16 t} }\int_{\mathbb{R}^n} |z|^{a -\|\mathbf{m}\|_{\ell_1}} 
\1_{ \frac{1}{9} t^{\frac{1}{2}} \le |z| \le \frac{1}{2} |x|} \rmd z 
\sim 
t^{-\frac n2} \rme^{-\frac{|x|^2}{16 t} }\int_{\frac{1}{9} t^{\frac{1}{2}}}^{\frac{1}{2} |x|} r^{a+n -\|\mathbf{m}\|_{\ell_1}-1}  \rmd r
\\
\lesssim \ & t^{-\frac n2} \rme^{-\frac{|x|^2}{16 t} }
\begin{cases}
t^{\frac{1}{2}(a+n -\|\mathbf{m}\|_{\ell_1})}
& \mbox{ \ if \ } a < \|\mathbf{m}\|_{\ell_1} - n
\\
\ln(\frac{9}{2} t^{-\frac{1}{2}} |x| ) 
& \mbox{ \ if \ } a = \|\mathbf{m}\|_{\ell_1} - n
\\
|x|^{a+n -\|\mathbf{m}\|_{\ell_1}} & \mbox{ \ if \ } a > \|\mathbf{m}\|_{\ell_1} -n,
\end{cases}
\end{aligned}
\end{equation*}
\begin{align*}
& P_{22}
\sim t^{-\frac n2} |x|^{a -\|\mathbf{m}\|_{\ell_1} } \int_{\mathbb{R}^n} \rme^{-\frac{|x-z|^2}{4t} }  
\1_{\frac{1}{2} |x| < |z| \le 2|x|} \rmd z
\\
\le \ & t^{-\frac n2} |x|^{a -\|\mathbf{m}\|_{\ell_1}} \int_{|x-z| \le 3|x|} \rme^{-\frac{|x-z|^2}{4t} } \rmd z 
\sim
t^{-\frac n2} |x|^{a -\|\mathbf{m}\|_{\ell_1}} \int_{0}^{3|x|} \rme^{-\frac{r^2}{4t} } r^{n-1} \rmd r
\\
\sim \ & |x|^{a -\|\mathbf{m}\|_{\ell_1}} \int_{0}^{\frac{9}{4} \frac{|x|^2}{t}} \rme^{-w} w^{\frac{n}{2} - 1}  \rmd w
\stackrel{n>0, |x| \ge t^{\frac{1}{2}}}{\sim}  |x|^{a -\|\mathbf{m}\|_{\ell_1}},
\end{align*}
which is greater than the upper bounds of $|P_1|$ and $P_{21}$ since
$
t^{\frac{1}{2} (a - \|\mathbf{m}\|_{\ell_1}) } \rme^{- (\frac{7}{9})^2 \frac{|x|^2}{5t} }
\lesssim |x|^{a -\|\mathbf{m}\|_{\ell_1}}
$ is true for $|x| \ge t^{\frac{1}{2}}$, and
\begin{equation*}
\begin{aligned}
&
t^{-\frac n2} \rme^{-\frac{|x|^2}{16 t} }
\begin{cases}
t^{\frac{1}{2}(a+n -\|\mathbf{m}\|_{\ell_1})}
& \mbox{ \ if \ } a < \|\mathbf{m}\|_{\ell_1} - n
\\
\ln(\frac{9}{2} t^{-\frac{1}{2}} |x| ) 
& \mbox{ \ if \ } a = \|\mathbf{m}\|_{\ell_1} - n
\\
|x|^{a+n -\|\mathbf{m}\|_{\ell_1}} & \mbox{ \ if \ } a > \|\mathbf{m}\|_{\ell_1} - n
\end{cases}
\lesssim |x|^{a -\|\mathbf{m}\|_{\ell_1}}
\\
\Leftrightarrow \ & \rme^{-\frac{|x|^2}{16 t} }
\begin{cases}
(t^{-1} |x|^2)^{\frac{1}{2}(-a + \|\mathbf{m}\|_{\ell_1})}
& \mbox{ \ if \ } a < \|\mathbf{m}\|_{\ell_1} - n
\\
(t^{-1} |x|^2)^{\frac{n}{2}}
\ln(\frac{9}{2} t^{-\frac{1}{2}} |x| ) 
& \mbox{ \ if \ } a = \|\mathbf{m}\|_{\ell_1} - n
\\
(t^{-1} |x|^2)^{\frac{n}{2}} & \mbox{ \ if \ } a > \|\mathbf{m}\|_{\ell_1} - n
\end{cases}
\lesssim 1,
\end{aligned}
\end{equation*}
which is true for $|x| \ge t^{\frac{1}{2}}$.
\begin{equation*}
\begin{aligned}
& P_{23}
\le t^{-\frac n2} \int_{\mathbb{R}^n} \rme^{-\frac{|z|^2}{16 t} } |z|^{a -\|\mathbf{m}\|_{\ell_1}} 
\1_{|z| > 2|x|} \rmd z
\sim t^{-\frac n2} \int_{2|x|}^{\infty} \rme^{-\frac{r^2}{16 t} } r^{a+n - \|\mathbf{m}\|_{\ell_1}-1} \rmd r
\\
\sim \ & t^{\frac{1}{2}(a -\|\mathbf{m}\|_{\ell_1}) } \int_{\frac{|x|^2}{4t}}^{\infty} \rme^{-w}  w^{\frac{1}{2} (a+n -\|\mathbf{m}\|_{\ell_1} -2)} \rmd w
\stackrel{|x| \ge t^{\frac{1}{2}}, \eqref{23Oct07-1}}{\sim}
t^{\frac{1}{2}(a -\|\mathbf{m}\|_{\ell_1}) } \rme^{-\frac{|x|^2}{4t}} ( t^{-1} |x|^2 )^{\frac{1}{2} (a+n -\|\mathbf{m}\|_{\ell_1} -2)},
\end{aligned}
\end{equation*}
which is smaller than the upper bound of $P_{22}$ for $|x| \ge t^{\frac{1}{2}}$. In sum, we get the conclusion.
\end{proof}

\section{Spatial decay improvement lemma}

The following lemma improves the spatial decay of solutions by the comparison theorem.
\begin{lemma}\label{qd26Mar7-1-lem}
	
	Given an integer $n\ge 3$, $0< t_0 <t_1 \le \infty$, $R(t)>0, v(t) \ge 0$ defined in $(t_0, t_1)$, consider
	\begin{equation*}
		\partial_{t} f = \Delta f + V(x,t) f + h(x,t) \mbox{ \ for \ } t\in (t_0, t_1), x\in B_{R},
		\quad
		f=0
		\mbox{ \ for \ } t\in (t_0, t_1), x\in \partial B_{R}.
	\end{equation*}
	Suppose that $\| f(\cdot,t) \|_{L^{\infty}(B_{R})} \le v$, $|h(x,t)| \le v \langle x \rangle^{-a_1}$ with a constant $a_1>n$; $|V(x,t)| \le C_{V} \langle x \rangle^{-2-a_2}$ with positive constants $C_{V}, a_2$; $|f(x,t_0)| \le v(t_0) \langle x \rangle^{a_3}$ with a constant $a_3 \le 2-n$;
	\begin{equation*}
		\begin{aligned}
			&
			\mbox{either Case 1: $\dot{v} \ge 0$}
			\\
			&
			\mbox{or Case 2: $|\dot{v}| \le C_{v} t^{-1} v$, $\langle R \rangle^{2+\epsilon} \le C_{R} t$ with positive constants $C_{v}, \epsilon, C_{R}$}
		\end{aligned}
	\end{equation*}
	holds, then
	\begin{equation*}
		|f| \le C v \langle x \rangle^{2-n}
	\end{equation*}
	with a constant $C>0$ independent of $t_0, t_1$.
\end{lemma}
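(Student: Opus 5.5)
Our plan is to use the comparison principle with an explicit barrier on $B_R$, built from a profile that has exactly the decay $\langle x\rangle^{2-n}$ and controls the forcing and the potential outside a large fixed ball. We fix a large constant $M\ge 1$ (depending on $C_V,a_2,n$) and set
\begin{equation*}
\bar f(x,t) := C_* v(t)\big(\langle x\rangle^{2-n} + \langle x\rangle^{-n+2-\delta}\big),\qquad 0<\delta<\min\{a_2,a_1-n,\epsilon'\},
\end{equation*}
or, more simply, $\bar f = C_* v(t)\,\rho(x)$ with $\rho(x):=|x|^{2-n}-|x|^{2-n-\delta}$ adjusted near the origin. The point is that $-\Delta\langle x\rangle^{2-n}\sim\langle x\rangle^{-n}$ (indeed $\Delta\langle x\rangle^{2-n}=-n(n-2)\langle x\rangle^{-n-2}$, which is too fast), so we instead use $\rho$, which satisfies $-\Delta\rho\gtrsim|x|^{-n-\delta}$ for $|x|\ge M$ while $\rho\sim|x|^{2-n}$ there.

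We split the domain into the inner region $|x|\le M$ and the outer region $M\le|x|\le R$. In the inner region no barrier is needed, since $|f|\le v\lesssim_M v\langle x\rangle^{2-n}$ directly from the hypothesis $\|f(\cdot,t)\|_{L^\infty}\le v$. In the outer region we verify that $\bar f$ is a supersolution. The potential term satisfies $|V\bar f|\le C_V M^{-a_2+\delta}\,C_*v|x|^{-n-\delta}$, which is absorbed by $-\Delta\bar f$ once $M$ is large. The forcing satisfies $|h|\le v|x|^{-a_1}\le v|x|^{-n-\delta}$, which is absorbed by taking $C_*$ large.

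The time-derivative term $C_*\dot v\rho$ needs separate treatment in the two cases. In Case 1 it has the good sign, so nothing more is required. In Case 2 we have $|\dot v\rho|\lesssim t^{-1}v|x|^{2-n}$, and we need this to be $\ll v|x|^{-n-\delta}$ for $|x|\le R$, i.e.\ $t^{-1}R^{2+\delta}\ll 1$. This follows from $\langle R\rangle^{2+\epsilon}\le C_Rt$ with $\delta<\epsilon$, up to a constant factor $C_R R^{\delta-\epsilon}$, which is small when $R$ is large. If $R$ is bounded, the claim is trivial from $|f|\le v$. If the constant is not small, we handle it by enlarging $M$ or by multiplying the barrier by a slowly varying factor. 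We expect this step, making the $\dot v$ term harmless uniformly in $t_0,t_1$, to be the main technical point.

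For the boundary data, $f=0\le\bar f$ on $\partial B_R$. On $|x|=M$ we have $|f|\le v\le\bar f$ for $C_*\gtrsim M^{n-2}$, and at $t=t_0$ the bound $|f|\le v(t_0)\langle x\rangle^{a_3}\le v(t_0)\langle x\rangle^{2-n}\le\bar f$. The comparison principle on the annulus $\{M<|x|<R\}$, applied to $\pm f$, then gives $|f|\le\bar f\lesssim v\langle x\rangle^{2-n}$. Combined with the inner region, this yields the conclusion with constants independent of $t_0,t_1$.
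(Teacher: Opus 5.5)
Your argument is correct and is essentially the paper's. Both use comparison on the exterior region $\{M\le |x|\le R\}$ with a barrier $C_*v(t)\rho(x)$, where $\rho\sim|x|^{2-n}$, $-\Delta\rho\gtrsim|x|^{-n-\delta}$ and $\delta<\min\{a_2,a_1-n,\epsilon\}$, and both handle the interior by the trivial bound $|f|\le v$. The paper simply takes $\rho=(-\Delta)^{-1}(\langle x\rangle^{-b})$ with $b=n+\delta$ instead of your explicit $|x|^{2-n}-|x|^{2-n-\delta}$. Your first displayed candidate, with a plus sign, has the wrong sign of $\Delta$, but the $\rho$ you actually use is fine.

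The step you flag as the main difficulty is resolved exactly as in the paper, by enlarging $M$. On $M\le|x|\le R$ you have $t^{-1}|x|^{2+\delta}\le t^{-1}\langle R\rangle^{2+\epsilon}|x|^{\delta-\epsilon}\le C_R M^{\delta-\epsilon}$. So you need neither a case distinction on the size of $R(t)$ nor an extra slowly varying factor.
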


\begin{proof}
	
	For Case 2, set
	$ f_1(x,t) := D v (-\Delta)^{-1} ( \langle x \rangle^{-b} ) $
	with $D>1$ to be determined later and $b \in (n, \min\{ a_2+n, n+\epsilon, a_1 \})$. Straightforward calculation gives
	$ C_1^{-1} \langle x \rangle^{2-n} \le (-\Delta)^{-1} ( \langle x \rangle^{-b} ) \le C_1 \langle x \rangle^{2-n} $ for some $C_1>1$. 
	\begin{equation*}
		\begin{aligned}
			&
			\Delta f_1 + V f_1 - \partial_{t} f_1 + h
			\\
			= \ & - D v \langle x \rangle^{-b} + V D v (-\Delta)^{-1} ( \langle x \rangle^{-b} ) - D \dot{v} (-\Delta)^{-1} ( \langle x \rangle^{-b} ) + h
			\\
			\le \ & - D v \langle x \rangle^{-b} + C_{V} \langle x \rangle^{-2-a_2} D v C_1 \langle x \rangle^{2-n} + D C_{v} t^{-1} v C_1 \langle x \rangle^{2-n} + v \langle x \rangle^{-a_1}
			\\
			\le \ & - D v \langle x \rangle^{-b} + C_{V} D v C_1 \langle x \rangle^{-a_2-n} + D C_{v} t^{-1} v \langle R \rangle^{2+\epsilon} C_1 \langle x \rangle^{-n-\epsilon} + v \langle x \rangle^{-a_1}
			\\
			\le \ &  D v \langle x \rangle^{-b} \big( - 1 + C_{V} C_1 \langle x \rangle^{b-a_2-n} + C_{v} C_{R} C_1 \langle x \rangle^{b -n-\epsilon} + D^{-1} \langle x \rangle^{b-a_1} \big).
		\end{aligned}
	\end{equation*}
	There exists a constant $R_{1} \gg 1$ such that for $|x| \ge R_{1}$, we have $ - 1 + C_{V} C_1 \langle x \rangle^{b-a_2-n} + C_{v} C_{R} C_1 \langle x \rangle^{b -n-\epsilon} + D^{-1} \langle x \rangle^{b-a_1} <0$. If $R \le R_1$, then $|f| \le v \langle R_1 \rangle^{n-2} \langle x \rangle^{2-n}$. If $R > R_1$, we take $D = D(R_1) \gg 1$ to make
$ f_1(x,t_0) \ge D v(t_0) C_1^{-1} \langle x \rangle^{2-n} \ge v(t_0) \langle x \rangle^{a_3} $
and $f_1(x,t) |_{|x|=R_1} \ge v$. Then $f_1$ is a barrier function in $t\in (t_0, t_1), R_1 \le |x| \le R$. The estimate in $|x|< R_1$ is trivial. Hence, we get the conclusion. Case 1 can be proved similarly. 
\end{proof}

\section{Proof of Lemma \ref{qd26Apr17-5-lem}}\label{ext-lem-proof}

\begin{proof}[Proof of Lemma \ref{qd26Apr17-5-lem}]

It is ready to get \eqref{qd26May2-3}.
\begin{align*}
			&
			g_{i}(y, t[\dot{\lambda}_{1,i}](\tau) ) \1_{\tau_0 \le \tau \le  \tau[\dot{\lambda}_{1,i}](T)}
			+
			g_{i}(y,T) \1_{\tau > \tau[\dot{\lambda}_{1,i}](T)}
			\\
			& -
			\big[
			g_{\infty}(y,t[\dot{\lambda}_{1,\infty}](\tau)) \1_{\tau_0 \le \tau \le  \tau[\dot{\lambda}_{1,\infty}](T)}
			+ g_{\infty}(y,T) \1_{\tau > \tau[\dot{\lambda}_{1,\infty}](T)}
			\big]
			\\
			= \ & g_{i}(y, t[\dot{\lambda}_{1,i}](\tau) ) \1_{\tau \ge \tau_0, \tau \le  \tau[\dot{\lambda}_{1,i}](T), \tau \le  \tau[\dot{\lambda}_{1,\infty}](T)}
			+
			g_{i}(y, t[\dot{\lambda}_{1,i}](\tau) ) \1_{\tau \ge \tau_0, \tau \le  \tau[\dot{\lambda}_{1,i}](T), \tau > \tau[\dot{\lambda}_{1,\infty}](T)}
			\\
			&
			+
			g_{i}(y,T) \1_{\tau > \tau[\dot{\lambda}_{1,i}](T), \tau > \tau[\dot{\lambda}_{1,\infty}](T)}
			+
			g_{i}(y,T) \1_{\tau > \tau[\dot{\lambda}_{1,i}](T), \tau \le \tau[\dot{\lambda}_{1,\infty}](T)}
			\\
			& -
			\big[
			g_{\infty}(y,t[\dot{\lambda}_{1,\infty}](\tau)) \1_{\tau \ge \tau_0, \tau \le  \tau[\dot{\lambda}_{1,\infty}](T), \tau \le  \tau[\dot{\lambda}_{1,i}](T)}
			+
			g_{\infty}(y,t[\dot{\lambda}_{1,\infty}](\tau)) \1_{\tau \ge \tau_0, \tau \le  \tau[\dot{\lambda}_{1,\infty}](T), \tau >  \tau[\dot{\lambda}_{1,i}](T)}
			\\
			&
			+ g_{\infty}(y,T) \1_{\tau > \tau[\dot{\lambda}_{1,\infty}](T), \tau >  \tau[\dot{\lambda}_{1,i}](T)}
			+ g_{\infty}(y,T) \1_{\tau > \tau[\dot{\lambda}_{1,\infty}](T), \tau \le   \tau[\dot{\lambda}_{1,i}](T)}
			\big]
			\\
			= \ & 
			\big[
			g_{i}(y, t[\dot{\lambda}_{1,i}](\tau) )
			-
			g_{\infty}(y, t[\dot{\lambda}_{1,i}](\tau) )
			+
			g_{\infty}(y, t[\dot{\lambda}_{1,i}](\tau) )
			-
			g_{\infty}(y,t[\dot{\lambda}_{1,\infty}](\tau))
			\big] \1_{\tau \ge \tau_0, \tau \le  \tau[\dot{\lambda}_{1,i}](T), \tau \le  \tau[\dot{\lambda}_{1,\infty}](T)}
			\\
			&
			+
			\big[
			g_{i}(y, t[\dot{\lambda}_{1,i}](\tau) )
			-
			g_{\infty}(y,T)
			\big] \1_{\tau \le  \tau[\dot{\lambda}_{1,i}](T), \tau > \tau[\dot{\lambda}_{1,\infty}](T)}
			\\
			&
			+
			\big[
			g_{i}(y,T)
			-
			g_{\infty}(y,T)
			\big] \1_{\tau > \tau[\dot{\lambda}_{1,i}](T), \tau > \tau[\dot{\lambda}_{1,\infty}](T)}
			\\
			&
			+
			\big[ 
			g_{i}(y,T) - g_{\infty}(y,t[\dot{\lambda}_{1,\infty}](\tau))
			\big] \1_{\tau > \tau[\dot{\lambda}_{1,i}](T), \tau \le \tau[\dot{\lambda}_{1,\infty}](T)}.
    \end{align*}
	Therein, for the 1st part,
	\begin{equation*}
		\big[
		g_{i}(y, t[\dot{\lambda}_{1,i}](\tau) )
		-
		g_{\infty}(y, t[\dot{\lambda}_{1,i}](\tau) )
		\big] \1_{\tau \ge \tau_0, \tau \le  \tau[\dot{\lambda}_{1,i}](T), \tau \le  \tau[\dot{\lambda}_{1,\infty}](T)} \rightsquigarrow 0 \mbox{ \ in \ } \tau \in [\tau_0,\infty), y\in \Omega
	\end{equation*}
	since $t[\dot{\lambda}_{1,i}](\tau) \le t[\dot{\lambda}_{1,i}](\tau[\dot{\lambda}_{1,i}](T)) = T$, and the convergence assumption of $g_{i}$.
	\begin{equation*}
		\big[
		g_{\infty}(y, t[\dot{\lambda}_{1,i}](\tau) )
		-
		g_{\infty}(y,t[\dot{\lambda}_{1,\infty}](\tau))
		\big] \1_{\tau \ge \tau_0, \tau \le  \tau[\dot{\lambda}_{1,i}](T), \tau \le  \tau[\dot{\lambda}_{1,\infty}](T)} \rightsquigarrow 0 
		\mbox{ \ in \ } \tau \in [\tau_0,\infty), y\in \Omega
	\end{equation*}
	by \eqref{qd26Apr16-1}, $\tau \le  \tau[\dot{\lambda}_{1,\infty}](T) \stackrel{\eqref{tau-est}}{\le} C_{tm} T^{2\gamma_1 -3}$, the assumption $\lim_{i\to \infty} \|\dot{\lambda}_{1,i} - \dot{\lambda}_{1,\infty}\|_{\dot{\lambda}_1} = 0$, and \eqref{qd26May2-3}.
	
	For the 2nd part,
	\begin{equation*}
		\begin{aligned}
			&
			\big[
			g_{i}(y, t[\dot{\lambda}_{1,i}](\tau) )
			-
			g_{\infty}(y,T)
			\big] \1_{\tau \le  \tau[\dot{\lambda}_{1,i}](T), \tau > \tau[\dot{\lambda}_{1,\infty}](T)}
			\\
			= \ & \big[
			g_{i}(y, t[\dot{\lambda}_{1,i}](\tau))
			-
			g_{\infty}(y, t[\dot{\lambda}_{1,i}](\tau))
			+
			g_{\infty}(y, t[\dot{\lambda}_{1,i}](\tau))
			-
			g_{\infty}(y,T)
			\big] \1_{\tau \le  \tau[\dot{\lambda}_{1,i}](T), \tau > \tau[\dot{\lambda}_{1,\infty}](T)}.
		\end{aligned}
	\end{equation*}
	Here,
	\begin{equation*}
		\big[
		g_{i}(y, t[\dot{\lambda}_{1,i}](\tau))
		-
		g_{\infty}(y, t[\dot{\lambda}_{1,i}](\tau))
		\big] \1_{\tau \le  \tau[\dot{\lambda}_{1,i}](T), \tau > \tau[\dot{\lambda}_{1,\infty}](T)}
		\rightsquigarrow 0 
		\mbox{ \ in \ } \tau \in [\tau_0,\infty), y\in \Omega
	\end{equation*}
	since $t[\dot{\lambda}_{1,i}](\tau) \le t[\dot{\lambda}_{1,i}](\tau[\dot{\lambda}_{1,i}](T)) = T$, and the convergence assumption of $g_{i}$.
	\begin{equation*}
		\begin{aligned}
			&
			\big[
			g_{\infty}(y, t[\dot{\lambda}_{1,i}](\tau))
			-
			g_{\infty}(y,T)
			\big] \1_{\tau \le  \tau[\dot{\lambda}_{1,i}](T), \tau > \tau[\dot{\lambda}_{1,\infty}](T)}
			\\
			= \ & \big[
			g_{\infty}(y, t[\dot{\lambda}_{1,i}](\tau))
			-
			g_{\infty}(y,t[\dot{\lambda}_{1,i}](\tau[\dot{\lambda}_{1,i}](T)))
			\big] \1_{\tau \le  \tau[\dot{\lambda}_{1,i}](T), \tau > \tau[\dot{\lambda}_{1,\infty}](T)}
			\rightsquigarrow 0 
			\mbox{ \ in \ } \tau \in [\tau_0,\infty), y\in \Omega
		\end{aligned}
	\end{equation*}
by \eqref{qd26May2-3} and
	\begin{equation*}
		\big| t[\dot{\lambda}_{1,i}](\tau) - t[\dot{\lambda}_{1,i}](\tau[\dot{\lambda}_{1,i}](T)) \big|
		\le
		\big| t[\dot{\lambda}_{1,i}](\tau[\dot{\lambda}_{1,\infty}](T)) - t[\dot{\lambda}_{1,i}](\tau[\dot{\lambda}_{1,i}](T)) \big|
		\to 0,
	\end{equation*}
    which is deduced by
	$\tau[\dot{\lambda}_{1,i}](T) \to \tau[\dot{\lambda}_{1,\infty}](T)$ by \eqref{qd26Apr17-3} and the assumption $\lim_{i\to \infty} \|\dot{\lambda}_{1,i} - \dot{\lambda}_{1,\infty}\|_{\dot{\lambda}_1} = 0$; and  $\tau_1 - \tau_2 \sim \int_{t[\dot{\lambda}_{1}](\tau_2) }^{t[\dot{\lambda}_{1}](\tau_1) } (s^{2-\gamma_1} )^{-2} \rmd s$ for $\tau_1, \tau_2 \in [\tau_0, \infty)$ by \eqref{qd26May2-1}, where the ``$\sim$'' is independent of $\tau_1, \tau_2$ and the choice of $\dot{\lambda}_{1}$.

	For the 3rd part, by the convergence assumption of $g_{i}$
	\begin{equation*}
		\big[
		g_{i}(y,T)
		-
		g_{\infty}(y,T)
		\big] \1_{\tau > \tau[\dot{\lambda}_{1,i}](T), \tau > \tau[\dot{\lambda}_{1,\infty}](T)}
		\rightsquigarrow 0 
		\mbox{ \ in \ } \tau \in [\tau_0,\infty), y\in \Omega.
	\end{equation*}
	
	For the 4th part,
	\begin{equation*}
		\begin{aligned}
			&
			\big[ 
			g_{i}(y,T) - g_{\infty}(y,t[\dot{\lambda}_{1,\infty}](\tau))
			\big] \1_{\tau > \tau[\dot{\lambda}_{1,i}](T), \tau \le \tau[\dot{\lambda}_{1,\infty}](T)}
			\\
			= \ & \big[ 
			g_{i}(y,T) - g_{\infty}(y,T) + g_{\infty}(y,T) - g_{\infty}(y,t[\dot{\lambda}_{1,\infty}](\tau))
			\big] \1_{\tau > \tau[\dot{\lambda}_{1,i}](T), \tau \le \tau[\dot{\lambda}_{1,\infty}](T)}.
		\end{aligned}
	\end{equation*}
	Here, $g_{i}(y,T) - g_{\infty}(y,T) \rightsquigarrow 0$ in $y\in \Omega$.
	\begin{equation*}
		\begin{aligned}
			& \big[ g_{\infty}(y,T) - g_{\infty}(y,t[\dot{\lambda}_{1,\infty}](\tau))
			\big] \1_{\tau > \tau[\dot{\lambda}_{1,i}](T), \tau \le \tau[\dot{\lambda}_{1,\infty}](T)}
			\\
			= \ & \big[ g_{\infty}(y,t[\dot{\lambda}_{1,\infty}]( \tau[\dot{\lambda}_{1,\infty}](T) ) ) - g_{\infty}(y,t[\dot{\lambda}_{1,\infty}](\tau))
			\big] \1_{\tau > \tau[\dot{\lambda}_{1,i}](T), \tau \le \tau[\dot{\lambda}_{1,\infty}](T)}
			\rightsquigarrow 0 
			\mbox{ \ in \ } \tau \in [\tau_0,\infty), y\in \Omega
		\end{aligned}
	\end{equation*}
	by \eqref{qd26May2-3} and
	$ \big| \tau[\dot{\lambda}_{1,\infty}](T) - \tau \big|
		\le
		\big| \tau[\dot{\lambda}_{1,\infty}](T) - \tau[\dot{\lambda}_{1,i}](T) \big| \stackrel{\eqref{qd26Apr17-3}}{\to} 0$ with the assumption $\lim_{i\to \infty} \|\dot{\lambda}_{1,i} - \dot{\lambda}_{1,\infty}\|_{\dot{\lambda}_1} = 0$.
\end{proof}

\section{Proof of Proposition \ref{mode1-orth-prop}}\label{mode1-orth-pf-sec}

\begin{proof}[Proof of Proposition \ref{mode1-orth-prop}]

Denote $r=|y|$. The orthogonality condition is equivalent to
\begin{equation*}
	\int_0^{R} h_1(r,\tau) U_r(r) r^{n-1} \rmd r = 0
	\mbox{ \ for \ } 
	\tau \in (\tau_0, \tau_1), 
\end{equation*}
where $U_r(r) = [n(n-2)]^{\frac{n-2}{4}} (2-n)r(1+r^2)^{-\frac n2}$. Denote $\tilde{h}_1$ as the zero extension of $h_1$ outside $B_{R}$. Obviously, $\|\tilde{h}_1\|_{v,2+a}^{\infty} = \|h_1\|_{v,2+a}^{R}$.
Denote $\mathcal{L}_{1}[\phi_1]:=
(\partial_{rr} + \frac{n-1}{r} \partial_{r}
- \frac{n-1}{r^2} + pU^{p-1}) \phi_1$. Let $H = H_1(|y|,\tau) \Upsilon_{1,i}(\frac{y}{|y|})$ satisfying $\mathcal{L}_{1} H_{1} +  \tilde{h}_1  = 0 $ for $r>0$. For $r>0$, $H_1$ is given by
\begin{equation*}
	\begin{aligned}
		H_1(r,\tau) = \ &  U_r(r)
		\int_0^r 
		\frac{1}{\rho^{n-1} U_r(\rho)^2 } 
		\int_\rho^\infty 
		\tilde{h}_1(s,\tau) U_r(s) s^{n-1} \rmd s
		\rmd \rho  \mbox{ \ for \ } -1 < a\le n-1,
		\\
		H_1(r,\tau) = \ &  -U_r(r)
		\int_r^{\infty} 
		\frac{1}{\rho^{n-1} U_r(\rho)^2 } 
		\int_\rho^\infty 
		\tilde{h}_1(s,\tau) U_r(s) s^{n-1} \rmd s
		\rmd \rho  
		\mbox{ \ for \ } a > n-1,
	\end{aligned}
\end{equation*}
where $a>-1$ is used to guarantee that the spatial decay of $\tilde{h}_1(s,\tau) U_r(s) s^{n-1}$ is faster than $s^{-1}$ for $s \ge 1$. Using the orthogonality condition and then the scaling argument, we have
\begin{equation*}
\|\partial_{r} H_1 \|_{v, \hat{a}_{1} + 1 }^{\infty} +	\|H_1 \|_{v, \hat{a}_{1} }^{\infty} \lesssim \| h_1 \|_{v,2+a}^{R},
\end{equation*}
where $
\hat{a}_{1} := 
\begin{cases}
	a
	&
	\mbox{ if }  a\ne n-1
	\\
	(n-1)-
	&
	\mbox{ if } a=n-1
\end{cases}
$. It holds that $\Delta H + pU^{p-1} H = - \tilde{h}_1(|y|,\tau) \Upsilon_{1,i}(\frac{y}{|y|})$.
Consider 
\begin{equation*}
	\begin{cases}
		\partial_\tau \Phi = \Delta \Phi + pU^{p-1} \Phi + H
		\mbox{ \ for \ } \tau \in (\tau_0, \tau_1), y \in B_{2R},
		\\
		\Phi = 0 
		\mbox{ \ for \ } \tau \in (\tau_0, \tau_1), y \in \partial B_{2R}
		\quad	\Phi(\cdot,\tau_0) = 0 
		\mbox{ \ in \ }  B_{2R(\tau_0)}.
	\end{cases}
\end{equation*}
Under the assumption $|\dot{v}| = O(\tau^{-1} v)$, $|\dot{R}| = O(\tau^{-1} R)$, and $\tau^{-1} R^n \theta_{R, \hat{a}_{1}}^1 \le d_0 \ll 1$, 
by Lemma \ref{M1-nonortho-lem} and similar to the argument of \eqref{26Mar14-3}, we find a solution $\Phi=\Phi[H]$ linearly depending on $H$, and satisfying
\begin{equation*}
\langle y \rangle^2 | \nabla^2 \Phi |
+
\langle y \rangle |\nabla \Phi| + |\Phi| \lesssim 
	v
	\theta_{R, \hat{a}_{1} }^1  R^n  \langle y\rangle^{1-n}
	\|h_1\|_{v,2+a}^{R},
\quad
\Phi = \Phi_1(|y|,\tau) \Upsilon_{1,i}(\frac{y}{|y|}).
\end{equation*}
We take $\phi = (\Delta + pU^{p-1}) \Phi$. It follows that
\begin{equation*}
\begin{aligned}
&
\phi
= 
\Upsilon_{1,i}(\frac{y}{|y|}) \Big( \frac{\rmd^2}{\rmd r^2} + \frac{n-1}{r} \frac{\rmd}{\rmd r} - \frac{n-1}{r^2} +
pU^{p-1}(r) \Big) \Phi_1(r,\tau),
\quad
\phi(\cdot, \tau_0) = 0 
\mbox{ \ in \ } B_{2R(\tau_0)},
\\
&
\langle y \rangle |\nabla \phi| +
	|\phi| \lesssim v
	\theta_{R, \hat{a}_{1} }^1  R^n  \langle y\rangle^{-1-n}
	\|h_1\|_{v,2+a}^{R}.
\end{aligned}
\end{equation*}
Since $\theta_{R, \hat{a}_{1}}^1 = \theta_{R, a}^1$, $\|h_1\|_{v,2+a}^{R} \sim \|h\|_{v,2+a}^{R}$, we get the result.
\end{proof}

\section{Proof of Lemma \ref{phi-perp-lemma}}\label{high-mode-pf-sec}

\begin{proof}[Proof of Lemma \ref{phi-perp-lemma}]

The existence and uniqueness of the solution $\phi = \phi[h]$ linearly depending on $h$ are guaranteed by the classical parabolic theory.  Since $h \in {\mathbf{SO}}_{\tau_0, \tau_1}^{R}$, Lemma \ref{chiM-eq-lem}-$(2)$ implies $\phi \in {\mathbf{SO}}_{\tau_0, \tau_1}^{R}$.

By Lemma \ref{chiM-eq-lem}-$(4)$, $\phi$ can inherits the even symmetry of $h$.

Recall $\chi_{M}$ given in \eqref{chiM-def}. Under the assumption $R\ge 2$, $v\ge 0$, $|\dot{v}| = O(\tau^{-1} v)$, $|\dot{R}| = O(\tau^{-1} R)$, $R^{2 + \epsilon_1} \ll \tau$ with $\epsilon_1>0$, by Remark \ref{qd26Mar13-3-rmk}, $\mathcal{T}_*[h]$ is given by Lemma \ref{chiM-eq-lem} and satisfies
	\begin{equation}
		\begin{cases}
			\partial_{\tau} \mathcal{T}_*[h]
			= 
			\Delta \mathcal{T}_*[h] + p U^{p-1} (1-\chi_{M}(|y|)) \mathcal{T}_*[h]
			 + h \mbox{ \ for \ } \tau \in (\tau_0, \tau_1), y \in B_{R}, 
			\\
			\mathcal{T}_*[h] = 
			0 \mbox{ \ for \ } \tau \in (\tau_0, \tau_1), y \in \partial B_{R},
			\quad
			\mathcal{T}_*[h](\cdot,\tau_0) = 0 
			\mbox{ \ in \ } B_{R(\tau_0)},
		\end{cases}
	\end{equation}
\begin{equation}\label{phi*-hperp-estimate}
|\mathcal{T}_*[h]| \lesssim
	v \Theta_{R, a}^0(|y|) 
	\|h\|_{v,a}^{R}.
\end{equation}
And Lemma \ref{chiM-eq-lem}-$(2)$ implies that $\mathcal{T}_*[h]$ satisfies 
$\mathcal{T}_*[h] \in {\mathbf{SO}}_{\tau_0, \tau_1}^{R}$. Set $\tilde{\phi} = \phi - \mathcal{T}_*[h]$. Then $\tilde{\phi}$ satisfies 
$\tilde{\phi} \in {\mathbf{SO}}_{\tau_0, \tau_1}^{R}$
and
	\begin{equation}\label{phi-tilde-perpeq}
		\begin{cases}
			\partial_{\tau}
			\tilde{\phi} = 
			\Delta \tilde{\phi} + p U^{p-1} \tilde{\phi}
			 + p U^{p-1} \chi_{M}(|y|) \mathcal{T}_*[h] \mbox{ \ for \ } \tau \in (\tau_0, \tau_1), y \in B_{R},
			 \\
			\tilde{\phi} = 
			0 \mbox{ \ for \ } \tau \in (\tau_0, \tau_1), y \in \partial B_{R},
			\quad	\tilde{\phi}(\cdot,\tau_0) = 0 
			\mbox{ \ in \ } B_{R(\tau_0) }.
		\end{cases}
	\end{equation}
Multiplying \eqref{phi-tilde-perpeq} by $\tilde{\phi}$ and integrating both sides, we have
	\begin{equation*}
	\frac{1}{2} \partial_{\tau} \int_{B_{R  }} \tilde{\phi}^2 \rmd y
	+
	\int_{B_R} ( |\nabla \tilde{\phi}|^2 - p U^{p-1} \tilde{\phi}^2 ) \rmd y
	=
	\int_{B_{R  }} p U^{p-1}(y) \chi_{M}(|y|) \mathcal{T}_*[h] \tilde{\phi} \rmd y.
	\end{equation*}
Since $\tilde{\phi}$ satisfies $\tilde{\phi} \in {\mathbf{SO}}_{\tau_0, \tau_1}^{R}$, by \cite[Lemma 7.8]{infi4d} and H\"older inequality, we have
	\begin{equation*}
	\frac 12 \partial_{\tau} \int_{B_{R  }} \tilde{\phi}^2 \rmd y
	+ (n+ \frac 12) \int_{B_{R  } } \frac{ \tilde{\phi}^2}{|y|^2} \rmd y 
	\le
	\frac 12 \int_{B_{R  }} \Big( p U^{p-1}(y) \chi_{M}(|y|) \mathcal{T}_*[h] |y|\Big)^2 \rmd y.
	\end{equation*}
	Then, by \eqref{phi*-hperp-estimate}, we have
	\begin{equation*}
		\partial_{\tau} \int_{B_{R }} \tilde{\phi}^2 \rmd y
		+  R^{-2}
		\int_{B_{R  } } |\tilde{\phi}|^2 \rmd y 
		\lesssim 
		( v  \theta_{R, a}^0  )^2
		(\|h\|_{v,a}^{R})^2.
	\end{equation*}
	Since $\tilde{\phi}(\cdot,\tau_0) = 0 $, similar to \cite[(8.23) in Lemma 8.3]{Wei-Zhang-Zhou2022LLG}, under the assumption $v, R, \ln R \in \mathbf{AP}((\tau_0,\infty))$, $\mathbf{P}_1[R^{-2}] > -1$,  we have
	\begin{equation*}
	\begin{aligned}
&
	\int_{B_{R }} \tilde{\phi}^2 \rmd y
	\lesssim 
	\rme^{-\int^{\tau} R^{-2}(u) \rmd u}
	\int_{\tau_0}^{\tau} 
	\rme^{ \int^{s} R^{-2}(u) \rmd u} 
	(v(s) \theta_{R,a}^{0}(s))^2 \rmd s (\|h\|_{v,a}^{R})^2  
	\\
	\lesssim \ &
	\min\{ \tau, R^2 \} (v \theta_{R,a}^{0})^2 (\|h\|_{v,a}^{R})^2
	\lesssim
	( v\theta_{R, a}^0 R 
	\|h\|_{v,a}^{R} )^2.
		\end{aligned}
	\end{equation*}

Since $|p U^{p-1} \chi_{M}(|y|) \mathcal{T}_*[h]| \lesssim v \theta_{R,a}^{0} \chi_{M}(|y|) \|h\|_{v,a}^{R}$, then $\| \tilde{\phi} (\cdot,\tau) \|_{L^{\infty}(B_{R})} \lesssim v\theta_{R, a}^0 R 
\|h\|_{v,a}^{R}$. Under the assumption $|\dot{v}| = O(t^{-1} v)$, $\langle R \rangle^{2+\epsilon_1} = O(t)$ with $\epsilon_1>0$, applying Lemma \ref{qd26Mar7-1-lem} to \eqref{phi-tilde-perpeq}, we have $ |\tilde{\phi}| 
		\lesssim 
		v \theta_{R, a}^0   R 
		\langle y \rangle^{2-n}
		\|h\|_{v,a}^{R} $.
Combining \eqref{phi*-hperp-estimate}, and the scaling argument, we get \eqref{qd26Mar8-1}.
\end{proof}

\section{Acknowledgements}

We appreciate the meaningful discussion with Professor Yifu Zhou, Professor Kihyun Kim's suggestions to our manuscript.
The first author is partly supported by
Grant-in-Aid for Young Scientists (B) No. 26800065.


\end{document}